\theoremstyle{plain}
\newtheorem{theorem}{Theorem}[section]
\newtheorem*{theorem-non}{Theorem}
\newtheorem{proposition}[theorem]{Proposition}
\newtheorem*{proposition-non}{Proposition}
\newtheorem{corollary}[theorem]{Corollary}
\newtheorem*{conjecture-non}{Conjecture}
\newtheorem{lemma}[theorem]{Lemma}
\newtheorem*{lemma-non}{Lemma}
\theoremstyle{definition}
\newtheorem{definition}[theorem]{Definition}
\newtheorem{notation}[theorem]{Notation}
\theoremstyle{remark}
\newtheorem{remark}[theorem]{Remark}
\newtheorem{example}[theorem]{Example}
\newtheorem{examples}[theorem]{Examples}
\newtheorem*{example-non}{Example}
\numberwithin{equation}{section}
\DeclareMathOperator{\colim}{colim}
\DeclareMathOperator{\im}{im}
\DeclareMathOperator{\diag}{diag}
\DeclareMathOperator{\Idem}{Idem}
\newcommand{\id}{\textup{id}}
\newcommand{\pr}{\textup{pr}}
\newcommand{\map}{\textup{map}}
\newcommand{\trans}{\textup{trans}}
\newcommand{\pt}{\textup{pt}}
\newcommand{\Aut}{\textup{Aut}}
\newcommand{\GL}{\textup{Gl}}
\def\C{\mathbb C}
\def\N{\mathbb N}
\def\Q{\mathbb Q}
\def\R{\mathbb R}
\def\Z{\mathbb Z}
\title{The Farrell-Jones Conjecture for virtually solvable groups}
\author{Christian Wegner}
\subjclass[2000]{Primary 19D10; Secondary 19A31, 19B28}
\keywords{Farrell-Jones Conjecture, algebraic $K$- and $L$-theory of group rings, solvable groups}
\address{Hausdorff Research Institute for Mathematics (HIM) \\ Poppelsdorfer Allee 45 \\ 53115 Bonn \\ Germany}
\email{wegner@him.uni-bonn.de}
\begin{document}

\begin{abstract}
We prove the $K$- and $L$-theoretic Farrell-Jones Conjecture (with coefficients in additive categories) for virtually solvable groups.
\end{abstract}

\maketitle

\section{Introduction}

The Farrell-Jones Conjecture for algebraic $K$-theory predicts the structure of the algebraic $K$-groups $K_n(RG)$ for a group $G$ and a ring $R$. There is also an $L$-theoretic version.
The Farrell-Jones Conjecture plays an important role in the classification and geometry of manifolds. It implies a variety of well-known conjectures, e.g., the Bass-, Borel-, Kaplansky- and Novikov-Conjecture.

This article is dedicated to the Farrell-Jones Conjecture for virtually solvable groups. The main result is
\begin{theorem} \label{thm}
Let $G$ be a virtually solvable group. Then $G$ satisfies the $K$- and $L$-theoretic Farrell-Jones Conjecture (with coefficients in additive categories) with respect to the family of virtually cyclic subgroups.
\end{theorem}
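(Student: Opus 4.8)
My plan is to combine the formal inheritance properties of the Farrell--Jones Conjecture with a new geometric input concerning certain linear groups. Write $\mathcal{FJ}$ for the class of groups satisfying the $K$- and $L$-theoretic Farrell--Jones Conjecture with coefficients in additive categories relative to the family of virtually cyclic subgroups. I would use the standard facts that $\mathcal{FJ}$ is closed under passing to subgroups, to finite-index overgroups, to directed colimits of its members, to free products and to finite direct products, and --- conditionally --- under group extensions: if $1\to N\to G\to Q\to 1$ is exact with quotient map $p$, if $Q\in\mathcal{FJ}$, and if $p^{-1}(V)\in\mathcal{FJ}$ for every virtually cyclic subgroup $V\le Q$, then $G\in\mathcal{FJ}$. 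One also has the transitivity principle: to obtain $G\in\mathcal{FJ}$ it suffices to prove the Conjecture for $G$ relative to some larger family $\mathcal{F}$ of subgroups together with $\mathcal{FJ}$ for every member of $\mathcal{F}$. The finite-index and colimit properties reduce the theorem immediately to finitely generated solvable groups.

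Next I would use the structure theory of finitely generated solvable groups to descend to linear groups. For a finitely generated solvable $G$ of derived length $d$, put $A=G^{(d-1)}$; this is abelian and normal, and $Q=G/A$ is finitely generated solvable of derived length $<d$, so $Q\in\mathcal{FJ}$ by induction on $d$. To apply the extension property one has to control the groups $p^{-1}(V)$ for $V\le Q$ virtually cyclic; each of these is abelian-by-(virtually cyclic), hence virtually metabelian, and is the directed union of its finitely generated subgroups. A finitely generated virtually metabelian group is within reach of the classical linear representation theory of finitely generated metabelian groups --- Philip Hall's theorem that the abelian kernel is a Noetherian module over the integral group ring of the quotient, together with the embeddings it produces --- so that such a group embeds into a finite direct product of linear groups over finitely generated field extensions of $\Q$ and of finite fields; in the basic cases these fields are just $\Q$ and $\mathbb{F}_p(t)$. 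Thus everything comes down to proving $\mathcal{FJ}$ for subgroups of $\mathrm{GL}_n$ over such fields.

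The heart of the argument is this last point, which I would treat geometrically. A finitely generated subgroup of $\mathrm{GL}_n(\Q)$ lies in $\mathrm{GL}_n(\Z[1/m])$ for a suitable integer $m$, and over a function field one likewise lands in an $S$-arithmetic group; any such group acts properly by isometries on a finite-dimensional CAT(0)-space obtained as a product of the symmetric space of $\mathrm{GL}_n(\R)$ (in characteristic $0$) with the Bruhat--Tits buildings at the relevant finite places. Crucially this action is proper but \emph{not} cocompact, so the theorem of Bartels--Lück for groups acting properly cocompactly on CAT(0)-spaces does not apply off the shelf. The technical core is therefore to carry the Bartels--Lück--Reich machinery --- the associated flow space and the construction of long, thin, bounded-dimensional $\mathcal{F}$-covers --- through to this non-cocompact situation, and then, by the transitivity principle together with an induction on the rank $n$, to dispose of the parabolic stabilizers of points at infinity, which are extensions of nilpotent groups by arithmetic groups of strictly smaller rank. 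Passing from $\mathrm{GL}_n(\Z[1/m])$ to $\mathrm{GL}_n(\Q)$, and similarly over function fields, is then a directed colimit, after which the subgroup property completes the reduction.

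The step I expect to be the main obstacle is precisely this geometric one: making flow spaces, long thin covers and the induction over parabolic subgroups function for isometric actions on finite-dimensional CAT(0)-spaces that are not cocompact, and pushing the whole argument through simultaneously for $K$- and for $L$-theory with coefficients in additive categories. The metabelian structure theory and the inheritance bookkeeping of the earlier steps, though indispensable, are by comparison relatively soft.
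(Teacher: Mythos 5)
Your reduction bookkeeping (pass to finitely generated groups, induct on derived length, apply the extension criterion to land on abelian-by-(virtually cyclic) groups) is consistent with how the paper starts, and is essentially the Farrell--Linnell step. But the heart of your argument, as you yourself flag, is the claim that one can carry the Bartels--L\"uck flow-space and long-thin-cover machinery through to \emph{non-cocompact} isometric actions on products of symmetric spaces and Bruhat--Tits buildings, disposing of parabolic stabilizers by induction on rank. That step is not an adaptation of an existing argument; at the time of this paper it was an open problem, and it is not what the paper does. The difficulty is concrete: the dimension and Lebesgue number bounds for the long thin covers in the Bartels--L\"uck construction lean essentially on cocompactness (one covers a compact fundamental domain and translates); in the cusps of a non-cocompact CAT(0)-space the geometry degenerates and there is no uniform control. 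The group $\mathrm{GL}_n(\Z)$, cited in the paper as Example~\ref{ex-FJCw-1}, was handled not by such a geometric argument but by the Farrell--Hsiang method (reduction modulo primes and induction theory on the finite quotients), and even that result in~\cite{BLRR12} requires the ring to be finitely generated \emph{as an abelian group}, which excludes $\Z[1/m]$, $\Q$, and $\mathbb{F}_p(t)$. There is also a soft spot on the linearity side: Hall's theory embeds a finitely generated metabelian group into $\mathrm{GL}_n$ over a finite product of finitely generated fields, which in general have positive transcendence degree; for those one has neither a classical symmetric space nor an affine building of bounded dimension.

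The paper sidesteps both problems with a much sharper reduction and a new hybrid method. Following Farrell--Linnell, all of the abelian-by-(virtually cyclic) cases are funnelled into the specific groups $G_w=\Z[w,w^{-1}]\rtimes_{\cdot w}\Z$ for $w$ a nonzero algebraic number (Proposition~\ref{prop-sdp}). For these one passes to the overgroup $\mathcal{O}_w\rtimes_{\cdot w}\Z$ and builds a warped product $Y_w$ out of $\R^{n_w}$, the trees $T(v_{\mathfrak p})$ attached to the valuations at the finitely many primes dividing $w$, and the Minkowski space $\Q(w)_\R$; on $Y_w$ the group $\mathcal{O}_w\rtimes\mathcal{O}_w^\times$ acts isometrically, properly, and \emph{cocompactly} (Lemmas~\ref{lem-action-isometric}--\ref{lem-action-cocompact}), so the flow-space technology applies in its cocompact form. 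The genuinely new ingredient is the ``generalized Farrell--Hsiang'' method (Definition~\ref{def-gFH}, Proposition~\ref{prop-gFH}): one reduces via a surjection onto the finite group $\mathcal{O}_w/q^m\mathcal{O}_w\rtimes\Z/t_w(q,m)\Z$ to its hyper-elementary subgroups, and a number-theoretic analysis (Proposition~\ref{prop-hyp-el}, Corollary~\ref{cor-hyp-el}) shows each such subgroup is either conjugate into the cyclic part --- handled by the flow space and transfer argument on a compact ball $X_w^R$ --- or else has small image in the $\Z$-factor, where a one-dimensional contraction to $\R$ suffices. Your proposal identifies the correct bottleneck but does not supply the idea that resolves it, and the route through arbitrary linear groups over fields would require substantially more than what was then available.
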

This result has already been used in several papers, e.g., \cite{GMR13}, \cite{KLR14}, \cite{Rue13}.

The Farrell-Jones Conjecture for virtually solvable groups has been studied by several mathematicians. In \cite{FL03} Farrell and Linnell show that if the fibered isomorphism conjecture is true for all nearly crystallographic groups, then it is true for all virtually solvable groups. Bartels, Farrell, and L\"uck prove that virtually poly-$\Z$-groups satisfy the Farrell-Jones Conjecture (see \cite[Theorem~1.1]{BFL14}). In \cite{FW14} Farrell and Wu show that solvable Baumslag-Solitar groups satisfy the Farrell-Jones Conjecture.

In this article we modify the results of \cite{FL03} and show that the Farrell-Jones Conjecture holds for all virtually solvable groups if the semi-direct products $G_w := \Z[w,w^{-1}] \rtimes_{\cdot w} \Z$ with $w$ a non-zero algebraic number satisfy the Farrell-Jones Conjecture with finite wreath product (see section~\ref{sec-reducing} and, in particular, Proposition~\ref{prop-sdp}). This is mainly done by using inheritance properties, in particular for extensions (see Propositions~\ref{prop-FJCw-1} and \ref{prop-FJCw-2}).

Note that $G_w$ is isomorphic to the Baumslag-Solitar group $BS(1,w)$ if $w$ is a natural number. Unfortunately, the methods in \cite{FW14} do not directly carry over to the general case $w$ a non-zero algebraic number, even not to the case $w$ a rational number.
The main idea to overcome this problem is to combine two methods of proof for the Farrell-Jones Conjecture:
\begin{enumerate}
\item The Farrell-Hsiang method. This is the method used in the proofs for virtually poly-$\Z$-groups and solvable Baumslag-Solitar groups.
\item Transfer reducibility. This method using the geodesic flow was pioneered by Farrell-Jones and later extended to hyperbolic groups (see \cite{BLR08b}) and CAT(0)-groups (see \cite{BL12a}, \cite{Weg12}).
\end{enumerate}
We call groups which satisfy the combined method Farrell-Hsiang-Jones groups (''FHJ groups'' for short, see Definition~\ref{def-FHJ}). We prove that the Farrell-Jones Conjecture holds for FHJ groups (see Proposition~\ref{prop-FHJ} and Corollary~\ref{cor-FHJ}). Finally, we show for all non-zero algebraic numbers $w$ that the group $G_w$ is a FHJ group (see Proposition~\ref{prop-final}). The proof requires a variety of techniques, for example, geodesic flows on CAT(0)-spaces, group actions on trees, classical algebraic number theory.

This article is organized as follows. We begin with a brief survey of the Farrell-Jones Conjecture (see section~\ref{sec-FJC}). In section~\ref{sec-reducing} we show that the Farrell-Jones Conjecture holds for all virtually solvable groups if the semi-direct products $G_w := \Z[w,w^{-1}] \rtimes_{\cdot w} \Z$ with $w$ a non-zero algebraic number satisfy the Farrell-Jones Conjecture with finite wreath product. The section~\ref{sec-FHJ} is dedicated to FHJ groups. We finally prove the Farrell-Jones Conjecture with finite wreath product for the groups $G_w$ in section~\ref{sec-f}.

\section{The Farrell-Jones Conjecture} \label{sec-FJC}

This section contains a brief survey of the Farrell-Jones Conjecture. The original conjecture was stated in \cite[section 1.6]{FJ93}. Our formulations differ from the original ones and are more general.

Let $G$ be a group and let $\mathcal{F}$ be a family of subgroups (i.e., a set of subgroups which is closed under conjugation and taking subgroups). The original conjecture uses the family of virtually cyclic subgroups.

\begin{definition}[$K$-theoretic Farrell-Jones Conjecture]
We say that $G$ satisfies the \emph{$K$-theoretic Farrell-Jones Conjecture with respect to the family $\mathcal{F}$} if the assembly map
\begin{equation}
H^G_m(E_{\mathcal{F}}G;\mathbf{K}_\mathcal{A}) \to H^G_m(\pt;\mathbf{K}_\mathcal{A}) \cong K_m(\int_G \mathcal{A}) \label{assembly-K}
\end{equation}
induced by the projection $E_\mathcal{F}G \to \pt$ is an isomorphism for all $m \in \Z$ and every additive $G$-category $\mathcal{A}$.
\end{definition}
Here $E_{\mathcal{F}}G$ denotes the classifying space of the group $G$ with respect to the family $\mathcal{F}$. Any additive $G$-category $\mathcal{A}$ induces a covariant functor $\mathbf{K}_\mathcal{A}$ from the orbit category of $G$ to the category of spectra with (strict) maps of spectra as morphisms (see \cite[Definition 3.1]{BR07}). We denote the associated $G$-homology theory by $H^G_*(-;\mathbf{K}_\mathcal{A})$ (see \cite[sections 4 and 7]{DL98}).

There is also an $L$-theoretic version of the Farrell-Jones Conjecture.
\begin{definition}[$L$-theoretic Farrell-Jones Conjecture]
We say that $G$ satisfies the \emph{$L$-theoretic Farrell-Jones Conjecture with respect to the family $\mathcal{F}$} if the assembly map
\begin{equation}
H^G_m(E_{\mathcal{F}}G;\mathbf{L}^{\langle -\infty \rangle}_\mathcal{A}) \to H^G_m(\pt;\mathbf{L}^{\langle -\infty \rangle}_\mathcal{A}) \cong L^{\langle -\infty \rangle}_m(\int_G \mathcal{A}) \label{assembly-L}
\end{equation}
induced by the projection $E_\mathcal{F}G \to \pt$ is an isomorphism for all $m \in \Z$ and every additive $G$-category $\mathcal{A}$ with involution.
\end{definition}

We will use the following abbreviation.
\begin{notation}[FJC]
A group $G$ satisfies \emph{FJC} with respect to $\mathcal{F}$ if the $K$- and $L$-theoretic Farrell-Jones Conjecture with respect to $\mathcal{F}$ hold for $G$.
If the family $\mathcal{F}$ is not mentioned, it is by default the family of virtually cyclic subgroups.
\end{notation}

In the following subsections we will present two well-known methods to prove the Farrell-Jones Conjecture for specific groups: The Farrell-Hsiang method and transfer reducibility. We will end this section with a survey of the Farrell-Jones Conjecture with finite wreath products. For more information on the Farrell-Jones Conjecture we refer to the survey article \cite{LR05}.

\subsection{The Farrell-Hsiang method}

Recall that a (finite) group $H$ is said to be \emph{hyper-elementary} if it can be written as an extension $1 \to C \to H \to P \to 1$, where $C$ is a cyclic group and $P$ is a $p$-group for some prime number $p$ which does not divide the order of $C$.

\begin{definition}[Farrell-Hsiang group] \label{def-FH}
Let $G$ be a finitely generated group and let $\mathcal{F}$ be a family of subgroups.
We call $G$ a \emph{Farrell-Hsiang group with respect to the family $\mathcal{F}$} if the following holds for a (and hence any) fixed word metric $d_G$:\\
There exists a natural number $N$ and surjective homomorphisms $\alpha_n \colon G \to F_n$ ($n \in \N$) onto finite groups $F_n$ such that the following condition is satisfied. For any hyper-elementary subgroup $H$ of $F_n$ there exist a simplicial complex $E_H$ of dimension at most $N$ with a cell preserving simplicial $\alpha_n^{-1}(H)$-action whose stabilizers belong to $\mathcal{F}$ and an $\alpha_n^{-1}(H)$-equivariant map $f_H \colon G \to E_H$ such that $d_G(g,h) < n$ implies $d^1_{E_H}(f_H(g),f_H(h)) < 1/n$ for all $g, h \in G$. (Here $d^1_{E_H}$ denotes the $l^1$-metric on $E_H$.)
\end{definition}

\begin{remark}
Suppose that $\mathcal{F}$ is closed under overgroups of finite index, i.e., $H<K<G$ with $H \in \mathcal{F}$ and $[K:H]<\infty$ implies $K \in \mathcal{F}$.
Then we can drop the requirement "cell preserving" for the simplicial $\alpha_n^{-1}(H)$-action in the definition above, since we can achieve a cell preserving action by passing to the barycentric subdivision of $E_H$ (compare \cite[Lemma 5.5]{BLRR14}).
\end{remark}

The following result is taken from \cite[Theorem 1.2]{BL12c}.
\begin{proposition} \label{prop-FH-elem}
Let $G$ be a Farrell-Hsiang group with respect to a family $\mathcal{F}$ of subgroups.
Then $G$ satisfies FJC with respect to $\mathcal{F}$.
\end{proposition}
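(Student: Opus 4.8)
The statement is that a Farrell-Hsiang group satisfies FJC, and it is cited verbatim from \cite[Theorem 1.2]{BL12c}. So the "proof" I would give is really a sketch of how that argument runs, because the real content lives in the controlled-algebra machinery developed by Bartels and Lück.

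The plan is to use the standard reduction of the Farrell-Jones Conjecture to the existence of certain equivariant covers / controlled homotopies, combined with a Frobenius-type induction over the finite quotients $F_n$. Here is the order I would carry things out. First, recall the abstract criterion (from Bartels-Lück-Reich and Bartels-Lück): to prove FJC with respect to $\mathcal{F}$ it suffices to show that the assembly map is an isomorphism after smashing with the sphere spectrum in each degree, and by a general "transitivity" principle this follows if one can produce, for each natural number $n$, sufficiently highly connected $\alpha_n^{-1}(H)$-equivariant maps $G \to E_H$ with $E_H$ of bounded dimension and stabilizers in $\mathcal{F}$, for all hyper-elementary $H < F_n$ --- which is exactly the data packaged in Definition \ref{def-FH}. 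Second, use the Farrell-Hsiang maps $f_H$ to build, for the group $G$ itself (not just the subgroups $\alpha_n^{-1}(H)$), a map to a complex which is "$\mathcal{F}$-controlled" up to an error that is detected only by hyper-elementary subgroups. Third --- and this is the heart of the matter --- invoke a Frobenius induction / Artin-type argument: the relevant $K$- and $L$-theory Mackey functors, evaluated on the finite group $F_n$, are computed (rationally, and after the appropriate completions that the controlled-algebra setup forces) by their values on hyper-elementary subgroups, by the Dress induction theorem. So controlling the situation $\alpha_n^{-1}(H)$-equivariantly for every hyper-elementary $H$ is enough to control it $G$-equivariantly, modulo the map $\alpha_n$. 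Letting $n \to \infty$ makes the control arbitrarily fine, which forces the obstruction class in the (relative term of the) assembly map to vanish.

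The main obstacle --- and the reason this is a genuine theorem rather than a formality --- is the interaction between the inductive structure and the controlled algebra: one must run Dress induction not on abstract $K$-groups but on the obstruction category / the fibers of the assembly map, keeping track of the metric control throughout, so that the "error" introduced by induction can be absorbed by choosing $n$ large. Concretely, the challenge is to verify that the homological Farrell-Jones setup satisfies the axioms needed to apply induction with controlled coefficients, and to check that passing to the barycentric subdivision (to make actions cell-preserving) and to the subgroups $\alpha_n^{-1}(H)$ does not destroy the bounded dimension $N$ or the contracting property of the maps. For the $L$-theoretic statement one additionally needs the decorations $\langle -\infty \rangle$ to be compatible with this induction, which is why the $\langle -\infty \rangle$-decoration (rather than some other one) appears in the conjecture.

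Since all of this is exactly what is carried out in \cite{BL12c}, I would simply state:

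\begin{proof}
This is \cite[Theorem 1.2]{BL12c}.
\end{proof}
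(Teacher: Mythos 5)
Your proposal matches the paper exactly: the paper states immediately before Proposition~\ref{prop-FH-elem} that the result ``is taken from \cite[Theorem 1.2]{BL12c}'' and gives no further proof, which is precisely what you do. Your accompanying sketch of the Bartels--L\"uck argument (reduction to controlled algebra, Dress/Swan induction over hyper-elementary subgroups of $F_n$, absorbing the induction error by taking $n$ large) is an accurate description of what happens inside \cite{BL12c}, but it is supplementary background rather than a divergence from the paper's treatment.
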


\subsection{Transfer reducibility}

In this subsection we consider a further method for proving the Farrell-Jones Conjecture: Transfer reducibility. The definition of transfer reducibility needs some preparation.

\begin{definition}[strong homotopy action]
A \emph{strong homotopy action} of a group $G$ on a topological space $X$ is a continuous map
\[
 \Psi \colon \coprod_{j=0}^\infty \big( (G \times [0,1])^j \times G \times X \big) \to X
\]
with the following properties:
\begin{enumerate}
 \item $\Psi(\ldots,g_l,0,g_{l-1},\ldots) = \Psi(\ldots,g_l,\Psi(g_{l-1},\ldots))$
 \item $\Psi(\ldots,g_l,1,g_{l-1},\ldots) = \Psi(\ldots,g_l \cdot g_{l-1},\ldots)$
 \item $\Psi(e,t_j,g_{j-1},\ldots) = \Psi(g_{j-1},\ldots)$
 \item $\Psi(\ldots,t_l,e,t_{l-1},\ldots) = \Psi(\ldots,t_l \cdot t_{l-1},\ldots)$
 \item $\Psi(\ldots,t_1,e,x) = \Psi(\ldots,x)$
 \item $\Psi(e,x) = x$
\end{enumerate}
\end{definition}

\begin{examples} \label{ex-sha}
\begin{enumerate}
\item Every action of a group $G$ on a topological space $X$ induces a strong homotopy action by \label{ex-sha-1}
\[
\Psi(g_j,t_j,g_{j-1},t_{j-1},\ldots,g_1,t_1,g_0,x) := g_j g_{j-1} \cdots g_1 g_0 x.
\]
\item More generally, strong homotopy actions appear in the following situation: Let $Y$ be a $G$-space and let $H \colon Y \times [0,1] \to Y$ be a deformation retraction onto a subspace $X \subseteq Y$ (i.e., $H_0(Y) = X$, $H_0|_X = \id_X$ and $H_1 = \id_Y$) such that $H_t \circ H_{t'} = H_{t \cdot t'}$ for all $t,t' \in [0,1]$. In this situation we define \label{ex-sha-2}
\[
 \Omega \colon \coprod_{j=0}^\infty \big( (G \times [0,1])^j \times G \times X \big) \to Y
\]
inductively by $\Omega(g_0,x) := g_0 x$ and $\Omega(g_j,t_j,g_{j-1},\dots) := g_j H_{t_j}(\Omega(g_{j-1},\ldots))$ for $j \geq 1$. Then $\Psi := H_0 \circ \Omega$ is a strong homotopy action.
\end{enumerate}
\end{examples}

\begin{definition}[metric $d_{\Psi,S,k,\Lambda}$]
Let $\Psi$ be a strong homotopy $G$-action on a metric space $(X,d_X)$. Let $S \subseteq G$ be a finite symmetric subset which contains the trivial element $e \in G$. Let $k$ be a natural number.
\begin{enumerate}
 \item For $g \in G$ we define $F_g(\Psi,S,k) \subset \map(X,X)$ by
       \[
        F_g(\Psi,S,k) := \big\{ \Psi(g_k,t_k,\ldots,g_0,?) \colon X \to X \, \big| \, g_i \in S, t_i \in [0,1], g_k \cdot \ldots \cdot g_0 = g \big\}.
       \]
 \item For $(g,x) \in G \times X$ we define $S^1_{\Psi,S,k}(g,x) \subset G \times X$ as the subset consisting of all $(h,y) \in G \times X$ with the following property: There are $a,b \in S$, $f \in F_a(\Psi,S,k)$ and $\tilde{f} \in F_b(\Psi,S,k)$ such that $f(x)=\tilde{f}(y)$ and $h = g a^{-1} b$. For $n \in \N^{\geq 2}$ we set
       \[
        S^n_{\Psi,S,k}(g,x) := \big\{ S^1_{\Psi,S,k}(h,y) \, \big| \, (h,y) \in S^{n-1}_{\Psi,S,k}(g,x) \big\}.
       \]
 \item For $\Lambda \in \R^{>0}$ we define the quasi-metric $d_{\Psi,S,k,\Lambda}$ on $G \times X$ as the largest quasi-metric on $G \times X$ satisfying
     \begin{itemize}
      \item $d_{\Psi,S,k,\Lambda}\big((g,x),(g,y)\big) \leq \Lambda \cdot d_X(x,y)$ for all $g \in G$, $x,y \in X$ and
      \item $d_{\Psi,S,k,\Lambda}\big((g,x),(h,y)\big) \leq 1$ for all $(h,y) \in S^1_{\Psi,S,k}(g,x)$.
     \end{itemize}
\end{enumerate}
\end{definition}
We remind the reader that the difference between a metric and a quasi-metric is that in the latter case the distance $\infty$ is allowed. Note that the quasi-metric $d_{\Psi,S,k,\Lambda}$ is $G$-invariant with respect to the $G$-action $g(h,x) := (gh,x)$ on $G \times X$. See \cite[Definition~3.4]{BL12a} for a construction of the quasi-metric $d_{\Psi,S,k,\Lambda}$. The following lemma is taken from \cite[Lemma 3.5]{BL12a}.
\begin{lemma} \label{lem-qm}
\begin{enumerate}
\item The subset $S$ generates $G$ if and only if $d_{\Psi,S,k,\Lambda}$ is a metric.\label{lem-qm1}
\item Let $(g,x),(h,y) \in G \times X$ and $n \in \N$. Then $(h,y) \in S^n_{\Psi,S,k}(g,x)$ if and only if $d_{\Psi,S,k,\Lambda}((g,x),(h,y)) \leq n$ for all $\Lambda > 0$.\label{lem-qm2}
\item The topology on $G \times X$ induced by $d_{\Psi,S,k,\Lambda}$ coincides with the product topology.\label{lem-qm3}
\end{enumerate}
\end{lemma}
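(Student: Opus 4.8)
The statement to be proved is Lemma~\ref{lem-qm}, which collects three basic facts about the quasi-metric $d_{\Psi,S,k,\Lambda}$: it is a genuine metric precisely when $S$ generates $G$; the closed $n$-balls are exactly the iterated successor sets $S^n_{\Psi,S,k}(g,x)$ (independently of $\Lambda$); and the induced topology is the product topology. Since this is quoted from \cite[Lemma 3.5]{BL12a}, I would either cite it directly or reconstruct the argument as follows. The plan is to work from the explicit construction of $d_{\Psi,S,k,\Lambda}$ as the \emph{largest} quasi-metric subject to the two listed constraints. Such a largest quasi-metric always exists and has the standard ``chain'' description: $d_{\Psi,S,k,\Lambda}((g,x),(h,y))$ is the infimum, over all finite chains $(g,x) = (p_0,q_0), (p_1,q_1), \ldots, (p_r,q_r) = (h,y)$, of $\sum_{i} \delta((p_{i-1},q_{i-1}),(p_i,q_i))$, where each elementary step $\delta$ is bounded by $\min$ of the two admissible upper bounds (the $\Lambda$-scaled $X$-distance when the $G$-coordinates agree, and $1$ when we move to a point of $S^1_{\Psi,S,k}$). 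One first checks this formula does define a $G$-invariant quasi-metric satisfying the two constraints, and that any other such quasi-metric is pointwise $\le$ it; this is the routine ``free quasi-metric'' bookkeeping.

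For part~\eqref{lem-qm2}, the key observation is that the ``$1$-step'' bounds are what control integer distances. If $(h,y) \in S^n_{\Psi,S,k}(g,x)$ then by definition there is a chain of $n$ successor-steps from $(g,x)$ to $(h,y)$, each contributing at most $1$, so $d_{\Psi,S,k,\Lambda}((g,x),(h,y)) \le n$ for every $\Lambda$. Conversely, suppose $d_{\Psi,S,k,\Lambda}((g,x),(h,y)) \le n$ for all $\Lambda > 0$; I would argue that letting $\Lambda \to \infty$ forces any chain realizing (close to) this distance to use \emph{no} genuine $X$-motion steps, i.e. to consist purely of $S^1$-successor steps, and hence $(h,y)$ lies in $S^{\lceil \cdot \rceil}$; one has to be a little careful because $S^1_{\Psi,S,k}(g,x)$ already allows an arbitrary $X$-jump ``for free'' via the relation $f(x) = \tilde f(y)$, so a single successor step can move the $X$-coordinate a lot, but it cannot be chained with positive-length $X$-segments without paying. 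The combinatorial heart is that $S^n_{\Psi,S,k}$ is closed under the operation ``append one more $S^1$-step'', which is exactly its recursive definition, so the two descriptions of the closed $n$-ball match. Here one also uses that $e \in S$, so that staying put is always an admissible $1$-step and the balls are nested.

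For part~\eqref{lem-qm1}: if $S$ generates $G$ then for any $g,h$ we can write $g^{-1}h$ as a word in $S$ and, taking $f = \tilde f$ to be the identity-type map $\Psi(e,?)$ (legitimate since $e \in S$), we get a chain of $S^1$-steps from $(g,x)$ to $(h,x)$, so $d_{\Psi,S,k,\Lambda}((g,x),(h,x)) < \infty$; combined with the $\Lambda$-bound in the $X$-direction and the triangle inequality this gives finiteness everywhere, so we have a metric. Conversely, if $S$ does not generate $G$, pick $h$ not in the subgroup $\langle S\rangle$; every $S^1$-step changes the $G$-coordinate within a fixed coset of $\langle S\rangle$ (since $h = g a^{-1} b$ with $a,b \in S$), and $X$-motion steps do not change the $G$-coordinate at all, so no finite chain connects $(g,x)$ to $(h,x)$, whence the distance is $\infty$ and $d_{\Psi,S,k,\Lambda}$ is only a quasi-metric. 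For part~\eqref{lem-qm3}, the product topology is finer than or equal to the metric topology because each coordinate projection is continuous for $d_{\Psi,S,k,\Lambda}$ (the $G$-factor being discrete, and the $\Lambda$-bound making $X \hookrightarrow \{g\} \times X$ Lipschitz), and conversely an $\varepsilon$-ball around $(g,x)$ for small $\varepsilon < 1$ contains no point with a different $G$-coordinate and agrees near $(g,x)$ with a $\Lambda^{-1}\varepsilon$-ball in $X$; one checks these two facts give mutual refinement.

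The step I expect to be the main obstacle is the converse direction of~\eqref{lem-qm2}: making precise the claim that taking $\Lambda$ large ``kills'' the $X$-motion contributions in an optimal chain, while correctly accounting for the fact that a single $S^1$-successor step can already relocate the $X$-coordinate arbitrarily far (through the matching condition $f(x) = \tilde f(y)$), so the naive ``distance $n$ $\Rightarrow$ $n$ unit steps $\Rightarrow$ in $S^n$'' implication needs the precise chain-infimum description and a limiting argument rather than a one-line deduction. Everything else is the standard verification that a ``largest quasi-metric with prescribed upper bounds'' behaves as expected, which I would either spell out briefly or defer to \cite[Definition~3.4, Lemma~3.5]{BL12a}.
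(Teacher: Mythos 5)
The paper does not prove this lemma at all; it imports it verbatim from \cite[Lemma~3.5]{BL12a}, and the quasi-metric itself is only referenced via \cite[Definition~3.4]{BL12a}. So there is no in-paper argument to compare against, and the appropriate ``proof'' in this paper's context really is the citation. Your reconstruction via the chain (infimum-over-paths) description of the largest quasi-metric with prescribed one-step upper bounds is the correct model and matches the construction in \cite{BL12a}. Within it: the forward implication of~\eqref{lem-qm2} is right; your argument for~\eqref{lem-qm1} is right in both directions (a single $S^1$-step moves the $G$-coordinate by $a^{-1}b\in S^2\subseteq\langle S\rangle$ and a $\Lambda$-step does not move it at all, so the finite-distance equivalence classes are exactly the $\langle S\rangle\times X$-cosets); and~\eqref{lem-qm3} is right in outline, though your refinement direction is stated backwards -- continuity of the projections out of $(G\times X,d_{\Psi,S,k,\Lambda})$ shows the \emph{metric} topology refines the product topology, while the other refinement comes from the $\Lambda$-Lipschitz inclusion of each fiber $\{g\}\times X$ together with the observation that a chain of total cost $<1$ never leaves its $G$-fiber.

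The one place your sketch is genuinely incomplete is the converse of~\eqref{lem-qm2}, and you flag it yourself. The difficulty is exactly a $\sup$--$\inf$ interchange: for each $\Lambda$ you get a near-optimal chain $C_\Lambda$ of cost $\le n+\varepsilon$, hence with at most $n$ unit-cost $S^1$-steps and with total $\Lambda$-step displacement $\le (n+\varepsilon)/\Lambda\to 0$, but the chains vary with $\Lambda$ and you cannot literally set the $X$-displacements to zero without knowing that the $S^1$-relation (i.e.\ the constraint $f(x)=\tilde f(y)$ for $f\in F_a$, $\tilde f\in F_b$) behaves suitably under small perturbations of the $X$-coordinates at each node. ``Let $\Lambda\to\infty$ and the $X$-motion dies'' is the right heuristic but does not yet produce a single $\Lambda$-free chain of $n$ $S^1$-steps. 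To finish one has to extract a limit of the alternating chains using the finiteness of $S$ and $S^{2n}$ and a compactness/closedness property of the maps in $F_g(\Psi,S,k)$ (in the applications $X$ is compact, which is what makes this go through), or appeal to the precise structure established in \cite[Definition~3.4 and Lemma~3.5]{BL12a}. That is where the cited proof does its real work, and your proposal, as written, only gestures at it.
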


\begin{example}
If the strong homotopy action $\Psi$ is given by an action of a group $G$ (see Example~\ref{ex-sha} (\ref{ex-sha-1})) then
\[
S^n_{\Psi,S,k}(g,x) = \big\{ (gk,k^{-1}x) \, \big| \, k \in S^{2n} \big\}.
\]
\end{example}

\begin{definition}[almost strongly transfer reducible] \label{def-astr}
Let $\mathcal{F}$ be a family of subgroups of $G$.
The group $G$ is called \emph{almost strongly transfer reducible over $\mathcal{F}$} if there exists a natural number $N$ with the following property:
For every finite symmetric subset $S \subseteq G$ containing the trivial element $e \in G$ and every natural numbers $k, n$ there are
\begin{itemize}
 \item a compact, contractible, controlled $N$-dominated metric space $X$,
 \item a strong homotopy $G$-action $\Psi$ on $X$ and
 \item a $G$-invariant cover $\mathcal{U}$ of $G \times X$ by open sets
\end{itemize}
such that
\begin{enumerate}
 \item for all $U \in \mathcal{U}$ the subgroup $G_U := \{ g \in G \mid gU = U \}$ belongs to $\mathcal{F}$,
 \item $\dim(\mathcal{U}) \leq N$,
 \item for every $(g,x) \in G \times X$ there exists $U \in \mathcal{U}$ with $S^n_{\Psi,S,k}(g,x) \subseteq U$.
\end{enumerate}
\end{definition}
Recall that a metric space $X$ is \emph{controlled $N$-dominated} if for every $\epsilon > 0$ there is a finite CW-complex $K$ of dimension at most $N$, maps $i \colon X \to K$, $p \colon K \to X$, and a homotopy $H \colon X \times [0,1] \to X$ between $p \circ i$ and $\id_X$ such that for every $x \in X$ the diameter of $\{H(x,t) \mid t \in [0,1]\}$ is at most $\epsilon$.

We are interested in transfer reducibility because of the following proposition (see \cite[Proposition 5.4 (ii)]{BLRR14} for a proof).
\begin{proposition} \label{prop-astr}
Let $G$ be a group which is almost strongly transfer reducible over a family $\mathcal{F}$ of subgroups of $G$.
Suppose that $\mathcal{F}$ is virtually closed, i.e., $K < H < G$ with $K \in \mathcal{F}$ and $[H:K] < \infty$ implies $H \in \mathcal{F}$.
Then $G$ satisfies FJC with respect to $\mathcal{F}$.
\end{proposition}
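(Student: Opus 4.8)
The plan is to work inside the controlled-algebra reformulation of the Farrell--Jones Conjecture, in which the assembly maps \eqref{assembly-K} and \eqref{assembly-L} are identified with forget-control maps between the $K$- (resp.\ $L^{\langle -\infty \rangle}$-) theory spectra of suitable categories of geometric modules over $G$ whose morphisms are controlled by a coarse structure, and one proves these maps are isomorphisms by producing, for every control datum, a \emph{transfer} splitting the forget-control map up to a controlled error together with \emph{long, thin, finite-dimensional covers with stabilizers in $\mathcal{F}$} that let one push the residual obstruction to infinity by an Eilenberg swindle. The two ingredients packaged in Definition~\ref{def-astr} provide exactly this: the compact, contractible, controlled $N$-dominated space $X$ with its strong homotopy $G$-action $\Psi$ yields the transfer, and the $G$-invariant cover $\mathcal{U}$ of $G \times X$ yields the squeezing. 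Here compactness of $X$ is essential --- all the failure of control lives in the $G$-direction of $G \times X$, and it is precisely that direction which $\mathcal{U}$ is required to cover by ``large'' sets.

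For the transfer I would fix $\epsilon > 0$, use controlled $N$-domination to pick a finite CW-complex $K$ of dimension at most $N$ with maps $i \colon X \to K$, $p \colon K \to X$ and an $\epsilon$-small homotopy $H \colon p \circ i \simeq \id_X$, and then assemble $K$, $H$ and $\Psi$ into a homotopy-coherent $G$-action. The axioms (1)--(6) for a strong homotopy action are exactly the associativity and unit coherences needed for this; the resulting datum defines a transfer map on the $G$-homology theories $H^G_*(-;\mathbf{K}_\mathcal{A})$ and $H^G_*(-;\mathbf{L}^{\langle -\infty \rangle}_\mathcal{A})$ whose composite with the forget-control map agrees with the identity up to a controlled homotopy whose size is governed by $\epsilon$ and by the (bounded, $\le N$) number of homotopies one stacks. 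Keeping all complexes in the fixed dimension range $\le N$ is what later makes the swindle legitimate.

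For the squeezing I would use the quasi-metric $d_{\Psi,S,k,\Lambda}$ and Lemma~\ref{lem-qm}. By part~(\ref{lem-qm2}), condition (3) of Definition~\ref{def-astr} says precisely that the cover $\mathcal{U}$ is ``$n$-large'' with respect to $d_{\Psi,S,k,\Lambda}$ on $G \times X$; letting $S$, $k$ and $n$ grow therefore produces covers that are arbitrarily long and thin in the relevant metric, with uniformly bounded dimension $\le N$ and all stabilizers in $\mathcal{F}$. After the transfer has moved the problem from $G$ (where there is no a priori control) to $G \times X$ (where $X$ is compact and hence controllable), these are exactly the equivariant covers demanded by the controlled-algebra criterion in order to trivialise the relevant obstruction category over $\mathcal{F}$. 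The hypothesis that $\mathcal{F}$ is virtually closed is used because the transfer only controls the stabilizer groups that arise up to finite index (for instance when one subdivides the nerves of $\mathcal{U}$ to make the induced actions cellular, or when inducing up the transferred modules); virtual-closedness guarantees these finite-index overgroups still lie in $\mathcal{F}$.

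The main obstacle is the bookkeeping that couples the two halves: one must choose the control parameters $S, k, n, \Lambda$ and the domination parameter $\epsilon$ in a mutually compatible order so that the controlled error of the homotopy-coherent transfer is small relative to the scale at which $\mathcal{U}$ is ``$n$-large'', and then run an induction on the cover dimension $N$, invoking the Eilenberg swindle at each stage. Propagating the control estimates through a transfer built by stacking up to $N$ homotopies coming from $\Psi$ and from $H$ is the technical core of the argument; once it is in place, split injectivity of the assembly map (from the transfer) and surjectivity (from the swindle) both follow, for $\mathbf{K}_\mathcal{A}$ and --- with the standard additional care about the $\langle -\infty \rangle$ decoration and the involution --- for $\mathbf{L}^{\langle -\infty \rangle}_\mathcal{A}$.
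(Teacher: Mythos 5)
The paper does not give a proof of this proposition at all: it simply cites \cite[Proposition~5.4~(ii)]{BLRR12}.  Your sketch is an accurate high-level description of exactly that argument as developed in the Bartels--L\"uck line of work (transfer via chain complexes built from the controlled $N$-dominated space and the strong homotopy action, squeezing via the long thin $\mathcal{F}$-covers detected in the metric $d_{\Psi,S,k,\Lambda}$, and virtual closedness to absorb the finite-index overgroups produced by barycentric subdivision of the nerve), so it is essentially the same approach the paper relies on by citation.
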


\begin{examples} \label{ex-astr}
\begin{enumerate}
 \item Hyperbolic groups are almost strongly transfer reducible over the family of virtually cyclic subgroups. See \cite[Proposition~2.1]{BL12a}.\label{ex-astr-1}
 \item CAT(0)-groups are almost strongly transfer reducible over the family of virtually cyclic subgroups. Hence, CAT(0)-groups satisfy FJC. See \cite[Theorem~3.4]{Weg12}.\label{ex-astr-2}
\end{enumerate}
\end{examples}

\subsection{The Farrell-Jones Conjecture with finite wreath products}

Let $G$ and $H$ be groups. The (restricted) wreath product $G \wr H$ is defined as the semi-direct product $(\bigoplus_{h \in H} G) \rtimes_\phi H$, where the map $\phi \colon \, H \to \Aut(\bigoplus_{h \in H} G)$ is given by $\phi(k)((g_h)_{h \in H}) := (g_{hk})_{h \in H}$.

\begin{definition}[FJCw]
We say that a group $G$ satisfies the \emph{Farrell-Jones Conjecture with finite wreath product} if for any finite group $F$ the wreath product $G \wr F$ satisfies the $K$- and $L$-theoretic Farrell-Jones Conjecture (with respect to the family of virtually cyclic subgroups).
We will use the abbreviation \emph{FJCw} for "Farrell-Jones Conjecture with finite wreath product".
\end{definition}

The main reason to study FJCw (instead of FJC) is the fact that FJCw is virtually closed (i.e., closed under overgroups of finite index, see Proposition~\ref{prop-FJCw-1}). The Farrell-Jones Conjecture with finite wreath product has first been used in \cite{FR00}, see also \cite[Definition 2.1]{Rou08}.

\begin{examples} \label{ex-FJCw-1}
\begin{enumerate}
\item Since the class of CAT(0)-groups is closed under finite wreath products, we conclude from Example~\ref{ex-astr}~(\ref{ex-astr-2}) that CAT(0)-groups satisfy FJCw.
\item The following statement is the main result of \cite{BLRR14}: Let $R$ be a ring whose underlying abelian group is finitely generated. Let $G$ be a group which is commensurable to a subgroup of $GL_n(R)$ for some $n \in \N$. Then $G$ satisfies FJCw.
\end{enumerate}
\end{examples}

The Farrell-Jones Conjecture with finite wreath product has useful inheritance properties.
\begin{proposition}[Inheritance properties, part 1] \label{prop-FJCw-1}
\begin{description}
\item[Subgroups] Let $G$ be a group which satisfies FJCw. Then every subgroup $H < G$ satisfies FJCw.
\item[Direct products] If $G_1$ and $G_2$ satisfy FJCw, then the direct product $G_1 \times G_2$ satisfies FJCw.
\item[Finite wreath products] Let $F$ be a finite group. If $G$ satisfies FJCw then $G \wr F$ satisfies FJCw.
\item[Overgroups of finite index] Let $G < \Gamma$ be subgroup of finite index. If $G$ satisfies FJCw then $\Gamma$ satisfies FJCw.
\item[Colimits] Let $\{G_i \mid i \in I\}$ be a directed system of groups (with not necessarily injective structure maps). If each $G_i$ satisfies FJCw, then the colimit $\colim_{i \in I} G_i$ satisfies FJCw.
\end{description}
\end{proposition}
\begin{proof}
Note that \cite[Lemma~2.3]{BL12a} also holds for the class $\mathcal{FJ}^K$ of groups satisfying the $K$-theoretic Farrell-Jones Conjecture by the remark below the proof of \cite[Lemma~2.3]{BL12a} and \cite[Corollary~1.2]{Weg12}.
Therefore, the inheritance properties ''subgroups'', ''direct products'', and ''colimits'' are true for FJC (instead of FJCw).
The inheritance properties for FJCw can be proven as follows.
\begin{description}
\item[Subgroups] Note that $H \wr F$ is a subgroup of $G \wr F$ for every finite group $F$.
\item[Direct products] This follows from the fact that $(G_1 \times G_2) \wr F$ is a subgroup of $(G_1 \wr F) \times (G_2 \wr F)$.
\item[Finite wreath products] Since $(G \wr F) \wr F'$ is a subgroup of $G^{F \times F'} \wr (F \wr F')$ (compare \cite[Proof of Lemma~2.5]{FL03}), this follows from the fact that FJCw is closed under subgroups and direct products.
\item[Overgroups of finite index] There exists a subgroup $N < G$ of finite index which is normal in $\Gamma$. By \cite[Lemma~2.6]{FL03}, $N \wr \Gamma/N$ has a subgroup isomorphic to $\Gamma$. Now, the statement follows since FJCw is closed under subgroups and finite wreath products.
\item[Colimits] Note that $(\colim_{i \in I} G_i) \wr F = \colim_{i \in I} (G_i \wr F)$.
\end{description}
\end{proof}

\begin{example} \label{ex-FJCw-2}
Finitely generated abelian groups are CAT(0)-groups and hence satisfy FJCw (see Example~\ref{ex-FJCw-1}). Since FJCw is closed under colimits (see Proposition~\ref{prop-FJCw-1}), all abelian groups satisfy FJCw.
\end{example}

The author could not find the following proposition in the literature.
\begin{proposition} \label{prop-FH}
Let $G$ be a Farrell-Hsiang group with respect to the family $\mathcal{F}_G := \{ H < G \mid H \mbox{ satisfies FJCw} \}$. Then $G$ satisfies FJCw.
\end{proposition}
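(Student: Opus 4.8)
The plan is to combine Proposition~\ref{prop-FH-elem} with the transitivity principle for the Farrell--Jones Conjecture. Fix a finite group $F$; it suffices to show that $G\wr F$ satisfies FJC (note $G\wr F$ is finitely generated, since $G$ is and $F$ is finite). I would work with the family $\mathcal{F}':=\{K<G\wr F\mid K\text{ satisfies FJCw}\}$: it is closed under taking subgroups by Proposition~\ref{prop-FJCw-1}~(\ref{prop-FJCw-1-1}) and under conjugation since conjugate subgroups are isomorphic, so it is a family, and it is closed under finite-index overgroups by Proposition~\ref{prop-FJCw-1}~(\ref{prop-FJCw-1-4}). Finite groups and $\Z$ are CAT(0)-groups, so every virtually cyclic subgroup of $G\wr F$ lies in $\mathcal{F}'$ by Example~\ref{ex-FJCw-1} and Proposition~\ref{prop-FJCw-1}~(\ref{prop-FJCw-1-4}), while every member of $\mathcal{F}'$ satisfies FJC (apply the definition of FJCw with the trivial group). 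Hence, once we know that $G\wr F$ is a Farrell--Hsiang group with respect to $\mathcal{F}'$, Proposition~\ref{prop-FH-elem} gives FJC for $G\wr F$ with respect to $\mathcal{F}'$, and the transitivity principle upgrades this to FJC with respect to the virtually cyclic subgroups; as $F$ was arbitrary, $G$ satisfies FJCw. Since $\mathcal{F}'$ is closed under finite-index overgroups, by the remark following Definition~\ref{def-FH} we may ignore the requirement ``cell preserving'' throughout.

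It remains to manufacture Farrell--Hsiang data for $G\wr F$ out of the data $\alpha_n\colon G\to F_n$, $N$ for $G$. I would take the surjections $\beta_n:=\alpha_n\wr\id_F\colon G\wr F\to F_n\wr F$. Given a hyper-elementary subgroup $H<F_n\wr F$, let $\bar H<F$ be its image under $F_n\wr F\to F$ and $H_0:=H\cap F_n^F$, a normal hyper-elementary subgroup with $H/H_0\cong\bar H$; put $K_0:=\beta_n^{-1}(H_0)$, which is normal of index $|\bar H|$ in $\beta_n^{-1}(H)$. For each $f\in F$ the $f$-th coordinate projection makes $H_0$ into a hyper-elementary subgroup $H_{0,f}<F_n$, and the Farrell--Hsiang data of $G$ provide a simplicial complex $E_{H_{0,f}}$ of dimension $\le N$ with an $\alpha_n^{-1}(H_{0,f})$-action whose stabilizers lie in $\mathcal{F}_G$, together with a controlled equivariant map $f_{H_{0,f}}\colon G\to E_{H_{0,f}}$. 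Form $E^{(0)}:=\prod_{f\in F}E_{H_{0,f}}$, a $K_0$-complex of dimension $\le|F|\cdot N$, with the $K_0$-equivariant map $\psi\colon G^F\to E^{(0)}$ given coordinatewise by the $f_{H_{0,f}}$, extended to $\tilde\psi\colon G\wr F\to E^{(0)}$ by forgetting the $F$-coordinate. Finally set $E_H:=\operatorname{Map}_{K_0}(\beta_n^{-1}(H),E^{(0)})$, the coinduced $\beta_n^{-1}(H)$-complex (of dimension $\le|\bar H|\cdot|F|\cdot N\le|F|^2\cdot N=:N'$, after an equivariant triangulation), and let $f_H\colon G\wr F\to E_H$ be the adjoint of $\tilde\psi$, i.e.\ $f_H(u)(w):=\tilde\psi(wu)$, which is automatically $\beta_n^{-1}(H)$-equivariant.

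Two points then need verification. For the stabilizer condition, if $\phi\in E_H$ then $K_0$-equivariance of $\phi$ forces $\operatorname{Stab}(\phi)\cap K_0\subseteq\operatorname{Stab}_{E^{(0)}}(\phi(e))$, and the right-hand side is a subgroup of a finite product of members of $\mathcal{F}_G$, hence satisfies FJCw by Proposition~\ref{prop-FJCw-1}~(\ref{prop-FJCw-1-2}),~(\ref{prop-FJCw-1-1}); since $\operatorname{Stab}(\phi)/(\operatorname{Stab}(\phi)\cap K_0)$ embeds into $\bar H$, Proposition~\ref{prop-FJCw-1}~(\ref{prop-FJCw-1-4}) gives $\operatorname{Stab}(\phi)\in\mathcal{F}'$. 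For the control condition, equip $G\wr F$ with a word metric built from generators of $G$ placed in the identity coordinate together with generators of $F$; then a word of length $<m$ connecting $u$ to $u'$ alters each $G$-coordinate of the $G^F$-part by a $G$-word of length $<m$, so the corresponding coordinates of $\pr_{G^F}u$ and $\pr_{G^F}u'$ lie at distance $<m$ in $G$. Since left translations are isometries, the same holds after multiplying by coset representatives $w_1,\dots,w_{|\bar H|}$ of $K_0$ in $\beta_n^{-1}(H)$; feeding this into the control property of the $f_{H_{0,f}}$, using additivity of the $l^1$-metric over the finitely many product factors, and absorbing the bounded constant introduced by triangulation, one sees that replacing $\alpha_n$ by $\alpha_{n(m)}$ with $n(m)$ of order $m|F|^2$ makes $f_H$ send points at distance $<m$ to points at $l^1$-distance $<1/m$. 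This establishes the Farrell--Hsiang property of $G\wr F$ with respect to $\mathcal{F}'$.

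The genuine difficulty is the construction of $E_H$ and $f_H$: one must obtain an honest action of the whole group $\beta_n^{-1}(H)$, not just of the normal subgroup $K_0$, while keeping the dimension bounded independently of $n$ and $H$ and not spoiling the metric control. Coinducing along the finite quotient $\bar H$ is what simultaneously makes the action strict and keeps the dimension under control; the alternative of letting $\bar H$ permute the factors $E_{H_{0,f}}$ directly would require choosing the Farrell--Hsiang data of $G$ coherently on conjugate hyper-elementary subgroups of $F_n$, which is possible but more cumbersome. Once this construction is in place, everything else is formal, given Proposition~\ref{prop-FJCw-1} and the transitivity principle.
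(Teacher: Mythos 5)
Your overall strategy matches the paper's: reduce to showing that $G\wr F$ is a Farrell--Hsiang group with respect to $\mathcal{F}'$, invoke Proposition~\ref{prop-FH-elem}, and finish with the transitivity principle. Where you genuinely diverge is in how you promote the $K_0$-data coming from $G$ to an honest $\beta_n^{-1}(H)$-action: you take the coinduced complex $\operatorname{Map}_{K_0}(\beta_n^{-1}(H),E^{(0)})$ with $f_H(u)(w)=\tilde\psi(wu)$, whereas the paper fixes a coset transversal $T$, a representative $h_{\bar f}$ for each $\bar f\in\pr_F(H)$, builds a disjoint union $F\times E'$ with an action written explicitly via $g'=h_{\pr_F(g)\bar f}^{-1}gh_{\bar f}$, and then averages $f_{G\wr F,H}(x)=\frac{1}{|F|}\sum_f f'_{G\wr F,H}(xf)$ to repair the Lipschitz control destroyed by the transversal decomposition. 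Your coinduction sidesteps that averaging step entirely, because evaluating $\tilde\psi$ at all left translates $w_iu$ has no coset discontinuity; that is a clean conceptual gain over the paper.

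The soft spot is that you take $E^{(0)}=\prod_{f\in F}E_{H_{0,f}}$ and then $E_H\cong(E^{(0)})^{|\bar H|}$ as an \emph{ordinary} product, which is not a simplicial complex, and wave at ``an equivariant triangulation'' and ``absorbing the bounded constant introduced by triangulation.'' This is where the detail lives: you would need to produce an equivariant triangulation that (i) does not increase dimension, (ii) changes the $\ell^1$-metric only by a constant depending on $N$ and $|F|$, and (iii) has simplex stabilizers still landing in $\mathcal{F}'$ (your point-stabilizer computation handles (iii) only because a simplex stabilizer fixes the barycenter, which is worth saying). None of these is false, but none is free either, and the paper deliberately avoids all three by working with the ``simplicial product'' $E'_{G\wr F,H}$ -- the simplicial complex whose vertices are tuples of vertices and whose simplices are tuples lying coordinatewise in common simplices -- together with the map $i\colon\prod_f E_{G,H_f}\to E'_{G\wr F,H}$, at the mild cost of the larger dimension bound $\sim N^{|F|}$. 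If you replace your product and coinduced product by the corresponding simplicial products, your coinduction argument goes through without any triangulation at all and is arguably shorter than the paper's proof. As written, though, the triangulation step is a genuine gap that needs to be filled before the argument is complete.
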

\begin{proof}
Suppose that $G$ is a Farrell-Hsiang group with respect to the family $\mathcal{F}_G$.
Let $F$ be a finite group. We will show that $G \wr F$ is a Farrell-Hsiang group with respect to $\mathcal{F}_{G \wr F} := \{ H < G \wr F \mid H \mbox{ satisfies FJCw} \}$. Then the statement follows from Proposition~\ref{prop-FH-elem} and the transitivity principle described in \cite[Theorem~2.10]{BFL14}.

We fix a finite symmetric generating set $S_G$ for $G$ and define the following finite symmetric generating sets: $S_{G^F} := \{(s_f)_{f \in F} \in G^F \mid s_f \in S_G\}$, $S_F := F$, $S_{G \wr F} := \{ (x,f) \mid x \in S_{G^F}, f \in F \}$.
We denote the respective word length functions by $l_G$, $l_{G^F}$, $l_{G \wr F}$ and the respective word metrics by $d_G$, $d_{G^F}$, $d_{G \wr F}$.

Since $G$ is a Farrell-Hsiang group, there exist a natural number $N_G$ and surjective homomorphisms $\alpha_{G,n} \colon G \to F_{G,n}$ ($n \in \N$) onto finite groups $F_{G,n}$ such that the following condition is satisfied: For any hyper-elementary subgroup $H$ of $F_{G,n}$ there exist a simplicial complex $E_{G,H}$ of dimension at most $N_G$ with a simplicial $\alpha_{G,n}^{-1}(H)$-action whose stabilizers belong to $\mathcal{F}_G$ and an $\alpha_{G,n}^{-1}(H)$-equivariant map $f_{G,H} \colon G \to E_{G,H}$ such that $d^1_{E_{G,H}}(f_{G,H}(g),f_{G,H}(h)) < 1/n$ if $d_G(g,h) < n$.

We define $\alpha_{G \wr F,n} \colon G \wr F \to F_{G,n|F|} \wr F =: F_{G \wr F,n}$ to be the surjective homomorphism induced by $\alpha_{G,n|F|}$, i.e., $\alpha_{G \wr F,n}((g_f)_{f \in F}, f') := ((\alpha_{G,n|F|}(g_f))_{f \in F}, f')$.

Let $H < F_{G \wr F,n}$ be a hyper-elementary subgroup. We will construct a simplicial complex $E_{G \wr F,H}$ of dimension at most $|F| \cdot (N_G + 1)^{|F|} - 1$ with a simplicial $\alpha_{G \wr F,n}^{-1}(H)$-action whose stabilizers belong to $\mathcal{F}_{G \wr F}$ and an $\alpha_{G \wr F,n}^{-1}(H)$-equivariant map $f_{G \wr F,H} \colon G \wr F \to E_{G,H}$ such that $d^1_{E_{G \wr F,H}}(f_{G \wr F,H}(g),f_{G \wr F,H}(h)) < 1/n$ if $d_{G \wr F}(g,h) < n$.

Since the class of hyper-elementary groups is closed under taking subgroups and quotients, the groups $H_f := \pr_f(H \cap F_{G,n|F|}^F) < F_{G,n|F|}$ are hyper-elementary. We define a simplicial complex $E'_{G \wr F,H}$ as follows. The vertices of $E'_{G \wr F,H}$ are tuples $(v_f)_{f \in F}$, where $v_f$ is a vertex of $E_{G,H_f}$. Distinct vertices $v^{(0)}, \ldots, v^{(m)}$ span an $m$-simplex if and only if for all $f \in F$ the vertices $v^{(0)}_f, \ldots, v^{(m)}_f$ are contained in a common simplex of $E_{G,H_f}$. The simplicial complex $E'_{G \wr F,H}$ is of dimension at most $(N_G+1)^{|F|}-1$.

For every $f \in \pr_F(H)$ we fix an element $h_f \in \alpha_{G \wr F,n}^{-1}(H)$ with $\pr_F(h_f) = f$. A simplicial $\alpha_{G \wr F,n}^{-1}(H)$-action on $\coprod_{f \in F} E'_{G \wr F,H} = F \times E'_{G \wr F,H}$ is defined as follows. Let $g \in \alpha_{G \wr F,n}^{-1}(H)$, $f' \in F$, and let $(v_f)_{f \in F}$ be a vertex in $E'_{G \wr F,H}$. We define
\[
g \big(f',(v_f)_{f \in F}\big) := \big(\pr_F(g)f',(\pr_f(g')v_f)_{f \in F}\big)
\]
with $g' := h_{\pr_F(g)f'}^{-1} \cdot g \cdot h_{f'} \in \alpha_{G \wr F,n}^{-1}(H) \cap G^F = \prod_{f \in F} \alpha_{G,n|F|}^{-1}(H_f)$.

We fix a right coset transversal $T \subset F$ for $G^F \rtimes \pr_F(H) < G \wr F$. Then every element in $G \wr F$ can be uniquely written as $h_{\overline{f}} \cdot g \cdot t$ with $\overline{f} \in \pr_F(H)$, $g = (g_f)_{f \in F} \in G^F$, and $t \in T$. We define a map $f'_{G \wr F,H} \colon G \wr F \to F \times E'_{G \wr F,H}$ by
\[
f'_{G \wr F,H}(h_{\overline{f}} \cdot g \cdot t) = \Big(\overline{f},i\big((f_{G,H_f}(g_f))_{f \in F}\big)\Big),
\]
where $i \colon \prod_{f \in F} E_{G,H_f} \to E'_{G \wr F,H}$ is given by
\[
\Big(\displaystyle\sum_{v_f \text{ vertex in } E_{G,H_f}} t_{v_f} \cdot v_f\Big)_{f \in F} \mapsto \sum_{(v_f)_{f \in F} \text{ vertex in } E'_{G \wr F,H}} \prod_{f \in F} t_{v_f} \cdot (v_f)_{f \in F}
\]
($t_{v_f} \geq 0$, $\sum_{v_f} t_{v_f} = 1$).
The map $f'_{G \wr F,H}$ is $\alpha_{G \wr F,n}^{-1}(H)$-equivariant.

We would like to define a map $f_{G \wr F,H} \colon G \wr F \to F \times E'_{G \wr F,H}$ by $f_{G \wr F,H}(x) := \sum_{f \in F} \frac{1}{|F|} f'_{G \wr F,H} (xf)$. But in general, this linear combination will not lie in a simplex of $F \times E'_{G \wr F,H}$. That is why we define the simplicial complex $E_{G \wr F,H}$ to be the smallest simplicial complex with the same vertices as $F \times E'_{G \wr F,H}$ such that every linear combination $\sum_{f \in F} \frac{1}{|F|} f'_{G \wr F,H} (xf)$ ($x \in G \wr F$) lies in $E_{G \wr F,H}$. Then the map
\[
f_{G \wr F,H} \colon G \wr F \to E_{G \wr F,H}, x \mapsto \sum_{f \in F} \frac{1}{|F|} f'_{G \wr F,H} (xf)
\]
is well defined. The simplicial complex $E_{G \wr F,H}$ is of dimension at most $|F| \cdot (N_G + 1)^{|F|} - 1$.

It remains to show that $d^1_{E_{G \wr F,H}}(f_{G \wr F,H}(x),f_{G \wr F,H}(y)) < 1/n$ if $d_{G \wr F}(x,y) < n$. Let $g,g' \in G^F$ and $f,f' \in F$ with $x = gf$ and $y = g'f'$. Then $f_{G \wr F,H}(x) = f_{G \wr F,H}(g)$ and $f_{G \wr F,H}(y) = f_{G \wr F,H}(g')$. For every $\overline{f} \in F$ there exist $a(\overline{f}) \in \pr_F(H)$, $b(\overline{f}), c(\overline{f}) \in G^F$, and $d(\overline{f}) \in T \subset F$ such that $g \overline{f} = h_{a(\overline{f})} b(\overline{f}) d(\overline{f})$ and $g'\overline{f} = h_{a(\overline{f})} c(\overline{f}) d(\overline{f})$.

We conclude
\begin{eqnarray*}
f_{G \wr F,H}(g) & = & \frac{1}{|F|} \sum_{\overline{f} \in F} \Big(a(\overline{f}),i\big((f_{G,H_f}(b(\overline{f})_f))_{f \in F}\big)\Big), \\
f_{G \wr F,H}(h) & = & \frac{1}{|F|} \sum_{\overline{f} \in F} \Big(a(\overline{f}),i\big((f_{G,H_f}(c(\overline{f})_f))_{f \in F}\big)\Big).
\end{eqnarray*}
Hence,
\begin{align*}
& d^1_{E_{G \wr F,H}}(f_{G \wr F,H}(g),f_{G \wr F,H}(h)) \\
& \leq \frac{1}{|F|} \sum_{\overline{f} \in F} d^1_{E'_{G \wr F,H}}\Big(i\big((f_{G,H_f}(b(\overline{f})_f))_{f \in F}\big),i\big((f_{G,H_f}(c(\overline{f})_f))_{f \in F}\big)\Big) \\
& \leq \frac{1}{|F|} \sum_{\overline{f}, f \in F} d^1_{E_{G,H_f}}\big(f_{G,H_f}(b(\overline{f})_f),f_{G,H_f}(c(\overline{f})_f)\big).
\end{align*}
We calculate $d_{G \wr F}(x,y) = l_{G \wr F}(y^{-1}x) = l_{G \wr F}({f'}^{-1}{g'}^{-1}gf)$. The construction of the generating sets $S_{G \wr F}$ and $S_{G^F}$ -- in particular, the invariance under conjugation with elements in $F$ -- implies
\[
l_{G \wr F}({f'}^{-1}{g'}^{-1}gf) = l_{G \wr F}({g'}^{-1}gf{f'}^{-1}) \geq l_{G^F}({g'}^{-1}g).
\]
Hence, $l_{G^F}({g'}^{-1}g) \leq d_{G \wr F}(x,y) < n$.
Since
\begin{align*}
& d_G\big(b(\overline{f})_f,c(\overline{f})_f\big) = l_G\big(c(\overline{f})_f^{-1} b(\overline{f})_f\big) \leq l_{G^F}\big(c(\overline{f})^{-1} b(\overline{f})\big) = \\
& l_{G^F}\big(d(\overline{f}) \overline{f}^{-1} g'^{-1} g \overline{f} d(\overline{f})^{-1}\big) = l_{G^F}(g'^{-1} g) < n \leq n \cdot |F|,
\end{align*}
we conclude
\[
d^1_{E_{G,H_f}}\big(f_{G,H_f}(b(\overline{f})_f),f_{G,H_f}(c(\overline{f})_f)\big) < \frac{1}{n |F|}.
\]
Therefore,
\begin{align*}
& d^1_{E_{G \wr F,H}}\big(f_{G \wr F,H}(x),f_{G \wr F,H}(y)\big) = d^1_{E_{G \wr F,H}}\big(f_{G \wr F,H}(g),f_{G \wr F,H}(h)\big) \\
& \leq \frac{1}{|F|} \sum_{\overline{f}, f \in F} d^1_{E_{G,H_f}}\big(f_{G,H_f}(b(\overline{f})_f),f_{G,H_f}(c(\overline{f})_f)\big) \\
& < \frac{1}{|F|} \sum_{\overline{f}, f \in F} \frac{1}{n |F|} = \frac{1}{n}.
\end{align*}
This finishes the proof of Proposition~\ref{prop-FH}.
\end{proof}

\begin{proposition}
Let $G$ be a group which is almost strongly transfer reducible over $\mathcal{F}_G := \{ H < G \mid H \mbox{ satisfies } FJCw \}$. Then $G$ satisfies FJCw.
\end{proposition}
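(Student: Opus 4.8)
The plan is to show, for every finite group $F$, that the wreath product $G \wr F$ is almost strongly transfer reducible over $\mathcal{F}_{G \wr F} := \{ H < G \wr F \mid H \text{ satisfies FJCw} \}$; the conclusion then follows exactly as in Proposition~\ref{prop-FH}. Namely, $\mathcal{F}_{G \wr F}$ is virtually closed by Proposition~\ref{prop-FJCw-1}~(\ref{prop-FJCw-1-4}), so Proposition~\ref{prop-astr} gives that $G \wr F$ satisfies FJC with respect to $\mathcal{F}_{G \wr F}$; since every member of $\mathcal{F}_{G \wr F}$ satisfies FJCw and hence FJC, the transitivity principle \cite[Theorem~1.10]{BFL11} upgrades this to FJC with respect to the family of virtually cyclic subgroups, and letting $F$ vary yields FJCw for $G$.

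Fix $F$, and let $N$ be the constant attached to $G$ by Definition~\ref{def-astr}. Given a finite symmetric $S \subseteq G \wr F$ with $e \in S$ and natural numbers $k, n$, let $S_G \subseteq G$ be the finite symmetric subset containing $e$ consisting of all $G$-coordinates of elements of $S$. Apply the almost strong transfer reducibility of $G$ to $S_G$ and to auxiliary parameters $k', n'$ (depending only on $k, n$ and $|F|$, to be fixed below) to obtain a compact, contractible, controlled $N$-dominated metric space $X'$, a strong homotopy $G$-action $\Psi'$ on $X'$, and a $G$-invariant open cover $\mathcal{U}'$ of $G \times X'$ with $\dim \mathcal{U}' \leq N$, with all stabilizers in $\mathcal{F}_G$, and such that each $S^{n'}_{\Psi',S_G,k'}(g,x)$ lies in a member of $\mathcal{U}'$. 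Put $X := (X')^F$ with the $\ell^1$-product metric; it is compact, contractible, and controlled $N|F|$-dominated. On $X$ define a strong homotopy $G \wr F$-action $\Psi$ modeled on the strict wreath-product action $\big((g_\tau)_\tau, f\big)\cdot(x_\tau)_\tau = (g_\tau x_{\tau f})_\tau$: the $F$-coordinate permutes the $|F|$ factors of $X$ while $\Psi'$ is inserted on each factor, the time parameters being processed coordinatewise. The strong-homotopy axioms for $\Psi$ follow from those for $\Psi'$; carrying out this bookkeeping is routine but must be done.

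The essential construction is the cover. Write an element of $(G \wr F) \times X$ as $\big((p_\tau)_\tau, q, (y_\tau)_\tau\big)$ with $(p_\tau)_\tau \in G^F$, $q \in F$, $(y_\tau)_\tau \in X$, and for $(V_\tau)_{\tau \in F} \in (\mathcal{U}')^F$ set
\[
 U_{(V_\tau)} := \big\{ \big((p_\tau)_\tau, q, (y_\tau)_\tau\big) \,\big|\, (p_\tau, y_{\tau q}) \in V_\tau \text{ for all } \tau \in F \big\}.
\]
The "shift by $q$" in the second slot of each pair is what makes the family $\mathcal{U} := \{ U_{(V_\tau)} \mid (V_\tau) \in (\mathcal{U}')^F \}$ invariant under the $G \wr F$-action, with $(a,f)\cdot U_{(V_\tau)} = U_{(a_\tau V_{\tau f})}$ (here one uses that $\mathcal{U}'$ is $G$-invariant); it is also what will make the thickened sets fit inside single members. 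The stabilizer $G_{U_{(V_\tau)}}$ is a finite-index overgroup of $\prod_{\tau \in F} G_{V_\tau}$, a product of stabilizers of members of $\mathcal{U}'$, each lying in $\mathcal{F}_G$; since FJCw is closed under finite products (Proposition~\ref{prop-FJCw-1}~(\ref{prop-FJCw-1-2})) and under finite-index overgroups (Proposition~\ref{prop-FJCw-1}~(\ref{prop-FJCw-1-4})), we get $G_{U_{(V_\tau)}} \in \mathcal{F}_{G \wr F}$. Near any point the cover $\mathcal{U}$ looks like the $|F|$-fold product of $\mathcal{U}'$, so $\dim \mathcal{U} \leq (N+1)^{|F|} - 1$; hence the constant $N_{G \wr F} := (N+1)^{|F|} - 1$ works in Definition~\ref{def-astr} (it also bounds $N|F|$).

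It remains to check condition~(3). Expanding $S^n_{\Psi,S,k}\big((g_\tau)_\tau, f_0, (x_\tau)_\tau\big)$ in coordinates, using that the $G$-coordinates of elements of $S$ lie in $S_G$ and that the strong-homotopy formulas for $\Psi$ are the coordinatewise ones for $\Psi'$ modulo the $F$-permutations, one verifies that for every point $\big((p_\tau)_\tau, q, (y_\tau)_\tau\big)$ of this thickened set the pair $(p_\tau, y_{\tau q})$ lies in $S^{n'}_{\Psi',S_G,k'}\big(g_\tau, x_{\tau f_0}\big)$, for suitable $k' = k'(k,n,|F|)$ and $n' = n'(k,n,|F|)$ — this is why the auxiliary parameters were left free. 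Choosing for each $\tau$ a member $V_\tau \in \mathcal{U}'$ containing $S^{n'}_{\Psi',S_G,k'}(g_\tau, x_{\tau f_0})$ then gives $S^n_{\Psi,S,k}\big((g_\tau)_\tau, f_0, (x_\tau)_\tau\big) \subseteq U_{(V_\tau)} \in \mathcal{U}$, as required. This coordinate-by-coordinate comparison of the two thickening processes — bounding, in terms of $|F|$, how many $\Psi'$-thickening and homotopy-composition steps on $G$ are needed to account for one thickening step and $k$ composition steps on $G \wr F$ once the $F$-permutations are taken into account — is the delicate combinatorial heart of the argument and the step I expect to be the main obstacle; it is entirely analogous in spirit to, though somewhat heavier than, the metric estimate that closes the proof of Proposition~\ref{prop-FH}.
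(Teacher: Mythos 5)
Your overall strategy --- show that $G \wr F$ is itself almost strongly transfer reducible over $\mathcal{F}_{G\wr F}$, then invoke Proposition~\ref{prop-astr} and the transitivity principle --- is reasonable, but it is not the route the paper takes, and as written it is incomplete.

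The paper's proof is one paragraph: it cites \cite[Theorem~5.1 and Remark~5.9]{BLRR12}, which already establishes that for an almost strongly transfer reducible group $G$ and any finite $F$ the wreath product $G\wr F$ satisfies FJC with respect to the family $\mathcal{F}^\wr$ of subgroups that virtually embed into a finite product $H_1\times\cdots\times H_n$ with $H_i\in\mathcal{F}_G$; it then observes via Proposition~\ref{prop-FJCw-1} that every member of $\mathcal{F}^\wr$ satisfies FJCw, and applies the transitivity principle \cite[Theorem~1.11]{BFL11}. In other words, the entire product-space/product-cover construction that you set out to build is precisely the content of the BLRR12 result that the paper invokes; you are in effect reproving that theorem from scratch, and your description of the stabilizers (finite-index overgroups of products $\prod_\tau G_{V_\tau}$) is exactly why the natural target family is $\mathcal{F}^\wr$ rather than $\mathcal{F}_{G\wr F}$ itself.

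The genuine gap in your write-up is the verification of condition~(3) of Definition~\ref{def-astr}. You acknowledge this yourself: the coordinatewise comparison of $S^n_{\Psi,S,k}$ on $G\wr F\times X$ with $S^{n'}_{\Psi',S_G,k'}$ on $G\times X'$ --- i.e., producing $n'$ and $k'$ such that the $n$-fold $\Psi$-thickening projects into the $n'$-fold $\Psi'$-thickening in each $\tau$-coordinate --- is declared to be ``the delicate combinatorial heart of the argument'' and is not carried out. That step is not cosmetic: the strong homotopy action on $(X')^F$ permutes coordinates at every stage, so the $\tau$-coordinate of a $\Psi$-homotopy word of length $k$ is a $\Psi'$-word whose group entries are shifted by the accumulated $F$-permutations $f_l f_{l-1}\cdots f_0$, exactly as in the $\widetilde\Psi_{H_f}$ construction the paper carries out explicitly in the proof of Lemma~\ref{lem-gFH} for the generalized Farrell-Hsiang case. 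Until that bookkeeping is done and the constants $n',k'$ are actually produced, the argument does not close. If you wish to avoid it, cite \cite[Theorem~5.1 and Remark~5.9]{BLRR12} as the paper does; if you prefer your direct route, you should carry out the computation in the style of Lemma~\ref{lem-gFH} rather than defer it.

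Two minor points. First, your ``fibered'' sets $U_{(V_\tau)}$, with the shift $y_{\tau q}$ in the $X$-slot, and the invariance formula $(a,f)\cdot U_{(V_\tau)} = U_{(a_\tau V_{\tau f})}$ are correct, and the dimension bound $\dim\mathcal{U}\le (N+1)^{|F|}-1$ is the right order of magnitude. Second, once you land on a cover whose stabilizers lie in $\mathcal{F}^\wr$ rather than $\mathcal{F}_{G\wr F}$, you should still observe (as the paper does) that members of $\mathcal{F}^\wr$ satisfy FJCw, so the transitivity step goes through; your formulation already does this implicitly, but it should be stated.
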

\begin{proof}
Suppose that $G$ is almost strongly transfer reducible over $\mathcal{F}_G$.
Let $F$ be a finite group. We have to show that $G \wr F$ satisfies FJC.
\cite[Theorem~5.1 and Remark~5.7]{BLRR14} imply that $G \wr F$ satisfies FJC with respect to the family $\mathcal{F}^\wr$ which is defined as follows. The family $\mathcal{F}^\wr$ consists of those subgroups $H < G \wr F$ which contain a subgroup of finite index that is isomorphic to a subgroup of $H_1 \times \ldots \times H_n$ for some $n$ and $H_1, \ldots, H_n \in \mathcal{F}$. Using Proposition~\ref{prop-FJCw-1} we conclude that every group in $\mathcal{F}^\wr$ satisfies FJCw. \cite[Theorem~2.10]{BFL14} implies that $G \wr F$ satisfies FJC.
\end{proof}

\begin{example} \label{ex-FJCw-3}
We conclude from Example~\ref{ex-astr}~(\ref{ex-astr-1}) that hyperbolic groups satisfy FJCw. In particular, finitely generated free groups satisfy FJCw. Since FJCw is closed under colimits (see Proposition~\ref{prop-FJCw-1}), all free groups satisfy FJCw.
\end{example}

\begin{proposition}[Inheritance properties, part 2] \label{prop-FJCw-2}
\begin{description}
\item[Extensions] Let
    \[
    \xymatrix{1 \ar[r] & K \ar[r] & G \ar[r]^{p} & Q \ar[r] & 1}
    \]
    be a short exact sequence. If $K$, $Q$, and $p^{-1}(C)$ satisfy FJCw for every infinite cyclic subgroup $C < Q$ then $G$ satisfies FJCw.
\item[Free products] If $G_1$ and $G_2$ satisfy FJCw, then the free product $G_1 \ast G_2$ satisfies FJCw.
\end{description}
\end{proposition}
\begin{proof}
\begin{description}
\item[Extensions] Let $F$ be a finite group. The map $p$ induces a map $\tilde{p} \colon \, G \wr F \to Q \wr F$.
    Note that \cite[Lemma~2.3~(ii)]{BL12a} also holds for the class $\mathcal{FJ}^K$ of groups satisfying the $K$-theoretic Farrell-Jones Conjecture by the remark below the proof of \cite[Lemma~2.3]{BL12a} and \cite[Corollary~1.2]{Weg12}. Hence, it suffices to show that $\tilde{p}^{-1}(V)$ satisfies FJC for every virtually cyclic subgroup $V < Q \wr F$.
    The subgroup $V \cap Q^F < V$ is of finite index. Since the class of virtually cyclic groups is closed under subgroups and quotients, the groups $\pr_f(V \cap Q^F) < Q$ $(f \in F)$ are virtually cyclic. Hence, there exist subgroups $C_f < \pr_f(V \cap Q^F)$ of finite index such that $C_f$ is either infinite cyclic or trivial. By assumption the groups $p^{-1}(C_f)$ satisfy FJCw. Since $\prod_{f \in F} C_f < V$ is a subgroup of finite index, $\prod_{f \in F} p^{-1}(C_f) = \tilde{p}^{-1}(\prod_{f \in F} C_f) < \tilde{p}^{-1}(V)$ is a subgroup of finite index. Using Proposition~\ref{prop-FJCw-1} we conclude that $\tilde{p}^{-1}(V)$ satisfies FJCw.
\item[Free products] We consider the surjective homomorphism $p \colon \, G_1 \ast G_2 \to G_1 \times G_2$. It remains to check that $\ker(p)$ and $p^{-1}(C)$ satisfy FJCw for every infinite cyclic subgroup $C < G_1 \times G_2$. By \cite[Lemma 5.2]{Rou08} those groups are free and hence satisfy FJCw (see Example~\ref{ex-FJCw-3}).
\end{description}
\end{proof}

\section{Reducing the class of solvable groups} \label{sec-reducing}

In \cite{FL03} Farrell and Linnell study isomorphism conjectures for solvable groups. The following result is due to Farrell and Jones (compare \cite[Theorem 1.2]{FL03}).
\begin{proposition} \label{prop-FL}
Suppose that FJCw is true for every nearly crystallographic group. Then every solvable group satisfies FJCw.
\end{proposition}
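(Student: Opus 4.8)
The plan is to obtain the proposition from \cite[Theorem~1.2]{FL03}, whose proof carries over essentially word for word. That result establishes exactly the analogous reduction for the fibered isomorphism conjecture, and its proof combines the structure theory of finitely generated solvable groups with a short list of formal closure properties of the class of groups in play. So the first step is to isolate which closure properties of the class
\[
\mathcal{C} := \{\, G \mid G \text{ satisfies FJCw} \,\}
\]
are used there and to observe that every one of them has already been recorded: $\mathcal{C}$ is closed under passage to subgroups and to finite-index overgroups (Proposition~\ref{prop-FJCw-1}~(\ref{prop-FJCw-1-1}) and~(\ref{prop-FJCw-1-4})); $\mathcal{C}$ is closed under directed colimits (Proposition~\ref{prop-FJCw-1}~(\ref{prop-FJCw-1-5})); $\mathcal{C}$ has the fibering property that if $p\colon G \to Q$ is a homomorphism with $Q \in \mathcal{C}$, $\ker(p) \in \mathcal{C}$, and $p^{-1}(C) \in \mathcal{C}$ for every infinite cyclic $C < Q$, then $G \in \mathcal{C}$ (Proposition~\ref{prop-FJCw-2}~(\ref{prop-FJCw-2-1})); every abelian group lies in $\mathcal{C}$ (Example~\ref{ex-FJCw-2}); and, by hypothesis, every nearly crystallographic group lies in $\mathcal{C}$. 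Granting this, the argument of \cite[Theorem~1.2]{FL03} applies verbatim and yields the proposition.

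For orientation I would also indicate the shape of that argument. Since $\mathcal{C}$ is closed under directed colimits and every solvable group is the directed union of its finitely generated (necessarily solvable) subgroups, it suffices to treat finitely generated solvable groups. One then inducts on the derived length $d$ of such a group $G$: the case $d \leq 1$ is Example~\ref{ex-FJCw-2}. For $d \geq 2$ put $A := G^{(d-1)}$, an abelian normal subgroup with $G/A$ finitely generated solvable of derived length $d-1$, hence in $\mathcal{C}$ by induction; applying the fibering property to the quotient map $p\colon G \to G/A$ reduces matters to showing $p^{-1}(C) \in \mathcal{C}$ for $C < G/A$ infinite cyclic. Because $H^2(\Z;A)=0$ such a preimage splits as $A \rtimes \Z$, and writing $A$ as the directed union of its finitely generated $\Z[\Z]$-submodules realizes $A \rtimes \Z$ as a directed colimit of finitely generated metabelian groups. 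Thus everything comes down to the finitely generated metabelian case.

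The metabelian case is where the real work — and the hypothesis on nearly crystallographic groups — actually enters, and I expect it to be the main obstacle. For a finitely generated metabelian group $M$ the abelian normal subgroup $M'$ is, by P.\ Hall's theorem, a finitely generated module over the Noetherian ring $\Z[M/M']$; passing to a finite-index subgroup and to the subquotients of a prime filtration of $M' \otimes_\Z \Q$ one reduces to the case in which this module is simple over $\Q[M/M']$, and then Schur's lemma identifies it with a number field $K$ on which $M/M'$ acts through a finitely generated subgroup of $K^\times$. A Dirichlet-unit-type analysis of that subgroup, together with the closure properties above, then pins $M$ down in terms of nearly crystallographic groups. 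This last part is precisely the content of \cite[Theorem~1.2]{FL03} that we are invoking; the only thing genuinely to be checked in the present setting is that FJCw — and not merely FJC — possesses the formal closure properties listed above, which is exactly what Propositions~\ref{prop-FJCw-1} and~\ref{prop-FJCw-2} provide.
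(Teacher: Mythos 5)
Your proposal matches the paper's treatment exactly: the paper proves Proposition~\ref{prop-FL} by invoking \cite[Theorem~1.2]{FL03} and observing that the closure properties of FIC used there (passage to subgroups and finite-index overgroups, directed colimits, the fibering criterion, and the abelian case) are all established for FJCw in Propositions~\ref{prop-FJCw-1} and~\ref{prop-FJCw-2} and Example~\ref{ex-FJCw-2}, so the argument transfers verbatim. Your additional sketch of the internal structure of the Farrell--Linnell argument is accurate as orientation and does not deviate from the paper's reasoning.
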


Recall that a \emph{nearly crystallographic group} $G$ is a finitely generated group which can be written as a semi-direct product $G = A \rtimes C$ such that $A \lhd G$ is isomorphic to a subgroup of $\Q^n$ for some $n \in \N$, $C < G$ is virtually cyclic, and the conjugation action of $C$ on $A$ makes $A \otimes \Q$ into an irreducible $\Q C$-module.

Farrell and Linnell prove Proposition~\ref{prop-FL} for a related conjecture, the Fibered Isomorphism Conjecture (FIC), instead of FJCw (see \cite[Theorem 1.2]{FL03}). The proof is based on properties of FIC which also hold for FJCw. Hence, the proof can be directly transferred to FJCw.

For our purpose we need the similar Proposition~\ref{prop-sdp} stated below.

\begin{definition} \label{def-G_w}
Let $w \in \overline{\Q}^\times$ be a non-zero algebraic number. We define the group $G_w$ as the semi-direct product
\[
G_w := \Z[w,w^{-1}] \rtimes_{\cdot w} \Z.
\]
The multiplication in $G_w$ is given by
\[
(x_1,x_2) \cdot (y_1,y_2) := (x_1 + w^{x_2} y_1, x_2 + y_2)
\]
($x_1,y_1 \in \Z[w,w^{-1}]$, $x_2,y_2 \in \Z$).
\end{definition}

\begin{proposition} \label{prop-sdp}
Suppose that FJCw is true for the groups $G_w$ ($w \in \overline{\Q}^\times$). Then FJCw holds for all solvable groups.
\end{proposition}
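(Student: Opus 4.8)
The plan is to combine Proposition~\ref{prop-FL} with the inheritance properties collected in section~\ref{sec-FJC}. By Proposition~\ref{prop-FL} it suffices to show that every nearly crystallographic group $G = A \rtimes C$ satisfies FJCw, where $A$ is (isomorphic to) a subgroup of some $\Q^n$, $C$ is virtually cyclic, and $A \otimes \Q$ is an irreducible $\Q C$-module. First I would reduce to the case $C = \Z$ by applying Proposition~\ref{prop-FJCw-2}~(\ref{prop-FJCw-2-1}) to the projection $p \colon G \to C$: its kernel $A$ is abelian and hence satisfies FJCw (Example~\ref{ex-FJCw-2}), its quotient $C$ is virtually cyclic, hence hyperbolic, and satisfies FJCw (Example~\ref{ex-FJCw-3}), and $p^{-1}(C') = A \rtimes C'$ with $C' \cong \Z$ for every infinite cyclic subgroup $C' < C$. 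Thus everything reduces to the assertion that \emph{for every subgroup $A$ of some $\Q^n$ and every $s \in \Aut(A)$ the group $A \rtimes_s \Z$ satisfies FJCw} (after restriction to $C'$ the $\Z$-action need no longer make $A \otimes \Q$ irreducible, but irreducibility is recovered along the induction below).

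I would prove this assertion by induction on $d := \dim_\Q(A \otimes \Q)$; the case $d = 0$ is trivial since then $A \rtimes_s \Z \cong \Z$. For the inductive step put $V := A \otimes \Q$ and let $\phi \in \GL(V)$ be the automorphism induced by $s$, making $V$ into a module over $\Q[t,t^{-1}]$ via $t \mapsto \phi$. Suppose first that $V$ is \emph{not} a simple $\Q[t,t^{-1}]$-module; choose a $\phi$-invariant subspace $W$ with $0 < \dim W < d$ and set $A_W := A \cap W$. A clearing-of-denominators argument gives $(A_W) \otimes \Q = W$, so $\dim_\Q(A_W \otimes \Q) < d$, and $\overline A := A/A_W$ embeds into $V/W$, hence is isomorphic to a subgroup of some $\Q^{n'}$ with $\dim_\Q(\overline A \otimes \Q) < d$. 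Applying Proposition~\ref{prop-FJCw-2}~(\ref{prop-FJCw-2-1}) to the projection $q \colon A \rtimes_s \Z \to \overline A \rtimes_{\overline s} \Z$, the quotient satisfies FJCw by the inductive hypothesis, the kernel $A_W$ is abelian, and for any infinite cyclic subgroup $D < \overline A \rtimes_{\overline s} \Z$ the preimage $q^{-1}(D)$ is either abelian (being contained in $A$) or, since $\Z$ is free, an extension of $\Z$ by $A_W$ and hence isomorphic to $A_W \rtimes_{s^j} \Z$ for some $j \neq 0$; in both cases $q^{-1}(D)$ satisfies FJCw (Example~\ref{ex-FJCw-2}, respectively the inductive hypothesis). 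Therefore $A \rtimes_s \Z$ satisfies FJCw.

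It remains to treat the case in which $V$ is a simple $\Q[t,t^{-1}]$-module. Then $V \cong \Q[t]/(f)$ for some irreducible $f \in \Q[t]$ with $f(0) \neq 0$; choosing a root $w$ of $f$ we get $V \cong \Q(w)$ with $\phi$ acting as multiplication by $w$, so $w \in \overline{\Q}^\times$, and $A$ becomes a $\Z[w,w^{-1}]$-submodule of $\Q(w)$. I would then write $A$ as the directed union of its finitely generated $\Z[w,w^{-1}]$-submodules $A_i$. Clearing the denominators of a finite generating set of $A_i$ with respect to the $\Q$-basis $1, w, \dots, w^{d-1}$ of $\Q(w)$ shows $A_i \subseteq \tfrac{1}{m_i}\Z[w,w^{-1}]$ for some $m_i \in \Z^{>0}$, and multiplication by $m_i$ is a $\Z[w,w^{-1}]$-linear isomorphism onto $\Z[w,w^{-1}]$ which induces an isomorphism $\big(\tfrac{1}{m_i}\Z[w,w^{-1}]\big) \rtimes_{\cdot w} \Z \cong G_w$. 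Hence $A_i \rtimes_{\cdot w} \Z$ is isomorphic to a subgroup of $G_w$; since $G_w$ satisfies FJCw by hypothesis, so does $A_i \rtimes_{\cdot w} \Z$ by Proposition~\ref{prop-FJCw-1}~(\ref{prop-FJCw-1-1}), and therefore so does $A \rtimes_{\cdot w} \Z = \colim_i (A_i \rtimes_{\cdot w} \Z)$ by Proposition~\ref{prop-FJCw-1}~(\ref{prop-FJCw-1-5}). This closes the induction, and with it the proof.

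The main difficulty is one of finiteness rather than of ideas: the subgroups $p^{-1}(C')$ and $q^{-1}(D)$ produced by Proposition~\ref{prop-FJCw-2}~(\ref{prop-FJCw-2-1}) need not be finitely generated, so in the simple case one genuinely has to pass to the colimit over the finitely generated $\Z[w,w^{-1}]$-submodules of $A$ rather than appeal directly to the case $A = \Z[w,w^{-1}]$. One must also verify carefully that $(A \cap W) \otimes \Q = W$, so that $d$ really decreases in the reducible case, and that in the simple case the identification $V \cong \Q(w)$ turns the action of $s$ into multiplication by a non-zero algebraic number, so that the hypothesis on the groups $G_w$ applies.
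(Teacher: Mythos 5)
Your proof is correct, but it takes a genuinely different route from the paper's. The paper does not invoke Proposition~\ref{prop-FL} at all. Instead it proves Proposition~\ref{prop-sdp} by a direct induction on the derived length of $G$, using the torsion subgroup $T < [G,G]$ and the two short exact sequences $1 \to [G,G]/T \to G/T \to G/[G,G] \to 1$ and $1 \to T \to G \to G/T \to 1$ in the $2$-step case, and then bootstrapping via $1 \to G^{(2)} \to G \to G/G^{(2)} \to 1$. The heavy lifting is outsourced to Lemma~\ref{lem-1} (torsion abelian $\rtimes \Z$), Lemma~\ref{lem-3} ($\Q^n \rtimes \Z$, proved via a Jordan-block induction over $\overline{\Q}$), and Corollary~\ref{cor-1} (torsion-free abelian $\rtimes \Z$, proved via decomposing $A \otimes \Q$ into cyclic $\Q\Z$-modules, with the free cyclic case handled by Lemma~\ref{lem-2} on $\Z \wr \Z$). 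You instead treat Proposition~\ref{prop-FL} as a black box, reduce nearly crystallographic groups to $A \rtimes \Z$, and then run a self-contained induction on $\dim_\Q(A \otimes \Q)$ with a dichotomy between the reducible case (invariant-subspace reduction) and the simple case ($V \cong \Q(w)$, colimit of finitely generated $\Z[w,w^{-1}]$-submodules). Your approach has the advantage of staying inside $\Q(w)$ and never needing to pass to $\overline{\Q}$ or appeal to Jordan normal form, and it dispenses with Lemmas~\ref{lem-1}, \ref{lem-2} and Corollary~\ref{cor-1} entirely; the price is that it rests on Proposition~\ref{prop-FL}, whose FJCw-version the paper only asserts (following Farrell--Linnell) without re-proving, so the paper's route is arguably more self-contained in what it actually verifies. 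Both approaches share the key trick of expressing the relevant additive group as a directed union of $\Z[w,w^{-1}]$-submodules each conjugate into $\Z[w,w^{-1}]$ by a scalar; the paper buries this in the base case $n=1$ of Lemma~\ref{lem-3}, where it writes $\Q(w) \rtimes_{\cdot w} \Z = \colim \Z[w,w^{-1}] \rtimes_{\cdot w} \Z$.
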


The proof is similar to the proof of Proposition~\ref{prop-FL}. It is based on the following lemmata.

\begin{lemma} \label{lem-1}
FJCw holds for every semi-direct product $A \rtimes \Z$ with $A$ torsion abelian.
\end{lemma}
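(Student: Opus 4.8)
The plan is to prove that FJCw holds for $A \rtimes \Z$ whenever $A$ is a torsion abelian group by writing $A$ as a directed union of finite subgroups and exploiting the colimit inheritance property from Proposition~\ref{prop-FJCw-1}~(\ref{prop-FJCw-1-5}). The subtlety is that the $\Z$-action need not preserve a finite subgroup of $A$, so one cannot directly express $A \rtimes \Z$ as a colimit of groups $A_i \rtimes \Z$ with $A_i$ finite. Instead I would use the homomorphism $p \colon A \rtimes \Z \to \Z$ with kernel $A$, together with the relative inheritance property Proposition~\ref{prop-FJCw-2}~(\ref{prop-FJCw-2-1}): it suffices to check FJCw for $\ker(p) = A$, for $Q = \Z$, and for $p^{-1}(C)$ for every infinite cyclic $C < \Z$.

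First I would observe that $Q = \Z$ satisfies FJCw (it is abelian, see Example~\ref{ex-FJCw-2}, or a CAT(0)-group). Next, $A$ itself is abelian, so it satisfies FJCw by Example~\ref{ex-FJCw-2}. Finally, every infinite cyclic subgroup $C < \Z$ has finite index, so $p^{-1}(C) < A \rtimes \Z$ is a subgroup of finite index, hence isomorphic to $A \rtimes \Z$ again (with the $\Z$-action replaced by an iterate); alternatively, $p^{-1}(C)$ is itself of the form $A \rtimes \Z$ with $A$ torsion abelian, so this case reduces to the general statement and I would need to handle it directly rather than circularly.

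To handle the remaining case — equivalently, to prove the whole lemma — I would instead reduce to finite subgroups of $A$ more carefully. Given the $\Z$-action $\phi$ on $A$ and a finite subgroup $F < A$, the subgroup generated by all $\phi^n(F)$, $n \in \Z$, is $\phi$-invariant but generally infinite; however, since $A$ is torsion, $\bigcup_{F} \big(\bigcup_n \phi^n(F) \text{-generated subgroup}\big)$ exhausts $A$, and each such $\phi$-invariant subgroup $A_F$ is a countable torsion abelian group that is a directed union of the finite subgroups $\langle \phi^{-m}(F), \dots, \phi^{m}(F)\rangle$. This realizes $A \rtimes_\phi \Z$ as the directed colimit of the subgroups $A_F \rtimes_\phi \Z$, and each $A_F \rtimes_\phi \Z$ is itself a directed colimit — over $m$ — of the finitely generated subgroups $\langle \phi^{-m}(F), \dots, \phi^m(F)\rangle \rtimes \langle t^{2m+1}\text{-ish shift}\rangle$; more simply, $A_F \rtimes_\phi \Z$ is the ascending HNN-extension / mapping torus structure on a colimit of finite groups, so it is an increasing union of subgroups of the form $B_m \rtimes \Z$ with $B_m$ finite, where the $\Z$ now acts through the shift. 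By Proposition~\ref{prop-FJCw-1}~(\ref{prop-FJCw-1-5}) it therefore suffices to prove FJCw for $B \rtimes \Z$ with $B$ \emph{finite}.

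For $B$ finite, $B \rtimes \Z$ contains the central, finite-index subgroup $B^{|\Aut(B)|!} \rtimes |\Aut(B)|!\,\Z$? — more cleanly: the action of $\Z$ on the finite group $B$ factors through a finite cyclic quotient $\Z/N$, so the kernel $N\Z$ acts trivially, giving a finite-index normal subgroup $B \times N\Z \cong B \times \Z$ of $B \rtimes \Z$. Now $B \times \Z$ is virtually cyclic, hence satisfies FJCw (it is a CAT(0)-group, Example~\ref{ex-FJCw-1}), and by Proposition~\ref{prop-FJCw-1}~(\ref{prop-FJCw-1-4}) the overgroup $B \rtimes \Z$ of finite index satisfies FJCw as well. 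The main obstacle I anticipate is purely organizational: carefully writing the torsion abelian group $A$ with its $\Z$-action as a directed colimit of subgroups $B \rtimes \Z$ with $B$ finite — one must check that the exhausting family of $\phi$-invariant finitely generated (hence finite) subgroups of $A$ is directed and that the induced maps of semi-direct products are compatible — after which everything reduces to the virtually cyclic case via the inheritance properties already established.
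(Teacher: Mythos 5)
Your reduction to the case $B \rtimes \Z$ with $B$ finite is where the proof fails. You claim that $A_F \rtimes_\phi \Z$ ``is an increasing union of subgroups of the form $B_m \rtimes \Z$ with $B_m$ finite,'' but any subgroup of $A \rtimes_\phi \Z$ whose image in $\Z$ is a nontrivial subgroup $k\Z$ meets $A$ in a $\phi^k$-invariant subgroup, and a torsion abelian group need not be a directed union of its \emph{finite} $\phi^k$-invariant subgroups. The lamplighter group $\Z/2 \wr \Z = (\bigoplus_{n \in \Z}\Z/2) \rtimes_\sigma \Z$, with $\sigma$ the shift, is a counterexample already at the point where you form $A_F$: taking $F$ to be the zeroth summand makes $A_F$ all of $\bigoplus_\Z \Z/2$, and for every $k \neq 0$ the only finite $\sigma^k$-invariant subgroup is $\{0\}$, since a nonzero element has infinitely many shifts with pairwise distinct supports. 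Consequently every subgroup of the lamplighter that projects onto a nontrivial subgroup of $\Z$ and has finite intersection with the base is infinite cyclic; the lamplighter contains no nontrivial subgroup of the shape $(\textup{finite})\rtimes\Z$ at all, let alone enough of them to exhaust it as a directed union. Your passage from finite $B$ to the virtually cyclic finite-index subgroup $B \times N\Z$ is fine, but the argument never reaches that stage.

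The paper's own proof of this lemma takes a genuinely different route: it invokes Farrell and Linnell's argument for FIC, observing that the properties that argument uses (epimorphism inheritance, closure under subgroups, FIC for $F \wr_X S$ with $F$ finitely generated free and $S$ a finite symmetric group acting on $X$, the abelian case, and directed colimits with not-necessarily-injective structure maps) all hold for FJCw. The $F \wr_X S$ input is the decisive device that handles groups such as the lamplighter, for which no reduction to finite kernels over $\Z$ is available. Your initial observation that a naive application of Proposition~\ref{prop-FJCw-2}~(\ref{prop-FJCw-2-1}) to $p \colon A \rtimes \Z \to \Z$ would be circular is correct, but the workaround you propose has a genuine gap and does not prove the lemma.
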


\begin{lemma} \label{lem-2}
The wreath product $\Z \wr \Z$ satisfies FJCw.
\end{lemma}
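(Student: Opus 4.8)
Proof proposal for Lemma~\ref{lem-2} ($\Z \wr \Z$ satisfies FJCw).

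The plan is to realize $\Z \wr \Z$ as a directed colimit of groups that are already known to satisfy FJCw, and then invoke Proposition~\ref{prop-FJCw-1}~(\ref{prop-FJCw-1-5}). Write $\Z \wr \Z = \big(\bigoplus_{n \in \Z} \Z\big) \rtimes \Z$, where the generator $t$ of the acting $\Z$ shifts the index. For each $N \in \N$, consider the subgroup $B_N := \bigoplus_{|n| \le N} \Z < \bigoplus_{n \in \Z} \Z$; this is \emph{not} $t$-invariant, so instead I would use the quotient picture. Let $A := \bigoplus_{n \in \Z} \Z$ and observe that $A$ is the colimit of the finitely generated free abelian groups $A_N := \bigoplus_{|n| \le N} \Z$ under the inclusions $A_N \hookrightarrow A_{N+1}$, but the shift does not descend. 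A cleaner route: $A \otimes \Q \cong \Q[s,s^{-1}]$ as a $\Z[\Z]$-module, with $t$ acting by multiplication by $s$; the integral version is $A \cong \Z[s,s^{-1}]$ with $t \mapsto \cdot s$, so in fact $\Z \wr \Z$ is precisely $G_s$ in the notation of the paper with the \emph{indeterminate} $s$ playing the role of a transcendental "algebraic number" — but $s$ is transcendental, so Lemma~\ref{lem-1} and the $G_w$ results do not literally apply, and a colimit argument is genuinely needed.

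The key steps, in order, are as follows. First, I would express $\Z \wr \Z$ as $\colim_{N} H_N$ where $H_N$ is a crystallographic-type group: concretely, let $R_N := \Z[\Z/(2N+1)] = \Z[s]/(s^{2N+1}-1)$, let $H_N := R_N \rtimes_{\cdot s} \Z$, and take the structure maps $H_N \to H_{N'}$ for $N \mid N'$ (suitably) induced by the quotient $R_N \twoheadrightarrow$... — wait, this goes the wrong direction. The right construction is: $\Z[s,s^{-1}] = \colim_N \frac{1}{?}$; instead, present $\Z[s,s^{-1}]$ as an \emph{increasing union} of finitely generated free abelian subgroups that are invariant only "eventually". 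The genuinely correct move, which I would carry out, is to use Proposition~\ref{prop-FJCw-2}~(\ref{prop-FJCw-2-1}) applied to the projection $p \colon \Z \wr \Z \to \Z$ onto the acting copy: then $\ker(p) = \bigoplus_{\Z}\Z$ is abelian and satisfies FJCw by Example~\ref{ex-FJCw-2}, the quotient $\Z$ satisfies FJCw, and for the unique (up to the relevant indexing) infinite cyclic $C < \Z$, say $C = d\Z$, the preimage $p^{-1}(C) = \big(\bigoplus_{\Z}\Z\big) \rtimes d\Z$ is again of the same shape as $\Z \wr \Z$ (with $t$ replaced by $t^d$). This is circular as stated, so instead I would take $C$ to be \emph{trivial}: every infinite cyclic subgroup of $\Z$ has the form $d\Z$ with $d \ge 1$, and $p^{-1}(d\Z) \cong \big(\bigoplus_\Z \Z\big)\rtimes_{\cdot s^d}\Z$, which is $\cong \Z \wr \Z$ again. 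So Proposition~\ref{prop-FJCw-2}~(\ref{prop-FJCw-2-1}) reduces FJCw for $\Z\wr\Z$ to itself — not helpful.

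Therefore I would abandon the extension approach and return to the colimit: write $\Z \wr \Z = \colim_{N \to \infty} \big(\Z^{2N+1} \rtimes_{\sigma_N} \Z\big)$, where $\sigma_N$ is the \emph{cyclic} shift on $\Z^{2N+1}$ (permuting coordinates $0,1,\dots,2N$ cyclically) and the structure map $\Z^{2N+1}\rtimes_{\sigma_N}\Z \to \Z^{2(2N+1)+1}\rtimes\cdots$ sends the $i$-th standard basis vector to the sum of basis vectors in the target whose index is $\equiv i \pmod{2N+1}$ — no: the structure maps must be group homomorphisms compatible with the shifts, and the honest statement is that $\Z\wr\Z$ is the colimit over $N$ (ordered by divisibility) of the \emph{lamplighter-type} groups $L_N := \big(\Z[\Z/N]\big)\rtimes \Z$, where $\Z$ acts by multiplication by the generator $s$ of $\Z/N$, with structure map $L_N \to L_{NM}$ induced by the ring map $\Z[\Z/N] \to \Z[\Z/NM]$, $s \mapsto s^M$; one checks $\colim_N \Z[\Z/N] = \Z[s,s^{-1}]$ and the shift actions are compatible, so $\colim_N L_N = \Z\wr\Z$. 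Each $L_N = \Z[\Z/N]\rtimes\Z$ is a finitely generated group with a finitely generated abelian normal subgroup of finite index's... in fact $L_N$ is virtually $\Z^{N}\rtimes\Z$ — more to the point, $L_N$ fits in $1 \to \Z^N \to L_N \to \Z \to 1$ with $\Z$ acting through a finite-order automorphism (order dividing $N$) composed with nothing, so $L_N$ is virtually $\Z^{N+1}$, hence a finitely generated virtually abelian — thus a CAT(0) — group, and satisfies FJCw by Example~\ref{ex-FJCw-2} together with Proposition~\ref{prop-FJCw-1}~(\ref{prop-FJCw-1-4}). Applying Proposition~\ref{prop-FJCw-1}~(\ref{prop-FJCw-1-5}) to the directed system $\{L_N\}$ then yields FJCw for $\Z\wr\Z$.

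The main obstacle I anticipate is verifying that the shift action of $\Z$ on $\Z[\Z/N]$ has finite order as an automorphism of the underlying abelian group so that $L_N$ is virtually abelian: multiplication by $s$ on $\Z[\Z/N]$ satisfies $s^N = 1$, so $t \mapsto$ (mult.\ by $s$) generates a finite cyclic subgroup of $\Aut(\Z^N)$, whence $\langle t^N\rangle \cong \Z$ is central of finite index in $L_N$'s acting part and $L_N$ has $\Z^N \times \langle t^N\rangle \cong \Z^{N+1}$ as a finite-index subgroup — this is the one point needing care, and it is a short verification. The remaining checks — that the structure maps $L_N \to L_{NM}$ are well-defined homomorphisms, directed under divisibility, with colimit $\Z\wr\Z$ — are routine.
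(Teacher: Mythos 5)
Your final strategy --- writing $\Z \wr \Z$ as a directed colimit of virtually abelian groups $L_N = \Z[\Z/N] \rtimes_{\cdot s} \Z$ and invoking Proposition~\ref{prop-FJCw-1}~(\ref{prop-FJCw-1-5}) --- has a fatal gap. With the structure maps you specify, $\Z[\Z/N] \to \Z[\Z/NM]$, $s \mapsto s^M$, the colimit of the base rings is $\Z[\Q/\Z]$ (because $\colim_{N} \Z/N$ under $1 \mapsto M$ is $\Q/\Z$), not $\Z[s,s^{-1}]$; and for these maps to extend to group homomorphisms $L_N \to L_{NM}$ compatible with the conjugation action one is forced to send $t \mapsto t^M$, so that the acting copies of $\Z$ assemble to $\Q$. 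Thus $\colim_N L_N \cong \Z[\Q/\Z] \rtimes \Q$, which is not $\Z \wr \Z$.

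Moreover, no variant of this can work: since $\Z \wr \Z$ is finitely generated, in any filtered colimit presentation $\Z \wr \Z = \colim_i G_i$ some canonical map $G_{i_0} \to \Z \wr \Z$ is already surjective (take $i_0$ dominating the finitely many indices where the generators lift). If $G_{i_0}$ were virtually abelian, the image of a finite-index normal abelian subgroup would be a finite-index normal abelian subgroup of $\Z \wr \Z$; but $\Z \wr \Z$ has no finite-index abelian subgroup, since any finite-index subgroup contains an element projecting to some $e \ne 0$ in the quotient $\Z$, and conjugation by that element acts on $\bigoplus_\Z\Z$ as the degree-$e$ shift, which fixes no nonzero finitely supported vector. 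So $\Z \wr \Z$ is not a directed colimit of virtually abelian groups at all, and the reduction via Proposition~\ref{prop-FJCw-1}~(\ref{prop-FJCw-1-5}) cannot get started. The paper does not argue directly either: it observes that Farrell and Linnell prove the corresponding FIC statement in \cite[Lemma~4.3]{FL03} using a short list of inheritance properties, each of which is established here for FJCw (Propositions~\ref{prop-FJCw-1}, \ref{prop-FJCw-2}, Examples~\ref{ex-FJCw-2}, \ref{ex-FJCw-3}), so that proof transfers unchanged.
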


\begin{proof}[Proof of Lemma~\ref{lem-1} and \ref{lem-2}]
Farrell and Linnell prove Lemma~\ref{lem-1} and \ref{lem-2} for FIC, see \cite[Lemma 4.1, Corollary 4.2, Lemma 4.3]{FL03}. In the proofs they use the following properties of FIC:
\begin{itemize}
\item Let $p \colon \, \Gamma \twoheadrightarrow G$ be an epimorphism of groups such that FIC is true for $G$ and also for $p^{-1}(S)$ for all virtually cyclic subgroups $S < G$. Then FIC is true for $\Gamma$. (see \cite[Lemma 2.2]{FL03})
\item If FIC is true for a group $\Gamma$ then FIC is true for every subgroup of $\Gamma$. (see \cite[Lemma 2.3]{FL03})
\item Let $F$ be a finitely generated free group, let $X$ be a finite set, and let $S$ denote the symmetric group on $X$. Then FIC is true for $F \wr_X S := F^X \rtimes S$. (see \cite[Lemma 2.4]{FL03})
\item Every virtually abelian group satisfies FIC. (see \cite[Lemma 2.7]{FL03})
\item Let $\{G_i \mid i \in I\}$ be a directed system of groups. If each $G_i$ satisfies FIC, then the colimit $\colim_{i \in I} G_i$ satisfies FIC. (see \cite[Theorem 7.1]{FL03})
\end{itemize}
Since these properties are also satisfied for FJCw (see Proposition~\ref{prop-FJCw-1}, Example~\ref{ex-FJCw-2}, Example~\ref{ex-FJCw-3}, Proposition~\ref{prop-FJCw-2}), the proofs can be directly transferred to FJCw.
\end{proof}

\begin{lemma} \label{lem-3}
Suppose that FJCw holds for the groups $G_w$ ($w \in \overline{\Q}^\times$).
Then every semi-direct product $\Q^n \rtimes \Z$ satisfies FJCw.
\end{lemma}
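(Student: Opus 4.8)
The plan is to prove, by induction on $n$, that $\Q^n \rtimes \Z$ satisfies FJCw for \emph{every} action of $\Z$ on $\Q^n$. The case $n = 0$ is trivial, since then the group is just $\Z$. For the inductive step fix an action and let $\tau \in \Aut(\Q^n)$ be the image of $1 \in \Z$; as $\tau$ is invertible, $\Q^n$ becomes a module over the principal ideal domain $\Lambda := \Q[t,t^{-1}]$, with $t$ acting as $\tau$. I distinguish whether or not this $\Lambda$-module is simple.

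\textbf{Simple case.} If $\Q^n$ is a simple $\Lambda$-module, then $\Q^n \cong \Lambda/(f)$ for some irreducible $f \in \Q[t]$ with $f(0) \neq 0$ (every maximal ideal of $\Lambda$ has this form, since $t$ is a unit in $\Lambda$); hence $\Q^n \cong \Q(w)$ for a root $w \in \overline{\Q}^{\times}$ of $f$, with $\tau$ acting as multiplication by $w$, so $\Q^n \rtimes \Z \cong \Q(w) \rtimes_{\cdot w} \Z$. Now $\Q(w) = \bigcup_{m \geq 1} \tfrac{1}{m!}\Z[w,w^{-1}]$ is a directed union of subgroups invariant under multiplication by $w^{\pm 1}$, and multiplication by $m!$ gives a $w$-equivariant isomorphism $\tfrac{1}{m!}\Z[w,w^{-1}] \xrightarrow{\cong} \Z[w,w^{-1}]$. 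Thus $\Q(w) \rtimes_{\cdot w} \Z$ is a directed union of subgroups isomorphic to $G_w$, so it is the colimit of a directed system of copies of $G_w$; since $G_w$ satisfies FJCw by hypothesis, Proposition~\ref{prop-FJCw-1}~(\ref{prop-FJCw-1-5}) shows that $\Q^n \rtimes \Z$ satisfies FJCw. This is where the hypothesis on the $G_w$ is used.

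\textbf{Non-simple case.} If $\Q^n$ is not simple, pick a $\tau$-invariant subspace $V$ with $0 < \dim V < n$ and let $p \colon \Q^n \rtimes \Z \to Q := (\Q^n/V) \rtimes \Z$ be the quotient map modulo $V$. I verify the hypotheses of Proposition~\ref{prop-FJCw-2}~(\ref{prop-FJCw-2-1}). First, $Q \cong \Q^{n - \dim V} \rtimes \Z$ satisfies FJCw by the induction hypothesis, and $\ker p \cong V$ is abelian, hence satisfies FJCw by Example~\ref{ex-FJCw-2}. Finally, let $C < Q$ be infinite cyclic, with generator $(\bar v, k)$, $\bar v \in \Q^n/V$, $k \in \Z$. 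If $k = 0$, then $p^{-1}(C)$ is contained in the abelian subgroup $\Q^n \times \{0\}$ of $\Q^n \rtimes \Z$, hence is abelian and satisfies FJCw. If $k \neq 0$, lift the generator to $g = (\tilde v, k) \in \Q^n \rtimes \Z$; then $\langle g \rangle \cap \ker p = \{1\}$, and a short computation shows that conjugation by $g$ acts on $\ker p \cong V$ as $\tau^{k}|_V$, so $p^{-1}(C) = (\ker p) \rtimes \langle g \rangle \cong \Q^{\dim V} \rtimes \Z$, which satisfies FJCw by the induction hypothesis. Proposition~\ref{prop-FJCw-2}~(\ref{prop-FJCw-2-1}) now gives that $\Q^n \rtimes \Z$ satisfies FJCw, completing the induction.

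The main obstacle, such as it is, is the colimit identification in the simple case; the non-simple case is routine, the only points needing attention being that every infinite cyclic subgroup of $Q$ is covered by the two sub-cases and the conjugation computation identifying $p^{-1}(C)$ with $\Q^{\dim V} \rtimes \Z$.
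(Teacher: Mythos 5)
Your argument is correct, and it takes a genuinely different route from the paper. The paper first extends scalars to $\overline{\Q}$, proves by induction that $\overline{\Q}^n \rtimes_{\phi_{n,w}} \Z$ satisfies FJCw for the Jordan-block action $\phi_{n,w}$ (using the same base case $\Q(w)\rtimes_{\cdot w}\Z = \colim \frac{1}{m!}\Z[w,w^{-1}]\rtimes_{\cdot w}\Z$), and then puts an arbitrary rational action into Jordan form over $\overline{\Q}$ and embeds $\overline{\Q}^n \rtimes_\phi \Z$ into a direct sum of these standard blocks, finishing with the subgroup and product closure in Proposition~\ref{prop-FJCw-1}. You instead stay over $\Q$ and use the structure of $\Q^n$ as a module over the PID $\Q[t,t^{-1}]$: the simple case recovers exactly the same colimit identification $\Q(w)\rtimes_{\cdot w}\Z = \colim G_w$ (this is the one point where the hypothesis on the $G_w$ enters in both proofs), and the non-simple case uses a proper invariant subspace and Proposition~\ref{prop-FJCw-2}~(\ref{prop-FJCw-2-1}), with the routine but necessary verification that $p^{-1}(C)$ is either abelian (when $C$ projects trivially to $\Z$) or again of the form $\Q^{\dim V}\rtimes\Z$ (when it doesn't). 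Your version avoids the passage to $\overline{\Q}$ and Jordan normal form entirely and is a cleaner single induction; the paper's version has the side benefit of establishing the FJCw statement for $\overline{\Q}^n$-actions as an intermediate result.
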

\begin{proof}
At first we show that $\overline{\Q}^n \rtimes_{\phi_{n,w}} \Z$ ($n \in \N$, $w \in \overline{\Q}^\times$) with
\[
\phi_{n,w} \colon \, \overline{\Q}^n \to \overline{\Q}^n, \, (a_1, a_2, \ldots, a_n) \mapsto (w a_1, a_1 + w a_2, \ldots, a_{n-1} + w a_n)
\]
satisfies FJCw. We proceed by induction on $n$. As initial step we have to show that $\overline{\Q} \rtimes_{\phi_{1,w}} \Z = \overline{\Q} \rtimes_{\cdot w} \Z$ satisfies FJCw. Since $\Q$ is a directed colimit of subgroups isomorphic to $\Z$, we obtain
\[
\Q(w) \rtimes_{\cdot w} \Z = \Q[w,w^{-1}] \rtimes_{\cdot w} \Z = \colim \Z[w,w^{-1}] \rtimes_{\cdot w} \Z.
\]
By Proposition~\ref{prop-FJCw-1} (Colimits) this group satisfies FJCw.
Let $b_i$ ($i \in I$) be a basis of the $\Q(w)$-vector space $\overline{\Q}$. Since
\[
\overline{\Q} \rtimes_{\cdot w} \Z = \big( \bigoplus_{i \in I} b_i \Q(w) \big) \rtimes_{\cdot w} \Z < \bigoplus_{i \in I} \big( b_i \Q(w) \rtimes_{\cdot w} \Z \big) \cong \bigoplus_{i \in I} \big( \Q(w) \rtimes_{\cdot w} \Z \big),
\]
we conclude from Proposition~\ref{prop-FJCw-1} that $\overline{\Q} \rtimes_{\cdot w} \Z$ satisfies FJCw.

Induction step $n \to n+1$: Assume that $\overline{\Q}^n \rtimes_{\phi_{n,w}} \Z$ satisfies FJCw.
Consider the the map
\[
p \colon \, \overline{\Q}^{n+1} \rtimes_{\phi_{n+1,w}} \Z \twoheadrightarrow \overline{\Q}^n \rtimes_{\phi_{n,w}} \Z, \, (a_1, \ldots, a_{n+1}, z) \mapsto (a_1, \ldots, a_n, z).
\]
We will apply Proposition~\ref{prop-FJCw-2} (Extensions). Note that $\ker(p) = \overline{\Q}$ satisfies FJCw (see Example~\ref{ex-FJCw-2}). It remains to show that $p^{-1}(C)$ satisfies FJCw for every infinite cyclic subgroup $C < \overline{\Q}^n \rtimes_{\phi_{n,w}} \Z$. Let $(a,z) \in \overline{\Q}^{n+1} \rtimes_{\phi_{n+1,w}} \Z$ be a preimage of a generator of $C$. We obtain a short exact sequence
\[
1 \to \ker(p) \to p^{-1}(C) \to C \to 1.
\]
Note that $(a,z) \cdot x \cdot (a,z)^{-1} = x \cdot w^z$ for all $x \in \ker(p)$. Hence, $p^{-1}(C) \cong \overline{\Q} \rtimes_{\cdot w^z} \Z$ satisfies FJCw. So far, we have shown that FJCw holds for $\overline{\Q}^n \rtimes_{\phi_{n,w}} \Z$ ($n \in \N$, $w \in \overline{\Q}^\times$).

Now, let $\phi \colon \Z \to Gl_n(\Q) < Gl_n(\overline{\Q})$ be any homomorphism. We want to prove that $\Q^n \rtimes_\phi \Z$ satisfies FJCw. Proposition~\ref{prop-FJCw-1} (Subgroups) tells us that it suffices to show that FJCw is true for $\overline{\Q}^n \rtimes_\phi \Z$. A decomposition of $\phi(1) \in Gl_n(\overline{\Q})$ into Jordan blocks gives
\[
\overline{\Q}^n \rtimes_\phi \Z \cong (\bigoplus \overline{\Q}^m) \rtimes_{(\oplus \, \phi_{m,w})} \Z < \bigoplus \big( \overline{\Q}^m \rtimes_{\phi_{m,w}} \Z \big).
\]
We conclude from Proposition~\ref{prop-FJCw-1} (Subgroups) and (Direct products) that $\overline{\Q}^n \rtimes_\phi \Z$ satisfies FJCw.
\end{proof}

Following the proof of \cite[Corollary~4.4]{FL03} we obtain
\begin{corollary} \label{cor-1}
Suppose that FJCw is true for the groups $G_w$ ($w \in \overline{\Q}^\times$).
Then FJCw holds for every semi-direct product $A \rtimes \Z$ with $A$ torsion-free abelian.
\end{corollary}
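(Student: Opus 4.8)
The plan is to follow the proof of \cite[Corollary~4.4]{FL03}, combining Lemma~\ref{lem-3} (the finitely generated rational case), Lemma~\ref{lem-2} (the group $\Z\wr\Z$) and the inheritance properties of FJCw. Let $A$ be a torsion-free abelian group equipped with a $\Z$-action $\phi\in\Aut(A)$. Since $A$ is torsion-free it embeds into the $\Q$-vector space $V:=A\otimes_\Z\Q$, and $\phi$ extends to $\phi\otimes\id\in GL(V)$, so that $A\rtimes\Z$ is a subgroup of $V\rtimes\Z$. By Proposition~\ref{prop-FJCw-1}~(\ref{prop-FJCw-1-1}) it therefore suffices to prove that $V\rtimes_\phi\Z$ satisfies FJCw.

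Next I would view $V$ as a module over the principal ideal domain $R:=\Q[t,t^{-1}]=\Q[\Z]$, with $t$ acting as $\phi$. Then $V$ is the directed union of its finitely generated $R$-submodules $M$ (every $v\in V$ lies in the cyclic submodule $Rv$, and the sum of two finitely generated submodules is finitely generated), and these submodules are $\phi$-invariant. Hence $V\rtimes_\phi\Z=\colim_M\,(M\rtimes_\phi\Z)$, and by Proposition~\ref{prop-FJCw-1}~(\ref{prop-FJCw-1-5}) it is enough to show that $M\rtimes_\phi\Z$ satisfies FJCw for every finitely generated $R$-module $M$.

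By the structure theorem for finitely generated modules over the PID $R$ we may write $M\cong R^r\oplus T$ with $T$ a finitely generated torsion $R$-module. Since the diagonal $\Z$ embeds $M\rtimes\Z$ into $(R^r\rtimes\Z)\times(T\rtimes\Z)$, Proposition~\ref{prop-FJCw-1}~(\ref{prop-FJCw-1-1}) and~(\ref{prop-FJCw-1-2}) reduce the problem to the two summands. For the torsion part: after removing units, $T$ is a finite direct sum of modules $R/(g)$ with $g\in\Q[t]$ and $g(0)\neq0$, so $T\cong\Q^N$ as an abelian group with $\phi$ acting through an element of $GL_N(\Q)$ (a direct sum of companion matrices, invertible because the $g(0)$ are nonzero); thus $T\rtimes_\phi\Z\cong\Q^N\rtimes_\phi\Z$ satisfies FJCw by Lemma~\ref{lem-3}. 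For the free part: $R^r\rtimes_{\cdot t}\Z$ is, as an abelian group, $\bigoplus_{\Z}\Q^r$ with $\Z$ acting by the shift, i.e.\ $R^r\rtimes_{\cdot t}\Z\cong\Q^r\wr\Z$. Writing $\Q=\colim_n\tfrac{1}{n!}\Z$ gives $\Q^r\wr\Z=\colim_n(\Z^r\wr\Z)$, and $\Z^r\wr\Z$ embeds via the diagonal $\Z<\Z^r$ into $(\Z\wr\Z)^r$; so $\Z^r\wr\Z$ satisfies FJCw by Lemma~\ref{lem-2} together with Proposition~\ref{prop-FJCw-1}~(\ref{prop-FJCw-1-1}) and~(\ref{prop-FJCw-1-2}), and then $\Q^r\wr\Z$ does too by Proposition~\ref{prop-FJCw-1}~(\ref{prop-FJCw-1-5}).

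The argument is largely bookkeeping with the inheritance properties, the only non-formal input beyond Lemma~\ref{lem-3} being Lemma~\ref{lem-2}, which is exactly what is needed to handle the free summand $R^r$. The one step requiring care — and which I expect to be the main (if minor) obstacle — is the passage to $V=A\otimes\Q$: one cannot argue directly with finitely generated $\phi$-invariant subgroups of $A$, since these need not be finitely generated (for instance $\Z[1/2]$ with $\phi$ multiplication by $1/2$); it is precisely after tensoring with $\Q$ that the invariant submodules become finitely generated over $R$, making the structure theorem available.
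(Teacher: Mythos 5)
Your argument is correct and follows the same route as the paper's proof: tensor with $\Q$, pass to the colimit over finitely generated $\Q[t,t^{-1}]$-submodules, apply the PID structure theorem, and treat the free part via $\Z\wr\Z$ (Lemma~\ref{lem-2}) and the torsion part via Lemma~\ref{lem-3}. The only cosmetic difference is that the paper decomposes a finitely generated module directly into cyclic summands, so each free summand already has rank one and gives $\Q\wr\Z$ with no further work, whereas your free-plus-torsion split leaves a module $R^r$ and hence requires the (correct but extra) observation that $\Z^r\wr\Z$ embeds in $(\Z\wr\Z)^r$.
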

\begin{proof}
Since $A$ is a torsion-free abelian group, $A \rtimes \Z$ is a subgroup of a semi-direct product $B \rtimes \Z$ with $B := A \otimes_\Z \Q$. The conjugation action of $\Z$ on $B$ makes $B$ into a $\Q\Z$-module. By Proposition~\ref{prop-FJCw-1} it suffices to show that $C \rtimes \Z$ satisfies FJCw for every finitely generated $\Q\Z$-submodule $C < B$. Since $\Q\Z$ is a principal ideal domain, we can decompose $C$ into cyclic $\Q\Z$-submodules: $C \cong \bigoplus_{k=1}^n C_k$. Note that $C \rtimes \Z$ is a subgroup of $\bigoplus_{k=1}^n (C_k \rtimes \Z)$. Hence, it suffices to show that $C_k \rtimes \Z$ satisfies FJCw (see Proposition~\ref{prop-FJCw-1}). There are two types of cyclic $\Q\Z$-modules: free and finite $\Q$-dimensional. If $C_k$ is a free $\Q\Z$-module then we conclude $C_k \rtimes \Z \cong \Q \wr \Z$. Since $\Q$ is a directed colimit of subgroups isomorphic to $\Z$, we obtain $\Q \wr \Z = \colim \Z \wr \Z$. This group satisfies FJCw because of Lemma~\ref{lem-2} and Proposition~\ref{prop-FJCw-1} (Colimits). If $C_k$ has finite $\Q$-dimension then $C_k \rtimes \Z$ satisfies FJCw because of Lemma~\ref{lem-3}.
\end{proof}

\begin{proof}[Proof of Proposition~\ref{prop-sdp}]
Recall that an $n$-step solvable group is a group whose $n$-th term of the derived series is trivial.
If $G$ is $1$-step solvable (i.e., abelian) then $G$ satisfies FJCw (see Example~\ref{ex-FJCw-2}). Now suppose that $G$ is $2$-step solvable. Then $[G,G]$ is abelian. Let $T < [G,G]$  be the torsion subgroup. Since $T$ is characteristic in $[G,G]$, it is normal in $G$. We obtain a short exact sequence
\[
1 \to [G,G]/T \to G/T \to G/[G,G] \to 1.
\]
Proposition~\ref{prop-FJCw-2} (Extensions) and Corollary~\ref{cor-1} imply that FJCw holds for $G/T$.
Proposition~\ref{prop-FJCw-2} applied to the short exact sequence $1 \to T \to G \to G/T \to 1$ and Lemma~\ref{lem-1} imply that $G$ satisfies FJCw.

Now, let $G$ be an $n$-step solvable group with $n \geq 3$. We will prove by induction on $n$ that $G$ satisfies FJCw.
We consider the short exact sequence
\[
1 \to G^{(2)} \to G \to G/G^{(2)} \to 1.
\]
Note that $G^{(2)}$ is $(n-2)$-step solvable and $G/G^{(2)}$ is $2$-step solvable. If $C<G/G^{(2)}$ is an infinite cyclic subgroup then the preimage is an $(n-1)$-step solvable subgroup of $G$. We apply Proposition~\ref{prop-FJCw-2} (Extensions) and conclude that $G$ satisfies FJCw.
\end{proof}

\section{A combination of the Farrell-Hsiang method and transfer reducibility} \label{sec-FHJ}

In this section we introduce and study Farrell-Hsiang-Jones groups. They arise from a combination of the Farrell-Hsiang method and transfer reducibility.\footnote{Some Farrell-Hsiang groups are transfer reducible, see \cite{Win14}.}

\begin{definition}[FHJ group] \label{def-FHJ}
Let $G$ be a finitely generated group and let $\mathcal{F}$ be a family of subgroups. Let $S \subseteq G$ be a finite symmetric subset which generates $G$ and contains the trivial element $e \in G$.
We call $G$ a \emph{Farrell-Hsiang-Jones group (''FHJ group'' for short) with respect to the family $\mathcal{F}$} if there exist a natural number $N$ and surjective homomorphisms $\alpha_n \colon G \to F_n$ ($n \in \N$) onto finite groups $F_n$ such that the following condition is satisfied.
For any hyper-elementary subgroup $H$ of $F_n$ there exist
\begin{itemize}
\item a compact, contractible, controlled $N$-dominated metric space $X_{n,H}$,
\item a strong homotopy $G$-action $\Psi_{n,H}$ on $X_{n,H}$,
\item a positive real number $\Lambda_{n,H}$,
\item a simplicial complex $E_{n,H}$ of dimension at most $N$ with a simplicial $\alpha_n^{-1}(H)$-action whose stabilizers belong to $\mathcal{F}$,
\item and an $\alpha_n^{-1}(H)$-equivariant map $f_{n,H} \colon G \times X_{n,H} \to E_{n,H}$
\end{itemize}
such that
\[
n \cdot d_{E_{n,H}}^1\big(f_{n,H}(g,x),f_{n,H}(h,y)\big) \leq d_{\Psi_{n,H},S^n,n,\Lambda_{n,H}}\big((g,x),(h,y)\big)
\]
for all $(g,x),(h,y) \in G \times X_{n,H}$ with $h^{-1}g \in S^n$.
\end{definition}

\begin{examples}
\begin{enumerate}
\item Every Farrell-Hsiang group is a FHJ group.
    (Choose as $X_{n,H}$ the space consisting of one point.)
\item If a group $G$ is almost strongly transfer reducible over $\mathcal{F}$ then $G$ is a FHJ group with respect to $\mathcal{F}$. This follows from \cite[Proposition~3.6]{Weg12}. (Choose as $F_n$ the trivial group.)
\end{enumerate}
\end{examples}

\begin{lemma} \label{lem-FHJ}
Let $G$ be a FHJ group with respect to the family $\mathcal{F}_G := \{ H < G \mid H \mbox{ satisfies } FJCw \}$.
Let $F$ be a finite group. Then the wreath product $G \wr F$ is a FHJ group with respect to the family $\mathcal{F}_{G \wr F} := \{ H < G \wr F \mid H \mbox{ satisfies } FJCw \}$.
\end{lemma}
\begin{proof}
We fix a finite symmetric generating set $S_G$ for $G$ and define the following finite symmetric generating sets: $S_{G^F} := \{(s_f)_{f \in F} \in G^F \mid s_f \in S_G\}$, $S_F := F$, $S_{G \wr F} := \{ (x,f) \mid x \in S_{G^F}, f \in F \}$.
We denote the respective word length functions by $l_G$, $l_{G^F}$, $l_{G \wr F}$ and the respective word metrics by $d_G$, $d_{G^F}$, $d_{G \wr F}$.

Since $G$ is a FHJ group there exist a natural number $N_G$ and surjective homomorphisms $\alpha_{G,n} \colon G \to F_{G,n}$ ($n \in \N$) onto finite groups $F_{G,n}$ such that the following condition is satisfied.
For any hyper-elementary subgroup $H$ of $F_{G,n}$ there exist
\begin{itemize}
\item a compact, contractible, controlled $N_G$-dominated metric space $X_{G,n,H}$,
\item a strong homotopy $G$-action $\Psi_{G,n,H}$ on $X_{G,n,H}$,
\item a positive real number $\Lambda_{G,n,H}$,
\item a simplicial complex $E_{G,n,H}$ of dimension at most $N_G$ with a simplicial $\alpha_{G,n}^{-1}(H)$-action whose stabilizers belong to $\mathcal{F}_G$,
\item and an $\alpha_{G,n}^{-1}(H)$-equivariant map $f_{G,n,H} \colon G \times X_{G,n,H} \to E_{G,n,H}$
\end{itemize}
such that
\[
n \cdot d_{E_{G,n,H}}^1\big(f_{G,n,H}(g,x),f_{G,n,H}(h,y)\big) \leq d_{\Psi_{G,n,H},S_G^n,n,\Lambda_{G,n,H}}\big((g,x),(h,y)\big)
\]
for all $(g,x),(h,y) \in G \times X_{G,n,H}$ with $d_G(g,h) \leq n$.

We set
\[
N_{G \wr F} := \max\big\{ |F|^2 \cdot N_G, |F| \cdot (N_G + 1)^{|F|^2} - 1 \big\}
\]
and define $\alpha_{G \wr F,n} \colon G \wr F \to F_{G,n|F|^2} \wr F =: F_{G \wr F,n}$ to be the surjective homomorphism induced by $\alpha_{G,n|F|^2}$, i.e., $\alpha_{G \wr F,n}((g_f)_{f \in F}, f') := ((\alpha_{G,n|F|^2}(g_f))_{f \in F}, f')$.

Let $H < F_{G \wr F,n}$ be a hyper-elementary subgroup. We have to construct
\begin{itemize}
\item a compact, contractible, controlled $N_{G \wr F}$-dominated metric space $X_{G \wr F,n,H}$,
\item a strong homotopy $G \wr F$-action $\Psi_{G \wr F,n,H}$ on $X_{G \wr F,n,H}$,
\item a positive real number $\Lambda_{G \wr F,n,H}$,
\item a simplicial complex $E_{G \wr F,n,H}$ of dimension at most $N_{G \wr F}$ with a simplicial $\alpha_{G \wr F,n}^{-1}(H)$-action whose stabilizers belong to $\mathcal{F}_{G \wr F}$,
\item and an $\alpha_{G \wr F,n}^{-1}(H)$-equivariant map $f_{G \wr F,n,H} \colon G \wr F \times X_{G \wr F,n,H} \to E_{G \wr F,n,H}$
\end{itemize}
such that
\[
n \cdot d_{E_{G \wr F,n,H}}^1\big(f_{G \wr F,n,H}(g,x),f_{G \wr F,n,H}(h,y)\big) \leq d_{\Psi_{G \wr F,n,H},S_{G \wr F}^n,n,\Lambda_{G \wr F,n,H}}\big((g,x),(h,y)\big)
\]
for all $(g,x),(h,y) \in G \wr F \times X_{G \wr F,n,H}$ with $d_{G \wr F}(g,h) \leq n$.

Since the class of hyper-elementary groups is closed under taking subgroups and quotients, the groups $H_f := \pr_f(H \cap F_{G,n|F|^2}^F) < G$ are hyper-elementary. We define
\[
\Lambda_{G \wr F,n,H} := \max\big\{ \Lambda_{G,n|F|^2,H_f} \, \big| \, f \in F \big\}.
\]
For the sequel we use the abbreviations
$X_{H_f} := X_{G,n|F|^2,H_f}$, $\Psi_{H_f} := \Psi_{G,n|F|^2,H_f}$, $\Lambda_{H_f} := \Lambda_{G,n|F|^2,H_f}$.

The strong homotopy $G$-actions $\Psi_{H_f}$ on $X_{H_f}$ induce strong homotopy $G \wr F$-actions $\widetilde{\Psi}_{H_f}$ on $X_{H_f}^F$:
\begin{align*}
& \widetilde{\Psi}_{H_f}\big(g_l,t_l,g_{l-1},\ldots,t_1,g_0,x\big)_f := \\
& \Psi_{H_f}\big((a_l)_f,t_l,(a_{l-1})_{f f_l},\ldots,t_1,(a_0)_{f f_l \ldots f_1},x_{f f_l \ldots f_1 f_0}\big)
\end{align*}
where $a_i \in G^F$ and $f_i \in F$ ($i = 0,1,\ldots,l$) are determined by $g_i = a_i f_i$.
We equip $X_{H_f}^F$ with the metric
\[
d_{X_{H_f}^F}\big(x,y\big) := \frac{1}{|F|} \sum_{f \in F} d_{X_{H_f}}(x_f,y_f).
\]
The metric space $X_{H_f}^F$ is compact, contractible, and controlled $|F| \cdot N_G$-dominated.
The group $F$ acts on $X_{H_f}^F$ by $f' \cdot (x_{\overline{f}})_{\overline{f} \in F} := (x_{\overline{f}f'})_{\overline{f} \in F}$.
Since the pseudo-metric
\begin{align*}
& d\big((g,x),(h,y)\big) := \\
& \frac{1}{|F|} \sum_{\overline{f} \in F} d_{\Psi_{H_f},S_G^n,n,\Lambda_{H_f}}\big((g \cdot \pr_F(g)^{-1})_{\overline{f}},(\pr_F(g)x)_{\overline{f}}),((h \cdot \pr_F(h)^{-1})_{\overline{f}},(\pr_F(h)y)_{\overline{f}})\big)
\end{align*}
on $G \wr F \times X_{H_f}^F$ satisfies
\begin{itemize}
\item $d((g,x),(g,y)) \leq \Lambda_{H_f} \cdot d_{X_H}(x,y)$ for all $g \in G \wr F$, $x,y \in X_{H_f}^F$ and
\item $d((g,x),(h,y)) \leq 1$ for all $(h,y) \in S^1_{\widetilde{\Psi}_{H_f},S_{G \wr F}^n,n}(g,x)$,
\end{itemize}
we have
\[
\frac{1}{|F|} \sum_{\overline{f} \in F} d_{\Psi_{H_f},S_G^n,n,\Lambda_{H_f}}\big(g_{\overline{f}},x_{\overline{f}}),(h_{\overline{f}},y_{\overline{f}})\big) \leq d_{\widetilde{\Psi}_{H_f},S_{G \wr F}^n,n,\Lambda_{H_f}}\big((g,x),(h,y)\big)
\]
for all $(g,x), (h,y) \in G^F \times X_{H_f}^F$.

The strong homotopy $G \wr F$-actions $\widetilde{\Psi}_{H_f}$ on $X_{H_f}^F$ give rise to a strong homotopy $G \wr F$-action $\Psi_{G \wr F,n,H}$ on $X_{G \wr F,n,H} := \prod_{f \in F} X_{H_f}^F$ which is given by
\[
\Psi_{G \wr F,n,H}\big(g_l,t_l,g_{l-1},\ldots,t_1,g_0,x\big) := \Big(\widetilde{\Psi}_{H_f}\big(g_l,t_l,g_{l-1},\ldots,t_1,g_0,x_f\big)\Big)_{f \in F}.
\]
The group $F$ acts diagonally on $X_{G \wr F,n,H} := \prod_{f \in F} X_{H_f}^F$, i.e., $f' \cdot (x_f)_{f \in F} := (f' \cdot x_f)_{f \in F}$ for $f' \in F$ and $x_f \in X_{H_f}^F$ ($f \in F$). We equip $X_{G \wr F,n,H}$ with the metric
\[
d_{X_{G \wr F,n,H}}\big(x,y\big) := \frac{1}{|F|} \sum_{f \in F} d_{X_{H_f}^F}(x_f,y_f).
\]
Note that $X_{G \wr F,n,H}$ is a compact, contractible, controlled $|F|^2 \cdot N_G$-dominated metric space.
We abbreviate $X_H := X_{G \wr F,n,H}$, $\Psi_H := \Psi_{G \wr F,n,H}$, $\Lambda_H := \Lambda_{G \wr F,n,H}$.

Since the metric
\[
\big((g,x),(h,y)\big) \mapsto \frac{1}{|F|} \sum_{f \in F} d_{\widetilde{\Psi}_{H_f},S_{G \wr F}^n,n,\Lambda_{H_f}}\big((g,x_f),(h,y_f)\big)
\]
on $G \wr F \times X_{H}$ satisfies
\begin{itemize}
\item $\frac{1}{|F|} \sum_{f \in F} d_{\widetilde{\Psi}_{H_f},S_{G \wr F}^n,n,\Lambda_{H_f}}((g,x),(g,y)) \leq \Lambda_H \cdot d_{X_{H}}(x,y)$ for all $g \in G \wr F$, $x,y \in X_H$ and
\item $\frac{1}{|F|} \sum_{f \in F} d_{\widetilde{\Psi}_{H_f},S_{G \wr F}^n,n,\Lambda_{H_f}}((g,x),(h,y)) \leq 1$ for all $(h,y) \in S^1_{\Psi_H,S_{G \wr F}^n,n}(g,x)$,
\end{itemize}
we have
\[
\frac{1}{|F|} \sum_{f \in F} d_{\widetilde{\Psi}_{H_f},S_{G \wr F}^n,n,\Lambda_{H_f}}\big((g,x),(h,y)\big) \leq d_{\Psi_H,S_{G \wr F}^n,n,\Lambda_H}\big((g,x),(h,y)\big)
\]
for all $(g,x), (h,y) \in G \wr F \times X_H$.

We define a simplicial complex $E'_{G \wr F,n,H}$ as follows. The vertices of $E'_{G \wr F,n,H}$ are tuples $(v_{f,f'})_{f,f' \in F}$, where $v_{f,f'}$ is a vertex of $E_{G,n|F|^2,H_f}$. Distinct vertices $v^{(1)}, \ldots, v^{(m)}$ span an $m$-simplex if and only if for all $f,f' \in F$ the vertices $v^{(1)}_{f,f'}, \ldots, v^{(m)}_{f,f'}$ are contained in a common simplex of $E_{G,n|F|^2,H_f}$. The simplicial complex $E'_{G \wr F,n,H}$ is of dimension at most $(N_G + 1)^{|F|^2} - 1$.

For every $f \in \pr_F(H)$ we fix an element $h_f \in \alpha_{G \wr F,n}^{-1}(H)$ with $\pr_F(h_f) = f$. A simplicial $\alpha_{G \wr F,n}^{-1}(H)$-action on $\coprod_{f \in F} E'_{G \wr F,n,H} = F \times E'_{G \wr F,n,H}$ is defined as follows. Let $g \in \alpha_{G \wr F,n}^{-1}(H)$, $\overline{f} \in F$, and let $(v_{f,f'})_{f,f' \in F}$ be a vertex in $E'_{G \wr F,H}$. We define
\[
g \big(\overline{f},(v_{f,f'})_{f,f' \in F}\big) := \big(\pr_F(g)\overline{f},(\pr_f(g')v_{f,f'})_{f,f' \in F}\big)
\]
with $g' := h_{\pr_F(g)\overline{f}}^{-1} \cdot g \cdot h_{\overline{f}} \in \alpha_{G \wr F,n}^{-1}(H) \cap G^F = \prod_{f \in F} \alpha_{G,n|F|^2}^{-1}(H_f)$.

We fix a right coset transversal $T \subset F$ for $G^F \rtimes \pr_F(H) < G \wr F$. Note that every element in $G \wr F$ can be uniquely written as $h_{\overline{f}} \cdot g \cdot t$ with $\overline{f} \in \pr_F(H)$, $g = (g_f)_{f \in F} \in G^F$, and $t \in T$. We define a map $f'_{G \wr F,n,H} \colon G \wr F \times X_H \to F \times E'_{G \wr F,n,H}$ by
\[
f'_{G \wr F,n,H}(h_{\overline{f}} \cdot g \cdot t,x) = \Big(\overline{f},i\big((f_{G,n|F|^2,H_f}(g_f,(t^{-1}x)_f))_{f,f' \in F}\big)\Big),
\]
where $i \colon \prod_{f,f' \in F} E_{G,n|F|^2,H_f} \to E'_{G \wr F,n,H}$ is given by
\begin{align*}
& \Big(\displaystyle\sum_{v_f \text{ vertex in } E_{G,n|F|^2,H_f}} t_{v_f}(f') \cdot v_f\Big)_{f,f' \in F} \mapsto \\
& \sum_{(v_{f,f'})_{f,f' \in F} \text{ vertex in } E'_{G \wr F,n,H}} \prod_{f,f' \in F} t_{v_f}(f') \cdot (v_{f,f'})_{f,f' \in F}
\end{align*}
($t_{v_f}(f') \geq 0$, $\sum_{v_f} t_{v_f}(f') = 1$).
The map $f'_{G \wr F,n,H}$ is $\alpha_{G \wr F,n}^{-1}(H)$-equivariant.

We would like to define a map $f_{G \wr F,n,H} \colon G \wr F \times X_H \to F \times E'_{G \wr F,n,H}$ by $f_{G \wr F,n,H}(g,x) := \sum_{f \in F} \frac{1}{|F|} f'_{G \wr F,n,H} (gf^{-1},fx)$. But in general, this linear combination will not lie in a simplex of $F \times E'_{G \wr F,n,H}$. Hence, we define the simplicial complex $E_{G \wr F,n,H}$ to be the smallest simplicial complex with the same vertices as $F \times E'_{G \wr F,n,H}$ such that every linear combination $\sum_{f \in F} \frac{1}{|F|} f'_{G \wr F,n,H} (gf^{-1},fx)$ ($g \in G \wr F$, $x \in X_H$) lies in $E_{G \wr F,n,H}$. Then the map
\[
f_{G \wr F,n,H} \colon G \wr F \times X_H \to F \times E_{G \wr F,n,H}, (g,x) \mapsto \sum_{f \in F} \frac{1}{|F|} f'_{G \wr F,n,H} (gf^{-1},fx)
\]
is well defined. The simplicial complex $E_{G \wr F,n,H}$ is of dimension at most $|F| \cdot (N_G + 1)^{|F|^2} -  1$.
For the sequel we use the abbreviations
$E_H := E_{G \wr F,n,H}$, $f_H := f_{G \wr F,n,H}$, $E_{H_f} := E_{G,n|F|^2,H_f}$, $f_{H_f} := f_{G,n|F|^2,H_f}$.

It remains to show that
\[
n \cdot d_{E_H}^1\big(f_H(g,x),f_H(h,y)\big) \leq d_{\Psi_H,S_{G \wr F}^n,n,\Lambda_H}\big((g,x),(h,y)\big)
\]
for all $(g,x),(h,y) \in G \wr F \times X_H$ with $d_{G \wr F}(g,h) \leq n$.
Let $g',h' \in G^F$ and $f_g,f_h \in F$ with $g = g'f_g$ and $h = h'f_h$.
Then $f_H(g,x) = f_H(g',f_g^{-1}x)$ and $f_H(h,y) = f_H(h',f_h^{-1}y)$. For every $\overline{f} \in F$ there exist $a(\overline{f}) \in \pr_F(H)$, $b(\overline{f}), c(\overline{f}) \in G^F$, and $d(\overline{f}) \in T \subset F$ such that $g' \overline{f} = h_{a(\overline{f})} b(\overline{f}) d(\overline{f})$ and $h' \overline{f} = h_{a(\overline{f})} c(\overline{f}) d(\overline{f})$.
We conclude
\begin{eqnarray*}
f_H(g',f_g^{-1}x) & = & \frac{1}{|F|} \sum_{\overline{f} \in F} \Big(a(\overline{f}),i\big((f_{H_f}(b(\overline{f})_f,d(\overline{f})^{-1}f_g^{-1}x)_f)_{f,f' \in F}\big)\Big), \\
f_H(h',f_h^{-1}y) & = & \frac{1}{|F|} \sum_{\overline{f} \in F} \Big(a(\overline{f}),i\big((f_{H_f}(c(\overline{f})_f,d(\overline{f})^{-1}f_h^{-1}y)_f)_{f,f' \in F}\big)\Big).
\end{eqnarray*}
Hence,
\begin{align*}
& d^1_{E_H}(f_H(g,x),f_H(h,y)) \\
& \leq \frac{1}{|F|} \sum_{\overline{f} \in F} d^1_{E'_{G \wr F,n,H}}\Big(i\big((f_{H_f}(b(\overline{f})_f,(d(\overline{f})^{-1}f_g^{-1}x)_f))_{f,f' \in F}\big),i\big((f_{H_f}(c(\overline{f})_f,(d(\overline{f})^{-1}f_h^{-1}y)_f))_{f,f' \in F}\big)\Big) \\
& \leq \frac{1}{|F|} \sum_{\overline{f}, f,f' \in F} d^1_{E_{H_f}}\big(f_{H_f}(b(\overline{f})_f,(d(\overline{f})^{-1}f_g^{-1}x)_f),f_{H_f}(c(\overline{f})_f,(d(\overline{f})^{-1}f_h^{-1}y)_f)\big) \\
& = \sum_{\overline{f}, f \in F} d^1_{E_{H_f}}\big(f_{H_f}(b(\overline{f})_f,(d(\overline{f})^{-1}f_g^{-1}x)_f),f_{H_f}(c(\overline{f})_f,(d(\overline{f})^{-1}f_h^{-1}y)_f)\big).
\end{align*}

The construction of the generating sets $S_{G \wr F}$ and $S_{G^F}$ -- in particular, the invariance under conjugation with elements in $F$ -- implies $d_{G^F}(g',h') \leq d_{G \wr F}(g,h) \leq n$.
Since
\begin{align*}
& d_G\big(b(\overline{f})_f,c(\overline{f})_f\big) = l_G\big(c(\overline{f})_f^{-1} b(\overline{f})_f\big) \leq l_{G^F}\big(c(\overline{f})^{-1} b(\overline{f})\big) = \\
& l_{G^F}\big(d(\overline{f}) \overline{f}^{-1} h'^{-1} g' \overline{f} d(\overline{f})^{-1}\big) = l_{G^F}(h'^{-1} g') \leq n,
\end{align*}
we conclude
\[
d^1_{E_{H_f}}\big(f_{H_f}(b(\overline{f})_f,(d(\overline{f})^{-1}f_g^{-1}x)_f),f_{H_f}(c(\overline{f})_f,(d(\overline{f})^{-1}f_h^{-1}y)_f)\big) < \frac{1}{n|F|^2}.
\]
Therefore,
\begin{align*}
& d^1_{E_H}(f_H(g,x),f_H(h,y)) = d^1_{E_H}(f_H(g',x),f_H(h',y)) \\
& \leq \sum_{\overline{f}, f \in F} d^1_{E_{H_f}}\big(f_{H_f}(b(\overline{f})_f,(d(\overline{f})^{-1}f_g^{-1}x)_f),f_{H_f}(c(\overline{f})_f,(d(\overline{f})^{-1}f_h^{-1}y)_f)\big) \\
& \leq \frac{1}{n|F|^2} \sum_{\overline{f}, f \in F} d_{\Psi_{H_f},S_G^n,n,\Lambda_{H_f}}\big((b(\overline{f})_f,(d(\overline{f})^{-1}f_g^{-1}x)_f),(c(\overline{f})_f,(d(\overline{f})^{-1}f_h^{-1}y)_f)\big) \\
& \leq \frac{1}{n|F|} \sum_{\overline{f} \in F} d_{\widetilde{\Psi}_{H_f},S_{G \wr F}^n,n,\Lambda_{H_f}}\big((b(\overline{f}),d(\overline{f})^{-1}f_g^{-1}x),(c(\overline{f}),d(\overline{f})^{-1}f_h^{-1}y)\big) \\
& \leq \frac{1}{n|F|} \sum_{\overline{f} \in F} d_{\Psi_H,S_{G \wr F}^n,n,\Lambda_H}\big((b(\overline{f}),d(\overline{f})^{-1}f_g^{-1}x),(c(\overline{f}),d(\overline{f})^{-1}f_h^{-1}y)\big) \\
& = \frac{1}{n|F|} \sum_{\overline{f} \in F} d_{\Psi_H,S_{G \wr F}^n,n,\Lambda_H}\big((b(\overline{f})d(\overline{f})f_g,x),(c(\overline{f})d(\overline{f})f_h,y)\big) \\
& = \frac{1}{n|F|} \sum_{\overline{f} \in F} d_{\Psi_H,S_{G \wr F}^n,n,\Lambda_H}\big((h_{a(\overline{f})}b(\overline{f})d(\overline{f})f_g,x),(h_{a(\overline{f})}c(\overline{f})d(\overline{f})f_h,y)\big) \\
& = \frac{1}{n|F|} \sum_{\overline{f} \in F} d_{\Psi_H,S_{G \wr F}^n,n,\Lambda_H}\big((g,x),(h,y)\big) \\
& = \frac{1}{n} d_{\Psi_H,S_{G \wr F}^n,n,\Lambda_H}\big((g,x),(h,y)\big)
\end{align*}
This finishes the proof of Lemma~\ref{lem-FHJ}.
\end{proof}

\begin{proposition} \label{prop-FHJ}
Let $G$ be a FHJ group with respect to a family $\mathcal{F}$ of subgroups.
Suppose that $\mathcal{F}$ is virtually closed, i.e., $K < H < G$ with $K \in \mathcal{F}$ and $[H:K] < \infty$ implies $H \in \mathcal{F}$.
Then $G$ satisfies FJC with respect to $\mathcal{F}$.
\end{proposition}
The proof of Proposition~\ref{prop-FHJ} needs some preparation. We will give a proof in subsection~\ref{subsec-FHJ-K} and \ref{subsec-FHJ-L}.

\begin{remark}
If the simplicial $\alpha_n^{-1}(H)$-action is cell preserving then we can drop the assumption in Proposition \ref{prop-FHJ} that $\mathcal{F}$ is virtually closed.
\end{remark}

Lemma~\ref{lem-FHJ} and Proposition~\ref{prop-FHJ} imply
\begin{corollary} \label{cor-FHJ}
Let $G$ be a FHJ group with respect to the family $\mathcal{F}_G := \{ H < G \mid H \mbox{ satisfies } FJCw \}$. Then $G$ satisfies FJCw.
\end{corollary}

\subsection{The obstruction category}

In this subsection we recall the definition of the obstruction category. In the sequel $\mathcal A$ denotes a small additive category (with a strictly associative direct sum) which is provided with a strict right action of a group $G$.

\begin{definition}[obstruction category]
Let $X$ be a $G$-space and let $(Y,d_Y)$ be a metric space with an isometric $G$-action. We consider the $G$-space $G \times X \times Y \times [1,\infty)$ with the $G$-action given by $h(g,x,y,t) := (hg,hx,hy,t)$. We define the \emph{obstruction category $\mathcal{O}^G(X,(Y,d_Y);\mathcal{A})$} as follows.\\
An object in $\mathcal{O}^G(X,(Y,d_Y);\mathcal{A})$ is a collection $A = (A_{g,x,y,t})_{(g,x,y,t) \in G \times X \times Y \times [1,\infty)}$ of objects in $\mathcal{A}$ with the following properties:
\begin{itemize}
 \item $A$ is locally finite, i.e., for every $z_0 \in G \times X \times Y \times [1,\infty)$ there exists an open neighborhood $U$ such that the set $\{z \in G \times X \times Y \times [1,\infty) \, | \, A_z \neq 0\} \cap U$ is finite.
 \item There is a compact subset $K \subseteq G \times X \times Y$ such that $A_{g,x,y,t} = 0$ whenever $(g,x,y) \notin G \cdot K$.
 \item We have $A_z \cdot g = A_{g^{-1}z}$ for all $z \in G \times X \times Y \times [1,\infty)$ and $g \in G$.
\end{itemize}
A morphism $\phi \colon B \to A$ is a collection of morphisms $\phi_{z,z'} \colon B_{z'} \to A_z$ in $\mathcal{A}$ ($z,z' \in G \times X \times Y \times [1,\infty)$) with the following properties:
\begin{itemize}
 \item The sets $\{ z \in G \times X \times Y \times [1,\infty) \, | \, \phi_{z,z'} \neq 0\}$ and $\{ z \in G \times X \times Y \times [1,\infty)  \, | \, \phi_{z',z} \neq 0\}$ are finite for all $z' \in G \times X \times Y \times [1,\infty)$.
 \item There are $R,T > 0$ and a finite subset $F \subseteq G$ such that $\phi_{(g,x,y,t),(g',x',y',t')} = 0$ whenever $g^{-1}g' \notin F$ or $d_Y(y,y') > R$ or $|t-t'| > T$.
 \item The set
     \[
      \big\{ \big((x,t),(x',t')\big) \in (X \times [1,\infty))^2 \, \big| \, \exists \, g,g' \in G, y,y' \in Y: \phi_{(g,x,y,t),(g',x',y',t')} \neq 0 \big\}
     \]
     lies in the equivariant continuous control condition $\mathcal{E}^X_{Gcc}$ defined in \cite[section 3.2]{BLR08b}.
 \item We have $\phi_{z,z'} \cdot g = \phi_{g^{-1}z,g^{-1}z'}$ for all $z,z' \in G \times X \times Y \times [1,\infty)$ and $g \in G$.
\end{itemize}
Composition is given by matrix multiplication, i.e.,
\[
 (\psi \circ \phi)_{z,z''} := \sum_{z' \in G \times X \times Y \times [1,\infty)} \psi_{z,z'} \circ \psi_{z',z''}.
\]
The obstruction category $\mathcal{O}^G(X,(Y,d_Y);\mathcal{A})$ inherits the structure of an additive category from $\mathcal{A}$.
\end{definition}
We use the same notation as in \cite[subsection~4.4]{BL12a} and \cite[section~4]{Weg12} which slightly differs from the notation used in \cite{BLR08b} (see \cite[Remark 4.10]{BL12a}).

The construction is functorial in $Y$: Let $f \colon Y \to Y'$ be a $G$-equivariant map with the property that for every $r > 0$ there exists $R > 0$ such that $d_{Y'}(f(y_1),f(y_2)) < R$ whenever $d_Y(y_1,y_2) < r$. Then the map $f$ induces a functor $f_* \colon \mathcal{O}^G(X,Y;\mathcal{A}) \to \mathcal{O}^G(X,Y';\mathcal{A})$ with $f_*(A)_{g,x,y',t} := \bigoplus_{y \in f^{-1}(\{y'\})} \, A_{g,x,y,t}$.

If $\mathcal{A}$ is an additive category with involution $I_\mathcal{A}$ (i.e., $I_\mathcal{A} \colon \mathcal{A} \to \mathcal{A}$ is a contravariant functor with $I_\mathcal{A}^2 = \id_\mathcal{A}$) then we obtain an involution $I_\mathcal{O}$ on $\mathcal{O}^G(X,(Y,d_Y);\mathcal{A})$ by $I_\mathcal{O}(A)_{g,x,y,t} := I_\mathcal{A}(A_{g,x,y,t})$ and $I_\mathcal{O}(\phi)_{z,z'} := I_\mathcal{A}(\phi_{z',z})$.

We are mostly interested in $\mathcal{O}^G(E_\mathcal{F}G,\pt;\mathcal{A})$ because of the following proposition which is proven in \cite[Theorem 5.2]{BL12a}.
\begin{proposition} \label{prop-assembly}
\begin{enumerate}
\item Let $G$ be a group and let $\mathcal{F}$ be a family of subgroups. Suppose that for every $m_0 \in \Z$ there exists $m \geq m_0$ such that
    \[
    K_m\big(\mathcal{O}^G(E_\mathcal{F}G,\pt;\mathcal{A})\big) = 0
    \]
    for all additive $G$-categories $\mathcal{A}$.
    Then the assembly map~(\ref{assembly-K}) is an isomorphism for all $m \in \Z$ and all additive $G$-categories $\mathcal{A}$.
\item Let $G$ be a group and let $\mathcal{F}$ be a family of subgroups. Denote by $\mathcal{F}_2$ the family of subgroups which are contained in $\mathcal{F}$ or contain a member of $\mathcal{F}$ as subgroup of index two. Suppose that there exists $m \in \Z$ such that
    \[
    L_m^{\langle -\infty \rangle}\big(\mathcal{O}^G(E_{\mathcal{F}_2}G,\pt;\mathcal{A})\big) = 0
    \]
    for all additive $G$-categories $\mathcal{A}$ with involution.
    Then the assembly map~(\ref{assembly-L}) is an isomorphism for all $m \in \Z$ and all additive $G$-categories $\mathcal{A}$.
\end{enumerate}
\end{proposition}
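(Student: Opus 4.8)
The plan is to unwind the statement into the controlled-algebra description of the assembly map, and then to kill the resulting obstruction term by a suspension argument (and, for $L$-theory, periodicity). The one genuine input I would take from outside is essentially the content of the continuously-controlled-algebra machinery of \cite{BLR08b}: for every $G$-CW-complex $X$ all of whose isotropy groups lie in $\mathcal{F}$, the spectrum $\mathbf{K}\big(\mathcal{O}^G(X,\pt;\mathcal{A})\big)$ is naturally a model for the homotopy cofiber of the assembly map $H^G(X;\mathbf{K}_\mathcal{A})\to H^G(\pt;\mathbf{K}_\mathcal{A})$; concretely one obtains a long exact sequence
\[
\cdots \to K_{m+1}\big(\mathcal{O}^G(X,\pt;\mathcal{A})\big) \to H^G_m(X;\mathbf{K}_\mathcal{A}) \xrightarrow{\ \mathrm{asmb}\ } H^G_m(\pt;\mathbf{K}_\mathcal{A}) \to K_m\big(\mathcal{O}^G(X,\pt;\mathcal{A})\big) \to \cdots .
\]
The auxiliary coordinate $[1,\infty)$ together with the equivariant continuous control condition $\mathcal{E}^X_{Gcc}$ is exactly what produces this (nonconnective) fibration sequence via a Karoubi/germ-at-infinity argument, while the isotropy hypothesis on $X$ is what forces the support and stabilizer conditions in $\mathcal{O}^G(X,\pt;\mathcal{A})$ to see only subgroups in $\mathcal{F}$. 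Taking $X=E_\mathcal{F}G$ reduces part~(1) to showing that $K_m\big(\mathcal{O}^G(E_\mathcal{F}G,\pt;\mathcal{A})\big)=0$ for \emph{all} $m\in\Z$ and all additive $G$-categories $\mathcal{A}$.

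Next I would promote the hypothesis (vanishing in arbitrarily high degrees, for every coefficient category) to vanishing in all degrees by a suspension argument. The obstruction category is engineered so that its $K$-theory is already nonconnective, and there is a natural isomorphism $K_{m-1}\big(\mathcal{O}^G(X,\pt;\mathcal{A})\big)\cong K_m\big(\mathcal{O}^G(X,\pt;\Sigma\mathcal{A})\big)$, where $\Sigma\mathcal{A}$ is the Pedersen--Weibel suspension of the additive $G$-category $\mathcal{A}$ (again an additive $G$-category, by functoriality of the suspension). Because the hypothesis is quantified over all $\mathcal{A}$, vanishing of $K_m\big(\mathcal{O}^G(E_\mathcal{F}G,\pt;-)\big)$ in one degree $m$ forces it in every degree $m'\le m$, by descending induction; since by assumption such $m$ can be chosen arbitrarily large, we obtain vanishing in every degree, for every $\mathcal{A}$. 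The long exact sequence above (noting that $K_m$ and $K_{m+1}$ of the obstruction category then both vanish in every degree) shows that (\ref{assembly-K}) is an isomorphism in every degree.

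For part~(2) the scheme is identical but uses the $L$-theoretic version of the package of \cite{BLR08b}: an analogous long exact sequence relates $L^{\langle-\infty\rangle}_*\big(\mathcal{O}^G(-,\pt;\mathcal{A})\big)$ to the $L$-theoretic assembly map, and the same Pedersen--Weibel suspension identity holds for $L^{\langle-\infty\rangle}$. Since $L^{\langle-\infty\rangle}$ is $4$-periodic, vanishing in a single degree already propagates --- via periodicity in one direction and the suspension identity in the other --- to all degrees, which is why the hypothesis here only asks for one $m$. The other change is that in $L$-theory the duality (flip) structure on $\mathcal{O}^G$ interacts with the support conditions so that controlled $L$-theory over a space with isotropy in $\mathcal{F}_2$ is what models the cofiber of the assembly map for the \emph{smaller} family $\mathcal{F}$; this index-two discrepancy is a standard feature of $L$-theoretic controlled algebra and is the reason the hypothesis is phrased in terms of $E_{\mathcal{F}_2}G$ while the conclusion is about (\ref{assembly-L}). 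With these two adjustments the same two steps give that (\ref{assembly-L}) is an isomorphism in every degree.

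The main obstacle is the first step: constructing the natural fibration sequence identifying $\mathbf{K}\big(\mathcal{O}^G(E_\mathcal{F}G,\pt;\mathcal{A})\big)$ (and its $L$-theoretic counterpart) with the homotopy cofiber of the assembly map. This requires comparing the homological $G$-spectrum $H^G(-;\mathbf{K}_\mathcal{A})$ coming from the orbit category with the controlled categories $\mathcal{O}^G(-;\mathcal{A})$, verifying the relevant Karoubi exact sequences of additive categories, delooping via the $[1,\infty)$-coordinate, and checking compatibility with the map induced by $E_\mathcal{F}G\to\pt$; the bookkeeping with the condition $\mathcal{E}^X_{Gcc}$ and with nonconnective $K$- and $L$-theory is where all the real work lies. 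Everything after that identification --- the suspension step, the periodicity step, and the final diagram chase in the long exact sequence --- is formal.
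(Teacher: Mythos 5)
The paper does not prove Proposition~\ref{prop-assembly} at all --- the sentence immediately preceding it states that it is ``proven in \cite[Theorem 5.2]{BL12a}'', and this is left as a citation. So there is no in-paper argument to compare your sketch against; you are reconstructing a proof from Bartels--L\"uck. With that caveat, your two-step outline matches the standard structure of that argument: identify $K_*\big(\mathcal{O}^G(E_\mathcal{F}G,\pt;\mathcal{A})\big)$ (resp.\ $L^{\langle-\infty\rangle}_*$) as the homotopy cofiber of the assembly map via the continuous-control machinery of \cite{BLR08b}, and then propagate vanishing in a few degrees to vanishing in all degrees by a degree-shifting device. You also correctly diagnose why part~(1) needs arbitrarily high $m$ (suspension of coefficients only pushes vanishing \emph{downward}) while part~(2) only needs a single $m$ ($4$-periodicity pushes both ways).

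Two points deserve more care. First, the identity $K_{m-1}\big(\mathcal{O}^G(X,\pt;\mathcal{A})\big)\cong K_m\big(\mathcal{O}^G(X,\pt;\Sigma\mathcal{A})\big)$ is not a formal tautology: one has to verify that the Pedersen--Weibel suspension of the coefficient $G$-category commutes through the whole obstruction-category construction (support conditions, $\mathcal{E}^X_{Gcc}$ control, Karoubi quotient), and you should note that this is precisely where the ``for all $\mathcal{A}$'' quantifier in the hypothesis is consumed. Second, and more substantively, your explanation of the $\mathcal{F}$-versus-$\mathcal{F}_2$ discrepancy is not right as stated. You claim that the flip interacting with the support conditions makes controlled $L$-theory over $E_{\mathcal{F}_2}G$ directly into the cofiber of the $\mathcal{F}$-assembly map; but the cofiber sequence for the obstruction category over $E_{\mathcal{F}_2}G$ naturally compares against the $\mathcal{F}_2$-assembly map, not the $\mathcal{F}$-assembly map. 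Passing from an isomorphism statement for $\mathcal{F}_2$ to one for $\mathcal{F}$ is a separate step in \cite{BL12a}, and it is exactly this extra step that uses part~(1): once the $K$-theoretic obstruction vanishes, Rothenberg-type sequences identify the $\langle-\infty\rangle$-assembly maps attached to $\mathcal{F}$ and $\mathcal{F}_2$. As written, your mechanism glosses over this and would not yield the conclusion for $\mathcal{F}$. In this paper's own application the gap is harmless, because with $\mathcal{F}=\mathcal{V}cyc$ every index-$2$ overgroup of a virtually cyclic group is again virtually cyclic, so $\mathcal{F}_2=\mathcal{F}$.
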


The reason why we study the category $\mathcal{O}^G(E_\mathcal{F}G,(Y,d_Y);\mathcal{A})$ not only for $Y := \pt$ is that we need room for certain constructions.
Moreover, we want to consider simultaneously metric spaces $(Y_n,d_n)$ with isometric $G$-action ($n \in \N$). In analogy to \cite[subsection 3.4]{BLR08b} we define the additive subcategory
\[
\mathcal{O}^G\big(E_\mathcal{F}G,(Y_n,d_n)_{n \in \N};\mathcal{A}\big) \subseteq \prod_{n \in \N} \mathcal{O}^G\big(E_\mathcal{F}G,(Y_n,d_n);\mathcal{A}\big)
\]
by requiring additional conditions on the morphisms. A morphism $\phi = (\phi(n))_{n \in \N}$ is allowed if there are $R > 0$ and a finite subset $F \subseteq G$ (not depending on $n$) such that $\phi(n)_{(g,x,y,t),(g',x',y',t')} = 0$ whenever $g^{-1}g' \notin F$ or $d_n(y,y') > R$.
Note that a sequence of $G$-equivariant maps $(f_n \colon Y_n \to Y'_n)_{n \in \N}$ induces a functor $(f_n)_* \colon \mathcal{O}^G(E_\mathcal{F}G,(Y_n,d_n)_{n \in \N};\mathcal{A}) \to \mathcal{O}^G(E_\mathcal{F}G,(Y'_n,d'_n)_{n \in \N};\mathcal{A})$ if for every $r > 0$ there exists $R > 0$ such that $d'_n(f_n(y_1),f_n(y_2)) < R$ whenever $d_n(y_1,y_2) < r$.

The inclusion
\[
\bigoplus_{n \in \N} \mathcal{O}^G\big(E_\mathcal{F}G,(Y_n,d_n);\mathcal{A}\big) \to \mathcal{O}^G\big(E_\mathcal{F}G,(Y_n,d_n)_{n \in \N};\mathcal{A}\big)
\]
is a Karoubi filtration and we denote the quotient by $\mathcal{O}^G(E_\mathcal{F}G,(Y_n,d_n)_{n \in \N};\mathcal{A})^{> \oplus}$.

\subsection{Farrell-Hsiang-Jones groups and the $K$-theoretic Farrell-Jones Conjecture} \label{subsec-FHJ-K}

This subsection is dedicated to the proof of
\begin{proposition} \label{prop-FHJ-K}
Let $G$ be a FHJ group with respect to a family $\mathcal{F}$ of subgroups.
Suppose that $\mathcal{F}$ is virtually closed, i.e., $K < H < G$ with $K \in \mathcal{F}$ and $[H:K] < \infty$ implies $H \in \mathcal{F}$.
Then $G$ satisfies the $K$-theoretic Farrell-Jones Conjecture with respect to $\mathcal{F}$.
\end{proposition}
\begin{proof}
We fix a finite symmetric generating subset $S \subseteq G$ which contains the trivial element $e \in G$. We denote by $d_G$ the word metric with respect to $S \setminus \{e\}$.
Since $G$ is a FHJ group with respect to $\mathcal{F}$, there exist a natural number $N$ and surjective homomorphisms $\alpha_n \colon G \to F_n$ ($n \in \N$) with $F_n$ a finite group such that the following condition is satisfied.
For any hyper-elementary subgroup $H$ of $F_n$ there exist
\begin{itemize}
\item a compact, contractible, controlled $N$-dominated metric space $X_{n,H}$,
\item a strong homotopy $G$-action $\Psi_{n,H}$ on $X_{n,H}$,
\item a positive real number $\Lambda_{n,H}$,
\item a simplicial complex $E_{n,H}$ of dimension at most $N$ with a simplicial $\alpha_n^{-1}(H)$-action whose stabilizers belong to $\mathcal{F}$,
\item and an $\alpha_n^{-1}(H)$-equivariant map $f_{n,H} \colon G \times X_{n,H} \to E_{n,H}$
\end{itemize}
such that
\[
n \cdot d_{E_{n,H}}^1\big(f_{n,H}(g,x),f_{n,H}(h,y)\big) \leq d_{\Psi_{n,H},S^n,n,\Lambda_{n,H}}\big((g,x),(h,y)\big)
\]
for all $(g,x),(h,y) \in G \times X_n$ with $h^{-1}g \in S^n$.

We denote by $\mathcal{H}_n$ the family of hyper-elementary subgroups of $F_n$.
We set $S_n := \coprod_{H \in \mathcal{H}_n} G \times_{\alpha_n^{-1}(H)} G$, $Y_n := \coprod_{H \in \mathcal{H}_n} G \times_{\alpha_n^{-1}(H)} G \times X_{n,H}$, and $\Sigma_n := \coprod_{H \in \mathcal{H}_n} G \times_{\alpha_n^{-1}(H)} E_{n,H}$.
We will use the quasi-metrics on $S_n$ and $Y_n$ given by
\begin{eqnarray*}
d_{S_n}\big((g,h)_H,(g',h')_{H'}\big) & := & d_G(gh,g'h'),\\
d_{Y_n}\big((g,h,x)_H,(g',h',x')_{H'}\big) & := & d_{\Psi_{n,H},S^n,n,\Lambda_{n,H}}((gh,x),(g'h',x')),
\end{eqnarray*}
if $H=H'$, $gH=g'H'$, and $(g'h')^{-1}(gh) \in S^n$. Otherwise, we set
\[
d_{S_n}((g,h)_H,(g',h')_{H'}) := \infty \quad \text{and} \quad d_{Y_n}((g,h,x)_H,(g',h',x')_{H'}) := \infty.
\]
We define the map
\[
f_n := \coprod_{H \in \mathcal{H}_n} \id_G \times_{\alpha_n^{-1}(H)} f_{n,H} \colon Y_n \to \Sigma_n.
\]
Note that $n \cdot d_{\Sigma_n}^1(f_n(x),f_n(y)) \leq d_{Y_n}(x,y)$ for all $x,y \in Y_n$.

By Proposition~\ref{prop-assembly} it suffices to show
\[
K_m\big(\mathcal{O}^G(E_\mathcal{F}G,\pt;\mathcal{A})\big) = 0
\]
for all $m \geq 1$.
This is a direct consequence of the following commuting diagram
\begin{equation*} \label{diagram}
\xymatrix{
K_m\big(\mathcal{O}^G(E_\mathcal{F}G,\pt;\mathcal{A})\big) \ar@{^{(}->}[d]^{\diag_*} & \ar[dl]_{\hspace{-5mm} \pr_*} K_m\big(\mathcal{O}^G(E_\mathcal{F}G,(S_n,d_{S_n})_{n \in \N};\mathcal{A})\big)^{> \oplus} \ar[d]^{\trans_*} \\
K_m\big(\mathcal{O}^G(E_\mathcal{F}G,(\pt)_{n \in \N};\mathcal{A})^{> \oplus}\big) \ar[d]^{=} & \ar[l]_{\hspace{-5mm} \pr_*} K_m\big(\mathcal{O}^G(E_\mathcal{F}G,(Y_n,d_{Y_n})_{n \in \N};\mathcal{A})^{> \oplus}\big) \ar[d]^{(f_n)_*} \\
K_m\big(\mathcal{O}^G(E_\mathcal{F}G,(\pt)_{n \in \N};\mathcal{A})^{> \oplus}\big) & \ar[l]_{\hspace{-5mm} \pr_*} K_m\big(\mathcal{O}^G(E_\mathcal{F}G,(\Sigma_n,n \cdot d^1)_{n \in \N};\mathcal{A})^{> \oplus}\big) = 0.
}
\end{equation*}
and the property that for every $x \in K_m(\mathcal{O}^G(E_\mathcal{F}G,\pt;\mathcal{A}))$ there exists an element $y \in K_m(\mathcal{O}^G(E_\mathcal{F}G,(S_n,d_{S_n})_{n \in \N};\mathcal{A})^{> \oplus})$ with $\pr_*(y) = \diag_*(x)$.

Here are some explanations concerning the diagram above: The maps $\pr_*$ are induced by the projections $\pr \colon S_n \to \pt$ resp. $\pr \colon Y_n \to \pt$ resp. $\pr \colon \Sigma_n \to \pt$.
The map $\diag_* \colon K_m(\mathcal{O}^G(E_\mathcal{F}G,\pt;\mathcal{A})) \to K_m(\mathcal{O}^G(E_\mathcal{F}G,(\pt)_{n \in \N};\mathcal{A})^{> \oplus})$ is induced by the diagonal map.
It remains to show:
\begin{enumerate}
\item The map $\diag_* \colon K_m(\mathcal{O}^G(E_\mathcal{F}G,\pt;\mathcal{A})) \to K_m(\mathcal{O}^G(E_\mathcal{F}G,(\pt)_{n \in \N};\mathcal{A})^{> \oplus})$ is injective.
\item There exists a map
    \[
    \trans_* \colon K_m(\mathcal{O}^G(E_\mathcal{F}G,(S_n,d_{S_n})_{n \in \N};\mathcal{A})) \to K_m(\mathcal{O}^G(E_\mathcal{F}G,(Y_n,d_{Y_n})_{n \in \N};\mathcal{A})^{> \oplus})
    \]
    with $\pr_* \circ \trans_* = \pr_*$.
\item $K_m(\mathcal{O}^G(E_\mathcal{F}G,(\Sigma_n,n \cdot d^1)_{n \in \N};\mathcal{A})^{> \oplus}) = 0$.
\item For every $x \in K_m(\mathcal{O}^G(E_\mathcal{F}G,\pt;\mathcal{A}))$ there exists $y \in K_m(\mathcal{O}^G(E_\mathcal{F}G,(S_n,d_{S_n})_{n \in \N};\mathcal{A})^{> \oplus})$ with $\pr_*(y) = \diag_*(x)$.
\end{enumerate}
Let us prove the statements above:
\begin{enumerate}
\item For the injectivity of the map $\diag_*$ we refer to \cite[page~789]{Weg12}.
\item The map
    \[
    \trans_* \colon K_m(\mathcal{O}^G(E_\mathcal{F}G,(S_n,d_{S_n})_{n \in \N};\mathcal{A})) \to K_m(\mathcal{O}^G(E_\mathcal{F}G,(Y_n,d_{Y_n})_{n \in \N};\mathcal{A})^{> \oplus})
    \]
    is a slight modification of the map $\trans_*$ defined in \cite[section~7]{Weg12}. We have to modify the definition of the objects $(A(n) \otimes C_*(n,\alpha))_{n \in \N}$ and $(A(n) \otimes D_*(n,\alpha))_{n \in \N}$, where $(A(n))_{n \in \N}$ is an object in $\mathcal{O}^G(E_\mathcal{F}G,(S_n,d_{S_n})_{n \in \N};\mathcal{A})$:
    \begin{eqnarray*}
    \big(A(n) \otimes C_k(n,\alpha)\big)_{(g,e,(g_1,g_2,x),t)} & := & A_{(g,e,(g_1,g_2),t)} \otimes C_k(n,\alpha)_{(g_1 g_2,x)}, \\
    \big(A(n) \otimes D_k(n,\alpha)\big)_{(g,e,(g_1,g_2,x),t)} & := & A_{(g,e,(g_1,g_2),t)} \otimes D_k(n,\alpha)_{(g_1 g_2,x)}.
    \end{eqnarray*}
    The maps $(\phi(n) \otimes m(n,\alpha))_{n \in \N}$ are given by
    \begin{align*}
    & \big(\phi(n) \otimes m(n,\alpha)\big)_{(g,e,(g_1,g_2,x),t),(g',e',(g'_1,g'_2,x'),t')} = \\
    & \phi(n)_{(g,e,(g_1,g_2),t),(g',e',(g'_1,g'_2),t')} \otimes m_{(g_1 g_2)^{-1}(g'_1 g'_2)}(n,\alpha)_{(g_1 g_2,x),(g'_1 g'_2,x')}.
    \end{align*}
    The proof follows as in \cite[section~7]{Weg12}.
\item The equation $K_m(\mathcal{O}^G(E_\mathcal{F}G,(\Sigma_n,n \cdot d^1)_{n \in \N};\mathcal{A})^{> \oplus}) = 0$ is proved in \cite[Theorem~7.2]{BLR08b}.
\item This is a direct consequence of \cite[Theorem~6.5]{BL12c}. Unfortunately, the notation slightly differs. Our category $\mathcal{O}^G(E_\mathcal{F}G,\pt;\mathcal{A})$ coincides with the category $\mathcal{O}^G(E,G,d_G)$ in \cite{BL12c}. The category $\mathcal{O}^G(E,(S_n,d_{S_n})$ in \cite{BL12c} is a full subcategory of our category $\mathcal{O}^G(E_\mathcal{F}G,(S_n,d'_{S_n})_{n \in \N};\mathcal{A})$ with a modified quasi-metric $d'_{S_n}$. This quasi-metric is defined by
    \[
    d'_{S_n}((g,h)_H,(g',h')_{H'}) := \left\{ \begin{array}{ll} d_G(gh,g'h') & \text{ if } H=H' \text{ and } gH=g'H',\\ \infty & \text{ otherwise}. \end{array} \right.
    \]
    The inclusion of the subcategory is induced by the map
    \[
    E_\mathcal{F}G \times S_n \to G \times E_\mathcal{F}G \times S_n, (e,(g_1,g_2)) \mapsto ((g_1 g_2,e,(g_1,g_2)).
    \]
    Note that the two quotient categories $\mathcal{O}^G(E_\mathcal{F}G,(S_n,d_{S_n})_{n \in \N};\mathcal{A})^{> \oplus}$ and $\mathcal{O}^G(E_\mathcal{F}G,(S_n,d'_{S_n})_{n \in \N};\mathcal{A})^{> \oplus}$ coincide.
\end{enumerate}
This finishes the proof of Proposition~\ref{prop-FHJ-K}.
\end{proof}

\subsection{Farrell-Hsiang-Jones groups and the $L$-theoretic Farrell-Jones Conjecture} \label{subsec-FHJ-L}

This subsection is dedicated to the proof of
\begin{proposition} \label{prop-FHJ-L}
Let $G$ be a FHJ group with respect to a family $\mathcal{F}$ of subgroups.
Suppose that $\mathcal{F}$ is virtually closed, i.e., $K < H < G$ with $K \in \mathcal{F}$ and $[H:K] < \infty$ implies $H \in \mathcal{F}$.
Then $G$ satisfies the $L$-theoretic Farrell-Jones Conjecture with respect to $\mathcal{F}$.
\end{proposition}

We begin this subsection with some preliminaries for the proof of Proposition~\ref{prop-FHJ-L}.
Let $\mathcal{A}$ be an additive $G$-category with involution $I$. For a chain complex $C$ over $\mathcal{A}$ we write $C^{-*}$ for the chain complex over $\mathcal{A}$ with $(C^{-*})_n := I(C_{-n})$ and differential $(-1)^n I(c_{-n+1})$, where $c_n \colon C_n \to C_{n-1}$ denotes the $n$-th differential of $C$. For a map $f \colon C \to D$ of degree $k$ the induced map $f^{-*} \colon D^{-*} \to C^{-*}$ is given by $(f^{-*})_n := (-1)^{nk} I(f_{-n}) \colon (D^{-*})_n \to (C^{-*})_{n+k}$.

\begin{definition}[$0$-dimensional ultra-quadratic Poincar\'e complex]
A \emph{$0$-dimensional ultra-quadratic Poincar\'e complex $(C,\psi)$ over $\mathcal{A}$} is a finite-dimensional chain complex $C$ over $\mathcal{A}$ together with a chain map $\psi \colon C^{-*} \to C$ of degree $0$ such that $\psi + \psi^{-*}$ is a chain homotopy equivalence. If $(C,\psi)$ is concentrated in degree $0$ then it is a \emph{quadratic form over $\mathcal{A}$}.
\end{definition}

\begin{proof}[Proof of Proposition~\ref{prop-FHJ-L}]
By Proposition~\ref{prop-assembly} it suffices to show
\[
L_0^{\langle -\infty \rangle}\big(\mathcal{O}^G(E_\mathcal{F}G,\pt;\mathcal{A})\big) = 0.
\]
Let $a \in L_0^{\langle -\infty \rangle}(\mathcal{O}^G(E_\mathcal{F}G,\pt;\mathcal{A}))$.
Because of Proposition~\ref{prop-FHJ-K} and the fact that $\mathcal{O}^G(E_\mathcal{F}G,\pt;\mathcal{A})$ is the cofiber of the assembly map (compare \cite[section 3.3]{BLR08b}) we have $K_m(\mathcal{O}^G(E_\mathcal{F}G,\pt;\mathcal{A})) = 0$ for all $m \in \Z$. This implies that the natural map
\[
L_0^{\langle 1 \rangle}(\mathcal{O}^G(E_\mathcal{F}G,\pt;\mathcal{A})) \to L_0^{\langle -\infty \rangle}(\mathcal{O}^G(E_\mathcal{F}G,\pt;\mathcal{A}))
\]
is an isomorphism (see \cite[Theorem~17.2 on page~146]{Ran92}).
Let $a'$ be the preimage of $a$. We pick a quadratic form $(M,\alpha)$ over the category $\mathcal{O}^G(E_\mathcal{F}G,\pt;\mathcal{A})$ such that $(M,\alpha) = a'$.

We fix a finite symmetric generating subset $S \subseteq G$ which contains the trivial element $e \in G$. We denote by $d_G$ the word metric with respect to $S \setminus \{e\}$.
Since $G$ is a FHJ group with respect to $\mathcal{F}$, there exist a natural number $N$ and surjective homomorphisms $\alpha_n \colon G \to F_n$ ($n \in \N$) with $F_n$ a finite group such that the following condition is satisfied.
For any hyper-elementary subgroup $H$ of $F_n$ there exist
\begin{itemize}
\item a compact, contractible, controlled $N$-dominated metric space $X_{n,H}$,
\item a strong homotopy $G$-action $\Psi_{n,H}$ on $X_{n,H}$,
\item a positive real number $\Lambda_{n,H}$,
\item a simplicial complex $E_{n,H}$ of dimension at most $N$ with a simplicial $\alpha_n^{-1}(H)$-action whose stabilizers belong to $\mathcal{F}$,
\item and an $\alpha_n^{-1}(H)$-equivariant map $f_{n,H} \colon G \times X_{n,H} \to E_{n,H}$
\end{itemize}
such that
\[
n \cdot d_{E_{n,H}}^1\big(f_{n,H}(g,x),f_{n,H}(h,y)\big) \leq d_{\Psi_{n,H},S^n,n,\Lambda_{n,H}}\big((g,x),(h,y)\big)
\]
for all $(g,x),(h,y) \in G \times X_n$ with $h^{-1}g \in S^n$.

We denote by $\mathcal{H}_n$ the family of hyper-elementary subgroups of $F_n$.
We define $S_n := \coprod_{H \in \mathcal{H}_n} G \times_{\alpha_n^{-1}(H)} G$ with the quasi metric
\[
d_{S_n}\big((g,h)_H,(g',h')_{H'}\big) := d_G(gh,g'h')
\]
if $H=H'$, $gH=g'H'$, and $(g'h')^{-1}(gh) \in S^n$. Otherwise, we set $d_{S_n}((g,h)_H,(g',h')_{H'}) := \infty$.

By \cite[Proposition~6.4]{BL12c} there are functors of additive categories with involutions
\[
G_n^+ = (G_n^+,E_n^+), G_n^- = (G_n^-,E_n^-) \colon \mathcal{O}^G(E_\mathcal{F}G,\pt;\mathcal{A}) \to \mathcal{O}^G(E_\mathcal{F}G,(S_n,d_{S_n});\mathcal{A})
\]
with the following properties:
\begin{itemize}
\item The functors $G_n^+$ and $G_n^-$ give rise to functors
    \[
    G^+ = (G_n^+)_{n \in \N}, G^- = (G_n^-)_{n \in \N} \colon \mathcal{O}^G(E_\mathcal{F}G,\pt;\mathcal{A}) \to \mathcal{O}^G(E_\mathcal{F}G,(S_n,d_{S_n});\mathcal{A})
    \]
\item For each object $M \in \mathcal{O}^G(E_\mathcal{F}G,\pt;\mathcal{A})$ the isomorphisms $E_n^+ \colon G_n^+(I(M)) \to I(G_n^+(M))$ and $E_n^- \colon G_n^-(I(M)) \to I(G_n^-(M))$ give rise to isomorphisms
    \begin{eqnarray*}
    E^+ = (E_n^+)_{n \in \N} \colon && G^+(I(M)) \to I(G^+(M)), \\
    E^- = (E_n^-)_{n \in \N} \colon && G^-(I(M)) \to I(G^-(M)).
    \end{eqnarray*}
    (Here, $I$ denotes the involution on the respective category.)
\item $L_0^{\langle -\infty \rangle}(\pr_k \circ G^+) - L_0^{\langle -\infty \rangle}(\pr_k \circ G^-)$ is the identity on $L_0^{\langle -\infty \rangle}(\mathcal{O}^G(E_\mathcal{F}G,\pt;\mathcal{A}))$.
\end{itemize}
We obtain quadratic forms $G^+(M,\alpha) := (G^+(M),G^+(\alpha) \circ (E^+)^{-1})$ and $G^-(M,\alpha) := (G^-(M),G^-(\alpha) \circ (E^-)^{-1})$ over $\mathcal{O}^G(E_\mathcal{F}G,(S_n,d_{S_n})_{n \in \N};\mathcal{A})$.

We denote the space of unordered pairs of two points in a space $X$ by $P_2(X)$, i.e., $P_2(X) := X \times X / \sim$, where $(x,y) \sim (y,x)$ for all $x,y \in X$. If $X$ is a metric space then we equip $P_2(X)$ with the metric
\[
d_{P_2(X)}\big([(x,y)],[(x',y')]\big) := \min\big\{ d_X(x,x')+d_X(y,y'),d_X(x,y')+d_X(x',y) \big\}.
\]
See \cite[section 9]{BL12a} for detailed information on the space $P_2(X)$.

We define the spaces
\begin{eqnarray*}
Y_n & := & \coprod_{H \in \mathcal{H}_n} G \times_{\alpha_n^{-1}(H)} G \times X_{n,H}, \\
Z_n & := & \coprod_{H \in \mathcal{H}_n} G \times_{\alpha_n^{-1}(H)} G \times P_2(X_{n,H}), \\
\Sigma_n & := & \coprod_{H \in \mathcal{H}_n} G \times_{\alpha_n^{-1}(H)} E_{n,H}.
\end{eqnarray*}
We will use the quasi-metrics on $Y_n$ and $Z_n$ given by
\begin{eqnarray*}
d_{Y_n}\big((g,h,x)_H,(g',h',x')_{H'}\big) & := & d_{\Psi_{n,H},S^n,n,\Lambda_{n,H}}((gh,x),(g'h',x')), \\
d_{Z_n}\big((g,h,x)_H,(g',h',x')_{H'}\big) & := & d_{P_2(\Psi_{n,H}),S^n,n,\Lambda_{n,H}}((gh,x),(g'h',x')),
\end{eqnarray*}
if $H=H'$, $gH=g'H'$, and $(g'h')^{-1}(gh) \in S^n$. Otherwise, $d_{Y_n}((g,h,x)_H,(g',h',x')_{H'}) := \infty$ and $d_{Z_n}((g,h,x)_H,(g',h',x')_{H'}) := \infty$.
We define the maps
\begin{align*}
& f_n := \coprod_{H \in \mathcal{H}_n} \id_G \times_{\alpha_n^{-1}(H)} f_{n,H} \colon Y_n \to \Sigma_n, \\
& p_n \colon Z_n \to S_n, (g,h,x)_H \mapsto (g,h)_H, \\
& w_n \colon Z_n \to P_2(Y_n), (g,h,[(x,y)])_H \mapsto [((g,h,x)_H,(g,h,y)_H)].
\end{align*}

Let us consider the following commuting diagram, where $\Idem$ stands for the idempotent completion.
\begin{equation*} \label{diagram2}
\xymatrix{
\mathcal{O}^G(E_\mathcal{F}G,(S_n,d_{S_n})_{n \in \N};\mathcal{A}) \ar@{^{(}->}[d] \ar[r]_{\hspace{7mm} \pr_k} & \mathcal{O}^G(E_\mathcal{F}G,\pt;\mathcal{A}) \ar@{^{(}->}[d] \\
\Idem\big( \mathcal{O}^G(E_\mathcal{F}G,(S_n,d_{S_n})_{n \in \N};\mathcal{A}) \big) \ar[r]_{\hspace{7mm} \pr_k} & \Idem\big( \mathcal{O}^G(E_\mathcal{F}G,\pt;\mathcal{A}) \big) \ar[d]^{=} \\
\Idem\big( \mathcal{O}^G(E_\mathcal{F}G,(Z_n,d_{Z_n})_{n \in \N};\mathcal{A}) \big) \ar[u]_{(p_n)*} \ar[d]^{(w_n)_*} \ar[r]_{\hspace{7mm} \pr_k} & \Idem\big( \mathcal{O}^G(E_\mathcal{F}G,\pt;\mathcal{A}) \big) \ar[d]^{=} \\
\Idem\big( \mathcal{O}^G(E_\mathcal{F}G,(P_2(Y_n),d_{P_2(Y_n)})_{n \in \N};\mathcal{A}) \big) \ar[d]^{(P_2(f_n))_*} \ar[r]_{\hspace{13mm} \pr_k} & \Idem\big( \mathcal{O}^G(E_\mathcal{F}G,\pt;\mathcal{A}) \big) \ar[d]^{=} \\
\Idem\big( \mathcal{O}^G(E_\mathcal{F}G,(P_2(\Sigma_n),n \cdot d^1_{P_2(\Sigma_n)})_{n \in \N};\mathcal{A}) \big) \ar[r]_{\hspace{16mm} \pr_k} & \Idem\big( \mathcal{O}^G(E_\mathcal{F}G,\pt;\mathcal{A}) \big)
}
\end{equation*}
The functor $(w_n)_*$ is well-defined because
\[
d_{P_2(Y_n)}\big(w_n(x),w_n(y)\big) \leq 2 \cdot d_{Z_n}(x,y)
\]
for all $n \in \N$ and $x,y \in Z_n$ (compare \cite[Lemma~9.7]{BL12a}).
Since
\[
n \cdot d_{E_{n,H}}^1\big(f_{n,H}(g,x),f_{n,H}(h,y)\big) \leq d_{\Psi_{n,H},S^n,n,\Lambda_{n,H}}\big((g,x),(h,y)\big)
\]
for all $(g,x),(h,y) \in G \times X_n$ with $h^{-1}g \in S^n$, \cite[Lemma~9.4 and 9.6]{BL12a} imply that the functor $(P_2(f_n))_*$ is well-defined.

We conclude as in \cite[Lemma~10.3]{BL12a} that there exist $0$-dimensional ultra-quadratic Poincar\'e complexes $(C^+,\psi^+)$, $(C^-,\psi^-)$ over $\Idem( \mathcal{O}^G(E_\mathcal{F}G,(Z_n,d_{Z_n})_{n \in \N};\mathcal{A}))$ with
\[
\big[(p_n)_*(C^\pm,\psi^\pm)\big] = \big[(G^\pm(M),G^\pm(\alpha) \circ (E^\pm)^{-1})\big] \in L_0^{\langle 1 \rangle}(\Idem\big( \mathcal{O}^G(E_\mathcal{F}G,(S_n,d_{S_n})_{n \in \N};\mathcal{A}) \big).
\]

We conclude from \cite[Theorem~5.3~(ii)]{BL12a} that the images of $[(P_2(f_n))_*(w_n)_*(C^\pm,\psi^\pm)]$ under the map
\begin{align*}
& L_0^{\langle -\infty \rangle}\big( \Idem\big( \mathcal{O}^G(E_\mathcal{F}G,(P_2(\Sigma_n),n \cdot d^1_{P_2(\Sigma_n)})_{n \in \N};\mathcal{A}) \big) \big) \to \\
& L_0^{\langle -\infty \rangle}\big( \Idem\big( \mathcal{O}^G(E_\mathcal{F}G,(P_2(\Sigma_k),k \cdot d^1_{P_2(\Sigma_k)});\mathcal{A}) \big) \big),
\end{align*}
which is induced by the projection on the $k$-th factor, vanish for $k$ sufficiently large.
For such a number $k \in \N$ we obtain in $L_0^{\langle -\infty \rangle}(\Idem(\mathcal{O}^G(E_\mathcal{F}G,\pt;\mathcal{A})))$ the equation
\[
\pr_k\big([(C^\pm,\psi^\pm)]\big) = \pr_k\big([P_2(f_n))_*(w_n)_*(C^\pm,\psi^\pm)]\big) = 0.
\]
Hence,
\[
\pr_k \big([G^\pm(M,\alpha)] \big) = \pr_k \circ (p_n)_* \big([(C^\pm,\psi^\pm)]\big) = \pr_k \big([(C^\pm,\psi^\pm)]\big) = 0.
\]
Since
\[
\pr_k \big([G^+(M,\alpha)] \big) - \pr_k \big([G^-(M,\alpha)] \big) = 0
\]
is the image of $a$ under the isomorphism
\[
L_0^{\langle -\infty \rangle}\big( \mathcal{O}^G(E_\mathcal{F}G,\pt;\mathcal{A}) \big) \to L_0^{\langle -\infty \rangle}\big( \Idem\big( \mathcal{O}^G(E_\mathcal{F}G,\pt;\mathcal{A}) \big) \big),
\]
we obtain $a=0$.
\end{proof}

\section{The Farrell-Jones Conjecture for the groups $G_w$}
\label{sec-f}

In this section we investigate the groups $G_w := \Z[w,w^{-1}] \rtimes_{\cdot w} \Z$, where $w$ is a non-zero algebraic number. We will construct overgroups (in particular, $\mathcal{O}_w \rtimes \mathcal{O}_w^\times$) and study their actions on certain spaces. We will finally show that the groups $G_w$ are FHJ groups (see Proposition~\ref{prop-final}).
For the convenience of the reader we have added a list of notation at the end of this section.

In the sequel $w \in \overline{\Q}^\times$ will be a fixed non-zero algebraic number.

\subsection{The ring $\mathbf{\mathcal{O}_w}$} \label{subsec-O_w}

Let $\mathcal{O}$ be the ring of integers in the algebraic number field $\Q(w)$, i.e., $\mathcal{O} \subset \Q(w)$ is the subring consisting of all elements which are integral. Recall that an algebraic number is integral if and only if it is a root of a monic polynomial with rational integer coefficients.

For a prime ideal $\mathfrak{p} \subset \mathcal{O}$ we denote by
\[
\mathcal{O}_\mathfrak{p} := \big\{ \frac{x}{y} \, \big| \, x, y \in \mathcal{O}, y \notin \mathfrak{p} \big\} \subset \Q(w)
\]
the localization of $\mathcal{O}$ at $\mathfrak{p}$. Let $v_\mathfrak{p} \colon \Q(w)^\times \to \Z$ be the corresponding valuation, i.e., $x \mathcal{O}_\mathfrak{p} = \mathfrak{p}^{v_\mathfrak{p}(x)} \mathcal{O}_\mathfrak{p}$. We extend the valuation $v_\mathfrak{p}$ to $\Q(w)$ by the convention $v_\mathfrak{p}(0) = \infty$. Note that $\mathcal{O}_\mathfrak{p} = \{ x \in \Q(w) \mid v_\mathfrak{p}(x) \geq 0 \}$.

A fractional ideal is a finitely generated $\mathcal{O}$-submodule $\mathfrak{a} \neq 0$ of $\Q(w)$. By \cite[Corollary 3.9 on page 22]{Neu99} every fractional ideal possesses a unique factorization $\mathfrak{a} = \prod_{\mathfrak{p}} \mathfrak{p}^{\nu_\mathfrak{p}}$ with $\nu_\mathfrak{p} \in \Z$ and $\nu_\mathfrak{p} = 0$ for almost all prime ideals $\mathfrak{p}$. For $\mathfrak{a} = x \mathcal{O}$ with $x \in \Q(w)^\times$ we have $\nu_\mathfrak{p} = v_\mathfrak{p}(x)$.
We conclude that
\[
M_w := \big\{ \mathfrak{p} \subset \mathcal{O} \text{ prime ideal } \big| \, v_\mathfrak{p}(w) \neq 0 \big\}
\]
is a finite set.
We define the ring
\[
\mathcal{O}_w := \big\{ x \in \Q(w) \, \big| \, v_\mathfrak{p}(x) \geq 0 \text{ for all prime ideals } \mathfrak{p} \notin M_w \big\}.
\]
Note that $\mathcal{O} \subseteq \mathcal{O}_w$ and $w, w^{-1} \in \mathcal{O}_w$.
The group of units in the ring $\mathcal{O}_w$ is given by
\[
\mathcal{O}_w^\times = \big\{ x \in \Q(w) \, \big| \, v_\mathfrak{p}(x) = 0 \text{ for all prime ideals } \mathfrak{p} \notin M_w \big\}.
\]

\begin{example}
For $w \in \Q$ we obtain $\Q(w) = \Q$, $\mathcal{O} = \Z$, $\mathcal{O}_w = \Z[w,w^{-1}]$.
\end{example}

\begin{lemma} \label{lem-units}
\begin{enumerate}
\item The abelian group $\mathcal{O}_w^\times$ is the direct product of the group $\mathcal{O}^\times$ of units in $\mathcal{O}$ and a finitely generated free abelian group. \label{lem-units-1}
\item The group $\mathcal{O}^\times$ is the direct product of a finite cyclic group with a finitely generated free abelian group. \label{lem-units-2}
\end{enumerate}
\end{lemma}
\begin{proof}
\begin{enumerate}
\item This follows from the short exact sequence
    \[
    1 \to \mathcal{O}^\times \to \mathcal{O}_w^\times \to \im\big(\mathcal{O}_w^\times \to \Z^{M_w}, x \mapsto (v_\mathfrak{p}(x))_{\mathfrak{p} \in M_w}\big) \to 1.
    \]
\item See Dirichlet's unit theorem \cite[Theorem 7.4 on page 42]{Neu99}.
\end{enumerate}
\end{proof}

We consider the semi-direct product $\mathcal{O}_w \rtimes \mathcal{O}_w^\times$, where $\mathcal{O}_w^\times$ acts on $\mathcal{O}_w$ by multiplication. There is the inclusion
\[
G_w := \Z[w,w^{-1}] \rtimes_{\cdot w} \Z \hookrightarrow \mathcal{O}_w \rtimes \mathcal{O}_w^\times, (x,y) \mapsto (x,w^y).
\]

\begin{lemma} \label{lem-fin-gen}
The groups $G_w$ and $\mathcal{O}_w \rtimes \mathcal{O}_w^\times$ are finitely generated.
\end{lemma}
\begin{proof}
The group $G_w$ is generated by $(1,0)$ and $(0,1)$.

By \cite[Proposition~2.10 on page 13]{Neu99}, $\mathcal{O}$ is a free $\Z$-module of rank $\deg(w) := \dim_{\Q}(\Q(w))$. By Lemma~\ref{lem-units}, $\mathcal{O}_w^\times$ is finitely generated. We fix finite generating sets $S$ resp. $S'$ for $\mathcal{O}$ resp. $\mathcal{O}_w^\times$. We will show that $S \times S'$ is a generating set for $\mathcal{O}_w \rtimes \mathcal{O}_w^\times$. Let $(x,y) \in \mathcal{O}_w \rtimes \mathcal{O}_w^\times$. Let $n_\mathfrak{p} \in \N$ ($\mathfrak{p} \in M_w$) such that $n_\mathfrak{p} + v_\mathfrak{p}(x) \geq 0$ and $\mathfrak{p}^{n_\mathfrak{p}}$ is a principal ideal. Such a number $n_\mathfrak{p}$ exists, since the ideal class group is finite (see \cite[Theorem~6.3 on page 36]{Neu99}). For $\mathfrak{p} \in M_w$ let $a_\mathfrak{p} \in \mathcal{O}$ with $a_\mathfrak{p} \mathcal{O} = \mathfrak{p}^{n_\mathfrak{p}}$. We set $a := \prod_{\mathfrak{p} \in M_w} a_\mathfrak{p} \in \mathcal{O}_w^\times$. Then $a \cdot x \in \mathcal{O}$. Hence,
\[
(x,y) = (0,a^{-1}) (ax,0) (0,ay)
\]
is generated by $S \times S'$.
\end{proof}

\begin{definition}[$\R^{n_w}$] \label{def-units}
We fix a $\Z$-basis $e_1, \ldots, e_{n_w}$ for the finitely generated free abelian subgroup of $\mathcal{O}^\times < \mathcal{O}_w^\times$ mentioned in Lemma~\ref{lem-units}~(\ref{lem-units-2}).
In the sequel we will denote this subgroup by $\Z^{n_w}$ (where $n_w$ is the rank of $\mathcal{O}^\times$).

We fix a projection $\alpha_w \colon \mathcal{O}_w^\times \to \Z^{n_w}$. Consider the semi-direct product $\Q(w) \rtimes \mathcal{O}_w^\times$, where $\mathcal{O}_w^\times$ acts on $\Q(w)$ by multiplication. We define an action of $\Q(w) \rtimes \mathcal{O}_w^\times$ on $\R^{n_w}$ by $(x,y)z := \alpha_w(y) + z$ ($x \in \Q(w)$, $y \in \mathcal{O}_w^\times$, $z \in \R^{n_w}$).
\end{definition}

\subsection{The tree $\mathbf{T(v_\mathfrak{p})}$} \label{ss-tree}

There is a tree associated to the valuation $v_\mathfrak{p}$. We shortly explain the construction. For details we refer to Serre's book on trees \cite[Chapter II, \S 1]{Ser03}.  We consider the (right) $\Q(w)$-vector space $\Q(w) \oplus \Q(w)$. A lattice of $\Q(w) \oplus \Q(w)$ is a finitely generated (right) $\mathcal{O}_\mathfrak{p}$-submodule of $\Q(w) \oplus \Q(w)$ which generates the $\Q(w)$-vector space. Such a module is free of rank 2. We call two lattices $L_1$, $L_2$ equivalent if and only if $L_1 = L_2 \cdot x$ for some $x \in \Q(w)^\times$. The vertices of the graph $T(v_\mathfrak{p})$ are the lattice classes. We fix an element $\pi_\mathfrak{p} \in \Q(w)$ with $v_\mathfrak{p}(\pi_\mathfrak{p})=1$. The distance between two classes $[L_1]$ and $[L_2]$ is defined as follows. There exist an $\mathcal{O}_\mathfrak{p}$-basis $(e_1,e_2)$ for $L_1$ and integers $a_1, a_2 \in \Z$ such that $(e_1\pi_\mathfrak{p}^{a_1},e_2\pi_\mathfrak{p}^{a_2})$ is an $\mathcal{O}_\mathfrak{p}$-basis for $L_2$. The distance is defined by $d([L_1],[L_2]) := |a_1-a_2|$. The vertices $[L_1]$, $[L_2]$ in the graph $T(v_\mathfrak{p})$ are adjacent if and only if $d([L_1],[L_2]) = 1$. See \cite[Theorem 1 on page 70]{Ser03} for a proof that the graph $T(v_\mathfrak{p})$ is a tree. More precisely, $T(v_\mathfrak{p})$ is an infinite regular $(n+1)$-valent tree, where $n$ denotes the order of the finite field $\mathcal{O}_\mathfrak{p} / \pi_\mathfrak{p} \mathcal{O}_\mathfrak{p} \cong \mathcal{O} / \mathfrak{p}$ (compare \cite[paragraph "Projective lines" on page 72]{Ser03}).

Note that the group $\GL_2(\Q(w))$ acts (isometrically) on the tree $T(v_\mathfrak{p})$. We consider the semi-direct product $\Q(w) \rtimes \Q(w)^\times$, where $\Q(w)^\times$ acts on $\Q(w)$ by multiplication. Using the homomorphism
\[
\Q(w) \rtimes \Q(w)^\times \to \GL_2(\Q(w)), \; (x,y) \mapsto \left( \begin{array}{cc} y & x \\ 0 & 1 \end{array} \right),
\]
we obtain an action of $\Q(w) \rtimes \Q(w)^\times$ on the tree $T(v_\mathfrak{p})$.

We define the Busemann function $f_\mathfrak{p} \colon T(v_\mathfrak{p}) \to \R$ by
\[
f_\mathfrak{p}(p) := \lim_{n \to \infty} n - d\big(p,[L_\mathfrak{p}(n)]\big)
\]
with $L_\mathfrak{p}(n) := \pi_\mathfrak{p}^{-n}\mathcal{O}_\mathfrak{p} \oplus \mathcal{O}_\mathfrak{p} \subset \Q(w) \oplus \Q(w)$.
Note that $f_\mathfrak{p}([L_\mathfrak{p}(n)])=n$. If the point $p$ does not lie on the line through the vertices $L_\mathfrak{p}(n)$ ($n \in \Z$) then $f_\mathfrak{p}(p) = m - d(p,[L_\mathfrak{p}(m)])$ with $[L_\mathfrak{p}(m)]$ the closest vertex in the line from $p$.

\begin{lemma} \label{lem-tree-1}
For $(x,y) \in \Q(w) \rtimes \Q(w)^\times$ and $p \in T(v_\mathfrak{p})$ we have
\[
f_\mathfrak{p}\big((x,y) \cdot p\big) = f_\mathfrak{p}(p) - v_\mathfrak{p}(y).
\]
\end{lemma}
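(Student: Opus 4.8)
The plan is to reduce the statement to the case where $p$ is a vertex of $T(v_\mathfrak{p})$ and then to carry out an explicit computation with lattices. Since the action of $\Q(w) \rtimes \Q(w)^\times$ on $T(v_\mathfrak{p})$ is simplicial and $f_\mathfrak{p}$ is affine along each edge (an edge is a geodesic segment of length $1$ joining vertices whose $f_\mathfrak{p}$-values differ by exactly $1$), it suffices to prove the formula when $p = [L]$ is the class of a lattice $L \subset \Q(w) \oplus \Q(w)$; the value at an interior point of an edge is then obtained by affine interpolation from the two endpoints. Write $A := \left(\begin{smallmatrix} y & x \\ 0 & 1 \end{smallmatrix}\right)$ for the matrix representing $(x,y)$, so that $(x,y) \cdot [L] = [A \cdot L]$, and recall from the discussion preceding the lemma that for every vertex $[L]$ the quantity $m - d\big([L],[L_\mathfrak{p}(m)]\big)$ is independent of $m$ once $m$ is large enough and equals $f_\mathfrak{p}([L])$ (for $[L]$ on the line $\{[L_\mathfrak{p}(n)]\}$ this is immediate, and for $[L]$ off the line it is the stabilization of the distance to the ray recorded in the text).

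The key step is the identity
\[
A \cdot L_\mathfrak{p}(m) = L_\mathfrak{p}\big(m - v_\mathfrak{p}(y)\big) \qquad \text{for all } m \geq v_\mathfrak{p}(y) - v_\mathfrak{p}(x),
\]
which one checks directly from the $\mathcal{O}_\mathfrak{p}$-basis. Indeed $A \cdot L_\mathfrak{p}(m)$ is the $\mathcal{O}_\mathfrak{p}$-module generated by $A \cdot (\pi_\mathfrak{p}^{-m},0) = (\pi_\mathfrak{p}^{-m} y, 0)$ and $A \cdot (0,1) = (x,1)$. Since $v_\mathfrak{p}(\pi_\mathfrak{p}^{-m} y) = -(m - v_\mathfrak{p}(y))$, the first generator spans the same rank-one submodule as $(\pi_\mathfrak{p}^{-(m - v_\mathfrak{p}(y))},0)$; and $(x,1) - (0,1) = (x,0)$ lies in $\pi_\mathfrak{p}^{-(m - v_\mathfrak{p}(y))}\mathcal{O}_\mathfrak{p} \oplus 0$ precisely when $v_\mathfrak{p}(x) \geq -(m - v_\mathfrak{p}(y))$, i.e. when $m \geq v_\mathfrak{p}(y) - v_\mathfrak{p}(x)$ (the convention $v_\mathfrak{p}(0) = \infty$ covering the case $x = 0$). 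For such $m$ one gets $A \cdot L_\mathfrak{p}(m) = \pi_\mathfrak{p}^{-(m - v_\mathfrak{p}(y))}\mathcal{O}_\mathfrak{p} \oplus \mathcal{O}_\mathfrak{p} = L_\mathfrak{p}(m - v_\mathfrak{p}(y))$ on the nose.

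Granting this, fix $m$ large enough that $m \geq v_\mathfrak{p}(y) - v_\mathfrak{p}(x)$, that $f_\mathfrak{p}([L]) = m - d\big([L],[L_\mathfrak{p}(m)]\big)$, and that $f_\mathfrak{p}([AL]) = (m - v_\mathfrak{p}(y)) - d\big([AL],[L_\mathfrak{p}(m - v_\mathfrak{p}(y))]\big)$. Since $A$ acts by isometries of the tree and $[L_\mathfrak{p}(m - v_\mathfrak{p}(y))] = [A \cdot L_\mathfrak{p}(m)]$, we have $d\big([AL],[L_\mathfrak{p}(m - v_\mathfrak{p}(y))]\big) = d\big([AL],[A L_\mathfrak{p}(m)]\big) = d\big([L],[L_\mathfrak{p}(m)]\big)$, and substituting yields $f_\mathfrak{p}([AL]) = (m - v_\mathfrak{p}(y)) - d\big([L],[L_\mathfrak{p}(m)]\big) = f_\mathfrak{p}([L]) - v_\mathfrak{p}(y)$. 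The only genuine obstacle is the lattice bookkeeping in the displayed identity — pinning down the shift $m \mapsto m - v_\mathfrak{p}(y)$ and the range of admissible $m$ — together with the routine but necessary check that the Busemann expression stabilizes simultaneously at $[L]$ and $[AL]$; both are elementary once the definitions are unwound, and the reduction from points to vertices is purely formal.
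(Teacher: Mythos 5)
Your proof is correct and follows essentially the same route as the paper: the heart of both arguments is the computation $\bigl(\begin{smallmatrix} y & x \\ 0 & 1 \end{smallmatrix}\bigr) L_\mathfrak{p}(m) = L_\mathfrak{p}(m - v_\mathfrak{p}(y))$ for $m \geq v_\mathfrak{p}(y) - v_\mathfrak{p}(x)$, combined with the fact that the action is by isometries. One small remark: your preliminary reduction from arbitrary points to vertices via affine interpolation along edges is superfluous, since the Busemann expression $f_\mathfrak{p}(p) = \lim_{n\to\infty} \bigl(n - d(p,[L_\mathfrak{p}(n)])\bigr)$ is defined for every point $p$ of the tree, and the isometry plus the lattice shift identity immediately yield $f_\mathfrak{p}(Ap) = f_\mathfrak{p}(p) - v_\mathfrak{p}(y)$ for all $p$ without passing through the vertex case.
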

\begin{proof}
We have
\begin{eqnarray*}
f_\mathfrak{p}\big((x,y) \cdot p\big) & = & \lim_{n \to \infty} n - d\big((x,y) \cdot p,[L_\mathfrak{p}(n)]\big) \\
& = & \lim_{n \to \infty} n - d\big(p,(x,y)^{-1} \cdot [L_\mathfrak{p}(n)]\big)
\end{eqnarray*}
with
\[
(x,y)^{-1} \cdot [L_\mathfrak{p}(n)] = \big[\left( \begin{array}{cc} y^{-1} & -x \\ 0 & 1 \end{array} \right) L_\mathfrak{p}(n)\big].
\]
The lattice $\left( \begin{array}{cc} y^{-1} & -x \\ 0 & 1 \end{array} \right) L_\mathfrak{p}(n)$ is the $\mathcal{O}_\mathfrak{p}$-module generated by $\left( \begin{array}{c} y^{-1} \cdot \pi_\mathfrak{p}^{-n} \\ 0 \end{array} \right)$ and $\left( \begin{array}{c} -x \\ 1 \end{array} \right)$.
There exists a unit $s \in \mathcal{O}_\mathfrak{p}^\times$ such that $y^{-1} \cdot \pi_\mathfrak{p}^{-n} = s \cdot \pi_\mathfrak{p}^{-n - v_\mathfrak{p}(y)}$. Hence, if $v_\mathfrak{p}(x) \geq - n - v_\mathfrak{p}(y)$ then
\[
\left( \begin{array}{cc} y^{-1} & -x \\ 0 & 1 \end{array} \right) L_\mathfrak{p}(n) = L_\mathfrak{p}(n + v_\mathfrak{p}(y)).
\]
We conclude
\begin{eqnarray*}
f_\mathfrak{p}\big((x,y) \cdot p\big) & = & \lim_{n \to \infty} n - d\big(p,(x,y)^{-1} \cdot [L_\mathfrak{p}(n)]\big) \\
& = & \lim_{n \to \infty} n - d\big(p,[L_\mathfrak{p}(n + v_\mathfrak{p}(y))]\big) \\
& = & \lim_{m \to \infty} (m - v_\mathfrak{p}(y)) - d\big(p,[L_\mathfrak{p}(m)]\big) \\
& = & f_\mathfrak{p}\big(p\big) - v_\mathfrak{p}(y).
\end{eqnarray*}
\end{proof}

\begin{lemma} \label{lem-tree-2}
Let $L$ be a lattice in $\Q(w) \oplus \Q(w)$. We denote by $z_1(L),z_2(L) \in \Z$ the elements satisfying $L \cap (\Q(w) \oplus 0) = \pi_\mathfrak{p}^{z_1(L)} \mathcal{O}_\mathfrak{p} \oplus 0$ and $L \cap (0 \oplus \Q(w)) = 0 \oplus \pi_\mathfrak{p}^{z_2(L)} \mathcal{O}_\mathfrak{p}$. Let $(x,y) \in \Q(w) \rtimes \Q(w)^\times$ with $(x,y) [L] = [L]$. Then $v_\mathfrak{p}(x) \geq z_1(L) - z_2(L)$ and $v_\mathfrak{p}(y) = 0$.
\end{lemma}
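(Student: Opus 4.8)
The plan is to make the matrix action completely explicit. By the construction in Subsection~\ref{ss-tree}, the element $(x,y)$ acts on $T(v_\mathfrak{p})$ through the matrix $M := \begin{pmatrix} y & x \\ 0 & 1 \end{pmatrix} \in \GL_2(\Q(w))$, and the hypothesis $(x,y)[L] = [L]$ says precisely that $M L = L \cdot s$ for some scalar $s \in \Q(w)^\times$. Before using this I would record the implicit fact that $z_1(L)$ and $z_2(L)$ are well defined: $L \cap (\Q(w) \oplus 0)$ is a finitely generated $\mathcal{O}_\mathfrak{p}$-submodule of $\Q(w) \oplus 0$ (finitely generated because $\mathcal{O}_\mathfrak{p}$ is Noetherian and $L$ is), and it is nonzero because $L$ spans the $\Q(w)$-vector space $\Q(w) \oplus \Q(w)$, so clearing $\mathfrak{p}$-denominators in a representation of $(1,0)$ in an $\mathcal{O}_\mathfrak{p}$-basis of $L$ produces a nonzero element of $L \cap (\Q(w) \oplus 0)$; a nonzero finitely generated $\mathcal{O}_\mathfrak{p}$-submodule of the field $\Q(w)$ is $\pi_\mathfrak{p}^{k}\mathcal{O}_\mathfrak{p}$ for a unique $k \in \Z$ since $\mathcal{O}_\mathfrak{p}$ is a discrete valuation ring. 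The same applies to the second coordinate.

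First I would prove $v_\mathfrak{p}(y) = 0$. The vertex $[L]$ is a fixed point of $(x,y)$ and $f_\mathfrak{p}$ takes a finite integer value there, so Lemma~\ref{lem-tree-1} gives $f_\mathfrak{p}([L]) = f_\mathfrak{p}((x,y)[L]) = f_\mathfrak{p}([L]) - v_\mathfrak{p}(y)$, whence $v_\mathfrak{p}(y) = 0$. (If one prefers to keep this lemma self-contained, comparing second exterior powers of $ML = L\cdot s$ yields $v_\mathfrak{p}(y) = 2 v_\mathfrak{p}(s)$, and the next step forces $v_\mathfrak{p}(s) = 0$.)

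Next I would pin down $s$. Since $M$ maps $\Q(w) \oplus 0$ bijectively onto itself, intersecting with this line commutes with $M$, so $M\big(L \cap (\Q(w) \oplus 0)\big) = (ML) \cap (\Q(w) \oplus 0) = (L\cdot s) \cap (\Q(w) \oplus 0) = \big(L \cap (\Q(w) \oplus 0)\big)\cdot s$. Evaluating both sides, the left-hand side equals $y\,\pi_\mathfrak{p}^{z_1(L)}\mathcal{O}_\mathfrak{p} \oplus 0 = \pi_\mathfrak{p}^{z_1(L)}\mathcal{O}_\mathfrak{p} \oplus 0$ by the previous step, and the right-hand side equals $\pi_\mathfrak{p}^{z_1(L) + v_\mathfrak{p}(s)}\mathcal{O}_\mathfrak{p} \oplus 0$; comparison gives $v_\mathfrak{p}(s) = 0$, hence $L\cdot s = L$ and $M L = L$. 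Finally $(0, \pi_\mathfrak{p}^{z_2(L)}) \in L$ by definition of $z_2(L)$, so $M(0, \pi_\mathfrak{p}^{z_2(L)}) = (x\,\pi_\mathfrak{p}^{z_2(L)},\, \pi_\mathfrak{p}^{z_2(L)}) \in ML = L$; subtracting $(0, \pi_\mathfrak{p}^{z_2(L)}) \in L$ leaves $(x\,\pi_\mathfrak{p}^{z_2(L)}, 0) \in L \cap (\Q(w) \oplus 0) = \pi_\mathfrak{p}^{z_1(L)}\mathcal{O}_\mathfrak{p} \oplus 0$, i.e. $v_\mathfrak{p}(x) + z_2(L) \geq z_1(L)$, which is the assertion (and holds trivially, with $v_\mathfrak{p}(x) = \infty$, when $x = 0$).

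I do not expect any real obstacle here: once one observes that $M$ preserves the coordinate line $\Q(w) \oplus 0$, the whole argument is short bookkeeping. The only points demanding care are the well-definedness of $z_1(L), z_2(L)$ (the preliminary remark) and the degenerate case $x = 0$; the use of Lemma~\ref{lem-tree-1} in the first step can be replaced by the determinant computation, so the lemma is essentially self-contained.
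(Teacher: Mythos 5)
Your proof is correct and follows essentially the same route as the paper: use Lemma~\ref{lem-tree-1} (or the determinant) to get $v_\mathfrak{p}(y)=0$, argue via $L\cap(\Q(w)\oplus 0)$ that $(x,y)L=L$ and not merely $(x,y)[L]=[L]$, and then push $(0,\pi_\mathfrak{p}^{z_2(L)})$ through the matrix to read off $v_\mathfrak{p}(x)+z_2(L)\geq z_1(L)$. You merely make explicit a step the paper leaves implicit, namely determining the scalar $s$ in $(x,y)L = Ls$, and add the (harmless) remarks about well-definedness of $z_1,z_2$ and the degenerate case $x=0$.
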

\begin{proof}
Lemma~\ref{lem-tree-1} implies $v_\mathfrak{p}(y) = 0$. We conclude $(x,y) L = L$ because
\begin{align*}
& ((x,y) L) \cap (\Q(w) \oplus 0) = (x,y) \big( L \cap (\Q(w) \oplus 0) \big) = (x,y) \big( \pi_\mathfrak{p}^{z_1(L)} \mathcal{O}_\mathfrak{p} \oplus 0 \big) = \\
& = y \pi_\mathfrak{p}^{z_1(L)} \mathcal{O}_\mathfrak{p} \oplus 0 = \pi_\mathfrak{p}^{z_1(L)} \mathcal{O}_\mathfrak{p} \oplus 0 = L \cap (\Q(w) \oplus 0).
\end{align*}
Since
\[
\left( \begin{array}{c} x \pi_\mathfrak{p}^{z_2(L)} \\ \pi_\mathfrak{p}^{z_2(L)} \end{array} \right) = \left( \begin{array}{cc} y & x \\ 0 & 1 \end{array} \right) \left( \begin{array}{c} 0 \\ \pi_\mathfrak{p}^{z_2(L)} \end{array} \right) \in (x,y) L = L,
\]
we have $(x \pi_\mathfrak{p}^{z_2(L)},0) \in L$ and hence $v_\mathfrak{p}(x) + z_2(L) \geq z_1(L)$.
\end{proof}

\begin{lemma} \label{lem-tree-3}
Let $L$ be a lattice in $\Q(w) \oplus \Q(w)$. We denote by $z_1(L) \in \Z$ the element satisfying $L \cap (\Q(w) \oplus 0) = \pi_\mathfrak{p}^{z_1(L)} \mathcal{O}_\mathfrak{p} \oplus 0$. We set
\[
z'_2(L) := \min\big\{v_\mathfrak{p}(b) \, \big| \, (a,b) \in L \big\}.
\]
Let $x \in \Q(w)$ with $v_\mathfrak{p}(x) \geq z_1(L) - z'_2(L)$. Then $(x,1) L = L$.
\end{lemma}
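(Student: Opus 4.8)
The map $(x,1)$ acts on $\Q(w)\oplus\Q(w)$ via the unipotent matrix $\left(\begin{smallmatrix} 1 & x \\ 0 & 1\end{smallmatrix}\right)$, so it sends $(a,b)$ to $(a+xb,b)$ and has inverse $(-x,1)$, with $v_\mathfrak{p}(-x)=v_\mathfrak{p}(x)$. The plan is therefore to prove only the inclusion $(x,1)L\subseteq L$; applying the same statement to $-x$ (which satisfies the same hypothesis) gives $(-x,1)L\subseteq L$, i.e.\ $L\subseteq (x,1)L$, and hence $(x,1)L=L$. This reduction to a single inclusion, exploiting that $(x,1)$ is unipotent, is the key structural observation.

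To prove $(x,1)L\subseteq L$: for $(a,b)\in L$ we have $(x,1)(a,b)=(a+xb,b)$, and since $(a,b)\in L$ this lies in $L$ if and only if the difference $(xb,0)=(a+xb,b)-(a,b)$ lies in $L$. So it suffices to show $(xb,0)\in L$ for every element $b\in\Q(w)$ occurring as the second coordinate of some element of $L$. Now $(xb,0)$ lies in $\Q(w)\oplus 0$, so it belongs to $L$ if and only if it belongs to $L\cap(\Q(w)\oplus 0)=\pi_\mathfrak{p}^{z_1(L)}\mathcal{O}_\mathfrak{p}\oplus 0$, which holds precisely when $v_\mathfrak{p}(xb)=v_\mathfrak{p}(x)+v_\mathfrak{p}(b)\geq z_1(L)$ (and trivially when $b=0$, where $v_\mathfrak{p}(b)=\infty$). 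By the very definition of $z'_2(L)$ as a minimum over second coordinates we have $v_\mathfrak{p}(b)\geq z'_2(L)$, whence $v_\mathfrak{p}(x)+v_\mathfrak{p}(b)\geq v_\mathfrak{p}(x)+z'_2(L)\geq \big(z_1(L)-z'_2(L)\big)+z'_2(L)=z_1(L)$ by the hypothesis $v_\mathfrak{p}(x)\geq z_1(L)-z'_2(L)$. This gives $(xb,0)\in L$ and completes the argument.

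The only minor point to record is that $z'_2(L)$ is well defined, i.e.\ the minimum is attained: the projection of $L$ to the second factor is a finitely generated $\mathcal{O}_\mathfrak{p}$-submodule of $\Q(w)$, hence of the form $\pi_\mathfrak{p}^{m}\mathcal{O}_\mathfrak{p}$ for an integer $m=z'_2(L)$ (nonzero since $L$ spans the $2$-dimensional $\Q(w)$-vector space), and this also handles the case $b=0$ cleanly. I do not anticipate any serious obstacle: once one reduces to the single inclusion and uses the explicit description of $L\cap(\Q(w)\oplus 0)$ from the definition of $z_1(L)$, the lemma is a one-line valuation estimate.
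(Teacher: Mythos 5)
Your proof is correct and takes essentially the same route as the paper: show $(xb,0)\in L$ via the valuation estimate $v_\mathfrak{p}(xb)=v_\mathfrak{p}(x)+v_\mathfrak{p}(b)\geq z_1(L)$, deduce $(x,1)L\subseteq L$, and obtain equality by applying the same to $-x$. The added remark on well-definedness of $z'_2(L)$ is a welcome but inessential supplement.
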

\begin{proof}
Let $(a,b) \in L$. Then $(xb,0) \in L$ because $v_\mathfrak{p}(xb) = v_\mathfrak{p}(x) + v_\mathfrak{p}(b) \geq z_1(L)$. Hence, $(a+xb,b) \in L$. This shows that $(x,1) L \subseteq L$.
Analogously, we conclude $(x,1)^{-1} L = (-x,1) L \subseteq L$.
Therefore, $(x,1) L = L$.
\end{proof}

\subsection{The Minkowski space $\mathbf{\Q(w)_\R}$} \label{ss-minkowski}

There are $\deg(w)$ embeddings $\tau \colon \Q(w) \hookrightarrow \C$, where $\deg(w) := \dim_\Q(\Q(w))$ denotes the degree of $w$.
We consider the map
\[
j \colon \Q(w) \to \Q(w)_\C := \prod_\tau \C, \, x \mapsto (\tau(x))_\tau.
\]
The \emph{Minkowski space $\Q(w)_\R$} is defined as the subset
\[
\Q(w)_\R := \big\{ (z_\tau)_\tau \in \prod_\tau \C \, \big| \, z_{\overline{\tau}} = \overline{z_\tau} \text{ for all } \tau \big\} \subset \Q(w)_\C.
\]
This is a real vector space of dimension $n$. Note that the image of the map $j$ is contained in $\Q(w)_\R$.
The $\C$-vector space $\Q(w)_\C$ is equipped with the hermitian scalar product $\langle x , y \rangle =  \sum_\tau x_\tau \overline{y_\tau}$. We obtain a scalar product on $\Q(w)_\R$ by restriction.

We define an equivalence relation on the embeddings $\tau \colon \Q(w) \hookrightarrow \C$ by $\tau \sim \tau'$ if $\tau' \in \{\tau,\overline{\tau}\}$.
We decompose $\Q(w)_\C = \bigoplus_{[\tau]} \Q(w)_{\C,[\tau]}$ with
\[
\Q(w)_{\C,[\tau]} := \prod_{\tau' \in [\tau]} \C \subseteq \prod_{\tau'} \C = \Q(w)_\C
\]
and $\Q(w)_\R = \prod_{[\tau]} \Q(w)_{\R,[\tau]}$ with
\[
\Q(w)_{\R,[\tau]} = \Q(w)_{\C,[\tau]} \cap \Q(w)_\R.
\]

The group $\Q(w) \rtimes \Q(w)^\times$ acts on $\Q(w)_\C$ and $\Q(w)_\R$ by
\[
(x,y) \cdot (z_\tau)_\tau := \big( \tau(y) \cdot z_\tau + \tau(x) \big)_\tau.
\]
We obtain actions of $\Q(w) \rtimes \Q(w)^\times$ on $\Q(w)_{\C,[\tau]}$ respectively $\Q(w)_{\R,[\tau]}$ by restriction.

By \cite[Proposition~5.2 on page 31 and Definition~4.1 on page 24]{Neu99} there exists for every ideal $\mathfrak{a} \neq 0$ in $\mathcal{O}$ a basis $v_1, \ldots, v_{\deg(w)} \in \Q(w)_\R$ such that $j(\mathfrak{a}) = \Z v_1 + \ldots + \Z v_{\deg(w)}$. A fundamental domain for the free action of the group $\mathfrak{a} = \mathfrak{a} \rtimes \{1\} < \Q(w) \rtimes \Q(w)^\times$ on $\Q(w)_\R$ is
\[
\big\{ r_1 v_1 + \ldots + r_{\deg(w)} v_{\deg(w)} \, \big| \, r_i \in \R, 0 \leq r_i < 1 \big\}.
\]
We conclude
\begin{lemma} \label{lem-min}
Let $\mathfrak{a} \neq 0$ be an ideal in $\mathcal{O}$. The additive group $\mathfrak{a}$ acts freely, properly, and cocompactly on $\Q(w)_\R$.
\end{lemma}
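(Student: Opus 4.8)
The plan is to observe that the action of the additive group $\mathfrak{a} = \mathfrak{a} \rtimes \{1\} < \Q(w) \rtimes \Q(w)^\times$ on $\Q(w)_\R$ is nothing but translation by the image lattice $j(\mathfrak{a}) \subset \Q(w)_\R$, and then to deduce the three required properties from the classical fact that $j(\mathfrak{a})$ is a full-rank lattice in the real vector space $\Q(w)_\R$. Concretely, I would start from the basis $v_1, \ldots, v_{\deg(w)} \in \Q(w)_\R$ with $j(\mathfrak{a}) = \Z v_1 + \cdots + \Z v_{\deg(w)}$ that is already furnished in the paragraph preceding the lemma (from \cite[Proposition~5.2 on page 31 and Definition~4.1 on page 24]{Neu99}). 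Since $\dim_\R \Q(w)_\R = \deg(w)$, these $\deg(w)$ vectors automatically form an $\R$-basis of $\Q(w)_\R$, so $j(\mathfrak{a})$ is a discrete, cocompact subgroup of $(\Q(w)_\R, +)$.

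The key steps, in order, would then be: (1) \emph{Injectivity of $j$ on $\mathfrak{a}$}: any field embedding $\tau \colon \Q(w) \hookrightarrow \C$ is injective, hence so is $j$; equivalently, the $v_i$ being $\R$-linearly independent forces $j$ to be injective on $\mathfrak{a}$. (2) \emph{Freeness}: for $0 \neq a \in \mathfrak{a}$ the translation $z \mapsto z + j(a)$ of $\Q(w)_\R$ has no fixed point, since $j(a) \neq 0$. (3) \emph{Properness}: $j(\mathfrak{a})$ is a discrete subgroup of the locally compact topological group $(\Q(w)_\R,+)$, hence closed, and the translation action of a discrete subgroup of a locally compact group is properly discontinuous. (4) \emph{Cocompactness}: the half-open box $\{ r_1 v_1 + \cdots + r_{\deg(w)} v_{\deg(w)} \mid 0 \le r_i < 1 \}$ is a fundamental domain, as recalled just before the lemma, and its closure is compact (a continuous image of $[0,1]^{\deg(w)}$) with $\mathfrak{a}$-translates covering $\Q(w)_\R$ because the $v_i$ span; hence $\Q(w)_\R / \mathfrak{a}$ is compact.

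I do not expect any genuine obstacle here: the statement is essentially the assertion that a nonzero ideal of a number ring embeds as a complete lattice under the Minkowski embedding, and the cited results from Neukirch already package exactly this. The only point worth a sentence is the verification that $j(\mathfrak{a})$ has full rank $\deg(w)$ rather than smaller rank — which is immediate since a nonzero ideal of $\mathcal{O}$ is a free $\Z$-module of rank $\deg(w)$ (finite index in $\mathcal{O}$) and the Minkowski embedding is $\R$-linear and injective, so it carries a rank-$\deg(w)$ subgroup to a rank-$\deg(w)$ subgroup of the $\deg(w)$-dimensional space $\Q(w)_\R$. With that in hand, steps (1)--(4) are routine and the lemma follows.
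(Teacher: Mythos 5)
Your proposal is correct and follows essentially the same route as the paper: the paper's (implicit) proof consists exactly of citing \cite[Proposition~5.2 and Definition~4.1]{Neu99} to get the $\Z$-basis $v_1,\ldots,v_{\deg(w)}$ of $j(\mathfrak{a})$ as an $\R$-basis of $\Q(w)_\R$, and then reading off freeness, properness, and cocompactness from the half-open box fundamental domain. You have simply spelled out the three standard deductions (injectivity of $j$, discreteness of a full-rank lattice, compactness of the closed box) that the paper leaves to the reader.
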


For more information on the Minkowski space $\Q(w)_\R$ we refer to \cite[Chapter 1, \S5]{Neu99}.

\subsection{The metric spaces $\mathbf{X_w}$ and $\mathbf{Y_w}$} \label{subsec-XY_w}

We define $X_w := \R^{n_w} \times \prod_{\mathfrak{p} \in M_w} T(v_\mathfrak{p})$. Recall that $n_w$ is the rank of $\mathcal{O}^\times$ (see Definition~\ref{def-units}), $T(v_\mathfrak{p})$ is the tree defined in subsection~\ref{ss-tree}. Note that $X_w$ is a finite-dimensional proper CAT(0)-space, since each factor has this property. We will need the space $X_w$ for the proof that $\mathcal{O}_w \rtimes_{\cdot w} \Z$ is a FHJ group.
Consider the semi-direct product $\Q(w) \rtimes \mathcal{O}_w^\times$, where $\mathcal{O}_w^\times$ acts on $\Q(w)$ by multiplication. The group $\Q(w) \rtimes \mathcal{O}_w^\times$ acts diagonally on $X_w$. This action is isometric because it is isometric on each factor.

We are also interested in the space $X_w \times \Q(w)_{\R}$, where $\Q(w)_{\R}$ is the Minkowski space (see subsection~\ref{ss-minkowski}). For $(x,y) \in \Q(w) \rtimes \mathcal{O}_w^\times$ and $z_\tau, z'_\tau \in \C \subset \Q(w)_{\C}$ we calculate
\[
\big|(x,y) \cdot z_\tau - (x,y) \cdot z'_\tau\big| = \big|\big(\tau(y) \cdot z_\tau + \tau(x)\big) - \big(\tau(y) \cdot z'_\tau + \tau(x)\big)\big| = |\tau(y)| \cdot |z_\tau - z'_\tau|.
\]
This shows that the action of $\Q(w) \rtimes \mathcal{O}_w^\times$ on $\Q(w)_{\R}$ respectively $X_w \times \Q(w)_{\R}$ is not isometric in general.
Since we want an isometric action on $X_w \times \Q(w)_{\R}$, we have to modify the metric.

\begin{definition}[warped product $M \times_f N$]
Let $(M,d_M)$ and $(N,d_N)$ be length spaces and let $f \colon M \to \R^{>0}$ be a continuous function on $M$.
The \emph{warped product $M \times_f N$} is defined as follows.
The length $l(\gamma)$ of a path $\gamma = (\gamma_M,\gamma_N) \colon [0,1] \to M \times N$ is defined as
\[
l(\gamma) := \lim \sum_{i=1}^n \sqrt{d_M(\gamma_M(t_{i-1}),\gamma_M(t_i))^2 + f(\gamma_M(t_{i-1}))^2 \cdot d_N(\gamma_N(t_{i-1}),\gamma_N(t_i))^2}
\]
where the limit is taken with respect to the refinement ordering of partitions
\[
0 = t_0 < t_1 < t_2 < \cdots < t_n = 1
\]
of the interval $[0,1]$.
The distance between two points $x, y \in M \times_f N$ is given as
\[
d(x,y) := \inf \{ l(\gamma) \mid \text{$\gamma$ is a path from $x$ to $y$.} \}.
\]
(See \cite[Proposition~3.1]{Che99} for the proof that $d$ is a metric.)
\end{definition}

\begin{lemma} \label{lem-warped}
\begin{enumerate}
\item The topology of the warped product $M \times_f N$ coincides with the product topology.
\item If the metric spaces $M$ and $N$ are proper then the warped product $M \times_f N$ is proper.
\end{enumerate}
\end{lemma}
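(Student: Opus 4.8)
The plan is to prove that near every point the warped metric $d := d_{M \times_f N}$ is Lipschitz‑equivalent to the product metric $d_M + d_N$ --- which gives (1) at once --- and then to deduce properness from (1) by exhibiting closed balls of $M \times_f N$ as closed subsets of compact products of balls. Throughout I will use the trivial estimates $\max(a,b) \le \sqrt{a^2+b^2} \le a+b$ for $a,b \ge 0$, together with the fact that $M$ and $N$ are length spaces (this is part of the set‑up of \cite{Che99}; in our applications $M = X_w$ is CAT(0) and $N = \Q(w)_\R$ is Euclidean), so that any two points of $M \times N$ can be joined by a path.

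Fix $x_0 = (m_0,n_0)$ and let $x = (m,n)$. First, for every path $\gamma = (\gamma_M,\gamma_N)$ from $x_0$ to $x$ and every partition $0 = t_0 < \dots < t_k = 1$, bounding each summand below by $d_M(\gamma_M(t_{i-1}),\gamma_M(t_i))$ and applying the triangle inequality in $M$ shows the partition sum is $\ge d_M(m_0,m)$; taking the minimum over partitions and then the infimum over $\gamma$ gives
\[
d(x_0,x) \ge d_M(m_0,m).
\]
Next choose, by continuity and positivity of $f$, a radius $r > 0$ and constants $0 < c \le C$ with $c \le f \le C$ on $\bar B_M(m_0,r)$. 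For a given partition, either some $\gamma_M(t_j)$ lies outside $\bar B_M(m_0,r)$, in which case the partial sum up to $t_j$ is already $\ge d_M(m_0,\gamma_M(t_j)) > r$ by the previous argument; or all $\gamma_M(t_i)$ lie in $\bar B_M(m_0,r)$, in which case bounding each summand below by $f(\gamma_M(t_{i-1}))\, d_N(\gamma_N(t_{i-1}),\gamma_N(t_i)) \ge c\, d_N(\gamma_N(t_{i-1}),\gamma_N(t_i))$ gives a sum $\ge c\, d_N(n_0,n)$. In either case the partition sum is $\ge \min(r,\, c\, d_N(n_0,n))$, hence
\[
d(x_0,x) \ge \min\bigl(r,\, c\, d_N(n_0,n)\bigr).
\]
Finally, evaluating the partition sum on the trivial partition $\{0,1\}$ gives $l(\gamma) \le \sqrt{d_M(m_0,m)^2 + f(m_0)^2 d_N(n_0,n)^2} \le d_M(m_0,m) + f(m_0)\, d_N(n_0,n)$, and since such a path $\gamma$ exists, $d(x_0,x) \le d_M(m_0,m) + f(m_0)\, d_N(n_0,n)$.

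Part (1) now follows: if $d(x_0,x) < r$, the two lower bounds give $d_M(m_0,m) \le d(x_0,x)$ and (the minimum being forced onto the second term) $c\, d_N(n_0,n) \le d(x_0,x)$, while the upper bound gives $d(x_0,x) \le d_M(m_0,m) + f(m_0)\, d_N(n_0,n)$; thus $d$ and $d_M + d_N$ are Lipschitz‑comparable on $B_d(x_0,r)$, and since a product neighbourhood of $x_0$ lies in $B_d(x_0,r)$ as soon as it is small enough, the two topologies have the same local basis at $x_0$. As $x_0$ was arbitrary, $\id$ is a homeomorphism $M \times_f N \to M \times N$ with the product topology. For part (2), assume $M$ and $N$ proper and fix $x_0 = (m_0,n_0)$ and $R > 0$. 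By the first lower bound, $\bar B_d(x_0,R) \subseteq \bar B_M(m_0,R) \times N$. Since $M$ is proper, $\bar B_M(m_0,R+1)$ is compact, so $c_0 := \min_{\bar B_M(m_0,R+1)} f > 0$; applying the second lower bound with this $r := R+1$ and $c := c_0$, every $(m,n) \in \bar B_d(x_0,R)$ satisfies $R \ge d(x_0,(m,n)) \ge \min(R+1,\, c_0\, d_N(n_0,n))$, which forces $d_N(n_0,n) \le R/c_0$. Hence $\bar B_d(x_0,R) \subseteq K := \bar B_M(m_0,R) \times \bar B_N(n_0,R/c_0)$, and $K$ is compact (both factors are, by properness); by (1) the topology induced on $K$ from $M \times_f N$ is the product topology, under which $K$ is compact. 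As $\bar B_d(x_0,R)$ is closed in $M \times_f N$, it is a closed subset of the compact set $K$, hence compact; thus all closed balls in $M \times_f N$ are compact, i.e.\ $M \times_f N$ is proper.

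The point requiring care --- the main obstacle --- is the unusual definition of $l(\gamma)$ as a \emph{minimum} over partitions: it makes the trivial‑partition upper bound immediate, but it blocks the naive "a short path stays near its starting point" argument, which is why the second lower bound above is phrased via the in/out dichotomy over partition points rather than over the whole path. The other subtlety is that $f$ has no global positive lower bound, forcing the localisation to a base ball on which $f$ is pinched between positive constants; in part (2) it is precisely the properness of $M$ that makes such a ball compact, hence $f$ bounded below on it.
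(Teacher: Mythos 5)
Your proof is correct and follows essentially the same route as the paper's: establish, near each base point, a two-sided comparison between $d_{M\times_f N}$ and the product metric $d_M+d_N$ (upper bound via the trivial-partition evaluation, lower bound $d\ge d_M$ and $d\ge\min(r,c\,d_N)$ from a local pinching of $f$), and then read off both the topology statement and, using properness of $M$ to make the pinching ball compact, the properness statement. Your treatment is somewhat more careful than the paper's sketch at the one genuinely delicate point --- the paper simply asserts the inequality $n\cdot d_N(z,z')\le d((y,z),(y',z'))$, whereas you justify it explicitly via the in/out dichotomy on partition points, which is exactly what is needed given the ``minimum over partitions'' formulation of path length.
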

\begin{proof}
\begin{enumerate}
\item Let $B_r(x) \subset M \times_f N$ be a ball. We have to show that there exist open subsets $U \subset M$ and $V \subset N$ with $x \in U \times V \subset B_r(x)$. We set $U := B_{r/2}(\pr_M(x)) \subset M$, $m := \max\{f(y) \mid y \in \overline{B}_{r/2}(\pr_M(x))\}$, and $V := B_{r/2m}(\pr_N(x)) \subset N$. Then $x \in U \times V \subset B_r(x)$.\\
    Now, let $U \times V \subset M \times N$ be an open neighborhood of $(y,z)$ with respect to the product topology. We have to show that there exists $r > 0$ such that $B_r(y,z) \subset U \times V$. Let $r_U, r_V > 0$ with $B_{r_U}(y) \subset U$ and $B_{r_V}(z) \subset V$. We define $r := \min\{r_U,n \cdot r_V\}$ with $n := \min\{f(y) \mid y \in \overline{B}_{r_U}(y)\}$. Then $d_{M \times_f N}((y,z),(y',z')) < r$ implies $d_M(y,y') < r \leq r_U$ and $n \cdot d_N(z,z') < r \leq n \cdot r_V$. Hence, $(y',z') \in B_{r_U}(y) \times B_{r_V}(z) \subset U \times V$.
\item Let $\overline{B}_r(x) \subset M \times_f N$ be a closed ball. We have to show that this ball is compact.
    The space $\pr_M(\overline{B}_r(x))$ is compact because it is contained in the compact set $\overline{B}_r(\pr_M(x))$. We set $m := \min\{f(y) \mid y \in \pr_M(\overline{B}_r(x))\}$. Note that $\pr_N(\overline{B}_r(x))$ is contained in $\overline{B}_{r/m}(\pr_N(x))$ and hence compact.
    We conclude that $\overline{B}_r(x) \subset \pr_M(\overline{B}_r(x)) \times \pr_N(\overline{B}_r(x))$ is compact.
\end{enumerate}
\end{proof}

\begin{definition}[metric space $(Y_w,d_{Y_w})$] \label{def-Y_w}
Since the ideal class group of $\mathcal{O}$ is finite (see \cite[Theorem 6.3 on page 36]{Neu99}), there exists for every $\mathfrak{p} \in M_w$ an element $y_\mathfrak{p} \in \mathcal{O}$ such that $y_\mathfrak{p} \mathcal{O} = \mathfrak{p}^{v_{\mathfrak{p}}(y_\mathfrak{p})}$. We fix such an element $y_\mathfrak{p}$ for every $\mathfrak{p} \in M_w$.
We define the metric space $(Y_w,d_{Y_w})$ as
\[
Y_w := \big\{ (x_{[\tau]},y_{[\tau]})_{[\tau]} \in \prod_{[\tau]} X_w \times_{f^{[\tau]}} \Q(w)_{\R,[\tau]} \, \big| \, x_{[\tau]} = x_{[\tau']} \text{ for all } \tau, \tau' \big\}
\]
with $f^{[\tau]}(r,(p_\mathfrak{p})_{\mathfrak{p} \in M_w}) := \prod_{i=1}^{n_w} |\tau(e_i)|^{-r_i} \cdot \prod_{\mathfrak{p} \in M_w} |\tau(y_\mathfrak{p})|^{f_{\mathfrak{p}}(p_\mathfrak{p})/v_{\mathfrak{p}}(y_\mathfrak{p})}$.
Here, $e_1, \ldots, e_{n_w}$ denotes the $\Z$-basis for $\Z^{n_w} < \mathcal{O}^\times < \mathcal{O}_w^\times$ (see Definition~\ref{def-units}).
\end{definition}

\begin{lemma} \label{lem-action-isometric}
The action of $\Q(w) \rtimes \mathcal{O}_w^\times$ on $(Y_w,d_{Y_w})$ is isometric.
\end{lemma}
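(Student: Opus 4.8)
The plan is to realize $(Y_w,d_{Y_w})$ as an invariant subspace of a space on which $\Q(w)\rtimes\mathcal{O}_w^\times$ patently acts by isometries, and to reduce the whole statement to a single identity between absolute values in $\Q(w)$. Put $P:=\prod_{[\tau]}X_w\times_{f^{[\tau]}}\Q(w)_{\R,[\tau]}$ and let $\Q(w)\rtimes\mathcal{O}_w^\times$ act on $P$ factorwise, by the given action on $X_w$ in the first coordinate of each factor and by $(a,y)\cdot z:=\tau(y)z+\tau(a)$ on $\Q(w)_{\R,[\tau]}$. Since the $X_w$-part of the action is the same on every factor, the action preserves the condition $x_{[\tau]}=x_{[\tau']}$ and hence restricts to $Y_w$; so it suffices to show that $\Q(w)\rtimes\mathcal{O}_w^\times$ acts isometrically on each factor $X_w\times_{f^{[\tau]}}\Q(w)_{\R,[\tau]}$, which yields an isometric action on $P$ and therefore on the invariant subset $Y_w$ (for either the subspace metric or the induced length metric).

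For the factorwise statement I would first record the elementary warped-product principle: if $\phi\colon M\to M$ is an isometry, $\psi\colon N\to N$ satisfies $d_N(\psi n,\psi n')=\lambda\cdot d_N(n,n')$ for a constant $\lambda>0$, and $f\circ\phi=\lambda^{-1}f$, then $(\phi,\psi)$ preserves the length of every path in $M\times_fN$ -- inspect each summand $\sqrt{d_M(\cdot,\cdot)^2+f(\cdot)^2d_N(\cdot,\cdot)^2}$ -- and is therefore an isometry. I apply this with $M=X_w$ and $\phi$ the action of $g=(a,y)$, which is isometric (as observed in the text, being isometric on each factor $\R^{n_w}$ and $T(v_\mathfrak{p})$); with $N=\Q(w)_{\R,[\tau]}$ and $\psi=(z\mapsto\tau(y)z+\tau(a))$, which by the form of the hermitian scalar product on $\Q(w)_\C$ is a similarity of ratio $\lambda=|\tau(y)|$; and with $f=f^{[\tau]}$. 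It then remains to check $f^{[\tau]}(g\cdot\xi)=|\tau(y)|^{-1}f^{[\tau]}(\xi)$ for $\xi\in X_w$. Using that $g$ translates the $\R^{n_w}$-coordinate by $\alpha_w(y)$ and that $f_\mathfrak{p}(g\cdot p)=f_\mathfrak{p}(p)-v_\mathfrak{p}(y)$ by Lemma~\ref{lem-tree-1}, all $\xi$-dependent factors in the definition of $f^{[\tau]}$ reproduce $f^{[\tau]}(\xi)$, and the remaining constant is $\prod_{i=1}^{n_w}|\tau(e_i)|^{-\alpha_w(y)_i}\cdot\prod_{\mathfrak{p}\in M_w}|\tau(y_\mathfrak{p})|^{-v_\mathfrak{p}(y)/v_\mathfrak{p}(y_\mathfrak{p})}$, where $\alpha_w(y)_i$ denotes the $i$-th coordinate of $\alpha_w(y)\in\Z^{n_w}$. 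Hence the lemma reduces to the identity
\[
|\tau(y)|=\prod_{i=1}^{n_w}|\tau(e_i)|^{\alpha_w(y)_i}\cdot\prod_{\mathfrak{p}\in M_w}|\tau(y_\mathfrak{p})|^{v_\mathfrak{p}(y)/v_\mathfrak{p}(y_\mathfrak{p})}
\]
for every $y\in\mathcal{O}_w^\times$ and every embedding $\tau\colon\Q(w)\hookrightarrow\C$.

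Both sides of this identity are homomorphisms in $y$, so it is enough to verify it on generators of $\mathcal{O}_w^\times$. On $\mathcal{O}^\times$ the $v_\mathfrak{p}$-terms drop out, $\alpha_w$ restricts to the projection onto the free factor $\langle e_1,\dots,e_{n_w}\rangle$ killing the finite cyclic torsion of $\mathcal{O}^\times$ (Lemma~\ref{lem-units}), and since $|\tau(\zeta)|=1$ for roots of unity $\zeta$ the identity is immediate. On the remaining generators it is a form of the product formula for the $S$-unit $y$, with $S$ the archimedean places together with $M_w$: because $\prod_v\|y\|_v=1$ and $\|y\|_v=1$ for every finite place $v\notin M_w$, the archimedean absolute values $|\tau(y)|$ are pinned down by the $v_\mathfrak{p}(y)$ up to the contribution of $\mathcal{O}^\times$, and the projection $\alpha_w$ of Definition~\ref{def-units} is to be chosen precisely so that $\alpha_w(y)$ records that contribution. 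The step I expect to be the main obstacle is exactly this last one: one must choose $\alpha_w$, the basis $e_1,\dots,e_{n_w}$ and the elements $y_\mathfrak{p}$ compatibly so that the identity holds at each individual archimedean place (not merely after multiplying over all places) and so that the exponents it forces are integers, so that $\alpha_w$ is a genuine homomorphism $\mathcal{O}_w^\times\to\Z^{n_w}$; the product formula for $\Q(w)$ and Dirichlet's unit theorem (Lemma~\ref{lem-units}) are the inputs that make this work.
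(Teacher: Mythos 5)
Your reduction is the same as the paper's: factor the warped product over $[\tau]$, observe that the $X_w$-part of the action is a genuine isometry while the $\Q(w)_{\R,[\tau]}$-part is a similarity of ratio $|\tau(y)|$, and reduce isometry of the $[\tau]$-factor to the scalar identity
\[
|\tau(y)| \;=\; \prod_{i=1}^{n_w}|\tau(e_i)|^{\alpha_w(y)_i}\cdot\prod_{\mathfrak{p}\in M_w}|\tau(y_\mathfrak{p})|^{\,v_\mathfrak{p}(y)/v_\mathfrak{p}(y_\mathfrak{p})}\qquad(y\in\mathcal O_w^\times,\ \tau\colon\Q(w)\hookrightarrow\C).
\]
Up to this point you match the paper exactly; the identity above is the displayed equation in the paper's proof.

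Where your argument breaks off is the verification of this identity, and you have correctly located the weak spot. Invoking the product formula is a dead end for precisely the reason you suspect: the product formula constrains only $\prod_\tau|\tau(y)|^{d_\tau}$ (together with the $v_\mathfrak{p}$-contributions), i.e.\ a single multiplicative relation among the archimedean absolute values, whereas the identity must hold at each embedding $\tau$ separately. There are $r_1+r_2-1$ independent archimedean valuations and one cannot recover them one by one from their product. Moreover, appealing to ``the projection $\alpha_w$ of Definition~\ref{def-units} is to be chosen precisely so that\ldots'' is circular: what needs to be checked is exactly that $\alpha_w$ and the $y_\mathfrak{p}$ are compatible, and the product formula does not manufacture such compatibility.

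The paper closes the gap differently and more elementarily, and you should replace the product-formula discussion by that mechanism. Since the $e_i$ and the $y_\mathfrak{p}$ generate, modulo torsion, a finite-index subgroup of $\mathcal O_w^\times$ (Lemma~\ref{lem-units} plus the fact that $y_\mathfrak{p}\mathcal O=\mathfrak{p}^{v_\mathfrak{p}(y_\mathfrak{p})}$ gives $v_{\mathfrak p'}(y_\mathfrak{p})=0$ for $\mathfrak p'\neq\mathfrak p$), every $y\in\mathcal O_w^\times$ satisfies $y^l=\prod_i e_i^{l_i}\prod_\mathfrak{p}y_\mathfrak{p}^{l_\mathfrak{p}}$ for some $l\in\N$. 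One then raises both sides of the identity to the $l$-th power and verifies the result by direct multiplicativity, using $\alpha_w(y^l)_i=l_i$ and $v_\mathfrak{p}(y^l)=l_\mathfrak{p}v_\mathfrak{p}(y_\mathfrak{p})$; taking positive $l$-th roots recovers the identity. This is a finite computation, not an analytic one. (Your ``check on generators'' idea is salvageable in the same spirit, but you would need the additional remark that two homomorphisms into the torsion-free group $\R^{>0}$ which agree on a finite-index subgroup agree everywhere; as written you implicitly treat the $y_\mathfrak{p}$ as if they were a full complement to $\mathcal O^\times$, which they need not be.) You are also right that both routes tacitly require $\alpha_w(y_\mathfrak{p})=0$ for all $\mathfrak p\in M_w$ -- by Dirichlet's unit theorem, $\prod_i|\tau(e_i)|^{\alpha_w(y_\mathfrak{p})_i}=1$ for all $\tau$ forces $\alpha_w(y_\mathfrak{p})=0$ -- so the projection $\alpha_w$ must indeed be chosen compatibly with the $y_\mathfrak{p}$; but that is a harmless normalization of the data in Definition~\ref{def-units}, not something the product formula supplies.
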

\begin{proof}
We have to prove the equation
\[
f^{[\tau]}((x,y)r,((x,y)p_\mathfrak{p})_{\mathfrak{p} \in M_w}) \cdot |\tau(y)| = f^{[\tau]}(r,(p_\mathfrak{p})_{\mathfrak{p} \in M_w})
\]
for all $(x,y) \in \Q(w) \rtimes \mathcal{O}_w^\times$, $r \in \R^{n_w}$ and $p_\mathfrak{p} \in T(v_\mathfrak{p})$ ($\mathfrak{p} \in M_w$).
This equation is equivalent to
\[
\prod_{i=1}^{n_w} |\tau(e_i)|^{\alpha_w(y)_i} \cdot \prod_{\mathfrak{p} \in M_w} |\tau(y_\mathfrak{p})|^{v_{\mathfrak{p}}(y)/v_{\mathfrak{p}}(y_\mathfrak{p})} = |\tau(y)|.
\]
There exists a natural number $l$ such that $y^l = \prod_{i=1}^{n_w} e_i^{l_i} \cdot \prod_{\mathfrak{p} \in M_w} y_\mathfrak{p}^{l_\mathfrak{p}}$
for some $l_i, l_\mathfrak{p} \in \Z$.
We conclude
\begin{align*}
& \Big( \prod_{i=1}^{n_w} |\tau(e_i)|^{\alpha_w(y)_i} \cdot \prod_{\mathfrak{p} \in M_w} |\tau(y_\mathfrak{p})|^{v_{\mathfrak{p}}(y)/v_{\mathfrak{p}}(y_\mathfrak{p})} \Big)^l = \\
& \prod_{i=1}^{n_w} |\tau(e_i)|^{\alpha_w(y^l)_i} \cdot \prod_{\mathfrak{p} \in M_w} |\tau(y_\mathfrak{p})|^{v_{\mathfrak{p}}(y^l)/v_{\mathfrak{p}}(y_\mathfrak{p})} = \\
& \prod_{i=1}^{n_w} |\tau(e_i)|^{l_i} \cdot \prod_{\mathfrak{p} \in M_w} |\tau(y_\mathfrak{p})|^{l_{\mathfrak{p}}} = |\tau(y)|^l.
\end{align*}
This implies
\[
\prod_{i=1}^{n_w} |\tau(e_i)|^{\alpha_w(y)_i} \cdot \prod_{\mathfrak{p} \in M_w} |\tau(y_\mathfrak{p})|^{v_{\mathfrak{p}}(y)/v_{\mathfrak{p}}(y_\mathfrak{p})} = |\tau(y)|.
\]
\end{proof}

\begin{lemma} \label{lem-proper}
\begin{enumerate}
\item The topology of the metric space $(Y_w,d_{Y_w})$ coincides with the product topology of $X_w \times \Q(w)_\R$. \label{lem-proper-1}
\item The metric space $(Y_w,d_{Y_w})$ is proper. \label{lem-proper-2}
\end{enumerate}
\end{lemma}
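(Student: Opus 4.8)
The plan is to deduce both claims from Lemma~\ref{lem-warped}, together with two elementary permanence properties: a finite product of proper metric spaces (with any of the usual product metrics) is proper, and a closed subspace of a proper metric space is proper in the induced metric.

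First I would record the inputs that Lemma~\ref{lem-warped} needs. The space $X_w=\R^{n_w}\times\prod_{\mathfrak{p}\in M_w}T(v_\mathfrak{p})$ is proper: $\R^{n_w}$ is proper, and each tree $T(v_\mathfrak{p})$ is locally finite (it is regular of valence $|\mathcal{O}/\mathfrak{p}|+1<\infty$), hence proper, and a finite product of proper spaces is proper. Likewise each $\Q(w)_{\R,[\tau]}$ is a finite-dimensional real vector space, hence proper, and $\Q(w)_\R=\prod_{[\tau]}\Q(w)_{\R,[\tau]}$. Finally, for every $[\tau]$ the warping function $f^{[\tau]}\colon X_w\to\R^{>0}$ is continuous and strictly positive: it is built from the Busemann functions $f_\mathfrak{p}$, which are continuous (indeed $1$-Lipschitz, being the pointwise limit of the $1$-Lipschitz functions $p\mapsto n-d(p,[L_\mathfrak{p}(n)])$), and from the positive real numbers $|\tau(e_i)|$ and $|\tau(y_\mathfrak{p})|$, which are positive because $e_i$ and $y_\mathfrak{p}$ are nonzero and an embedding $\tau\colon\Q(w)\hookrightarrow\C$ is injective.

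With these in hand, Lemma~\ref{lem-warped} applies to each factor $X_w\times_{f^{[\tau]}}\Q(w)_{\R,[\tau]}$, showing that it is proper and carries the product topology of $X_w\times\Q(w)_{\R,[\tau]}$. Hence the finite product $W:=\prod_{[\tau]}\big(X_w\times_{f^{[\tau]}}\Q(w)_{\R,[\tau]}\big)$, with its product metric, is proper and carries the product topology. Now $Y_w\subseteq W$ is the subset cut out by the conditions $x_{[\tau]}=x_{[\tau']}$; since $X_w$ is Hausdorff these are closed conditions, so $Y_w$ is a closed subspace of $W$ and therefore proper in the subspace metric $d_{Y_w}$, which is claim~(\ref{lem-proper-2}). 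For claim~(\ref{lem-proper-1}), the subspace topology that $Y_w$ inherits from $W$ is, via the obvious bijection $((x,y_{[\tau]}))_{[\tau]}\mapsto\big(x,(y_{[\tau]})_{[\tau]}\big)$ (which identifies the diagonal copies of $X_w$ with $X_w$), exactly the product topology on $X_w\times\prod_{[\tau]}\Q(w)_{\R,[\tau]}=X_w\times\Q(w)_\R$.

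I expect no serious obstacle; the one point worth spelling out carefully is that $f^{[\tau]}$ genuinely satisfies the hypotheses of Lemma~\ref{lem-warped}, i.e.\ continuity of the Busemann function $f_\mathfrak{p}$ from subsection~\ref{ss-tree} and non-vanishing of $\tau$ on nonzero elements. Everything else is routine bookkeeping about properness and product topologies.
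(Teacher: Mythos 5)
Your proof is correct and follows the paper's intended route: the paper's own proof reads simply ``This follows from Lemma~\ref{lem-warped},'' and your argument supplies all the details it leaves implicit --- verifying the continuity and positivity hypotheses on each $f^{[\tau]}$, the properness of $X_w$ and $\Q(w)_{\R,[\tau]}$, and then invoking stability of properness and the product topology under finite products and passage to closed subspaces.
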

\begin{proof}
This follows from Lemma~\ref{lem-warped}.
\end{proof}

Next, we show that the action of $\mathcal{O}_w \rtimes \mathcal{O}_w^\times < \Q(w) \rtimes \mathcal{O}_w^\times$ on $Y_w$ is proper and cocompact.

\begin{lemma} \label{lem-action-proper}
The action of $\mathcal{O}_w \rtimes \mathcal{O}_w^\times$ on $Y_w$ is proper.
\end{lemma}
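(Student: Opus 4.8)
The plan is to fix an arbitrary compact subset $K \subseteq Y_w$ and show that the set of group elements $(x,y) \in \mathcal{O}_w \rtimes \mathcal{O}_w^\times$ with $(x,y)K \cap K \neq \emptyset$ is finite. By Lemma~\ref{lem-proper}~(\ref{lem-proper-1}) the space $Y_w$ is, as a topological space, the product $X_w \times \Q(w)_\R = \R^{n_w} \times \prod_{\mathfrak{p} \in M_w} T(v_\mathfrak{p}) \times \Q(w)_\R$, and the action of $\mathcal{O}_w \rtimes \mathcal{O}_w^\times$ respects this decomposition. Hence all coordinate projections are continuous and equivariant, and $K$ is contained in a product $K_0 \times \prod_{\mathfrak{p}} K_\mathfrak{p} \times K_\infty$ of compact, in particular bounded, pieces. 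Any $(x,y)$ with $(x,y)K \cap K \neq \emptyset$ comes with a point $p \in K$ such that $(x,y)p \in K$, and I will read off constraints on $x$ and $y$ by comparing the various coordinates of $p$ and of $(x,y)p$.

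\emph{Bounding $y$.} On the $\R^{n_w}$-coordinate the element $(x,y)$ acts by the translation $z \mapsto \alpha_w(y) + z$ (Definition~\ref{def-units}), so $\alpha_w(y)$ lies in the bounded, hence finite, subset $(K_0 - K_0) \cap \Z^{n_w}$. On each tree $T(v_\mathfrak{p})$ ($\mathfrak{p} \in M_w$), Lemma~\ref{lem-tree-1} gives $f_\mathfrak{p}((x,y)q) = f_\mathfrak{p}(q) - v_\mathfrak{p}(y)$; since the Busemann function $f_\mathfrak{p}$ is $1$-Lipschitz and $K_\mathfrak{p}$ is bounded, the values of $f_\mathfrak{p}$ on $K_\mathfrak{p}$ form a bounded set, so each $v_\mathfrak{p}(y)$ is confined to a finite set of integers. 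Now the homomorphism $\mathcal{O}_w^\times \to \Z^{n_w} \times \Z^{M_w}$, $y \mapsto (\alpha_w(y),(v_\mathfrak{p}(y))_{\mathfrak{p} \in M_w})$, has finite kernel: its kernel lies in $\mathcal{O}^\times = \{u \in \Q(w) \mid v_\mathfrak{p}(u) = 0 \text{ for all } \mathfrak{p}\}$ (a unit of $\mathcal{O}_w$ with $v_\mathfrak{p} = 0$ for all $\mathfrak{p} \in M_w$ is a unit of $\mathcal{O}$), and $\mathcal{O}^\times$ is the product of the free subgroup $\Z^{n_w}$ with a finite group (Lemma~\ref{lem-units}) on which $\alpha_w$ is the identity, resp. trivial. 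Consequently $y$ ranges over a finite subset $Y_0 \subseteq \mathcal{O}_w^\times$.

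\emph{Bounding $x$ for fixed $y \in Y_0$.} The $\Q(w)_\R$-coordinate $z = (z_\tau)_\tau$ of $p$ and the $\Q(w)_\R$-coordinate $(\tau(y)z_\tau + \tau(x))_\tau$ of $(x,y)p$ both lie in the bounded set $K_\infty$, so $|z_\tau|$ and $|\tau(y)z_\tau + \tau(x)|$ are bounded by a constant $C$ for every embedding $\tau$, whence $|\tau(x)| \leq C + |\tau(y)| \cdot C \leq C'$ with $C'$ uniform over $y \in Y_0$. Next, on each tree $T(v_\mathfrak{p})$ the action of $GL_2(\Q(w))$ is isometric, so for the standard base vertex $o_\mathfrak{p} := [\mathcal{O}_\mathfrak{p} \oplus \mathcal{O}_\mathfrak{p}]$ and $R_\mathfrak{p} := \sup_{q \in K_\mathfrak{p}} d(o_\mathfrak{p},q)$ one gets $d(o_\mathfrak{p},(x,y)o_\mathfrak{p}) \leq 2 R_\mathfrak{p}$; a direct computation with the lattice $(x,y)(\mathcal{O}_\mathfrak{p} \oplus \mathcal{O}_\mathfrak{p})$ shows that $d(o_\mathfrak{p},(x,y)o_\mathfrak{p}) = |\,2\min(v_\mathfrak{p}(x),v_\mathfrak{p}(y),0) - v_\mathfrak{p}(y)\,|$, and since $|v_\mathfrak{p}(y)|$ is already bounded this forces $v_\mathfrak{p}(x) \geq -D$ for all $\mathfrak{p} \in M_w$. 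Thus $x$ lies in the set of elements of $\mathcal{O}_w$ with $|\tau(x)| \leq C'$ for all $\tau$ and $v_\mathfrak{p}(x) \geq -D$ for all $\mathfrak{p} \in M_w$, which is the intersection of the fractional ideal $\mathfrak{a} := \prod_{\mathfrak{p} \in M_w} \mathfrak{p}^{-D}$ of $\mathcal{O}$ with a bounded region of the Minkowski space $\Q(w)_\R$. After multiplying by a nonzero element of $\mathfrak{a}^{-1}$ one reduces to a nonzero ideal of $\mathcal{O}$, whose image under $j$ is a full lattice in $\Q(w)_\R$ by Lemma~\ref{lem-min}; hence $j(\mathfrak{a})$ is a full lattice as well, and its intersection with a bounded set is finite. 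Combining the two steps, only finitely many $(x,y)$ satisfy $(x,y)K \cap K \neq \emptyset$, so the action is proper.

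The topological bookkeeping and the finite-kernel statement for $\mathcal{O}_w^\times \to \Z^{n_w} \times \Z^{M_w}$ are routine. The one genuine computation, and the step I expect to cost the most care, is the identity $d(o_\mathfrak{p},(x,y)o_\mathfrak{p}) = |\,2\min(v_\mathfrak{p}(x),v_\mathfrak{p}(y),0) - v_\mathfrak{p}(y)\,|$ — equivalently, an estimate of how far $(x,1)$ moves the base vertex of $T(v_\mathfrak{p})$ in terms of $v_\mathfrak{p}(x)$. This can be extracted from the Smith normal form over $\mathcal{O}_\mathfrak{p}$ of the matrix representing $(x,y)$ after clearing denominators by a suitable power of $\pi_\mathfrak{p}$, using Lemma~\ref{lem-tree-1} for the contribution of $v_\mathfrak{p}(y)$ and the argument of Lemma~\ref{lem-tree-3} for that of $v_\mathfrak{p}(x)$.
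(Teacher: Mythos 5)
Your proof is correct, and it takes a genuinely different route from the paper. The paper argues by contradiction: it assumes the set of $(x,y)$ with $(x,y)K_2 \cap K_1 \neq \emptyset$ is infinite, applies a pigeonhole argument to the finitely many nearby vertices in each tree $T(v_\mathfrak{p})$ to extract an infinite subset that fixes a vertex after shifting by some $(x_0,y_0)^{-1}$, and then invokes Lemma~\ref{lem-tree-2} to get the exact vanishing $v_\mathfrak{p}(y)=0$, hence $y \in \mathcal{O}^\times$; it then bounds $y$ using properness of $\mathcal{O}^\times$ on $\R^{n_w}$, bounds $v_\mathfrak{p}(x)$ from below via Lemma~\ref{lem-tree-2} again, clears denominators, and finishes with properness of an ideal of $\mathcal{O}$ on $\Q(w)_\R$. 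You instead argue directly and quantitatively: you bound $v_\mathfrak{p}(y)$ (not necessarily to zero) via the Busemann equivariance of Lemma~\ref{lem-tree-1}, combine it with the $\R^{n_w}$ translation data and the structure of $\mathcal{O}_w^\times$ to pin $y$ to a finite set, and then bound $v_\mathfrak{p}(x)$ via an explicit Smith-normal-form displacement formula $d(o_\mathfrak{p},(x,y)o_\mathfrak{p}) = |2\min(v_\mathfrak{p}(x),v_\mathfrak{p}(y),0) - v_\mathfrak{p}(y)|$, which checks out over the DVR $\mathcal{O}_\mathfrak{p}$ (the first elementary divisor is the gcd of the matrix entries and they multiply to the determinant $y$). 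You avoid Lemma~\ref{lem-tree-2} entirely and need no contradiction; the trade-off is carrying the explicit distance identity, which the paper sidesteps by fixing a vertex. Both proofs ultimately rest on the same arithmetic inputs: Dirichlet's unit theorem for $\mathcal{O}_w^\times$ and the fact that $j$ embeds nonzero (fractional) ideals of $\mathcal{O}$ as full lattices in $\Q(w)_\R$.

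One small point worth making explicit: you claim $\alpha_w$ is trivial on the finite cyclic factor $C < \mathcal{O}^\times$. That follows because $\alpha_w|_C$ is a homomorphism from a finite group to the torsion-free group $\Z^{n_w}$, hence trivial; it is worth stating since the definition of $\alpha_w$ only requires it to be a retraction onto $\Z^{n_w}$.
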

\begin{proof}
Let $K_1, K_2 \subset Y_w$ be a compact subsets. We will prove by contradiction that the set
\[
S := \big\{ (x,y) \in \mathcal{O}_w \rtimes \mathcal{O}_w^\times \, \big| \, K_1 \cap (x,y) K_2 \neq \emptyset \big\}
\]
is finite.
Assume that this set is infinite.
Since
\[
A_i(\mathfrak{p}) := \big\{ [L] \text{ vertex of } T(v_\mathfrak{p}) \, \big| \, d_{T(v_\mathfrak{p})}([L],p) < 1 \text{ for some } p \in \pr_{T(v_\mathfrak{p})}(K_i) \big\}
\]
($i = 1,2$, $\mathfrak{p} \in M_w$) are finite sets, there exist elements $[L_i(\mathfrak{p})] \in A_i(\mathfrak{p})$ and an infinite subset $S' \subset S$ such that $(x,y) [L_2(\mathfrak{p})] = [L_1(\mathfrak{p})]$ for all $(x,y) \in S'$ and $\mathfrak{p} \in M_w$. We choose $(x_0,y_0) \in S'$ and set $\widetilde{S} := S' (x_0,y_0)^{-1}$. Then $(x,y) [L_1(\mathfrak{p})] = [L_1(\mathfrak{p})]$ for all $(x,y) \in \widetilde{S}$ and $\mathfrak{p} \in M_w$. By Lemma~\ref{lem-tree-2} we have $v_\mathfrak{p}(y) = 0$ for all $(x,y) \in \widetilde{S}$. Hence, $y \in \mathcal{O}^\times$ for all $(x,y) \in \widetilde{S}$. Since $\mathcal{O}^\times$ acts properly on $\R^{n_w}$ and $\pr_{\R^{n_w}}(K_i) \subset \R^{n_w}$ ($i=1,2$) are compact subsets, we conclude that the set $\{ y \in \mathcal{O}^\times \mid (x,y) \in \widetilde{S} \}$ is finite. Therefore, there exists $y' \in \mathcal{O}^\times$ such that $T := \{ x \in \mathcal{O}_w \mid (x,y') \in \widetilde{S} \}$ is an infinite set. Lemma~\ref{lem-tree-2} implies that there exists $z \in \Z$ such that $v_\mathfrak{p}(x) \geq z$ for all $x \in T$ and $\mathfrak{p} \in M_w$. We choose $x' \in \mathcal{O}_w^\times$ with $v_\mathfrak{p}(x') \geq -z$ for all $\mathfrak{p} \in M_w$. Then $T' := T x'$ is an infinite subset of $\mathcal{O}$. We define the following compact subsets of $\Q(w)_\R$: $K'_1 := \pr_{\Q(w)_\R}((0,x')K_1)$, $K'_2 := \pr_{\Q(w)_\R}((0,x'y')(x_0,y_0)K_2)$. Since
\[
(0,x')K_1 \cap (t,1)(0,x'y')(x_0,y_0)K_2 \neq \emptyset,
\]
for all $t \in T'$, we have $K'_1 \cap t K'_2 \neq \emptyset$. This is a contradiction because $\mathcal{O}$ acts properly on $\Q(w)_\R$ (see Lemma~\ref{lem-min}).
\end{proof}

\begin{lemma} \label{lem-action-cocompact}
The action of $\mathcal{O}_w \rtimes \mathcal{O}_w^\times$ on $Y_w$ is cocompact.
\end{lemma}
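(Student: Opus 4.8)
The plan is to produce a compact $K\subseteq Y_w$ with $(\mathcal{O}_w\rtimes\mathcal{O}_w^\times)\cdot K=Y_w$; cocompactness follows at once, the orbit space being the continuous image of $K$. Since cocompactness only depends on the topology, Lemma~\ref{lem-proper} lets me replace $Y_w$ by the product $\R^{n_w}\times\prod_{\mathfrak{p}\in M_w}T(v_\mathfrak{p})\times\Q(w)_\R$, writing a point as $(r,(p_\mathfrak{p})_\mathfrak{p},q)$ and using the actions from subsection~\ref{ss-tree} and the definitions of $X_w$, $Y_w$; in particular $(0,u)\in\mathcal{O}_w\rtimes\mathcal{O}_w^\times$ changes $r$ by $\alpha_w(u)$ and changes $f_\mathfrak{p}(p_\mathfrak{p})$ by $-v_\mathfrak{p}(u)$ (Lemma~\ref{lem-tree-1}), while $(a,1)$ fixes $r$, preserves every horospherical level in each $T(v_\mathfrak{p})$, and translates $\Q(w)_\R$ by $j(a)$.

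I would first record two number-theoretic facts. (1) By Lemma~\ref{lem-units} and Dirichlet's unit theorem the map $\mathcal{O}_w^\times\to\Z^{n_w}\times\Z^{M_w}$, $u\mapsto(\alpha_w(u),(v_\mathfrak{p}(u))_\mathfrak{p})$, has finite kernel and image of finite index (the ranks agree), so one may fix a natural number $M_0$ for which this image meets every closed cube of side $M_0$ in $\R^{n_w+|M_w|}$. (2) Put $\mathfrak{b}:=\bigl(\prod_{\mathfrak{p}\in M_w}\mathfrak{p}^{M_0}\bigr)^{-1}=\mathcal{O}_w\cap\bigcap_{\mathfrak{p}\in M_w}\pi_\mathfrak{p}^{-M_0}\mathcal{O}_\mathfrak{p}$, a fractional $\mathcal{O}$-ideal with $\mathcal{O}\subseteq\mathfrak{b}$. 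The Chinese Remainder Theorem and the finiteness of the ideal class group give $\Q(w)/\bigcap_\mathfrak{p}\pi_\mathfrak{p}^{-M_0}\mathcal{O}_\mathfrak{p}\cong\prod_\mathfrak{p}\Q(w)/\pi_\mathfrak{p}^{-M_0}\mathcal{O}_\mathfrak{p}$ together with $\mathcal{O}_w+\bigcap_\mathfrak{p}\pi_\mathfrak{p}^{-M_0}\mathcal{O}_\mathfrak{p}=\Q(w)$, so $\mathcal{O}_w$ surjects diagonally onto $\prod_\mathfrak{p}\Q(w)/\pi_\mathfrak{p}^{-M_0}\mathcal{O}_\mathfrak{p}$ with kernel $\mathfrak{b}$.

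Now set $\Sigma:=[0,M_0]^{n_w}\times\prod_{\mathfrak{p}\in M_w}\{p\in T(v_\mathfrak{p})\mid 0\leq f_\mathfrak{p}(p)\leq M_0\}\times\Q(w)_\R\subseteq Y_w$. First, $(\mathcal{O}_w\rtimes\mathcal{O}_w^\times)\cdot\Sigma=Y_w$: given $(r,(p_\mathfrak{p})_\mathfrak{p},q)$, choose by (1) a $u\in\mathcal{O}_w^\times$ with $\alpha_w(u)_i\in[r_i-M_0,r_i]$ and $v_\mathfrak{p}(u)\in[-f_\mathfrak{p}(p_\mathfrak{p}),-f_\mathfrak{p}(p_\mathfrak{p})+M_0]$ for all $\mathfrak{p}$; then $(0,u)^{-1}$ carries the point into $\Sigma$. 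Second, the subgroup $\mathcal{O}_w=\mathcal{O}_w\rtimes\{1\}$ preserves $\Sigma$ and acts cocompactly on it. Indeed, flowing along the geodesic ray towards the end of $T(v_\mathfrak{p})$ fixed by $\Q(w)\rtimes\mathcal{O}_w^\times$ is an $\mathcal{O}_w$-equivariant continuous retraction of $\{0\leq f_\mathfrak{p}\leq M_0\}$ onto the top horosphere $f_\mathfrak{p}^{-1}(M_0)$ with finite fibres, so it suffices to treat $[0,M_0]^{n_w}\times\prod_\mathfrak{p}f_\mathfrak{p}^{-1}(M_0)\times\Q(w)_\R$, on the first, compact factor of which $\mathcal{O}_w$ acts trivially. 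By transitivity of $\GL_2(\Q(w))$ on vertices together with $\GL_2(\Q(w))=B\cdot\GL_2(\mathcal{O}_\mathfrak{p})$ ($B$ the upper-triangular subgroup; a form of the Iwasawa decomposition valid over the discrete valuation ring $\mathcal{O}_\mathfrak{p}$), each vertex of $f_\mathfrak{p}^{-1}(M_0)$ equals $(x,\pi_\mathfrak{p}^{-M_0})\cdot[\mathcal{O}_\mathfrak{p}\oplus\mathcal{O}_\mathfrak{p}]$ for a unique class $x\in\Q(w)/\pi_\mathfrak{p}^{-M_0}\mathcal{O}_\mathfrak{p}$ (Lemma~\ref{lem-tree-3}), and there $(a,1)$ acts by $x\mapsto a+x$; hence $\mathcal{O}_w$ acts by translations on $\prod_\mathfrak{p}\Q(w)/\pi_\mathfrak{p}^{-M_0}\mathcal{O}_\mathfrak{p}\times\Q(w)_\R$ through the injective map $a\mapsto\bigl((a+\pi_\mathfrak{p}^{-M_0}\mathcal{O}_\mathfrak{p})_\mathfrak{p},j(a)\bigr)$, which by (2) is onto the first factor with kernel $\mathfrak{b}$, so the orbit space is $\Q(w)_\R/j(\mathfrak{b})$ — compact because $\mathfrak{b}\supseteq\mathcal{O}$ is a full lattice in the Minkowski space (Lemma~\ref{lem-min}).

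Choosing a compact $K\subseteq\Sigma$ with $\mathcal{O}_w\cdot K=\Sigma$ then yields $(\mathcal{O}_w\rtimes\mathcal{O}_w^\times)\cdot K=(\mathcal{O}_w\rtimes\mathcal{O}_w^\times)\cdot\Sigma=Y_w$, which finishes the proof. The heart of the argument is the second claim — identifying the horospheres $f_\mathfrak{p}^{-1}(M_0)$ with the quotients $\Q(w)/\pi_\mathfrak{p}^{-M_0}\mathcal{O}_\mathfrak{p}$ and establishing the Chinese-Remainder surjectivity of $\mathcal{O}_w$ onto their product (a strong-approximation type statement); once that is in place, cocompactness reduces to the classical fact, already used as Lemma~\ref{lem-min}, that an ideal of $\mathcal{O}$ is a cocompact lattice in $\Q(w)_\R$.
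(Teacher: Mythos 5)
Your argument is correct, and it reaches the conclusion by a genuinely different path than the paper. You pin down the Busemann levels and the $\R^{n_w}$-coordinate simultaneously using the finite-index image of $\mathcal{O}_w^\times$ under $u\mapsto(\alpha_w(u),(v_\mathfrak{p}(u))_\mathfrak{p})$ (an $S$-unit/class-number argument), then reduce the remaining tree coordinates to the horospheres $f_\mathfrak{p}^{-1}(M_0)$ via a level-preserving $\mathcal{O}_w$-equivariant proper retraction, identify those horospheres as $\Q(w)/\pi_\mathfrak{p}^{-M_0}\mathcal{O}_\mathfrak{p}$ through the $p$-adic Iwasawa decomposition, and finish with the strong-approximation surjection $\mathcal{O}_w\twoheadrightarrow\prod_\mathfrak{p}\Q(w)/\pi_\mathfrak{p}^{-M_0}\mathcal{O}_\mathfrak{p}$ whose kernel $\mathfrak{b}$ is a full lattice in $\Q(w)_\R$. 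The paper instead fixes, for each $\mathfrak{p}$, the closed ball $K_\mathfrak{p}$ of radius $v_\mathfrak{p}(y_\mathfrak{p})$ about the standard vertex, observes that a finite-index subgroup of $\Z^{n_w}<\mathcal{O}^\times$ and a suitable ideal $\mathfrak{a}$ act trivially on $\prod K_\mathfrak{p}$ (Lemma~\ref{lem-tree-3}), and then runs an explicit lattice computation (row-reduce to a monomial form, apply the Chinese Remainder Theorem to build an $\widetilde x\in\mathcal{O}_w$, then adjust by a monomial in the $y_\mathfrak{p}$) to move an arbitrary vertex into $K_\mathfrak{p}$. Both proofs really rest on the same two inputs — units to control the hyperbolic directions and CRT to control the additive direction — but you invoke them at the structural level (Dirichlet $S$-rank count, partial-fraction/strong-approximation surjectivity, parametrization of horospheres), whereas the paper works directly with lattice bases and residues. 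Your version is cleaner conceptually but pulls in a few facts the paper does not explicitly use (Iwasawa decomposition over a DVR, the primary decomposition of $\Q(w)/\mathcal{O}$); the paper's is longer but stays within the lattice language already set up in subsection~\ref{ss-tree}. Two small cosmetic points: Lemma~\ref{lem-min} is stated only for integral ideals $\mathfrak{a}\subseteq\mathcal{O}$, so for $\mathfrak{b}\supseteq\mathcal{O}$ you should just say $\Q(w)_\R/j(\mathfrak{b})$ is a quotient of $\Q(w)_\R/j(\mathcal{O})$, which is compact by that lemma; and the assertion ``the ranks agree'' deserves a sentence noting that the $y_\mathfrak{p}$ (which exist by finiteness of the class group) already have $\Z$-independent valuation vectors, so $\im(\mathcal{O}_w^\times\to\Z^{M_w})$ has full rank.
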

\begin{proof}
We define the compact subsets
\[
K_\mathfrak{p} := \overline{B}_{v_\mathfrak{p}(y_\mathfrak{p})}([\mathcal{O}_\mathfrak{p} \oplus \mathcal{O}_\mathfrak{p}]) \subset T(v_\mathfrak{p})
\]
for $\mathfrak{p} \in M_w$.

Note that $(0,y) [\mathcal{O}_\mathfrak{p} \oplus \mathcal{O}_\mathfrak{p}] = [\mathcal{O}_\mathfrak{p} \oplus \mathcal{O}_\mathfrak{p}]$ for all $y \in \mathcal{O}^\times$. This shows that $\mathcal{O}^\times$ permutes the finitely many vertices in the closed ball $K_\mathfrak{p}$. Hence, there exist $n_i \in \N$ ($i=1,\ldots,n_w)$ such that $\Z n_i e_i  \in \Z^{n_w} < \mathcal{O}^\times$ acts trivially on $K_\mathfrak{p}$ for every $\mathfrak{p} \in M_w$.

By Lemma~\ref{lem-tree-3} there exist $z_\mathfrak{p} \in \Z$ such that $(x,1)p = p$ for all $p \in K_\mathfrak{p}$ and all $x \in \Q(w)$ with $v_\mathfrak{p}(x) \geq z_\mathfrak{p}$.
Since the ideal $\mathfrak{a} := \{ x \in \mathcal{O} \mid v_\mathfrak{p}(x) \geq z_\mathfrak{p} \}$ acts cocompactly on $\Q(w)_\R$ (see Lemma~\ref{lem-min}), there exists a compact subset $K' \subset \Q(w)_\R$ such that $\mathfrak{a} \cdot K' = \Q(w)_\R$.

We consider the compact subset
\[
K := \prod_{\mathfrak{p} \in M_w} K_\mathfrak{p} \times \prod_{i=1}^{n_w} [0,n_i] \times K' \subset Y_w.
\]
We have to show that $\mathcal{O}_w \rtimes \mathcal{O}_w^\times \cdot K = Y_w$.
Since $\mathfrak{a} < \mathcal{O} \rtimes \{1\}$ acts trivially on $\prod_{\mathfrak{p} \in M_w} K_\mathfrak{p} \times \R^{n_w}$, we have
\[
\mathfrak{a} K = \prod_{\mathfrak{p} \in M_w} K_\mathfrak{p} \times \prod_{i=1}^{n_w} [0,n_i] \times \Q(w)_\R.
\]
Therefore, it suffices to prove the equation
\[
\mathcal{O}_w \rtimes \mathcal{O}_w^\times \cdot \prod_{\mathfrak{p} \in M_w} K_\mathfrak{p} \times \prod_{i=1}^{n_w} [0,n_i] = X_w.
\]
Since $\bigoplus_{i=1}^{n_w} \Z n_i e_i < \mathcal{O}^\times$ acts trivially on $\prod_{\mathfrak{p} \in M_w} K_\mathfrak{p}$,
we conclude
\[
\bigoplus_{i=1}^{n_w} \Z n_i e_i \cdot \prod_{\mathfrak{p} \in M_w} K_\mathfrak{p} \times \prod_{i=1}^{n_w} [0,n_i] = \prod_{\mathfrak{p} \in M_w} K_\mathfrak{p} \times \R^{n_w}.
\]
Hence, it remains to show
\[
\mathcal{O}_w \rtimes \mathcal{O}_w^\times \cdot \prod_{\mathfrak{p} \in M_w} K_\mathfrak{p} = \prod_{\mathfrak{p} \in M_w} T(v_\mathfrak{p}).
\]
Let $[L_\mathfrak{p}] \in T(v_\mathfrak{p})$ ($\mathfrak{p} \in M_w$) be vertices. We will construct an element $g \in \mathcal{O}_w \rtimes \mathcal{O}_w^\times$ such that $d_{T(v_\mathfrak{p})}(g[L_\mathfrak{p}],[\mathcal{O}_\mathfrak{p} \oplus \mathcal{O}_\mathfrak{p}]) \leq v_\mathfrak{p}(y_\mathfrak{p}) - 1$.

Let $(a_\mathfrak{p},b_\mathfrak{p}), (c_\mathfrak{p},d_\mathfrak{p})$ be an $\mathcal{O}_\mathfrak{p}$-basis for $L_\mathfrak{p}$.
Without loss of generality we assume $v_\mathfrak{p}(b_\mathfrak{p}) \geq v_\mathfrak{p}(d_\mathfrak{p})$. By adding a multiple of $(c_\mathfrak{p},d_\mathfrak{p})$ to $(a_\mathfrak{p},b_\mathfrak{p})$ we can and will assume that $b_\mathfrak{p} = 0$.
Let $k_\mathfrak{p} \in \N$ such that
\[
k_\mathfrak{p} v_\mathfrak{p}(y_\mathfrak{p}) \geq \max\{v_\mathfrak{p}(d_\mathfrak{p}) - v_\mathfrak{p}(c_\mathfrak{p}), v_\mathfrak{p}(d_\mathfrak{p}) - v_\mathfrak{p}(a)\}.
\]
Then there exist $x_\mathfrak{p} \in \mathcal{O}_\mathfrak{p}$ such that
\[
\prod_{\mathfrak{p'} \in M_w} y_\mathfrak{p'}^{k_\mathfrak{p'}} \cdot c_\mathfrak{p} + x_\mathfrak{p} \cdot d_\mathfrak{p} = 0.
\]
The elements $x_\mathfrak{p}$ represent elements
\[
[x_\mathfrak{p}] \in \mathcal{O}_\mathfrak{p} / \mathfrak{p}^{v_\mathfrak{p}(a_\mathfrak{p}) - v_\mathfrak{p}(d_\mathfrak{p}) + k_\mathfrak{p} v_\mathfrak{p}(y_\mathfrak{p})} \mathcal{O}_\mathfrak{p} \cong \mathcal{O} / \mathfrak{p}^{v_\mathfrak{p}(a_\mathfrak{p}) - v_\mathfrak{p}(d_\mathfrak{p}) + k_\mathfrak{p} v_\mathfrak{p}(y_\mathfrak{p})}
\]
(see \cite[Corollary~11.2 on page~70]{Neu99} for the isomorphism).
By applying the Chinese Reminder Theorem
\[
\mathcal{O} / \prod_{\mathfrak{p} \in M_w} \mathfrak{p}^{v_\mathfrak{p}(a_\mathfrak{p}) - v_\mathfrak{p}(d_\mathfrak{p}) + k_\mathfrak{p} v_\mathfrak{p}(y_\mathfrak{p})} \cong \bigoplus_{\mathfrak{p} \in M_w} \mathcal{O} / \mathfrak{p}^{v_\mathfrak{p}(a_\mathfrak{p}) - v_\mathfrak{p}(d_\mathfrak{p}) + k_\mathfrak{p} v_\mathfrak{p}(y_\mathfrak{p})}.
\]
we obtain an element $x' \in \mathcal{O}$ such that
\[
[x'] = [x_\mathfrak{p}] \in \mathcal{O}_\mathfrak{p} / \mathfrak{p}^{v_\mathfrak{p}(a_\mathfrak{p}) - v_\mathfrak{p}(d_\mathfrak{p}) + k_\mathfrak{p} v_\mathfrak{p}(y_\mathfrak{p})} \mathcal{O}_\mathfrak{p}.
\]
We conclude
\[
v_\mathfrak{p}\big( \prod_{\mathfrak{p'} \in M_w} y_\mathfrak{p'}^{k_\mathfrak{p'}} \cdot c_\mathfrak{p} + x' \cdot d_\mathfrak{p} \big) = v_\mathfrak{p}(x' - x_\mathfrak{p}) + v_\mathfrak{p}(d_\mathfrak{p}) \geq v_\mathfrak{p}(a_\mathfrak{p}) + k_\mathfrak{p} v_\mathfrak{p}(y_\mathfrak{p}).
\]
We set
\[
\widetilde{x} := x' \cdot \prod_{\mathfrak{p'} \in M_w} y_\mathfrak{p'}^{-k_\mathfrak{p'}} \in \mathcal{O}_w
\]
and obtain $v_\mathfrak{p}\big( c_\mathfrak{p} + \widetilde{x} \cdot d_\mathfrak{p} \big) \geq v_\mathfrak{p}(a_\mathfrak{p})$.
This implies that $(a_\mathfrak{p},0), (0,d_\mathfrak{p})$ is an $\mathcal{O}_\mathfrak{p}$-basis for $(\widetilde{x},1) L_\mathfrak{p}$.
We choose $l_\mathfrak{p} \in \Z$ such that $|l_\mathfrak{p} v_\mathfrak{p}(y_\mathfrak{p}) + v_\mathfrak{p}(a_\mathfrak{p}) - v_\mathfrak{p}(d_\mathfrak{p})| \leq v_\mathfrak{p}(y_\mathfrak{p}) - 1$ and set $y := \prod_{\mathfrak{p'} \in M_w} y_\mathfrak{p'}^{l_\mathfrak{p'}}$.
We conclude
\[
d_{T(v_\mathfrak{p})}\big((0,y)(\widetilde{x},1)[L_\mathfrak{p}],[\mathcal{O}_\mathfrak{p} \oplus \mathcal{O}_\mathfrak{p}]\big) \leq v_\mathfrak{p}(y_\mathfrak{p}) - 1
\]
because $(y a_\mathfrak{p},0), (0,d_\mathfrak{p})$ is an $\mathcal{O}_\mathfrak{p}$-basis for $(0,y) (\widetilde{x},1) L_\mathfrak{p}$ and
$|v_\mathfrak{p}(y a_\mathfrak{p}) - v_\mathfrak{p}(d_\mathfrak{p})| \leq v_\mathfrak{p}(y_\mathfrak{p}) - 1$.
\end{proof}

\begin{remark}
For a prime number $w \in \N$ the metric space $Y_w$ coincides with the metric space $T_d \times_{f_d} \R$ constructed in \cite[section~2]{FW14}.
\end{remark}

\subsection{The flow space $\mathbf{FS_w}$}

In this subsection we define a flow space for $Y_w$. It will be used to construct convenient open covers. These open covers will be needed to construct the simplicial complexes appearing in Definition~\ref{def-FHJ}.

\begin{definition}[generalized geodesic]
Let $(X,d_X)$ be a metric space. A continuous map $c \colon \R \to X$ is called a \emph{generalized geodesic} if there are $c_-, c_+ \in \R \cup \{\pm \infty\}$ with $\infty \neq c_- \leq c_+ \neq -\infty$ such that $c$ restricts to an isometry on the interval $(c_-,c_+)$ and is locally constant on the complement of this interval.
\end{definition}

\begin{definition}[flow space $FS(X)$] \label{def-FS}
Let $(X,d_X)$ be a metric space. The \emph{flow space $FS(X)$} is the set of all generalized geodesics in $X$. We provide $FS(X)$ with the metric
\[
d_{FS(X)}(c,d) := \int_{-\infty}^\infty \frac{d_X(c(t),d(t))}{2 \cdot e^{|t|}} \, dt.
\]
We define a $G$-equivariant flow $\Phi \colon FS(X) \times \R \to FS(X)$ by $\Phi_\tau(c)(t) := c(t + \tau)$
\end{definition}
We refer to \cite[section~1]{BL12b} for more information on the flow space $FS(X)$.

Analogously to \cite[section~3]{FW14} we will use the subspace
\[
FS_w := FS(X_w) \times \Q(w)_\R \subset FS\big( Y_w \big)
\]
as \emph{flow space for $Y_w$}. (See also \cite[Lemma~3.2]{Che99}.)
This subspace consists of all generalized geodesics which are constant on $\Q(w)_\R$.

\begin{lemma} \label{lem-FS_w}
The flow space $FS_w$ has the following properties:
\begin{enumerate}
\item The metric space $FS_w$ is proper. \label{lem-FS_w-1}
\item The action of $\mathcal{O}_w \rtimes \mathcal{O}_w^\times$ on $FS_w$ is isometric, proper, and cocompact. \label{lem-FS_w-2}
\item Let $FS_w^\R := \{ c \in FS_w \mid \Phi_t(c) = c \text{ for all } t \in \R\}$ denote the $\R$-fixed point set. The space $FS_w - FS_w^\R$ is locally connected and has finite covering dimension.
\item There is an upper bound for the orders of finite subgroups of $\mathcal{O}_w \rtimes \mathcal{O}_w^\times$.
\end{enumerate}
\end{lemma}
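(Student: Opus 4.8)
The plan is to reduce all four statements to known facts about flow spaces of finite-dimensional CAT(0)-spaces (from \cite{BL12b} and \cite{Weg12}) together with Chen's analysis of warped products \cite{Che99}, essentially following Farrell and Wu \cite{FW13} who treat the special case $Y_w = T_d \times_{f_d} \R$. The common device is the identification (see \cite[Lemma~3.2]{Che99}) of $FS_w = FS(X_w) \times \Q(w)_\R$ with the closed, $\mathcal{O}_w \rtimes \mathcal{O}_w^\times$-invariant subspace of $FS(Y_w)$ consisting of the generalized geodesics that are constant in the $\Q(w)_\R$-direction; under this identification the topology of $FS_w$ is the product topology, and $FS_w^\R$ corresponds to $FS(X_w)^\R \times \Q(w)_\R$.

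For (1) I would argue that $Y_w$ is proper by Lemma~\ref{lem-proper}, hence $FS(Y_w)$ is proper by the flow-space results of \cite{BL12b} (cf.\ \cite{FW13}), and therefore so is its closed subspace $FS_w$. For (2): Lemma~\ref{lem-action-isometric} gives an isometric $\mathcal{O}_w \rtimes \mathcal{O}_w^\times$-action on $Y_w$, which induces an isometric action on $FS(Y_w)$ preserving $FS_w$; Lemmas~\ref{lem-action-proper} and~\ref{lem-action-cocompact} say this action on $Y_w$ is proper and cocompact, so by \cite{BL12b} the induced action on $FS(Y_w)$ is proper and cocompact as well. Properness of an action restricts to invariant subspaces, and cocompactness restricts to a closed invariant subspace (if $\mathcal{O}_w \rtimes \mathcal{O}_w^\times \cdot K = FS(Y_w)$ with $K$ compact, then every $c \in FS_w$ lies in $\mathcal{O}_w \rtimes \mathcal{O}_w^\times \cdot (K \cap FS_w)$, and $K \cap FS_w$ is compact); this gives the claim for $FS_w$.

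For (3), under the identification above $FS_w - FS_w^\R \cong \big(FS(X_w) - FS(X_w)^\R\big) \times \Q(w)_\R$. Since $X_w = \R^{n_w} \times \prod_{\mathfrak{p} \in M_w} T(v_\mathfrak{p})$ is a finite-dimensional CAT(0)-space (a finite product of such), the space $FS(X_w) - FS(X_w)^\R$ is locally connected and of finite covering dimension by \cite{Weg12} (cf.\ \cite{BL12b}), and taking the product with the Euclidean space $\Q(w)_\R$ preserves both properties. For (4), which is elementary: the kernel of the projection $\mathcal{O}_w \rtimes \mathcal{O}_w^\times \to \mathcal{O}_w^\times$ is the additive group of $\mathcal{O}_w$, which is torsion-free, so any finite subgroup of $\mathcal{O}_w \rtimes \mathcal{O}_w^\times$ meets this kernel trivially and hence embeds into the abelian group $\mathcal{O}_w^\times$; by Lemma~\ref{lem-units} the torsion subgroup of $\mathcal{O}_w^\times$ is the finite group of roots of unity in $\Q(w)$, whose order is the desired upper bound.

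The main obstacle is making the flow-space transfer in (1)--(3) rigorous: one must ensure that $FS(Y_w)$, built over the warped product $Y_w$ which is not obviously a CAT(0)-space, still enjoys the properness, the properness and cocompactness of the induced action, and the finite-dimensionality and local connectivity away from $FS^\R$ used above. This is exactly where Chen's warped-product results \cite{Che99} and the identification $FS_w = FS(X_w) \times \Q(w)_\R$ are needed, letting one work with $FS(X_w)$ -- genuinely the flow space of a finite-dimensional CAT(0)-space -- and the Euclidean factor separately while keeping track of the warping function, as in \cite{FW13}. Property (4), by contrast, is routine group theory.
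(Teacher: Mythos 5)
Your proposal is correct and takes essentially the same route as the paper: properness via $FS(Y_w)$ proper plus $FS_w$ closed, the product decomposition $FS_w - FS_w^\R = (FS(X_w) - FS(X_w)^\R) \times \Q(w)_\R$ for (3), and for (4) the torsion-freeness of $\mathcal{O}_w$ followed by analysis of torsion in $\mathcal{O}_w^\times$ via Lemma~\ref{lem-units}. The only cosmetic difference is in (2), where the paper transfers properness and cocompactness from $Y_w$ to $FS_w$ directly via the proper equivariant evaluation map $c \mapsto c(0)$ (\cite[Lemma~1.10]{BL12b}), whereas you pass through the full flow space $FS(Y_w)$ and then restrict to the closed invariant subspace $FS_w$; both are sound.
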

\begin{proof}
\begin{enumerate}
\item By \cite[Proposition~1.9]{BL12b} and Lemma~\ref{lem-proper}~(\ref{lem-proper-2}), $FS(Y_w)$ is a proper metric space. Since $FS_w$ is a closed subspace of $FS(Y_w)$, the assertion follows.
\item The action of $\mathcal{O}_w \rtimes \mathcal{O}_w^\times$ on $FS_w$ is isometric, since this group acts isometrically on $Y_w$ (see Lemma~\ref{lem-action-isometric}).
    The map $FS_w \to Y_w, c \mapsto c(0)$ is equivariant, continuous, and proper (see \cite[Lemma~1.10]{BL12b}). This implies that the action of $\mathcal{O}_w \rtimes \mathcal{O}_w^\times$ on $FS_w$ is proper and cocompact because this group acts properly and cocompactly on $Y_w$ (see Lemma~\ref{lem-action-proper} and \ref{lem-action-cocompact}).
\item Note that $FS_w - FS_w^\R = FS(X_w) \times \Q(w)_\R - FS(X_w)^\R \times \Q(w)_\R = (FS(X_w) - FS(X_w)^\R) \times \Q(w)_\R$. The statement follows because $FS(X_w) - FS(X_w)^\R$ and $\Q(w)_\R$ are locally connected and have finite covering dimension (see \cite[Proposition~2.9 and 2.10]{BL12b} and Lemma~\ref{lem-proper}.
\item Let $F < \mathcal{O}_w \rtimes \mathcal{O}_w^\times$ be a finite subgroup. Since $\mathcal{O}_w < \Q(w) < \R$ is torsion-free, the composition $F \hookrightarrow \mathcal{O}_w \rtimes \mathcal{O}_w^\times \twoheadrightarrow \mathcal{O}_w^\times$ is injective.
    It remains to show that there is an upper bound for the orders of finite subgroups of $\mathcal{O}_w^\times$. Let $F' < \mathcal{O}_w^\times$ be a finite subgroup. There is a short exact sequence
    \[
    1 \to \mathcal{O}^\times \to \mathcal{O}_w^\times \to \im\big(\mathcal{O}_w^\times \to \Z^{M_w}, x \mapsto (v_{\mathfrak{p}}(x))_{\mathfrak{p} \in M_w}\big) \to 1.
    \]
    Since the quotient group is a subgroup of $\Z^{M_w}$, it is torsion-free. We conclude $F' < \mathcal{O}^\times$. Recall that $\mathcal{O}^\times$ is the direct product of a finite cyclic group $C$ and a free abelian group (see Lemma~\ref{lem-units}). This shows $F' < C$ and hence $|F'| \leq |C|$.
\end{enumerate}
\end{proof}

\begin{proposition} \label{prop-cover-pre}
There exists a natural number $M \in \N$ such that for every $\gamma > 0$ there
is an $\mathcal{O}_w \rtimes \mathcal{O}_w^\times$-invariant open cover $\mathcal{V}$ of $FS_w$ with the following properties:
\begin{enumerate}
\item For $g \in \mathcal{O}_w \rtimes \mathcal{O}_w^\times$ and $V \in \mathcal{V}$ we have $gV = V$ or $V \cap gV = \emptyset$.
\item For all $V \in \mathcal{V}$ the subgroup $G_V := \{ g \in \mathcal{O}_w \rtimes \mathcal{O}_w^\times \mid gV = V \}$ is virtually cyclic.
\item $\mathcal{O}_w \rtimes \mathcal{O}_w^\times \setminus \mathcal{V}$ is finite.
\item $\dim(\mathcal{V}) \leq M$.
\item There exists $\epsilon > 0$ such that for every $c \in FS_w$ with
    \[
    \inf\big\{ \tau > 0 \, \big| \, \exists \, g \in \mathcal{O}_w \rtimes \mathcal{O}_w^\times \text{ with } \Phi_\tau(c) = gc \big\} \leq \gamma
    \]
    there is $V \in \mathcal{V}$ satisfying
    \[
    B_\epsilon\big(\big\{ \Phi_t(c) \, \big| \, t \in [-\gamma,\gamma] \big\}\big) \subseteq V.
    \]
\end{enumerate}
\end{proposition}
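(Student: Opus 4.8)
The plan is to obtain Proposition~\ref{prop-cover-pre} as an application of the construction of long, thin, equivariant covers with virtually cyclic stabilizers for flow spaces due to Bartels and L\"uck (see \cite{BL12b}). That construction takes as input a triple $(Z,\Gamma,\Phi)$ in which $Z$ is a proper metric space, $\Gamma$ acts on $Z$ isometrically, properly and cocompactly, and $\Phi$ is a flow on $Z$; it requires that the complement $Z \setminus Z^\R$ of the $\R$-fixed point set of $\Phi$ be locally connected and of finite covering dimension, and that there be a uniform upper bound on the orders of the finite subgroups of $\Gamma$. Under these hypotheses one obtains a natural number $M$ --- depending only on that upper bound and on $\dim(Z \setminus Z^\R)$, and in particular \emph{not} on the parameter $\gamma$ --- such that for every $\gamma > 0$ there is a $\Gamma$-invariant open cover $\mathcal V$ of $Z$ with $\dim \mathcal V \leq M$, with only finitely many $\Gamma$-orbits, all of whose stabilizers are virtually cyclic, each orbit being either fixed or moved off itself by every group element, and satisfying a $\gamma$-longness condition with respect to $\Phi$. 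This is exactly the shape of the assertion, so it suffices to check that our data meet the hypotheses.

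We apply this with $Z := FS_w$, with $\Gamma := \mathcal{O}_w \rtimes \mathcal{O}_w^\times$, and with the flow $\Phi_\tau(c)(t) := c(t+\tau)$. All the required properties are supplied by Lemma~\ref{lem-FS_w}: properness of $FS_w$ is part~(\ref{lem-FS_w-1}); that the action is isometric, proper and cocompact is part~(\ref{lem-FS_w-2}); that $FS_w \setminus FS_w^\R$ is locally connected and finite-dimensional is the third part; and the bound on the orders of finite subgroups is the fourth part. The continuity of the flow and the control on its dynamics near $FS_w^\R$ that are needed to run the construction are read off from the product decomposition $FS_w = FS(X_w) \times \Q(w)_\R$, in which $\Phi$ is the geodesic flow on the CAT(0) factor $FS(X_w)$ and is stationary on $\Q(w)_\R$; note in particular that $FS_w^\R = FS(X_w)^\R \times \Q(w)_\R$, so that all the relevant flow dynamics take place in the already well-understood CAT(0) flow space $FS(X_w)$.

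Granting the abstract theorem, the remainder is bookkeeping. One fixes the $M$ it provides, and for a given $\gamma > 0$ runs the construction (with a suitably enlarged parameter) to produce $\mathcal V$; properties (1)--(4) of the proposition are immediate from the conclusion, and for (5) one uses that the $\gamma$-longness of $\mathcal V$ says precisely that for every $c \in FS_{w,\leq\gamma}$ some member of $\mathcal V$ contains the flow segment $\{\Phi_t(c) \mid t \in [-\gamma,\gamma]\}$; a uniform thickening radius $\epsilon > 0$, for which the genuine $\epsilon$-ball around that segment still lies inside the member, then exists by cocompactness of the $\mathcal{O}_w \rtimes \mathcal{O}_w^\times$-action together with the finiteness of the orbit type of $\mathcal V$.

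The main obstacle is not any single computation but invoking the machinery in sufficient generality: $\mathcal{O}_w \rtimes \mathcal{O}_w^\times$ is not a CAT(0) group, and its action on $FS_w$ does not respect the splitting $FS(X_w) \times \Q(w)_\R$ (the warping in the definition of $Y_w$ mixes the two factors), so one cannot literally quote the CAT(0)-group form of the long-cover theorem for $FS(X_w)$; one must use the version phrased abstractly for flow spaces and verify each of its hypotheses directly for $FS_w$, which is exactly what Lemma~\ref{lem-FS_w} was assembled to do. This follows the same route as \cite[Section~3]{FW13} in the special case $w \in \N$.
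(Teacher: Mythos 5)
There is a genuine gap. You are treating the Bartels--L\"uck cover-existence machinery of \cite{BL12b} as a black box whose only hypotheses are those verified by Lemma~\ref{lem-FS_w} (properness of the flow space, isometric/proper/cocompact action, local connectedness and finite covering dimension of the complement of the $\R$-fixed point set, a bound on orders of finite subgroups). Those conditions constitute precisely \cite[Convention~5.1]{BL12b}. But the actual cover-existence theorem, \cite[Theorem~5.7]{BL12b}, requires a second, substantive geometric hypothesis on top of Convention~5.1: that the flow space ``admits long $\mathcal{V}cyc$-covers at infinity and periodic flow lines'' in the sense of \cite[Definition~5.5]{BL12b}. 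That condition is \emph{not} a consequence of Convention~5.1 and must be established by hand for each flow space. The role of Proposition~\ref{prop-cover-pre} in this paper is exactly to supply the ``periodic flow lines'' half of that input (observe that the longness conclusion is only asserted for $c \in FS_{w,\leq\gamma}$, i.e.\ for points on short periodic orbits, not for all of $FS_w$). It is the subsequent Proposition~\ref{prop-cover} -- not this one -- that feeds Proposition~\ref{prop-cover-pre} and Lemma~\ref{lem-FS_w} into \cite[Theorem~5.7 and Lemma~5.8]{BL12b} to get a genuine cover of all of $FS_w$. Your argument therefore proves (a weaker version of) Proposition~\ref{prop-cover} and presupposes Proposition~\ref{prop-cover-pre} rather than proving it.

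What is actually missing is the dynamical content. The paper's proof adapts \cite[Theorem~4.2]{BL12b} directly: it introduces the notion of a hyperbolic element $g \in \mathcal{O}_w \rtimes \mathcal{O}_w^\times$ (one acting without fixed points on $X_w$), the translation length $l(g)$, the set $G^{hyp}_{\leq\gamma}$ of hyperbolic elements of translation length at most $\gamma$, the equivalence relation identifying hyperbolic elements whose axes stay at constant distance, the orbit spaces $A_{\leq\gamma}$, the subspaces $FS_a \subset FS_w$ of geodesics translated by a representative of $a$, the $G_a$-invariant covers $\mathcal{V}_a$ of $FS_a$ constructed following \cite[subsection~4.2 and Proposition~4.13]{BL12b}, and a separate $G$-invariant collection $\mathcal{V}_\R$ handling the constant geodesics following \cite[Lemma~4.15]{BL12b}; these are then assembled and thickened as in \cite[proof of Theorem~4.2]{BL12b}. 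None of this appears in your proposal, and it cannot be subsumed under the Convention~5.1 checklist. Your observation that $\mathcal{O}_w \rtimes \mathcal{O}_w^\times$ is not a CAT(0) group and the action on $FS_w$ does not respect the product decomposition is correct and is exactly why the proof must re-run the argument of \cite[Theorem~4.2]{BL12b} for this specific flow space rather than quote it -- but your proposed fix (switch to an abstract flow-space theorem) does not supply the missing geometric input; it requires it.
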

\begin{proof}
This is a modification of \cite[Theorem~4.2]{BL12b}.
We explain the main adaptations.
We abbreviate $G := \mathcal{O}_w \rtimes \mathcal{O}_w^\times$.
For $g \in G$ the displacement function $d_g \colon X_w \to \R^{\geq 0}$ is defined by $d_g(x) := d_{X_w}(gx,x)$. The translation length of $g \in G$ is the number $l(g) := \inf\{ d_g(x) \mid x \in X_w \}$. An element $g \in G$ is called hyperbolic if $d_g$ attains a strictly positive minimum. By \cite[II.6.8~(1) on page 231]{BH99}, an element $g$ is hyperbolic if and only if there exists $c \in FS(X_w)$ and $\tau > 0$ with $g c = \Phi_\tau(c)$; in this case $\tau = l(g)$. Note that $g$ is hyperbolic if and only if there exists $d \in FS_w$ with $g d = \Phi_{l(g)}(d)$ (define $d:=(c,j(\frac{g_1}{1-g_2}))$).

Analogously to \cite[Notation~4.5]{BL12b}, we denote by $G^{hyp}_{\leq \gamma} \subset G$ the set of all hyperbolic elements $g$ of translation length $l(g) \leq \gamma$. We define an equivalence relation on $G^{hyp}_{\leq \gamma}$  by $g \sim g'$ if and only if there exists $c_g, c_{g'} \in FS(X_w)$ with $g c_g = \Phi_{l(g)}(c_g)$ and $g' c_{g'} = \Phi_{l(g')}(c_{g'})$ such that $d_{X_w}(c_g(t),c_{g'}(t))$ is constant in $t$. Note that we obtain the same equivalence relation if we replace $FS(X_w)$ by $FS_w$. We set $A_{\leq \gamma} := G^{hyp}_{\leq \gamma} / \sim$. The conjugation action of $G$ on $G^{hyp}_{\leq \gamma}$ descends to an action on $A_{\leq \gamma}$. For $a \in A_{\leq \gamma}$ we set $G_a := \{ g \in \mathcal{O}_w \rtimes \mathcal{O}_w^\times \mid ga=a\}$ and denote by $FS_a \subset FS_w$ the subspace that consists of all geodesics $c \in FS_w$ with $g c_g = \Phi_{l(g)}(c_g)$ for some $g \in a$. Let $Y_a := FS_a / \Phi$ be the quotient of $FS_a$ by the action of the flow.

We proceed as in \cite[subsection~4.2]{BL12b} and obtain analogously to \cite[Proposition~4.13]{BL12b}: For every $\gamma > 0$ and $a \in A_{\leq \gamma}$ there is a $G_a$-invariant open cover $\mathcal{V}_a$ of $Y_a$ such that following conditions are satisfied:
\begin{itemize}
\item For $g \in G$ and $V \in \mathcal{V}_a$ we have $gV = V$ or $V \cap gV = \emptyset$.
\item For all $V \in \mathcal{V}_a$ the subgroup $G_V := \{ g \in G \mid gV = V \}$ is virtually cyclic.
\item $G \setminus \mathcal{V}_a$ is finite.
\item $\dim(\mathcal{V}_a) \leq \dim(Y_w)$.
\end{itemize}

We conclude as in the proof of \cite[Lemma~4.15]{BL12b} that there is $\epsilon_\R > 0$ and a $G$-invariant collection $\mathcal{V}_\R$ of open subsets of $FS_w$ with the following properties:
\begin{itemize}
\item For every constant geodesic $c \in FS_w$ there exists $V \in \mathcal{V}_\R$ such that $B_{\epsilon_\R}(c) \subseteq V$.
\item For $g \in G$ and $V \in \mathcal{V}_\R$ we have $gV = V$ or $V \cap gV = \emptyset$.
\item For all $V \in \mathcal{V}_\R$ the subgroup $G_V := \{ g \in G \mid gV = V \}$ is finite.
\item $G \setminus \mathcal{V}_\R$ is finite.
\item $\dim(\mathcal{V}_\R) < \infty$.
\end{itemize}

Now, the statement follows as in \cite[proof of Theorem~4.2 on page 1380ff.]{BL12b} using \cite[Lemma~4.14]{BL12b}.
\end{proof}

\begin{proposition} \label{prop-cover}
There exists a natural number $N$ such that for every $\alpha > 0$ there
is an $\mathcal{O}_w \rtimes \mathcal{O}_w^\times$-invariant open cover $\mathcal{U}$ of $FS_w$ with the following properties:
\begin{enumerate}
\item For all $U \in \mathcal{U}$ the subgroup $G_U := \{ g \in \mathcal{O}_w \rtimes \mathcal{O}_w^\times \mid gU = U \}$ is virtually cyclic.
\item $G \setminus \mathcal{U}$ is finite.
\item We have $\dim(\mathcal{U}) \leq N$.
\item There exists $\epsilon > 0$ such that for every $c \in FS_w$ there is $U \in \mathcal{U}$ satisfying
\[
B_\epsilon\big(\big\{ \Phi_t(c) \, \big| \, t \in [-\alpha,\alpha] \big\}\big) \subseteq U.
\]
\end{enumerate}
\end{proposition}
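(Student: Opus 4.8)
The plan is to deduce Proposition~\ref{prop-cover} from Proposition~\ref{prop-cover-pre} and Lemma~\ref{lem-FS_w} by the standard scheme which produces long thin covers of a flow space from long thin covers of the part near the short closed geodesics, cf.\ \cite[Section~4 and the proof of Theorem~1.4]{BL12b} and \cite[Section~5]{BLR08b}. Abbreviate $G := \mathcal{O}_w \rtimes \mathcal{O}_w^\times$, and fix $\alpha > 0$. The first step is to choose, depending on $\alpha$ and on the expansion constant coming from the flow estimate below, a sufficiently large $\gamma = \gamma(\alpha) > \alpha$, and to apply Proposition~\ref{prop-cover-pre} to this $\gamma$. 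This yields an $\epsilon_1 > 0$, a universal constant $M \in \N$, and a $G$-invariant open cover $\mathcal{V}$ of $FS_w$ with $\dim(\mathcal{V}) \le M$, with $G \setminus \mathcal{V}$ finite, with every $G_V$ virtually cyclic, and such that each $c \in FS_{w,\le\gamma}$ admits $V \in \mathcal{V}$ with $B_{\epsilon_1}(\{\Phi_t(c) \mid t \in [-\gamma,\gamma]\}) \subseteq V$. Since $[-\alpha,\alpha] \subseteq [-\gamma,\gamma]$, the family $\mathcal{V}$ already handles all $c \in FS_{w,\le\gamma}$.

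It remains to cover the points $c \in FS_w \setminus FS_{w,\le\gamma}$, i.e.\ the generalized geodesics that are neither flow-fixed nor periodic with period $\le \gamma$. For such $c$ the flow is "expanding" in the sense used in \cite{BL12b}: because the period exceeds $\gamma$, the segment $\{\Phi_t(c) \mid t \in [-\alpha,\alpha]\}$ is embedded and, together with a small transverse ball, is disjoint from all of its $G$-translates except those that fix it. One can therefore construct a second $G$-invariant collection $\mathcal{W}$ of open flow boxes of the shape $G \cdot \Phi_{(-\beta,\beta)}\big(B_\delta(d)\big)$ with $\beta$ slightly larger than $\alpha$ and $\delta$ uniformly small, so that every $c \notin FS_{w,\le\gamma}$ has a member $W \in \mathcal{W}$ containing an $\epsilon_2$-neighbourhood of $\{\Phi_t(c) \mid t \in [-\alpha,\alpha]\}$. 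The transverse direction lives in $FS_w - FS_w^\R$, which is locally connected of finite covering dimension by Lemma~\ref{lem-FS_w}~(3), so $\dim(\mathcal{W}) \le \dim(FS_w - FS_w^\R) + 1$; the action being proper and cocompact by Lemma~\ref{lem-FS_w}~(2) makes $G \setminus \mathcal{W}$ finite; and each stabilizer $G_W$ maps to the isometry group of the underlying flow line with finite kernel (this is where the bound on the orders of finite subgroups of $G$ from Lemma~\ref{lem-FS_w}~(4) enters), hence is virtually cyclic.

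Finally, set $\mathcal{U} := \mathcal{V} \cup \mathcal{W}$ and $\epsilon := \min\{\epsilon_1,\epsilon_2\}$. Property~(1) holds because it holds for $\mathcal{V}$ and for $\mathcal{W}$; property~(2) because $G \setminus \mathcal{V}$ and $G \setminus \mathcal{W}$ are finite; property~(3) because the union of two covers of dimensions $\le M$ and $\le \dim(FS_w - FS_w^\R) + 1$ has dimension at most $N := M + \dim(FS_w - FS_w^\R) + 2$, which does not depend on $\alpha$; and property~(4) because each $c \in FS_w$ lies either in $FS_{w,\le\gamma}$, where $\mathcal{V}$ works, or in its complement, where $\mathcal{W}$ works.

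The main obstacle is the construction of the flow boxes $\mathcal{W}$ off $FS_{w,\le\gamma}$ together with the quantitative expansion estimate guaranteeing that $\{\Phi_t(c) \mid t \in [-\alpha,\alpha]\}$ and a fixed $\epsilon$-neighbourhood of it fit into a single box with controlled stabilizer; this is precisely the flow-space machinery of \cite[Section~4 and the proof of Theorem~1.4]{BL12b}, whose hypotheses — a proper cocompact isometric action, $FS_w - FS_w^\R$ finite-dimensional and locally connected, and bounded orders of finite subgroups — are exactly the content of Lemma~\ref{lem-FS_w}.
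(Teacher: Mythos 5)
Your proposal takes essentially the same route as the paper: both deduce the result by feeding Proposition~\ref{prop-cover-pre} (the cover near periodic flow lines) together with the hypotheses verified in Lemma~\ref{lem-FS_w} into the long-thin-cover machinery of Bartels--L\"uck. The paper simply cites \cite[Theorem~5.7 and Lemma~5.8]{BL12b} directly, noting that Convention~5.1 holds by Lemma~\ref{lem-FS_w} and that the cover at infinity and periodic flow lines is supplied by Proposition~\ref{prop-cover-pre}; you instead unpack the proof of that cited theorem, splitting $FS_w$ into $FS_{w,\le\gamma}$ (covered by $\mathcal{V}$) and its complement (covered by flow boxes $\mathcal{W}$), which is precisely the internal structure of the argument in \cite{BL12b}.
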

\begin{proof}
This follows from \cite[Theorem~5.7 and Lemma~5.8]{BL12b}. Note that \cite[Convention~5.1]{BL12b} is satisfied (see Lemma~\ref{lem-FS_w}). Proposition~\ref{prop-cover-pre} implies that $FS_w$ admits long $\mathcal{V}cyc$-covers at infinity and periodic flow lines (see \cite[Definition~5.5]{BL12b}), where $\mathcal{V}cyc$ denotes the family of virtually cyclic subgroups of $\mathcal{O}_w \rtimes \mathcal{O}_w^\times$.
\end{proof}

\begin{definition}[strong homotopy action $\Psi^R$ on $X_w^R$] \label{def-sha}
We fix a base point $x_0$ in the CAT(0)-space $X_w$. For $R > 0$ we denote by $X_w^R \subset X_w$ the closed ball of radius $R$ around the base point. By \cite[Lemma 6.2]{BL12b}, $X_w^R$ is a compact, contractible, controlled $(2 \cdot \dim(X_w) + 1)$-dominated metric space.
We define a strong homotopy action
\[
\Psi^R \colon \coprod_{j=0}^\infty \big( (G \times [0,1])^j \times G \times X_w^R \big) \to X_w^R
\]
as follows.
For $x,y \in X_w$ we denote by $c_{x,y}$ the generalized geodesic satisfying $(c_{x,y})_- = 0$, $c_{x,y}(-\infty) = x$ and $c_{x,y}(\infty) = y$. We consider the deformation retraction $H^R \colon X_w \times [0,1] \to X_w$ on the ball $X_w^R$ by projecting along geodesics, i.e.,
\[
H^R(x,t) := c_{x,y_0}\big((d_{X_w}(x,y_0)-R) \cdot (1-t)\big).
\]
Note that $H^R_t \circ H^R_{t'} = H^R_{t \cdot t'}$. We define $\Psi^R$ as the strong homotopy action associated to $H^R$ (see Example~\ref{ex-sha}~(\ref{ex-sha-2})).
\end{definition}

\begin{definition}[$\mu_\beta$, $\iota$, $j_\beta$, $\kappa_{\beta,z}$] \label{def-div}
For $\beta \in \Q^\times$ we define the group automorphism
\[
\mu_\beta \colon \Q(w) \rtimes \mathcal{O}_w^\times \to \Q(w) \rtimes \mathcal{O}_w^\times, (g_1,g_2) \mapsto (\beta \cdot g_1,g_2).
\]
We define the maps
\[
\begin{array}{lll}
\iota \colon & \Q(w) \rtimes \mathcal{O}_w^\times \times X_w^R \to FS(X_w), & (g,x) \mapsto c_{gx_0,gx}, \\
j_\beta \colon & \Q(w) \rtimes \mathcal{O}_w^\times \times X_w^R \to FS_w, & (g,x) \mapsto \big(c_{\mu_\beta(g)x_0,\mu_\beta(g)x}, \mu_\beta(g) \cdot 0 \big), \\
\kappa_{\beta,z} \colon & \Q(w) \rtimes \mathcal{O}_w^\times \times X_w^R \to FS_w, & (g,x) \mapsto \big(c_{gx_0,gx}, \mu_\beta(g) \cdot 0 + z\big),
\end{array}
\]
where $\beta \in \Q$ and $z \in \Q(w)_\R$.
\end{definition}
Note that $\mu_\beta(g) \cdot 0 = j(\beta \cdot g_1) = \beta \cdot j(g_1)$. (See subsection~\ref{ss-minkowski} for the definition of the $\Q$-linear map $j \colon \Q(w) \to \Q(w)_{\R}$.)

\begin{lemma} \label{lem-alpha}
Let $S \subseteq \mathcal{O}_w \rtimes \mathcal{O}_w^\times$ be a finite symmetric subset containing the trivial element. For every $k,n \in \N$ there exists $\alpha > 0$ with the following property:
For all $\epsilon > 0$ there are $R,T > 0$ such that for every $(g,x) \in \mathcal{O}_w \rtimes \mathcal{O}_w^\times \times X_w^R$ and $(h,y) \in S^n_{\Psi^R,S,k}(g,x)$ there is $\tau \in [-\alpha,\alpha]$ with
\[
d_{FS(X_w)}\big(\Phi_T \circ \iota(g,x),\Phi_{T+\tau} \circ \iota(h,y)\big) \leq \epsilon.
\]
\end{lemma}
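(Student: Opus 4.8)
The plan is to translate the statement into a geometric estimate for geodesics in the CAT(0)-space $X_w$ and their behaviour under the geodesic flow on $FS(X_w)$, following the pattern of the corresponding lemma for CAT(0)-groups in \cite{Weg12} and the flow estimates of \cite{BL12b}. First I would unravel the relation $(h,y)\in S^n_{\Psi^R,S,k}(g,x)$. By Definition~\ref{def-sha} the strong homotopy action $\Psi^R$ is the one associated, as in Example~\ref{ex-sha}~(\ref{ex-sha-2}), to the deformation retraction $H^R$ of $X_w$ onto the ball $X_w^R$, and the key point is that $\Psi^R$ differs from the genuine isometric $\mathcal{O}_w\rtimes\mathcal{O}_w^\times$-action only in a way controlled by $k$ and $S$, independently of $R$: if $g_0,\dots,g_k\in S$ and $x\in X_w^R$, then along the trajectory defining $\Psi^R(g_k,t_k,\dots,g_0,x)$ all intermediate points stay within distance $R+(k+1)L$ of the base point, where $L:=\max_{s\in S}d_{X_w}(x_0,sx_0)$; since $H^R_t$ moves a point towards $x_0$ by at most the distance of that point from $X_w^R$ (and not at all once it lies in $X_w^R$), one gets $d_{X_w}(f(x),ax)\le C_0$ for every $f\in F_a(\Psi^R,S,k)$ and $x\in X_w^R$, with $C_0:=(k+1)^2L$. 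Iterating the definition of $S^n_{\Psi^R,S,k}$ then gives, for $(h,y)\in S^n_{\Psi^R,S,k}(g,x)$, that $g^{-1}h\in S^{2n}$ and that $d_{X_w}(gx,hy)$ is bounded in terms of $n$, $k$ and $S$ only, the bound being trivial when $x$ lies deep enough inside $X_w^R$.

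Next I would compare the two generalized geodesics $\iota(g,x)=c_{gx_0,gx}$ and $\iota(h,y)=c_{hx_0,hy}$ and fix $\alpha$. Put $s:=g^{-1}h\in S^{2n}$, so that $\iota(h,y)=c_{gsx_0,gsy}$; since $\mathcal{O}_w\rtimes\mathcal{O}_w^\times$ acts by isometries on $X_w$ and the flow $\Phi$ commutes with this action, after translating by $g^{-1}$ it suffices to compare $c_{x_0,x}$ with $c_{sx_0,sy}$. Their endpoints satisfy $d_{X_w}(x_0,sx_0)\le 2nL$ and $d_{X_w}(x,sy)\le 2nC_0$ by Step~1, hence the two geodesic lengths $\ell:=d_{X_w}(x_0,x)$ and $\ell':=d_{X_w}(x_0,y)=d_{X_w}(sx_0,sy)$ obey $|\ell-\ell'|\le d_{X_w}(x,sy)+d_{X_w}(x_0,sx_0)\le 2n(L+C_0)$. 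I would therefore \emph{define} $\alpha:=2n(L+C_0)$; note this depends only on $k$, $n$ and $S$, as the lemma requires.

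Finally, given $\epsilon>0$, I would choose $R$ large and then $T$ large (both in terms of $\epsilon$, $n$, $k$, $S$) and take $\tau:=\ell'-\ell\in[-\alpha,\alpha]$, so that flowing by $T$ resp. $T+\tau$ puts corresponding parameter values at equal distances from the endpoints of the two geodesics. Using convexity of the CAT(0)-metric on $X_w$ along geodesics, together with the fact (Step~1) that the relevant endpoints differ by a bounded amount while the geodesics are long precisely when those endpoints do differ, one estimates $d_{X_w}\big((\Phi_T c_{x_0,x})(t),(\Phi_{T+\tau}c_{sx_0,sy})(t)\big)$ for $t$ in a neighbourhood of $0$; integrating this against the weight $1/(2e^{|t|})$ in the definition of $d_{FS(X_w)}$ — which renders the contribution of large $|t|$ negligible — yields $d_{FS(X_w)}\big(\Phi_T\circ\iota(g,x),\Phi_{T+\tau}\circ\iota(h,y)\big)\le\epsilon$. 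Translating back by the isometry $g$ (which commutes with $\Phi$) gives the claim for the original $(g,x)$, $(h,y)$.

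The step I expect to be the main obstacle is this last one: one must make the contraction coming from CAT(0)-convexity of $X_w$ dominate the bounded-but-nonzero defect of the strong homotopy action $\Psi^R$ and the exponential weight in $d_{FS(X_w)}$, and do so \emph{uniformly} over all $(g,x)\in\mathcal{O}_w\rtimes\mathcal{O}_w^\times\times X_w^R$ and all $(h,y)\in S^n_{\Psi^R,S,k}(g,x)$, with $\alpha$ having been fixed before $\epsilon$. This is exactly the type of flow estimate carried out in Sections~5 and~6 of \cite{BL12b} and used in \cite{Weg12}, and I would import it along those lines.
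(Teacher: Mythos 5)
Your Steps 1 and 2 are essentially right: the fact that the retractions in $\Psi^R$ introduce a defect $C_0$ depending only on $k$ and $S$ (not on $R$), hence $g^{-1}h\in S^{2n}$ and $d_{X_w}(gx,hy)\le 2nC_0$, and the resulting bound $|\ell-\ell'|\le 2n(L+C_0)$, is the correct preparatory estimate and gives a legitimate choice of $\alpha$. The paper itself gives no argument and simply cites the CAT(0) version in \cite[Lemma~3.5]{Weg12}, which is also where you defer, so the overall framing matches.

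The gap is in the specific formula $\tau:=\ell'-\ell$ in Step~3. This makes the two geodesics be compared at equal distances from their \emph{terminal} endpoints $gx$ and $hy$. When the action $\Psi^R$ has actually left the ball, these terminal endpoints do \emph{not} coincide — they differ by up to $2nC_0$ — and then equal distance from differing endpoints does not force the two points to be close. A concrete failure already occurs in $X_w=\R$ with $x_0=0$, $S\ni a$ the translation by $1$, $k=0$, $n=1$, $x=R$: one gets $f(x)=H^R_0(R+1)=R$, $\tilde f(y)=y$, hence $y=R$, $h=ga^{-1}$, $s=a^{-1}$. Then $\iota(g,x)=c_{0,R}$ and $\iota(h,y)=c_{-1,R-1}$ (after translating by $g^{-1}$), both of length $R$, so $\ell'-\ell=0$, and with $\tau=0$ one finds
\[
d_{X_w}\bigl(\Phi_T c_{0,R}(t),\Phi_{T}c_{-1,R-1}(t)\bigr)=1\quad\text{for every }t\in\R,
\]
so $d_{FS(X_w)}(\Phi_T\iota(g,x),\Phi_{T}\iota(h,y))=1$ for \emph{all} $T,R$. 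The lemma is still true for this data, but with a different shift: $\tau=1$ makes the two generalized geodesics agree on $[0,R-1]$, and then $T\approx R/2$ with $R$ large gives an $O(e^{-R/2})$ bound. What this example shows is that the correct $\tau$ is a longitudinal alignment of the two geodesics (determined by where the initial points project onto each other along the common direction), not the length difference $\ell'-\ell$; the formula $\tau=\ell'-\ell$ only works in the regime where $gx=hy$, i.e.\ when the orbit of $x$ never leaves $X_w^R$.

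So: the main claim that would carry Step~3 — that equal-distance-from-endpoints plus CAT(0) convexity plus the exponential weight yields $\epsilon$-closeness — does not hold with your choice of $\tau$ once $gx\ne hy$. To repair this you need either a case split (one regime where $gx=hy$ and $\tau=\ell'-\ell$ with $T$ large suffices, and a boundary regime where a different longitudinal $\tau$ and an intermediate $T$ are chosen), or the more geometric argument of \cite{BL12b} in which the flow estimate is carried out against the asymptotic direction of the geodesics rather than their parametrized lengths.
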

\begin{proof}
See the proof of \cite[Lemma~3.5]{Weg12}.
\end{proof}

\begin{lemma} \label{lem-ball}
Let $S \subseteq \mathcal{O}_w \rtimes \mathcal{O}_w^\times$ be a finite symmetric subset containing the trivial element. Let $k,n$ be natural numbers and $R,T >0$ positive real numbers. Let $K \subset \Q(w)_\R$ be a compact subset.
Let $\mathcal{U}$ be a finite-dimensional $\mathcal{O}_w \rtimes \mathcal{O}_w^\times$-invariant open cover of $FS_w$ such that for all $x \in X_w^R$ and $z \in K$ there exists $U \in \mathcal{U}$ with
\[
\kappa_{0,z}\big(S^n_{\Psi^R,S,k}(e,x)\big) \subseteq U.
\]
Then there are positive real numbers $\Lambda, B > 0$ with the property that for all $x \in X_w^R$ and $z \in K$ there exists $U \in \mathcal{U}$ such that $\kappa_{\beta,z}(h,y) \in U$ for all $0 \leq \beta \leq B$ and all $(h,y)$ with $d_{\Psi^R,S,k,\Lambda}((e,x),(h,y)) < n$.
\end{lemma}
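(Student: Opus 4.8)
The plan is to argue by contradiction. Suppose no pair $\Lambda,B$ as asserted exists. Negating the statement, for every $m\in\N$ --- taking $\Lambda=m$ and $B=1/m$ --- there is a point $(x_m,z_m)\in X_w^R\times K$ such that for \emph{every} $U\in\mathcal{U}$ there are $\beta\in[0,1/m]$ and $(h,y)\in(\mathcal{O}_w\rtimes\mathcal{O}_w^\times)\times X_w^R$ with $d_{\Psi^R,S,k,m}\big((e,x_m),(h,y)\big)<n$ but $\kappa_{\beta,z_m}(h,y)\notin U$. Both $X_w^R$ (see Definition~\ref{def-sha}) and $K$ are compact, so after passing to a subsequence I may assume $x_m\to x_\infty\in X_w^R$ and $z_m\to z_\infty\in K$. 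By hypothesis there is $U_\infty\in\mathcal{U}$ with $\kappa_{0,z_\infty}\big(S^n_{\Psi^R,S,k}(e,x_\infty)\big)\subseteq U_\infty$; applying the above to $U=U_\infty$ yields, for each $m$, some $\beta_m\in[0,1/m]$ and $(h_m,y_m)$ with $d_{\Psi^R,S,k,m}\big((e,x_m),(h_m,y_m)\big)<n$ and $\kappa_{\beta_m,z_m}(h_m,y_m)\notin U_\infty$. I will reach a contradiction by showing that, after a further subsequence, $y_m$ converges to some $y_\infty$ with $h_m=h_\infty$ and $(h_\infty,y_\infty)\in S^n_{\Psi^R,S,k}(e,x_\infty)$, so that $\kappa_{\beta_m,z_m}(h_m,y_m)\to\kappa_{0,z_\infty}(h_\infty,y_\infty)\in U_\infty$.

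The key is to analyse a chain realizing $d_{\Psi^R,S,k,m}\big((e,x_m),(h_m,y_m)\big)<n$. By the definition of $d_{\Psi,S,k,\Lambda}$ (see \cite[Definition~3.4]{BL12a}) as the largest quasi-metric satisfying the two displayed bounds, this distance is an infimum over finite chains $(e,x_m)=p_0,p_1,\dots,p_M=(h_m,y_m)$ whose consecutive pairs are either of the form $(g,x),(g,x')$ with edge-weight $m\cdot d_{X_w}(x,x')$, or of the form $(g,x),(h,y)$ with $(h,y)\in S^1_{\Psi^R,S,k}(g,x)$ and edge-weight $1$; pick one of total weight $<n$. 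It then has at most $n-1$ edges of the second kind, so the group coordinate is changed at most $n-1$ times, each time by right multiplication with some $a^{-1}b\in S^2$; hence $h_m$ lies in the \emph{finite} set $S^{2(n-1)}$, while the $d_{X_w}$-displacement accumulated along edges of the first kind is $<n/m$. Thus, up to a total $d_{X_w}$-error $<n/m$, the chain is a concatenation of $m'\le n-1$ steps in $S^1_{\Psi^R,S,k}$: there are $a_i,b_i\in S$, maps $f_i^{(m)}\in F_{a_i}(\Psi^R,S,k)$, $\tilde f_i^{(m)}\in F_{b_i}(\Psi^R,S,k)$ and points $u_i^{(m)},v_i^{(m)}\in X_w^R$ with $f_i^{(m)}(u_{i-1}^{(m)})=\tilde f_i^{(m)}(v_i^{(m)})$, where $d_{X_w}(x_m,u_0^{(m)})$, $d_{X_w}(v_i^{(m)},u_i^{(m)})$ and $d_{X_w}(v_{m'}^{(m)},y_m)$ are all $<n/m$, and $h_m=a_1^{-1}b_1\cdots a_{m'}^{-1}b_{m'}$. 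Passing to a further subsequence, I may assume $m'$, the $h_m$, the $a_i,b_i$, and the finitely many $S$-valued arguments of $f_i^{(m)},\tilde f_i^{(m)}$ are independent of $m$.

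Now pass to yet a further subsequence so that the remaining $[0,1]$-parameters defining $f_i^{(m)},\tilde f_i^{(m)}$ converge and $v_i^{(m)}\to v_i\in X_w^R$. Since $\Psi^R$ is continuous on the compact set $(S\times[0,1])^k\times S\times X_w^R$, hence uniformly continuous there, the maps $f_i^{(m)}$ and $\tilde f_i^{(m)}$ converge uniformly on $X_w^R$ to maps $f_i\in F_{a_i}(\Psi^R,S,k)$ and $\tilde f_i\in F_{b_i}(\Psi^R,S,k)$. As the continuous-step displacements tend to $0$, we get $u_0^{(m)}\to x_\infty$, $u_i^{(m)}\to v_i$ for $1\le i\le m'$, and $y_m\to v_{m'}=:y_\infty$. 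Passing the identities $f_i^{(m)}(u_{i-1}^{(m)})=\tilde f_i^{(m)}(v_i^{(m)})$ to the limit and writing $v_0:=x_\infty$, $g_0:=e$, $g_i:=g_{i-1}a_i^{-1}b_i$ (so $g_{m'}=h_\infty:=h_m$), we obtain $f_i(v_{i-1})=\tilde f_i(v_i)$, i.e.\ $(g_i,v_i)\in S^1_{\Psi^R,S,k}(g_{i-1},v_{i-1})$; inductively $(h_\infty,y_\infty)\in S^{m'}_{\Psi^R,S,k}(e,x_\infty)\subseteq S^n_{\Psi^R,S,k}(e,x_\infty)$, the inclusion holding because $(g,x)\in S^1_{\Psi^R,S,k}(g,x)$. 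Finally, since $\kappa_{\beta,z}(g,x)=\big(c_{gx_0,gx},\,\beta j(g_1)+z\big)\in FS(X_w)\times\Q(w)_\R=FS_w$ depends continuously on $(x,\beta,z)$, and $h_m=h_\infty$, $y_m\to y_\infty$, $\beta_m\to0$, $z_m\to z_\infty$, we get $\kappa_{\beta_m,z_m}(h_m,y_m)\to\kappa_{0,z_\infty}(h_\infty,y_\infty)\in U_\infty$; as $U_\infty$ is open this forces $\kappa_{\beta_m,z_m}(h_m,y_m)\in U_\infty$ for large $m$, contradicting the choice of $(h_m,y_m)$.

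The main obstacle is the convergence-of-chains step in the middle paragraphs: one must exploit simultaneously that a chain of $d_{\Psi^R,S,k,m}$-weight $<n$ has at most $n-1$ jumps (so the group coordinate stays in the finite set $S^{2(n-1)}$), that its continuous displacement becomes negligible as $m\to\infty$, and that the strong-homotopy-action maps occurring along it range over the compact family $F_a(\Psi^R,S,k)$ and hence subconverge uniformly; only after combining these can one produce a limiting $S^1_{\Psi^R,S,k}$-chain and land the limit point in $S^n_{\Psi^R,S,k}(e,x_\infty)$.
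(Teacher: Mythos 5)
Your proof is correct, and your overall strategy --- argue by contradiction, pass to converging subsequences in $X_w^R \times K$, isolate the $U_\infty$ supplied by the hypothesis at the limit point, show the offending $(h_m,y_m)$ subconverge into $S^n_{\Psi^R,S,k}(e,x_\infty)$, and then use openness of $U_\infty$ --- is the same as the paper's. Where you diverge is in the technical core. The paper gets the inclusion $(h^U,y^U) \in S^n_{\Psi^R,S,k}(e,x)$ cheaply by citing the abstract properties of the quasi-metric from Lemma~\ref{lem-qm}: part~(\ref{lem-qm3}) (the induced topology is the product topology) lets one replace $(e,x_{m'})$ and $(h^U,y^U_{m'})$ by their limits at bounded cost, and then part~(\ref{lem-qm2}) converts the resulting ``$d_{\Psi^R,S,k,\Lambda} \le n$ for all $\Lambda$'' into membership in $S^n_{\Psi^R,S,k}(e,x)$. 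You instead unpack what those two facts say in this situation: you choose an explicit chain of weight $< n$, observe that it has at most $n-1$ jump edges (so the group coordinate ranges over the finite set $S^{2(n-1)}$) while the continuous displacement is $< n/m$, pass to subsequences where all the discrete data of the chain stabilize and the $[0,1]$-parameters and intermediate points converge, and invoke uniform continuity of $\Psi^R$ on the compact slice $(S\times[0,1])^k\times S\times X_w^R$ to take the limit of the chain. That is essentially a self-contained re-derivation of the two quasi-metric facts the paper quotes as a black box; it buys self-containedness at the cost of a longer argument, while the paper's route is shorter by factoring the convergence analysis through the pre-established Lemma~\ref{lem-qm}. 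One further small stylistic difference: the paper carries around a sequence $(h^U_m,y^U_m)$ for every $U$ in the finite set $\mathcal{U}_{x,z}$ before singling out $U_0$ at the end, whereas you pick $U_\infty$ once and for all, which is slightly leaner; both are fine.
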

\begin{proof}
We proceed by contradiction. Assume that $\Lambda, B > 0$ with the desired properties do not exist.
Then there exist
\begin{itemize}
\item a monotone increasing sequence $(\Lambda_m)_{m \in \N}$ with $\lim_{m \to \infty} \Lambda_m = \infty$,
\item a monotone decreasing sequence $(\beta_m)_{m \in \N}$ with $\lim_{m \to \infty} \beta_m = 0$,
\item a sequence $(x_m)_{m \in \N} \subset X_w^R$,
\item a sequence $(z_m)_{m \in \N} \subset K$,
\item for every $U \in \mathcal{U}$ a sequence $(h^U_m,y^U_m)_{m \in \N} \subset \mathcal{O}_w \rtimes \mathcal{O}_w^\times \times X_w^R$,
\end{itemize}
such that $\kappa_{\beta_m,z_m}(h^U_m,y^U_m) \notin U$ and $d_{\Psi^R,S,k,\Lambda_m}((e,x_m),(h^U_m,y^U_m)) < n$.
Since $X_w^R$ and $K$ are compact, we can arrange by passing to subsequences that $\lim_{m \to \infty} x_m = x$ and $\lim_{m \to \infty} z_m = z$.
We set $\mathcal{U}_{x,z} := \{ U \in \mathcal{U} \mid \kappa_{0,z}(e,x) \in U \}$.
Since $h^U_m$ lies in the finite set $S^{2n}$, $y^U_m$ lies in the compact set $X_w^R$, and $\mathcal{U}_{x,z}$ is finite, we can also arrange by passing to subsequences that for every $U \in \mathcal{U}_{x,z}$ we have $h^U_m = h^U$ for all $m \in \N$ and $\lim_{m \to \infty} y^U_m = y^U$.

We will use the abbreviation $d_m$ for $d_{\Psi^R,S,k,\Lambda_m}$.
For $U \in \mathcal{U}_{x,z}$ and $m \leq m'$ we conclude (since $\Lambda_m \leq \Lambda_{m'}$)
\begin{align*}
& d_m\big((e,x),(h^U,y^U)\big) \\
& \leq d_m\big((e,x),(e,x_{m'})\big) + d_m\big((e,x_{m'}),(h^U,y^U_{m'})\big) + d_m\big((h^U,y^U_{m'}),(h^U,y^U)\big) \\
& \leq d_m\big((e,x),(e,x_{m'})\big) + d_{m'}\big((e,x_{m'}),(h^U,y^U_{m'})\big) + d_m\big((h^U,y^U_{m'}),(h^U,y^U)\big) \\
& \leq d_m\big((e,x),(e,x_{m'})\big) + n + d_m\big((h^U,y^U_{m'}),(h^U,y^U)\big).
\end{align*}
Lemma~\ref{lem-qm}~(\ref{lem-qm3}) implies
\[
\lim_{m' \to \infty} d_m\big((e,x),(e,x_{m'})\big) = 0 \quad \text{and} \quad \lim_{m' \to \infty} d_m\big((h^U,y^U_{m'}),(h^U,y^U)\big) = 0.
\]
Therefore, $d_{\Psi^R,S,k,\Lambda_m}\big((e,x),(h^U,y^U)\big) \leq n$ for all $m \in \N$ and $U \in \mathcal{U}_{x,z}$.
By Lemma~\ref{lem-qm}~(\ref{lem-qm2}) we have $(h^U,y^U) \in S^n_{\Psi^R,S,k}(e,x)$ for all $U \in \mathcal{U}_{x,z}$.
Hence, by assumption there exists $U_0 \in \mathcal{U}_{x,z}$ with $\kappa_{0,z}(h^U,y^U) \in U_0$ for all $U \in \mathcal{U}_{x,z}$. In particular, $\kappa_{0,z}(h^{U_0},y^{U_0}) \in U_0$.
Since $\kappa_{\beta_m,z_m}(h^{U_0},y^{U_0}_m) \notin U_0$, we obtain the desired contradiction:
\begin{align*}
& \kappa_{0,z}(h^{U_0},y^{U_0}) = \big(\iota(h^{U_0},y^{U_0}),z\big) = \lim_{m \to \infty} \big(\iota(h^{U_0},y^{U_0}_m),\beta_m \cdot j(h^{U_0}) + z_m \big) = \\
& = \lim_{m \to \infty} \kappa_{\beta_m,z_m}(h^{U_0},y^{U_0}_m) \in FS_w - U_0.
\end{align*}
(Note that $FS_w - U_0$ is closed.)
\end{proof}

\begin{proposition} \label{prop-sc}
There exists a natural number $N$ with the following property.
Let $S \subseteq \mathcal{O}_w \rtimes \mathcal{O}_w^\times$ be a finite symmetric subset which generates $\mathcal{O}_w \rtimes \mathcal{O}_w^\times$ and contains the trivial element. Let $k \in \N$. Let $q$ be a natural number such that $q \notin \mathfrak{p}$ for all $\mathfrak{p} \in M_w$.
Then there are positive real numbers $R,\Lambda$, a natural number $M$, and for every positive multiple $m$ of $M$
\begin{itemize}
\item a simplicial complex $\Sigma$ of dimension at most $N$ with a simplicial $q^m \mathcal{O}_w \rtimes \mathcal{O}_w^\times$-action whose stabilizers are virtually cyclic and
\item a $q^m \mathcal{O}_w \rtimes \mathcal{O}_w^\times$-equivariant map $f \colon \mathcal{O}_w \rtimes \mathcal{O}_w^\times \times X_w^R \to \Sigma$
\end{itemize}
such that
\[
k \cdot d_\Sigma^1\big(f(g,x),f(h,y)\big) \leq d_{\Psi^R,S,k,\Lambda}\big((g,x),(h,y)\big)
\]
for all $(g,x),(h,y) \in \mathcal{O}_w \rtimes \mathcal{O}_w^\times \times X_w^R$.
\end{proposition}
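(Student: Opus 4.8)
The plan is to adapt the construction of \cite[Theorem~3.4 and Proposition~3.6]{Weg12} to the warped flow space $FS_w$. The roles of the ingredients will be: Proposition~\ref{prop-cover} produces the open cover, Lemma~\ref{lem-alpha} compares the quasi-metric $d_{\Psi^R,S,k,\Lambda}$ with the flow $\Phi$, Lemma~\ref{lem-ball} makes the Minkowski coordinate $\Q(w)_\R$ irrelevant once the scaling parameter is small, and the automorphisms $\mu_\beta$ with $\beta=q^{-m}$ supply the equivariance for the finite index subgroup $q^m\mathcal{O}_w\rtimes\mathcal{O}_w^\times$. The hypothesis $q\notin\mathfrak{p}$ for all $\mathfrak{p}\in M_w$, i.e.\ $v_\mathfrak{p}(q)=0$, enters precisely so that $\mu_\beta$ acts on each tree $T(v_\mathfrak{p})$ (and hence on $X_w$ and on $FS_w$) as conjugation by an isometry fixing the base point $x_0$; see Lemma~\ref{lem-tree-1}. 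Concretely, I would first fix all parameters independently of $m$: let $N$ be the natural number from Proposition~\ref{prop-cover}; given $S,k,q$, apply Lemma~\ref{lem-alpha} with $n:=k$ to get $\alpha>0$, then Proposition~\ref{prop-cover} with this $\alpha$ to get an $\mathcal{O}_w\rtimes\mathcal{O}_w^\times$-invariant open cover $\mathcal{U}$ of $FS_w$ with $\dim\mathcal{U}\le N$, with $(\mathcal{O}_w\rtimes\mathcal{O}_w^\times)\setminus\mathcal{U}$ finite, with virtually cyclic isotropy groups, and with $\epsilon_0>0$ so that every flow orbit segment of length $\alpha$ has an $\epsilon_0$-neighbourhood in one member of $\mathcal{U}$; apply Lemma~\ref{lem-alpha} again with $\epsilon:=\epsilon_0$ to get $R,T>0$ and replace $\mathcal{U}$ by $\Phi_{-T}\mathcal{U}$; using cocompactness of the action on $FS_w$ (Lemma~\ref{lem-FS_w}~(\ref{lem-FS_w-2})) and invariance of $\mathcal{U}$, check that $\mathcal{U}$ satisfies the hypothesis of Lemma~\ref{lem-ball} for a suitable compact $K\subseteq\Q(w)_\R$ with $0\in K$, producing $\Lambda,B>0$; finally choose $M\in\N$ with $q^{-M}\le B$.

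Now let $m$ be a positive multiple of $M$ and put $\beta:=q^{-m}$. Since $\mu_\beta$ restricts to an isomorphism $q^m\mathcal{O}_w\rtimes\mathcal{O}_w^\times\xrightarrow{\cong}\mathcal{O}_w\rtimes\mathcal{O}_w^\times$, the cover $\mathcal{U}$ viewed through $\mu_\beta$ is $q^m\mathcal{O}_w\rtimes\mathcal{O}_w^\times$-invariant, still of dimension $\le N$, with finite quotient, and with virtually cyclic isotropy (the $\mu_\beta$-preimages of the old isotropy groups, $\mu_\beta$ being injective on the subgroup). Let $\Sigma$ be the barycentric subdivision of the nerve of $\mathcal{U}$ with this simplicial $q^m\mathcal{O}_w\rtimes\mathcal{O}_w^\times$-action; then $\dim\Sigma\le N$ and every simplex stabilizer, being an intersection of the groups $G_U$, is virtually cyclic. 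Choose an $\mathcal{O}_w\rtimes\mathcal{O}_w^\times$-invariant partition of unity $(\lambda_U)_{U\in\mathcal{U}}$ subordinate to $\mathcal{U}$ (for instance the normalised functions $p\mapsto\max\{0,\epsilon'-d_{FS_w}(p,FS_w\setminus U)\}$) and define
\[
f\colon\ \mathcal{O}_w\rtimes\mathcal{O}_w^\times\times X_w^R\ \longrightarrow\ \Sigma,\qquad f(g,x):=\sum_{U\in\mathcal{U}}\lambda_U\bigl(\Phi_T\,j_\beta(g,x)\bigr)\cdot U .
\]
Because $\mu_\beta$ is a homomorphism and $\mu_\beta(h)\in\mathcal{O}_w\rtimes\mathcal{O}_w^\times$ for $h$ in the subgroup, one has $j_\beta(hg,x)=\mu_\beta(h)\cdot j_\beta(g,x)$; as $\Phi_T$ commutes with the action and the $\lambda_U$ are invariant, $f$ is $q^m\mathcal{O}_w\rtimes\mathcal{O}_w^\times$-equivariant. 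All data except $\Sigma,f,\beta$ are uniform in $m$.

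It remains to prove the distance estimate. If $d_{\Psi^R,S,k,\Lambda}((g,x),(h,y))\ge k$ there is nothing to do, so assume it is $<k$, whence $(h,y)\in S^k_{\Psi^R,S,k}(g,x)$. By the conjugation fact above, $\Phi_T j_\beta(g,x)=\hat{\delta}_m\circ\Phi_T\kappa_{\beta,0}(g,\delta_m^{-1}x)$, where $\delta_m$ is the isometry of $X_w$ conjugating the genuine action to the $\mu_\beta$-action and $\hat{\delta}_m$ is its isometric extension to $FS_w$; since $\delta_m$ fixes $x_0$ it preserves $X_w^R$ and the strong homotopy action $\Psi^R$. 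Hence the required estimate for $f$ is equivalent to one for the genuine action and the map $\kappa_{\beta,0}$, which follows from Lemma~\ref{lem-alpha} (giving, after flowing by $T$, closeness in $FS(X_w)$ along a flow segment of length $\le\alpha$ for each step of an $S^1_{\Psi^R,S,k}$-chain) together with Lemma~\ref{lem-ball} (absorbing the $\Q(w)_\R$-direction, legitimate because $\beta=q^{-m}\le B$): all points $\Phi_T\kappa_{\beta,0}(\cdot)$ along such a chain joining $(g,\delta_m^{-1}x)$ to $(h,\delta_m^{-1}y)$ lie in one $U\in\mathcal{U}$, after translating by $G$ and using $G$-invariance of $d_{\Psi^R,S,k,\Lambda}$ and of $\mathcal{U}$. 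The standard Lipschitz estimate for a truncated-distance partition of unity (as in \cite[Section~5]{BLR08b} and the proof of \cite[Theorem~3.4]{Weg12}) then gives $k\cdot d_\Sigma^1(f(g,x),f(h,y))\le d_{\Psi^R,S,k,\Lambda}((g,x),(h,y))$, completing the argument.

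I expect the main obstacle to be exactly this last step. The cover $\mathcal{U}$ and the quasi-metric $d_{\Psi^R,S,k,\Lambda}$ are attached to the genuine $\mathcal{O}_w\rtimes\mathcal{O}_w^\times$-action, whereas equivariance of $f$ forces the rescaled map $j_\beta$, and the two can only be reconciled by (i)~passing to $q^m\mathcal{O}_w\rtimes\mathcal{O}_w^\times$ so that $\mu_\beta$ lands back inside $\mathcal{O}_w\rtimes\mathcal{O}_w^\times$, (ii)~the hypothesis $v_\mathfrak{p}(q)=0$, which turns $\mu_\beta$ into conjugation by a base-point-fixing isometry of $X_w$ and thus preserves the flow-space geometry, and (iii)~Lemma~\ref{lem-ball}, which is what removes the Minkowski coordinate from the picture once $m$ is a sufficiently large multiple of $M$. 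The bookkeeping needed to thread the flow time $T$, the scaling $\beta$, the isometry $\delta_m$, and the $G$-translation through the chain condition simultaneously is where the proof is genuinely delicate.
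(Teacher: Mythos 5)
Your overall plan matches the paper's strategy, and you have correctly identified all the major ingredients: Proposition~\ref{prop-cover} for the open cover, Lemma~\ref{lem-alpha} to compare the quasi-metric with the geodesic flow, Lemma~\ref{lem-ball} to neutralize the Minkowski direction, the maps $j_\beta$, $\kappa_{\beta,0}$ and the twist $g \bullet c := \mu_{q^{-m}}(g)\cdot c$ for equivariance, and the pullback nerve construction. However, there are two genuine gaps.

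A minor one: you feed $n:=k$ into Lemma~\ref{lem-alpha}. The standard truncated-distance partition-of-unity estimate gives $d^1_\Sigma\big(f(g,x),f(h,y)\big) \leq 4N\cdot d_{\Psi^R,S,k,\Lambda}\big((g,x),(h,y)\big)/s(g,x)$, and to turn the lower bound $s(g,x) \ge n$ into the target factor $1/k$ you must take $n := 4Nk$, as the paper does. With $n=k$ the Lipschitz constant is off by a factor of $4N$.

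The serious gap is in the reduction from $j_\beta$ to $\kappa_{\beta,0}$. Your observation that $\mu_\beta(g) = (0,\beta)\,g\,(0,\beta)^{-1}$ in $\Q(w)\rtimes\Q(w)^\times$, and that (since $v_\mathfrak{p}(q)=0$ for $\mathfrak{p}\in M_w$) the element $\delta_m := (0,q^{-m})$ acts on $X_w$ as an isometry fixing the standard basepoint and preserving $FS_w$, is correct. But the resulting identity $\Phi_T j_\beta(g,x) = \hat\delta_m\,\Phi_T\,\kappa_{\beta,0}(g,\delta_m^{-1}x)$ does not transport the hypothesis $(h,y)\in S^n_{\Psi^R,S,k}(g,x)$ to the pair $\big((g,\delta_m^{-1}x),(h,\delta_m^{-1}y)\big)$: applying $\delta_m^{-1}$ to a $\Psi^R$-chain whose group labels lie in $S$ produces, since $\delta_m^{-1}(g\cdot -)=\mu_{q^m}(g)(\delta_m^{-1}\cdot-)$, a $\Psi^R$-chain whose labels lie in $\mu_{q^m}(S)$. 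So the hypothesis translates into a statement about $S^n_{\Psi^R,\mu_{q^m}(S),k}$, i.e.\ a quasi-metric built from a \emph{different} (and, as $m\to\infty$, unbounded) generating set, to which Lemma~\ref{lem-ball} --- formulated with the fixed $S$ --- does not apply. The paper avoids this entirely: it forces $j_{q^{-m}}$ and $\kappa_{q^{-m},0}$ to \emph{coincide} on the relevant bounded set, which requires a \emph{second} condition on $M$ beyond $q^{-M}\leq B$, namely $v_\mathfrak{p}(q^M-1)\geq z_\mathfrak{p}-v_\mathfrak{p}(h_1)+v_\mathfrak{p}(h_2)$ for $h\in\pr_{\mathcal{O}_w\rtimes\mathcal{O}_w^\times}(K')$ and $\mathfrak{p}\in M_w$, so that $\mu_{q^{-m}}(h)$ and $h$ act identically on the compact $K''\subset X_w$. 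You have no analogue of this. Relatedly, you restrict Lemma~\ref{lem-ball} to $z$ in a fixed compact $K\ni 0$, but you never carry out the paper's extension first to arbitrary $z\in\Q(w)_\R$ via the ideal $\mathfrak{a}$ (using $\kappa_{\beta,z'}(h,y)\in V \Rightarrow \kappa_{\beta,z}(h,y)\in(a,1)V$), and then to arbitrary $g$ via the careful choice of $b\in\mathcal{O}_w$ and $g'=( (g_1-bq^m)g_2^{-1},1)$. Your closing paragraph correctly flags this coordination as the delicate point, but the conjugation isometry $\delta_m$ alone does not resolve it.
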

\begin{proof}
Let $N$ be the natural number appearing in Proposition~\ref{prop-cover}.
Let $S \subseteq \mathcal{O}_w \rtimes \mathcal{O}_w^\times$ and $k,q \in \N$ be given.
We set $n := 4Nk$ and pick $\alpha > 0$ as in Lemma~\ref{lem-alpha}.
By Proposition~\ref{prop-cover} there is an $\mathcal{O}_w \rtimes \mathcal{O}_w^\times$-invariant open cover $\mathcal{U}$ of $FS_w$ and $\epsilon > 0$ with the following properties:
\begin{enumerate}
\item For all $U \in \mathcal{U}$ the subgroup $\{ g \in \mathcal{O}_w \rtimes \mathcal{O}_w^\times \mid gU = U \}$ is virtually cyclic.
\item $\mathcal{O}_w \rtimes \mathcal{O}_w^\times \setminus \mathcal{U}$ is finite.
\item $\dim(\mathcal{U}) \leq N$.
\item For every $c \in FS_w$ there is $U \in \mathcal{U}$ satisfying
\[
B_\epsilon\big(\big\{ \Phi_t(c) \, \big| \, t \in [-\alpha,\alpha] \big\}\big) \subseteq U.
\]
\end{enumerate}
We choose $R,T > 0$ as in Lemma~\ref{lem-alpha}.

Note that the sets
\[
K' := \big\{ (h,y) \in \mathcal{O}_w \rtimes \mathcal{O}_w^\times \times X_w^R \, \big| \, d_{\Psi^R,S,k,\Lambda}((e,x),(h,y)) \leq n \text{ for some } x \in X_w^R \big\}
\]
and $K'' := \{ h x_0, h y \in X_w \mid (h,y) \in K' \}$ are compact.
Hence, there are only finitely many vertices $[L] \in T(v_\mathfrak{p})$ with $d_{T(v_\mathfrak{p})}([L],\pr_{T(v_\mathfrak{p})}(K'')) < 1$.
By Lemma~\ref{lem-tree-3} there exist $z_\mathfrak{p} \in \Z$ such that $(a,1)p = p$ for all $p \in \pr_{T(v_\mathfrak{p})}(K'')$ and all $a \in \Q(w)$ with $v_\mathfrak{p}(a) \geq z_\mathfrak{p}$. Then the ideal
\[
\mathfrak{a} := \big\{ a \in \mathcal{O}_w \, \big| \, v_\mathfrak{p}(a) \geq \max\{z_\mathfrak{p},0\} \text{ for all } \mathfrak{p} \in M_w \big\} \subseteq \mathcal{O}
\]
has the property that $(a,1)x=x$ for all $a \in \mathfrak{a}$ and $x \in K''$. Since the ideal $\mathfrak{a}$ acts cocompactly on $\Q(w)_\R$ (see Lemma~\ref{lem-min}), there exists a compact subset $K \subset \Q(w)_\R$ such that $\mathfrak{a} \cdot K = \Q(w)_\R$.

By Lemma~\ref{lem-alpha} there exists for every $x \in X_w^R$, $z \in K$, and $(h,y) \in S^n_{\Psi^R,S,k}(e,x)$ an element $\tau \in [-\alpha,\alpha]$ such that
\[
d_{FS_w}\big(\Phi_T \circ \kappa_{0,z}(e,x),\Phi_{T+\tau} \circ \kappa_{0,z}(h,y)\big) = d_{FS(X_w)}\big(\Phi_T \circ \iota(e,x),\Phi_{T+\tau} \circ \iota(h,y)\big) \leq \epsilon.
\]
Recall that for every $c \in FS_w$ there is $U \in \mathcal{U}$ satisfying
\[
B_\epsilon\big(\big\{ \Phi_t(c) \, \big| \, t \in [-\alpha,\alpha] \big\}\big) \subseteq U.
\]
Hence, for every $x \in X_w^R$ and $z \in K$ there exists $U \in \mathcal{U}$ with
\[
\Phi_T \circ \kappa_{0,z}\big(S^n_{\Psi^R,S,k}(e,x)\big) \subseteq U.
\]
We apply Lemma~\ref{lem-ball} and fix $\Lambda, B > 0$ with the property that for all $x \in X_w^R$ and $z \in K$ there exists $V \in \Phi_T^{-1}(\mathcal{U})$ such that $\kappa_{\beta,z}(h,y) \in V$ for all $0 \leq \beta \leq B$ and all $(h,y)$ with $d_{\Psi^R,S,k,\Lambda}((e,x),(h,y)) \leq n$.
We can generalize this statement to all $z \in \Q(w)_\R$ by the following argument:
We can write $z = z' + j(a)$ with $a \in \mathfrak{a}$ and $z' \in K$. Then $\kappa_{\beta,z'}(h,y) \in V$ implies $\kappa_{\beta,z}(h,y) \in (a,1)V$.

We choose the natural number $M \in \N$ such that $q^{-M} \leq B$ and
\[
v_\mathfrak{p}(q^{M}-1) \geq \max\{z_\mathfrak{p},0\} - v_\mathfrak{p}(h_1) + v_\mathfrak{p}(h_2)
\]
for all $h \in \pr_{\mathcal{O}_w \rtimes \mathcal{O}_w^\times}(K')$ and $\mathfrak{p} \in M_w$. Let $m$ be a positive multiple of $M$.

Let $\Sigma := |\mathcal{V}|$ be the realization of the nerve of $\mathcal{V} := (\Phi_T \circ j_{q^{-m}})^{-1}(\mathcal{U})$ and let $f$ be the map induced by $\mathcal{V}$, i.e.,
\[
f \colon \mathcal{O}_w \rtimes \mathcal{O}_w^\times \times X_w^R \to |\mathcal{V}|, \, (g,x) \mapsto \sum_{V \in \mathcal{V}} \frac{a_V(g,x)}{s(g,x)} \, V
\]
with $a_V(g,x) := \inf\{d_{\Psi^R,S,k,\Lambda}((g,x),(h,y)) \, | \, (h,y) \notin V\}$ and $s(g,x) := \sum_V a_V(g,x)$.

We define a $q^m \mathcal{O}_w \rtimes \mathcal{O}_w^\times$-action on $FS_w$ by $g \bullet c := \mu_{q^{-m}}(g) \cdot c$.
Since the map $j_{q^{-m}}$ is equivariant with respect to this action on $FS_w$, we conclude that the cover $\mathcal{V}$ is $q^m \mathcal{O}_w \rtimes \mathcal{O}_w^\times$-invariant. Hence, the $q^m \mathcal{O}_w \rtimes \mathcal{O}_w^\times$-action on $\Sigma$ is simplicial. Since
\[
\mu_{q^{-m}}(h) j_{q^{-m}}(g,x) = j_{q^{-m}}(hg,x)
\]
for all $g,h \in \mathcal{O}_w \rtimes \mathcal{O}_w^\times$ and $x \in X_w^R$, the map $f$ is $q^m \mathcal{O}_w \rtimes \mathcal{O}_w^\times$-equivariant.

Before proving the inequality
\[
k \cdot d_\Sigma^1\big(f(g,x),f(h,y)\big) \leq d_{\Psi^R,S,k,\Lambda}\big((g,x),(h,y)\big)
\]
for all $(g,x),(h,y) \in \mathcal{O}_w \rtimes \mathcal{O}_w^\times \times X_w^R$, we will show the following statements:
\begin{enumerate}
\item For every $(g,x) \in \mathcal{O}_w \rtimes \mathcal{O}_w^\times \times X_w^R$ there exists $U \in \mathcal{U}$ such that $\kappa_{q^{-m},0}(h,y) \in \Phi_T^{-1}(U)$ for all $(h,y)$ with $d_{\Psi^R,S,k,\Lambda}((g,x),(h,y)) \leq n$.\label{enum1}
\item For all $h \in \pr_{\mathcal{O}_w \rtimes \mathcal{O}_w^\times}(K')$ and $x \in K''$ we have $\mu_{q^{-m}}(h) x = h x$.\label{enum2}
\item For every $(g,x) \in \mathcal{O}_w \rtimes \mathcal{O}_w^\times \times X_w^R$ the $n$-ball around $(g,x)$ with respect to the metric $d_{\Psi^R,S,k,\Lambda}$ lies in some $V \in \mathcal{V}$.\label{enum3}
\end{enumerate}

Proof of (\ref{enum1}): Let $(g,x) \in \mathcal{O}_w \rtimes \mathcal{O}_w^\times \times X_w^R$. For $z := -(1-q^{-m}) \cdot j(g_1 g_2^{-1}) \in \Q(w)_\R$ there exists $U' \in \mathcal{U}$ such that $\kappa_{q^{-m},z}(h',y) \in \Phi_T^{-1}(U')$ for all $(h',y)$ with $d_{\Psi^R,S,k,\Lambda}((e,x),(h',y)) \leq n$.
Let $(h,y) \in \mathcal{O}_w \rtimes \mathcal{O}_w^\times \times X_w^R$ with $d_{\Psi^R,S,k,\Lambda}((g,x),(h,y)) \leq n$. Then $h' := g^{-1} h$ satisfies $d_{\Psi^R,S,k,\Lambda}((e,x),(h',y)) \leq n$ and hence $\kappa_{q^{-m},z}(h',y) \in \Phi_T^{-1}(U')$. We set $U := g U' \in \mathcal{U}$ and conclude
\[
\kappa_{q^{-m},0}(h,y) = g \kappa_{q^{-m},z}(h',y) \in g \Phi_T^{-1}(U') = \Phi_T^{-1}(U).
\]

Proof of (\ref{enum2}): Let $h \in \pr_{\mathcal{O}_w \rtimes \mathcal{O}_w^\times}(K')$, $x \in K''$. Then
\[
v_\mathfrak{p}(q^m-1) \geq v_\mathfrak{p}(q^M-1) \geq \max\{z_\mathfrak{p},0\} - v_\mathfrak{p}(h_1) + v_\mathfrak{p}(h_2)
\]
for all $\mathfrak{p} \in M_w$.
Hence, $(q^{-m}-1)h_1 h_2^{-1} \in \mathfrak{a}$.
This implies
\[
h^{-1} \mu_{q^{-m}}(h) x = ((q^{-m}-1)h_1 h_2^{-1},1) x = x
\]
and hence $\mu_{q^{-m}}(h) x = h x$.

Proof of (\ref{enum3}): Let $(g,x) \in \mathcal{O}_w \rtimes \mathcal{O}_w^\times \times X_w^R$.
We choose $b \in \mathcal{O}_w$ such that $v_\mathfrak{p}(g_1 - b q^m) \geq \max\{z_\mathfrak{p},0\} + v_\mathfrak{p}(g_2) - v_\mathfrak{p}(q^{-m}-1)$ for all $\mathfrak{p} \in M_w$. Then $g' := ((g_1 - b q^m) g_2^{-1},1)$ satisfies $\mu_{q^{-m}}(g') p = g' p$ for all $p \in K''$ because ${g'}^{-1} \mu_{q^{-m}}(g') = ((q^{-m}-1) (g_1 - b q^m) g_2^{-1},1)$ and $v_\mathfrak{p}((q^{-m}-1) (g_1 - b q^m) g_2^{-1}) \geq \max\{z_\mathfrak{p},0\}$ for all $\mathfrak{p} \in M_w$ .

There exists an open set $U \in \mathcal{U}$ such that $\kappa_{q^{-m},0}(h',y) \in \Phi_T^{-1}(U)$ for all $(h',y)$ with $d_{\Psi^R,S,k,\Lambda}((g',x),(h',y)) \leq n$.
Since ${g'}^{-1}h' \in \pr_{\mathcal{O}_w \rtimes \mathcal{O}_w^\times}(K')$, we have $\mu_{q^{-m}}({g'}^{-1}h') x_0 = {g'}^{-1}h' x_0$ and $\mu_{q^{-m}}({g'}^{-1}h') y = g^{-1}h y$. Hence,
\[
\mu_{q^{-m}}(h') x_0 = \mu_{q^{-m}}(g') \mu_{q^{-m}}({g'}^{-1}h') x_0 =  \mu_{q^{-m}}(g') {g'}^{-1}h' x_0 = g' {g'}^{-1}h' x_0 = h' x_0.
\]
Analogously, we conclude $\mu_{q^{-m}}(h') y = h' y$.
This shows that $j_{q^{-m}}(h',y) = \kappa_{q^{-m},0}(h',y)$ for all $(h',y)$ with $d_{\Psi^R,S,k,\Lambda}((g',x),(h',y)) \leq n$. Hence,
$j_{q^{-m}}(h',y) \in \Phi_T^{-1}(U)$ for all $(h',y)$ with $d_{\Psi^R,S,k,\Lambda}((g',x),(h',y)) \leq n$. Therefore,
\[
j_{q^{-m}}((b q^m,g_2)h',y) = (b,g_2) j_{q^{-m}}(h',y) \in \Phi_T^{-1}((b,g_2)U)
\]
for all $(h',y)$ with $d_{\Psi^R,S,k,\Lambda}((g',x),(h',y)) \leq n$.
Since $(b q^m,g_2)g' = g$, we have $j_{q^{-m}}(h,y) \in \Phi_T^{-1}((b,g_2)U)$ and hence $(h,y) \in (\Phi_T \circ j_{q^{-m}})^{-1}((b,g_2)U)$ for all $(h,y)$ with $d_{\Psi^R,S,k,\Lambda}((g,x),(h,y)) \leq n$. This finishes the proof of statement (\ref{enum3}).

It remains to prove the inequality
\[
k \cdot d_\Sigma^1\big(f(g,x),f(h,y)\big) \leq d_{\Psi^R,S,k,\Lambda}\big((g,x),(h,y)\big)
\]
for all $(g,x),(h,y) \in \mathcal{O}_w \rtimes \mathcal{O}_w^\times \times X_w^R$.
Recall that $f(g,x) = \sum_{V \in \mathcal{V}} \frac{a_V(g,x)}{s(g,x)} \, V$ with $a_V(g,x) := \inf\{d_{\Psi^R,S,k,\Lambda}((g,x),(h,y)) \, | \, (h,y) \notin V\}$ and $s(g,x) := \sum_V a_V(g,x)$.

The statement (\ref{enum3}) implies that $s(g,x) \geq n$.
Note that $|a_V(g,x) - a_V(h,y)| \leq d_{\Psi^R,S,k,\Lambda}((g,x),(h,y))$ and hence
\[
\sum_{V \in \mathcal{V}} |a_V(g,x)-a_V(h,y)| \leq 2N \cdot d_{\Psi^R,S,k,\Lambda}\big((g,x),(h,y)\big)
\]
for all $(g,x), (h,y) \in \mathcal{O}_w \rtimes \mathcal{O}_w^\times \times X_w^R$.

We calculate
\begin{eqnarray*}
d^1_\Sigma(f(g,x),f(h,y)) & = & \sum_{V \in \mathcal{V}} \Big| \frac{a_V(g,x)}{s(g,x)} - \frac{a_V(h,y)}{s(h,y)} \Big| \\
& = & \sum_{V \in \mathcal{V}} \Big| \frac{a_V(g,x)-a_V(h,y)}{s(g,x)} + \frac{a_V(h,y) \cdot \big(s(h,y)-s(g,x)\big)}{s(g,x) \cdot s(h,y)} \Big| \\
& \leq & \frac{\sum_{V \in \mathcal{V}} |a_V(g,x)-a_V(h,y)|}{s(g,x)} + \frac{\big|s(h,y)-s(g,x)\big|}{s(g,x)} \\
& \leq & 2 \cdot \frac{\sum_{V \in \mathcal{V}} |a_V(g,x)-a_V(h,y)|}{s(g,x)} \\
& \leq & 4N \cdot \frac{d_{\Psi^R,S,k,\Lambda}\big((g,x),(h,y)\big)}{s(g,x)} \\
& \leq & \frac{d_{\Psi^R,S,k,\Lambda}\big((g,x),(h,y)\big)}{k}.
\end{eqnarray*}
\end{proof}

\subsection{Hyper-elementary subgroups}

In this subsection we study hyper-elementary subgroups of finite quotients of $\mathcal{O}_w \rtimes_{\cdot w} \Z$.

\begin{definition}[$t_w(q,s)$, $t_w(\mathfrak{q},s)$] \label{def-t_w}
Let $q \in \N$ be a prime number and let $\mathfrak{q} \notin M_w$ be a prime ideal in $\mathcal{O}$ which contains $q$. Let $s$ be a natural number.
We define $t_w(q,s), t_w(\mathfrak{q},s) \in \N_0$ by
\begin{eqnarray*}
t_w(q,s) \Z & = & \big\{ z \in \Z \, \big| \, w^z \equiv 1 \mod q^s \mathcal{O}_w \big\}, \\
t_w(\mathfrak{q},s) \Z & = & \big\{ z \in \Z \, \big| \, w^z \equiv 1 \mod \mathfrak{q}^s \mathcal{O}_w \big\}.
\end{eqnarray*}
\end{definition}

\begin{remark}
Let $q \mathcal{O} = \mathfrak{q}_1^{v_1} \cdots \mathfrak{q}_r^{v_r}$ be the prime ideal factorization in $\mathcal{O}$.
Then $t_w(q,s)$ is the least common multiple of the numbers $t_w(\mathfrak{q_i},s v_i)$ ($i = 1,\ldots,r$).
\end{remark}

\begin{lemma} \label{lem-q}
Let $q \in \N$ be a prime number and let $\mathfrak{q} \notin M_w$ be a prime ideal in $\mathcal{O}$ which contains $q$.
\begin{enumerate}
\item The ring $\mathcal{O}_w / \mathfrak{q} \mathcal{O}_w$ is a finite field of characteristic $q$. \label{lem-q-1}
\item $t_w(\mathfrak{q},1)$ and $q$ are coprime. In particular, $t_w(\mathfrak{q},1) \neq 0$. \label{lem-q-2}
\item For every $s \in \N$ we have $t_w(\mathfrak{q},s+1) = q \cdot t_w(\mathfrak{q},s)$ or $t_w(\mathfrak{q},s+1) = t_w(\mathfrak{q},s)$.\label{lem-q-3}
\item The normal subgroup $\mathcal{O}_w / \mathfrak{q}^s \mathcal{O}_w \rtimes_{\cdot w} t_w(\mathfrak{q},1) \Z / t_w(\mathfrak{q},s) \Z < \mathcal{O}_w / \mathfrak{q}^s \mathcal{O}_w \rtimes_{\cdot w} \Z / t_w(\mathfrak{q},s) \Z$ is a $q$-Sylow subgroup. \label{lem-q-4}
\item Let $(a,b) \in \mathcal{O}_w / \mathfrak{q}^s \mathcal{O}_w \rtimes_{\cdot w} \Z / t_w(\mathfrak{q},s) \Z$ such that $b \notin t_w(\mathfrak{q},1) \Z / t_w(\mathfrak{q},s) \Z$. Then $(a,b)$ is conjugated to $(0,b)$. \label{lem-q-5}
\end{enumerate}
\end{lemma}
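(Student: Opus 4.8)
The five assertions are elementary facts about the finite group $\mathcal{O}_w / \mathfrak{q}^s \mathcal{O}_w \rtimes_{\cdot w} \Z / t_w(\mathfrak{q},s)\Z$ and the ring $\mathcal{O}_w / \mathfrak{q}^s \mathcal{O}_w$, so the plan is to dispatch them in order, using standard algebraic number theory from \cite{Neu99}.

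\textbf{Parts (1)--(3).} For (1), since $\mathfrak{q} \notin M_w$ the valuation $v_{\mathfrak{q}}$ is defined on $\mathcal{O}_w$ and $\mathfrak{q}\mathcal{O}_w$ is a maximal ideal; the quotient is a finite extension of $\mathcal{O}/\mathfrak{q}$, hence a finite field of characteristic $q$ (using that $q \in \mathfrak{q}$). For (2), the multiplicative group $(\mathcal{O}_w / \mathfrak{q}\mathcal{O}_w)^\times$ has order prime to $q$ (it is the unit group of a field of characteristic $q$), and $t_w(\mathfrak{q},1)$ is by definition the multiplicative order of $w$ in this group — so it divides $|(\mathcal{O}_w/\mathfrak{q}\mathcal{O}_w)^\times|$ and is therefore coprime to $q$ and in particular nonzero. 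For (3): $w^{t_w(\mathfrak{q},s)} \equiv 1 \bmod \mathfrak{q}^s$, so write $w^{t_w(\mathfrak{q},s)} = 1 + x$ with $v_{\mathfrak{q}}(x) \geq s$; then $w^{q \cdot t_w(\mathfrak{q},s)} = (1+x)^q = 1 + qx + \binom{q}{2}x^2 + \dots$, and since $v_{\mathfrak{q}}(qx) \geq s+1$ (as $q \in \mathfrak{q}$) and $v_{\mathfrak{q}}(x^2) \geq 2s \geq s+1$, we get $w^{q\cdot t_w(\mathfrak{q},s)} \equiv 1 \bmod \mathfrak{q}^{s+1}$. Thus $t_w(\mathfrak{q},s+1) \mid q\cdot t_w(\mathfrak{q},s)$, while $t_w(\mathfrak{q},s) \mid t_w(\mathfrak{q},s+1)$ trivially; since $t_w(\mathfrak{q},s+1)/t_w(\mathfrak{q},s)$ divides $q$ it is $1$ or $q$.

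\textbf{Part (4).} The group $Q := \mathcal{O}_w/\mathfrak{q}^s\mathcal{O}_w \rtimes_{\cdot w} \Z/t_w(\mathfrak{q},s)\Z$ has order $|\mathcal{O}_w/\mathfrak{q}^s\mathcal{O}_w| \cdot t_w(\mathfrak{q},s)$. The additive group $\mathcal{O}_w/\mathfrak{q}^s\mathcal{O}_w$ is a $q$-group (a finite $\mathbb{Z}_{(q)}$-module, or directly: it is filtered by the $\mathfrak{q}^i/\mathfrak{q}^{i+1}$, each an $\mathcal{O}_w/\mathfrak{q}$-vector space of $q$-power order). By iterating (3), $t_w(\mathfrak{q},s) = q^a \cdot t_w(\mathfrak{q},1)$ for some $a \geq 0$, and by (2) $t_w(\mathfrak{q},1)$ is coprime to $q$. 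So the $q$-part of $|Q|$ is $|\mathcal{O}_w/\mathfrak{q}^s\mathcal{O}_w| \cdot q^a$, which is exactly the order of the indicated normal subgroup $\mathcal{O}_w/\mathfrak{q}^s\mathcal{O}_w \rtimes_{\cdot w} t_w(\mathfrak{q},1)\Z / t_w(\mathfrak{q},s)\Z$; being normal of $q$-power order equal to the full $q$-part, it is the (unique) $q$-Sylow subgroup.

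\textbf{Part (5).} Given $(a,b)$ with $b \notin t_w(\mathfrak{q},1)\Z/t_w(\mathfrak{q},s)\Z$, conjugating $(0,b)$ by $(c,0)$ gives $(c - w^b c, b) = ((1-w^b)c, b)$, so it suffices to show $1 - w^b$ is a unit in $\mathcal{O}_w/\mathfrak{q}^s\mathcal{O}_w$ (then solve $(1-w^b)c = a$). If $1-w^b$ were a non-unit, it would lie in the unique maximal ideal $\mathfrak{q}\mathcal{O}_w/\mathfrak{q}^s\mathcal{O}_w$ of the local ring $\mathcal{O}_w/\mathfrak{q}^s\mathcal{O}_w$, i.e. $w^b \equiv 1 \bmod \mathfrak{q}$, i.e. $t_w(\mathfrak{q},1) \mid b$ — contradicting the hypothesis. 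The only mild subtlety is checking that $\mathcal{O}_w/\mathfrak{q}^s\mathcal{O}_w$ is local with maximal ideal generated by the image of $\mathfrak{q}$, which follows because $\mathfrak{q}\mathcal{O}_w$ is the only prime of $\mathcal{O}_w$ containing $\mathfrak{q}^s\mathcal{O}_w$ (the primes of $\mathcal{O}_w$ correspond to primes of $\mathcal{O}$ not in $M_w$).

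The main obstacle is simply bookkeeping: being careful that $\mathfrak{q} \notin M_w$ ensures $v_{\mathfrak{q}}$ and the localization behave as expected, and that the binomial-expansion estimate in (3) genuinely uses $q \in \mathfrak{q}$ together with $s \geq 1$. Everything else is routine.
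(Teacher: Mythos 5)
Your proof is correct and follows essentially the same overall approach as the paper: unit group of a finite field for (2), order counting in a short exact sequence for (4), and conjugation by $(c,0)$ together with invertibility of $1-w^b$ for (5). The two spots where you diverge are cosmetic: in (3) the paper factors $w^{qt}-1=(w^t-1)\sum_{k=0}^{q-1}w^{kt}$ and notes the geometric sum is $\equiv q\equiv 0\pmod{\mathfrak{q}}$, whereas you expand $(1+x)^q$ binomially — both give $t_w(\mathfrak{q},s+1)\mid q\,t_w(\mathfrak{q},s)$; and in (5) the paper constructs the inverse of $w^b-1$ explicitly by raising to the power $q^{s-1}$, whereas you observe that $\mathcal{O}_w/\mathfrak{q}^s\mathcal{O}_w$ is a local ring with maximal ideal $\mathfrak{q}\mathcal{O}_w/\mathfrak{q}^s\mathcal{O}_w$, so a non-unit reduces to $0$ mod $\mathfrak{q}$. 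Both variants are sound; the local-ring observation in (5) is perhaps cleaner than the paper's explicit computation.
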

\begin{proof}
\begin{enumerate}
\item The ring $\mathcal{O} / \mathfrak{q}$ is a field because the ideal $\mathfrak{q} \subset \mathcal{O}$ is maximal (see \cite[Theorem~3.1 on page~17]{Neu99}). By \cite[Proposition~2.12 on page~15]{Neu99} the index $(\mathcal{O} : \mathfrak{q})$ is finite. Obviously, the characteristic of $\mathcal{O} / \mathfrak{q}$ is $q$.

    The inclusion $\mathcal{O} \subset \mathcal{O}_w$ induces a homomorphism $\mathcal{O} / \mathfrak{q} \to \mathcal{O}_w / \mathfrak{q} \mathcal{O}_w$. We will show that this homomorphism is bijective. Injectivity follows from $\mathfrak{q} \notin M_w$. It remains to prove surjectivity. Every element in $\mathcal{O}_w$ is of the shape $a/b$ with $a, b \in \mathcal{O}$ and $v_\mathfrak{p}(b) = 0$ for all prime ideals $\mathfrak{p} \notin M_w$. Since the index $(\mathcal{O} : \mathfrak{q})$ is finite, there exist $m > n$ with $b^m \equiv b^n \mod \mathfrak{q}$. Since $v_\mathfrak{q}(b) = 0$, we obtain $b^{m-n} \equiv 1 \mod \mathfrak{q}$. This shows that $b$ is invertible in $\mathcal{O} / \mathfrak{q}$.
\item The map
    \[
    \Z / t_w(\mathfrak{q},1) \Z \to (\mathcal{O}_w / \mathfrak{q} \mathcal{O}_w)^\times, [z] \mapsto [w^z]
    \]
    is injective. Since $\mathcal{O}_w / \mathfrak{q} \mathcal{O}_w$ is a finite field of characteristic $q$, the prime number  $q$ does not divide the order of $(\mathcal{O}_w / \mathfrak{q} \mathcal{O}_w)^\times$. Since $\Z / t_w(\mathfrak{q},1) \Z$ is isomorphic to a subgroup of $(\mathcal{O}_w / \mathfrak{q} \mathcal{O}_w)^\times$, $q$ does not divide the order of $\Z / t_w(\mathfrak{q},1) \Z$.
\item Note that $t_w(\mathfrak{q},s+1)$ is a multiple of $t_w(\mathfrak{q},s)$. It remains to show that $t_w(\mathfrak{q},s+1)$ divides $q \cdot t_w(\mathfrak{q},s)$. We calculate
    \[
    w^{q \cdot t_w(\mathfrak{q},s)} - 1 = (w^{t_w(\mathfrak{q},s)} - 1) \sum_{k=0}^{q-1} w^{k \cdot t_w(\mathfrak{q},s)},
    \]
    where
    \[
    \sum_{k=0}^{q-1} w^{k \cdot t_w(\mathfrak{q},s)} \equiv \sum_{k=0}^{q-1} 1 \equiv q \equiv 0 \mod \mathfrak{q} \mathcal{O}_w.
    \]
    This shows
    \[
    w^{q \cdot t_w(\mathfrak{q},s)} - 1 \equiv 0 \mod \mathfrak{q}^{s+1} \mathcal{O}_w.
    \]
    Hence, $q \cdot t_w(\mathfrak{q},s)$ is a multiple of $t_w(\mathfrak{q},s+1)$.
\item The group $\mathcal{O}_w / \mathfrak{q}^s \mathcal{O}_w \rtimes t_w(\mathfrak{q},1) \Z / t_w(\mathfrak{q},s) \Z$ is a $q$-group because $\mathcal{O}_w / \mathfrak{q}^s \mathcal{O}_w$ and $t_w(\mathfrak{q},1) \Z / t_w(\mathfrak{q},s) \Z$ are $q$-groups (see (\ref{lem-q-3})).
    The short exact sequence
    \[
    1 \to \mathcal{O}_w / \mathfrak{q}^s \mathcal{O}_w \rtimes t_w(\mathfrak{q},1) \Z / t_w(\mathfrak{q},s) \Z \to \mathcal{O}_w / \mathfrak{q}^s \mathcal{O}_w \rtimes \Z / t_w(\mathfrak{q},s) \Z \to \Z / t_w(\mathfrak{q},1) \Z \to 1
    \]
    implies that $\mathcal{O}_w / \mathfrak{q}^s \mathcal{O}_w \rtimes t_w(\mathfrak{q},1) \Z / t_w(\mathfrak{q},s) \Z$ is a $q$-Sylow subgroup because $t_w(\mathfrak{q},1)$ and $q$ are coprime (see (\ref{lem-q-2})).
\item We have $w^b \not\equiv 1 \mod \mathfrak{q} \mathcal{O}_w$ because $b \notin t_w(\mathfrak{q},1) \Z / t_w(\mathfrak{q},s) \Z$.
    Since $\mathcal{O}_w / \mathfrak{q} \mathcal{O}_w$ is a field (see (\ref{lem-q-1})), there exists $z \in \mathcal{O}_w$ such that $(w^b - 1) z \equiv 1 \mod \mathfrak{q} \mathcal{O}_w$. This implies
    \[
    \big((w^b - 1) z\big)^{q^{s-1}} \equiv 1 \mod \mathfrak{q}^s \mathcal{O}_w.
    \]
    Hence, $w^b - 1$ is a unit in $\mathcal{O}_w / \mathfrak{q}^s \mathcal{O}_w$. We set $x := (w^b-1)^{-1} a \in \mathcal{O}_w / \mathfrak{q^s} \mathcal{O}_w$.
    Then $(x,0)(a,b)(x,0)^{-1} = (x(1-w^b)+a,b) = (0,b)$.
\end{enumerate}
\end{proof}

Analogously to \cite[Theorem 4.5]{FW14} we have
\begin{proposition} \label{prop-hyp-el}
Let $q \in \N$ be a prime number and let $\mathfrak{q} \notin M_w$ be a prime ideal in $\mathcal{O}$ which contains $q$. Let $s$ be a natural number.
Every hyper-elementary subgroup of $H < \mathcal{O}_w / \mathfrak{q}^s \mathcal{O}_w \rtimes_{\cdot w} \Z / t_w(\mathfrak{q},s) \Z$ is
\begin{itemize}
\item a subgroup of $\mathcal{O}_w / \mathfrak{q}^s \mathcal{O}_w \rtimes t_w(\mathfrak{q},1) \Z / t_w(\mathfrak{q},s) \Z$ or
\item a subgroup of $\mathcal{O}_w / \mathfrak{q}^s \mathcal{O}_w \rtimes \frac{t_w(\mathfrak{q},s)}{t_w(\mathfrak{q},1)} \Z / t_w(\mathfrak{q},s) \Z$ or
\item conjugated to a subgroup of $\{0\} \rtimes \Z / t_w(\mathfrak{q},s) \Z$.
\end{itemize}
\end{proposition}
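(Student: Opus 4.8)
The strategy is to extract from Lemma~\ref{lem-q} (and its proof) a single arithmetic dichotomy and then run a Sylow-type bookkeeping. Abbreviate $A:=\mathcal{O}_w/\mathfrak{q}^s\mathcal{O}_w$ and $Q:=\Z/t_w(\mathfrak{q},s)\Z$, so that the group in question is $G=A\rtimes_{\cdot w}Q$ with multiplication $(x_1,x_2)(y_1,y_2)=(x_1+w^{x_2}y_1,x_2+y_2)$; let $\rho\colon G\to Q$ be the projection. By Lemma~\ref{lem-q}~(\ref{lem-q-2}) and (\ref{lem-q-3}) we have $t_w(\mathfrak{q},s)=q^{a}\cdot t_w(\mathfrak{q},1)$ with $t_w(\mathfrak{q},1)$ prime to $q$, so $Q=Q_0\times Q_1$ where $Q_0:=t_w(\mathfrak{q},1)\Z/t_w(\mathfrak{q},s)\Z$ is the $q$-primary part and $Q_1:=\frac{t_w(\mathfrak{q},s)}{t_w(\mathfrak{q},1)}\Z/t_w(\mathfrak{q},s)\Z$ the complementary part; the three subgroups in the statement are precisely $\rho^{-1}(Q_0)=A\rtimes Q_0$ (the unique Sylow $q$-subgroup of $G$, cf.\ Lemma~\ref{lem-q}~(\ref{lem-q-4})), $\rho^{-1}(Q_1)=A\rtimes Q_1$, and $\{0\}\rtimes Q$. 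The dichotomy is: for $r\in Q$ one has either $r\in Q_0$, or $w^{r}-1$ is a unit in $A$; the second alternative is exactly what is proved in Lemma~\ref{lem-q}~(\ref{lem-q-5}) (there $r\notin Q_0$ means $w^{r}\not\equiv 1\bmod\mathfrak{q}\mathcal{O}_w$), and it has two consequences we use repeatedly: multiplication by $w^{r}$ on $A$ has no nonzero fixed point, and any element $(\,\cdot\,,r)\in G$ is conjugate to $(0,r)$.

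Now let $H<G$ be hyper-elementary, say $1\to C\to H\to P\to 1$ with $C$ cyclic, $P$ a $p$-group and $p\nmid|C|$, and decompose $\rho(H)=R_0\times R_1$ into its $q$- and $q'$-parts. If $R_1=1$ then $H\subseteq\rho^{-1}(Q_0)$ (first case); if $R_0=1$ then $H\subseteq\rho^{-1}(Q_1)$ (second case). So assume $R_0\neq1$ and $R_1\neq1$; I will show $H$ is conjugate into $\{0\}\rtimes Q$. Observe that it suffices to prove $H\cap A=1$: then $\rho|_H$ is injective, so $H$ is cyclic (a subgroup of cyclic $Q$) with generator $h$, the order of $\rho(h)$ is divisible by $|R_1|>1$ and hence is not a power of $q$, so $\rho(h)\notin Q_0$, and Lemma~\ref{lem-q}~(\ref{lem-q-5}) conjugates $H=\langle h\rangle$ into $\{0\}\rtimes Q$.

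To prove $H\cap A=1$ I argue by cases. Since $A$ is a $q$-group, every $q$-element of $H$ lies in $C$ (it dies in the $p$-group $H/C$), so the Sylow $q$-subgroup $Q_H$ of $H$ is the cyclic, characteristic $q$-part of $C$, and $R_0\neq1$ forces $Q_H\neq1$; moreover $\rho(C)$ contains the Hall $p'$-subgroup $R_0\times R_1'$ of the abelian group $\rho(H)$, where $R_1'$ is the $p'$-part of $R_1$. If $p=q$, then $C\cap A=1$ and $C\neq1$, and a generator $c$ of $C$ has $\rho(c)$ of order $|C|$ prime to $q$; if $p\neq q$ and $R_1$ is not a $p$-group, then $R_1'\neq1$ and a generator $c$ of $C$ has $\rho(c)$ of order divisible by a prime $\neq q$. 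In either case $\rho(c)\notin Q_0$, so after conjugating $H$ we may assume $C=\langle(0,\rho(c))\rangle\subseteq\{0\}\rtimes Q$; a direct computation using that $w^{\rho(c)}-1$ is a unit gives $N_G(C)=\{0\}\rtimes Q$, and $C\lhd H$ then yields $H\subseteq\{0\}\rtimes Q$.

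The one remaining, and genuinely delicate, case is $p\neq q$ with $R_1$ a $p$-group. Here $\rho(C)=R_0$ is a $q$-group, so $C$ is a cyclic $q$-group equal to $Q_H$, and $P\cong R_1$ is cyclic. Suppose for a contradiction that $H\cap A\neq1$. Write $Q_H=\langle x\rangle$, $x=(\xi,r_0)$ with $r_0=\rho(x)\in Q_0$ of order $q^{b_0}:=|R_0|\ge q$; since $r_0\neq0$, $w^{r_0}-1$ is a nonzero element of the maximal ideal $\mathfrak{m}$ of the chain ring $A\cong\mathcal{O}/\mathfrak{q}^{s}$, say $w^{r_0}-1\in\mathfrak{m}^{\nu}\setminus\mathfrak{m}^{\nu+1}$ with $1\le\nu<s$. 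Pick $t=(\tau,r_1)\in P$ with $r_1=\rho(t)$ generating $R_1$; then $r_1\in Q_1\setminus Q_0$, so $w^{r_1}-1$ is a unit. From $Q_H\lhd H$ we get $txt^{-1}=x^{k}$ for some integer $k$ with $k\equiv1\bmod q^{b_0}$; writing this relation out in $A\rtimes Q$ and comparing $A$-components gives $(w^{r_1}-\mathcal{N}_k)\xi=(w^{r_0}-1)\tau$ with $\mathcal{N}_k:=\sum_{j=0}^{k-1}w^{jr_0}$. Since $w^{r_0}\equiv1$ and $k\equiv1$ modulo $\mathfrak{m}$, we have $\mathcal{N}_k\equiv k$ and $w^{r_1}-k\equiv w^{r_1}-1$ modulo $\mathfrak{m}$; as $w^{r_1}-1$ is a unit, so are $w^{r_1}-k$ and hence $w^{r_1}-\mathcal{N}_k$, whence $\xi=(w^{r_1}-\mathcal{N}_k)^{-1}(w^{r_0}-1)\tau\in\mathfrak{m}^{\nu}$. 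Finally $x^{q^{b_0}}=(\mathcal{N}_{q^{b_0}}\xi,0)$, and $\mathcal{N}_{q^{b_0}}(w^{r_0}-1)=w^{q^{b_0}r_0}-1=0$ in $A$ forces $\mathcal{N}_{q^{b_0}}\in\operatorname{Ann}_A(w^{r_0}-1)=\mathfrak{m}^{\,s-\nu}$, so $\mathcal{N}_{q^{b_0}}\xi\in\mathfrak{m}^{s}=0$ and $x^{q^{b_0}}=e$; this contradicts $\mathrm{ord}(x)=|Q_H|=|H\cap A|\cdot q^{b_0}>q^{b_0}$. Hence $H\cap A=1$ after all. I expect this last case to be the main obstacle: in \cite{FW13} the analogue of $A$ is cyclic and most of these complications evaporate, whereas here $A=\mathcal{O}_w/\mathfrak{q}^{s}\mathcal{O}_w$ can be a noncyclic group, a hyper-elementary $H$ with $H\cap A\neq1$ whose image is "mixed" is not ruled out by soft group theory, and the contradiction has to be squeezed out of the interplay between the unipotence of the $Q_0$-action (which buries $w^{r_0}-1$ deep in $\mathfrak{m}$, controlled by $\operatorname{Ann}_A$) and the invertibility of $w^{r_1}-1$ forced by $r_1\in Q_1$.
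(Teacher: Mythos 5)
Your proof is correct, but it takes a genuinely different route from the paper's. You organize the argument around the $q/q'$-decomposition $\rho(H)=R_0\times R_1$ of the image in $Q:=\Z/t_w(\mathfrak{q},s)\Z$, dispose of the two ``pure'' cases $R_1=1$ and $R_0=1$ immediately, and reduce the mixed case to showing $H\cap A=1$ (with $A:=\mathcal{O}_w/\mathfrak{q}^s\mathcal{O}_w$), at which point Lemma~\ref{lem-q}~(5) finishes. The paper instead works case-by-case on the \emph{form of a generator} $(a,b)$ of the cyclic normal subgroup $C$ ($C$ trivial; $a=0$; $b=0$; both nonzero), repeatedly invoking Lemma~\ref{lem-q} and Sylow normalization. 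The difference is most visible in the case you call delicate ($p\ne q$, $R_0,R_1\neq 1$ and $R_1$ a $p$-group): you rule out $H\cap A\neq 1$ by a ring-theoretic computation in the chain ring $A\cong\mathcal{O}/\mathfrak{q}^s$, using the $\mathfrak q$-adic valuation $\nu$ of $w^{r_0}-1$ and the annihilator identity $\operatorname{Ann}_A(w^{r_0}-1)=\mathfrak m^{s-\nu}$ to bound the order of the generator of $Q_H$, whereas the paper conjugates $H_p$ into $\{0\}\rtimes Q_1$ and then derives from the normality of $C$ (after a quick mod-$\mathfrak q$ computation) that the generator of $H_p$ lies in $\{0\}\rtimes Q_0$, forcing $H_p=1$ outright. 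Both are correct; the paper's version is more economical and, notably, does not require $A$ to be cyclic either -- it only uses that $A$ is local with residue field $\mathcal{O}/\mathfrak q$, so your concern that the non-cyclicity of $A$ (relative to the $\Z/d^s$ case of \cite{FW13}) makes the valuation/annihilator gadget unavoidable is somewhat overstated. One presentational slip: your opening sentence ``every $q$-element of $H$ lies in $C$ (it dies in the $p$-group $H/C$), so $Q_H$ is the $\dots$ $q$-part of $C$'' silently assumes $p\ne q$ (if $p=q$ there are $q$-elements in $H$ that do not die in $H/C$); since you treat $p=q$ separately from scratch this does no real damage, but the sentence should be qualified.
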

\begin{proof}
Let $1 \to C \to H \to P \to 1$ be a short exact sequence of groups, where $C$ is a cyclic group of order $n$ and $P$ is a $p$-group for some prime number $p$ which does not divide $n$.

We first consider the case that $C$ is the trivial group. Then $H$ is a $p$-group. If $p = q$ then $H$ is a subgroup of $\mathcal{O}_w / \mathfrak{q}^s \mathcal{O}_w \rtimes t_w(\mathfrak{q},1) \Z / t_w(\mathfrak{q},s) \Z$ by Sylow's theorem and Lemma~\ref{lem-q}~(\ref{lem-q-4}). If $p \neq q$ then $H$ is a subgroup of $\mathcal{O}_w / \mathfrak{q}^s \mathcal{O}_w \rtimes \frac{t_w(\mathfrak{q},s)}{t_w(\mathfrak{q},1)} \Z / t_w(\mathfrak{q},s) \Z$ because $\frac{t_w(\mathfrak{q},s)}{t_w(\mathfrak{q},1)}$ is a power of $q$ by Lemma~\ref{lem-q}~(\ref{lem-q-3}).

From now on we assume that the cyclic group $C$ is non-trivial. Let $(a,b) \in \mathcal{O}_w / \mathfrak{q}^s \mathcal{O}_w \rtimes_{\cdot w} \Z / t_w(\mathfrak{q},s) \Z$ be a generator of $C$.

Let us consider the case $a = 0$. Suppose that there exists $(x,y) \in H$ with $x \neq 0$. We calculate $(x,y)(0,b)(x,y)^{-1} = (x(1-w^b),b)$. Since $C$ is normal in $H$, we conclude $x(1-w^b) = 0$. This shows that $C$ lies in the center of $H$ and that $w^b \equiv 1 \mod \mathfrak{q} \mathcal{O}_w$. We conclude $(a,b) \in \{0\} \rtimes t_w(\mathfrak{q},1) \Z / t_w(\mathfrak{q},s) \Z$ which implies by Lemma~\ref{lem-q}~(\ref{lem-q-3}) that $C$ is a $q$-group. Hence $p \neq q$. Let $H_p$ be a $p$-Sylow subgroup of $H$. We have $H = C \times H_p$ because $C$ lies in the center of $H$.
Since the index $[\mathcal{O}_w / \mathfrak{q}^s \mathcal{O}_w \rtimes \Z / t_w(\mathfrak{q},s) \Z : \{0\} \rtimes \frac{t_w(\mathfrak{q},s)}{t_w(\mathfrak{q},1)} \Z / t_w(\mathfrak{q},s) \Z]$ is a power of $q$ (see Lemma~\ref{lem-q}~(\ref{lem-q-4})), every $p$-Sylow subgroup of $\{0\} \rtimes \frac{t_w(\mathfrak{q},s)}{t_w(\mathfrak{q},1)} \Z / t_w(\mathfrak{q},s) \Z$ is a $p$-Sylow subgroup of $\mathcal{O}_w / \mathfrak{q}^s \mathcal{O}_w \rtimes \Z / t_w(\mathfrak{q},s) \Z$. By Sylow's theorem $H_p$ is conjugated to a subgroup of $\{0\} \rtimes \frac{t_w(\mathfrak{q},s)}{t_w(\mathfrak{q},1)} \Z / t_w(\mathfrak{q},s) \Z$. In particular, $H_p$ is a cyclic group. Since $C$ and $H_p$ are cyclic groups with coprime orders, $H = C \times H_p$ is cyclic. Let $(a',b') \in H$ be a generator. If $b' \notin t_w(\mathfrak{q},1) \Z / t_w(\mathfrak{q},s) \Z$ then $H$ is conjugated to a subgroup of $\{0\} \rtimes \Z / t_w(\mathfrak{q},s) \Z$ (see Lemma~\ref{lem-q}~(\ref{lem-q-5})).

Next we consider the case $b = 0$. Then $C$ is a subgroup of $\mathcal{O}_w / \mathfrak{q}^s \mathcal{O}_w \rtimes \{0\}$ and hence a $q$-group. Since $P$ is a $p$-group with $p \neq q$, $H$ is a subgroup of $\mathcal{O}_w / \mathfrak{q}^s \mathcal{O}_w \rtimes \frac{t_w(\mathfrak{q},s)}{t_w(\mathfrak{q},1)} \Z / t_w(\mathfrak{q},s) \Z$.

Now suppose that both $a$ and $b$ are not zero. If $b \notin t_w(\mathfrak{q},1) \Z / t_w(\mathfrak{q},s) \Z$ then $(a,b)$ can be conjugated to $(0,b)$ (see Lemma~\ref{lem-q}~(\ref{lem-q-5})), which can be included in the case $a = 0$. Here, we used the fact that $\mathcal{O}_w / \mathfrak{q}^s \mathcal{O}_w \rtimes t_w(\mathfrak{q},1) \Z / t_w(\mathfrak{q},s) \Z$ and $\mathcal{O}_w / \mathfrak{q}^s \mathcal{O}_w \rtimes \frac{t_w(\mathfrak{q},s)}{t_w(\mathfrak{q},1)} \Z / t_w(\mathfrak{q},s) \Z$ are invariant under conjugation (i.e., normal subgroups of $\mathcal{O}_w / \mathfrak{q}^s \mathcal{O}_w \rtimes \Z / t_w(\mathfrak{q},s) \Z$).
If $b \in t_w(\mathfrak{q},1) \Z / t_w(\mathfrak{q},s) \Z$ then $C$ is a $q$-group (see Lemma~\ref{lem-q}~(\ref{lem-q-4})). We conclude $p \neq q$. Let $H_p$ be a $p$-Sylow subgroup of $H$. By Sylow's theorem we can and will conjugate $H$ such that $H_p$ becomes a subgroup of $\{0\} \rtimes \frac{t_w(\mathfrak{q},s)}{t_w(\mathfrak{q},1)} \Z / t_w(\mathfrak{q},s) \Z$. In particular, $H_p$ is a cyclic group.
Let $(0,y)$ be a generator of $H_p$. Since $C$ is normal in $H$, we have
\[
(w^y a,b) = (0,y) (a,b) (0,y)^{-1} = (a,b)^n = (a \sum_{k=0}^n w^{kb},nb)
\]
for some $n \in \N$. If $a \neq 0$ then $w^y \equiv \sum_{k=0}^n w^{kb} \equiv n \mod \mathfrak{q} \mathcal{O}_w$.
Since $C$ is a $q$-group and $b \neq 0$, $nb = b$ implies that $n-1$ is a multiple of $q$. Hence, $w^y \equiv 1 \mod \mathfrak{q}  \mathcal{O}_w$ and $(0,y) \in \{0\} \rtimes t_w(\mathfrak{q},1) \Z / t_w(\mathfrak{q},s) \Z$. Since $H_p$ is a subgroup of $\{0\} \rtimes \frac{t_w(\mathfrak{q},s)}{t_w(\mathfrak{q},1)} \Z / t_w(\mathfrak{q},s) \Z$, $H_p$ is the trivial group. This implies $H = C < \mathcal{O}_w / \mathfrak{q}^s \mathcal{O}_w \rtimes t_w(\mathfrak{q},1) \Z / t_w(\mathfrak{q},s) \Z$.
\end{proof}

\begin{corollary} \label{cor-hyp-el}
Let $q \in \N$ be a prime number such that $q \notin \mathfrak{p}$ for all $\mathfrak{p} \in M_w$.
We consider the prime ideal factorization $q \mathcal{O} = \mathfrak{q}_1^{v_1} \cdots \mathfrak{q}_r^{v_r}$ in $\mathcal{O}$.
Let $s$ be a natural number.
Let $H < \mathcal{O}_w / q^s \mathcal{O}_w \rtimes_{\cdot w} \Z / t_w(q,s) \Z$ be a hyper-elementary subgroup such that the index $[\Z / t_w(q,s) \Z,\pi(H)]$ satisfies
\[
[\Z / t_w(q,s) \Z,\pi(H)] < \min\big\{t_w(\mathfrak{q_i},1),t_w(\mathfrak{q_i},s v_i)/t_w(\mathfrak{q_i},1) \, \big| \, i = 1,\ldots,r \big\},
\]
where $\pi \colon \mathcal{O}_w / q^s \mathcal{O}_w \rtimes \Z / t_w(q,s) \Z \to \Z / t_w(q,s) \Z$ denotes the quotient map.
Then there exists $(x,y) \in \mathcal{O}_w / q^s \mathcal{O}_w \rtimes (\mathcal{O}_w / q^s \mathcal{O}_w)^\times$ such that $(x,y)H(x,y)^{-1}$ is a subgroup of $\{0\} \rtimes \Z / t_w(q,s) \Z$.\\
(Note that $\mathcal{O}_w / q^s \mathcal{O}_w \rtimes \Z / t_w(q,s) \Z$ is a normal subgroup of $\mathcal{O}_w / q^s \mathcal{O}_w \rtimes (\mathcal{O}_w / q^s \mathcal{O}_w)^\times$, where $(\mathcal{O}_w / q^s \mathcal{O}_w)^\times$ acts on $\mathcal{O}_w / q^s \mathcal{O}_w$ by multiplication.)
\end{corollary}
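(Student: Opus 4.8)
The plan is to break the problem up along the prime divisors of $q$ by means of the Chinese Remainder Theorem and then apply Proposition~\ref{prop-hyp-el} to each factor, finally reassembling the resulting conjugating elements into a single element of $\mathcal{O}_w / q^s \mathcal{O}_w \rtimes (\mathcal{O}_w / q^s \mathcal{O}_w)^\times$.

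First I would set up the decomposition. Since $q \notin \mathfrak{p}$ for all $\mathfrak{p} \in M_w$, none of the primes $\mathfrak{q}_i$ dividing $(q)$ lies in $M_w$, so the argument in the proof of Lemma~\ref{lem-q}~(\ref{lem-q-1}) shows that $\mathcal{O} \hookrightarrow \mathcal{O}_w$ induces isomorphisms $\mathcal{O}/\mathfrak{q}_i^{s v_i} \cong \mathcal{O}_w/\mathfrak{q}_i^{s v_i}\mathcal{O}_w$; together with the Chinese Remainder Theorem this yields a ring isomorphism $\mathcal{O}_w/q^s\mathcal{O}_w \cong \prod_{i=1}^r \mathcal{O}_w/\mathfrak{q}_i^{s v_i}\mathcal{O}_w$. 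Combining this with the quotient maps $\Z/t_w(q,s)\Z \twoheadrightarrow \Z/t_w(\mathfrak{q}_i,s v_i)\Z$ (legitimate since $t_w(q,s) = \lcm_i t_w(\mathfrak{q}_i,s v_i)$ and the semidirect structures are compatible) one obtains an injective homomorphism
\[
\iota \colon \mathcal{O}_w/q^s\mathcal{O}_w \rtimes_{\cdot w} \Z/t_w(q,s)\Z \hookrightarrow \prod_{i=1}^r \big( \mathcal{O}_w/\mathfrak{q}_i^{s v_i}\mathcal{O}_w \rtimes_{\cdot w} \Z/t_w(\mathfrak{q}_i,s v_i)\Z \big).
\]
Put $H_i := \pr_i(\iota(H))$. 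Each $H_i$ is a quotient of $H$, hence hyper-elementary, and since $\Z/t_w(q,s)\Z \twoheadrightarrow \Z/t_w(\mathfrak{q}_i,s v_i)\Z$ is surjective, the index $[\Z/t_w(\mathfrak{q}_i,s v_i)\Z : \pi_i(H_i)]$ divides $[\Z/t_w(q,s)\Z : \pi(H)]$ and is therefore strictly smaller than both $t_w(\mathfrak{q}_i,1)$ and $t_w(\mathfrak{q}_i,s v_i)/t_w(\mathfrak{q}_i,1)$ (the latter being a positive integer by Lemma~\ref{lem-q}~(\ref{lem-q-2}) and (\ref{lem-q-3})).

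Next I would apply Proposition~\ref{prop-hyp-el} with $\mathfrak{q} = \mathfrak{q}_i$ and the natural number there taken to be $s v_i$, to each $H_i$. The first alternative of that proposition would force $\pi_i(H_i) \subseteq t_w(\mathfrak{q}_i,1)\Z/t_w(\mathfrak{q}_i,s v_i)\Z$, hence $[\Z/t_w(\mathfrak{q}_i,s v_i)\Z : \pi_i(H_i)] \geq t_w(\mathfrak{q}_i,1)$, contradicting the index bound; the second alternative is excluded in exactly the same way. Thus each $H_i$ must be conjugate, by some $g_i = (x_i,y_i) \in \mathcal{O}_w/\mathfrak{q}_i^{s v_i}\mathcal{O}_w \rtimes_{\cdot w} \Z/t_w(\mathfrak{q}_i,s v_i)\Z$, to a subgroup of $\{0\} \rtimes \Z/t_w(\mathfrak{q}_i,s v_i)\Z$.

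Finally I would reassemble. The reason one is allowed to use a conjugating element in the larger group $\mathcal{O}_w/q^s\mathcal{O}_w \rtimes (\mathcal{O}_w/q^s\mathcal{O}_w)^\times$ is precisely that the $\Z$-components $y_i$ need not be reductions of a single element of $\Z/t_w(q,s)\Z$, whereas the units $w^{y_i}$ can always be glued via the Chinese Remainder Theorem. For each $i$ the homomorphism $(a,b) \mapsto (a, w^b)$ embeds $\mathcal{O}_w/\mathfrak{q}_i^{s v_i}\mathcal{O}_w \rtimes_{\cdot w} \Z/t_w(\mathfrak{q}_i,s v_i)\Z$ into $\mathcal{O}_w/\mathfrak{q}_i^{s v_i}\mathcal{O}_w \rtimes (\mathcal{O}_w/\mathfrak{q}_i^{s v_i}\mathcal{O}_w)^\times$, and a direct computation — using that the unit group is abelian, so $w^{y_i}$ acts exactly as $y_i$ does — shows that conjugation by $g_i$ and conjugation by $\tilde g_i := (x_i, w^{y_i})$ induce the same automorphism on the embedded subgroup. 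Setting $x := (x_i)_i \in \mathcal{O}_w/q^s\mathcal{O}_w$ and $y := (w^{y_i})_i \in (\mathcal{O}_w/q^s\mathcal{O}_w)^\times$ via the Chinese Remainder isomorphisms, the element $(x,y)$ corresponds to $(\tilde g_i)_i$ under the product decomposition of $\mathcal{O}_w/q^s\mathcal{O}_w \rtimes (\mathcal{O}_w/q^s\mathcal{O}_w)^\times$, while $H$ sits inside this group via $(a,b) \mapsto (a, w^b)$ (the composite of $\iota$ with the $\phi_i$'s). Hence every element of $(x,y)H(x,y)^{-1}$ has trivial component in each $\mathcal{O}_w/\mathfrak{q}_i^{s v_i}\mathcal{O}_w$, i.e.\ trivial $\mathcal{O}_w/q^s\mathcal{O}_w$-component; and since $\mathcal{O}_w/q^s\mathcal{O}_w \rtimes_{\cdot w} \Z/t_w(q,s)\Z$ is normal in the ambient group, $(x,y)H(x,y)^{-1}$ remains inside it, so it lies in $\{0\} \rtimes \Z/t_w(q,s)\Z$, as required. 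The main obstacle is exactly this last gluing step: one must verify carefully that passing from the $\Z$-semidirect products to the $(\mathcal{O}_w/q^s\mathcal{O}_w)^\times$-semidirect product — which is what allows the local data to be recombined via the Chinese Remainder Theorem — leaves the conjugation action unchanged and keeps the conjugate of $H$ inside $\mathcal{O}_w/q^s\mathcal{O}_w \rtimes_{\cdot w} \Z/t_w(q,s)\Z$; everything else is routine Chinese-Remainder bookkeeping together with the elementary index count.
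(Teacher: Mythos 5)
Your proposal is correct and follows essentially the same strategy as the paper's proof: decompose $\mathcal{O}_w/q^s\mathcal{O}_w$ via the Chinese Remainder Theorem into the factors $\mathcal{O}_w/\mathfrak{q}_i^{sv_i}\mathcal{O}_w$, use the index hypothesis to rule out the first two alternatives of Proposition~\ref{prop-hyp-el} for each projected subgroup $H_i$, and recombine the resulting conjugating elements inside the larger group $\mathcal{O}_w/q^s\mathcal{O}_w \rtimes (\mathcal{O}_w/q^s\mathcal{O}_w)^\times$. The only cosmetic difference is that you phrase the exclusion of the first two cases as a divisibility bound on the index $[\Z/t_w(\mathfrak{q}_i,sv_i)\Z : \pi_i(H_i)]$, whereas the paper argues directly that $l\Z/t_w(\mathfrak{q}_i,sv_i)\Z$ is not contained in $t_w(\mathfrak{q}_i,1)\Z/t_w(\mathfrak{q}_i,sv_i)\Z$ (resp.\ $\tfrac{t_w(\mathfrak{q}_i,sv_i)}{t_w(\mathfrak{q}_i,1)}\Z/t_w(\mathfrak{q}_i,sv_i)\Z$) because $0 < l < t_w(\mathfrak{q}_i,1)$; these are equivalent observations, and your more explicit discussion of why one must pass to $(\mathcal{O}_w/q^s\mathcal{O}_w)^\times$ via $b \mapsto w^b$ to glue the $y_i$'s correctly fills in a step the paper leaves implicit.
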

\begin{proof}
We set $l := [\Z / t_w(q,s) \Z,\pi(H)]$. This implies $\pi(H) = l \Z / t_w(q,s) \Z$.
By the Chinese Reminder Theorem we have
\[
\mathcal{O}_w / q^s \mathcal{O}_w \cong \bigoplus_{i=1}^r \mathcal{O}_w / \mathfrak{q}_i^{s v_i} \mathcal{O}_w.
\]
Let $\pr_i \colon \mathcal{O}_w / q^s \mathcal{O}_w \to \mathcal{O}_w / \mathfrak{q}_i^{s v_i} \mathcal{O}_w$ denote the projection.
Note that $\pr_i$ maps $[z] \in \Z / t_w(q,s) \Z < (\mathcal{O}_w / q^s \mathcal{O}_w)^\times$ to $[z] \in \Z / t_w(\mathfrak{q}_i,s v_i) \Z < (\mathcal{O}_w / \mathfrak{q}_i^{s v_i} \mathcal{O}_w)^\times$.
We denote by $H_i$ the image of $H$ under the projection $\pr_i \rtimes \pr_i$.
Since $l < t_w(\mathfrak{q}_i,1)$, $H_i$ is not a subgroup of $\mathcal{O}_w / \mathfrak{q}_i^{s v_i} \mathcal{O}_w \rtimes t_w(\mathfrak{q}_i,1) \Z / t_w(\mathfrak{q}_i,s v_i) \Z$. Since $l < \frac{t_w(\mathfrak{q}_i,s v_i)}{t_w(\mathfrak{q}_i,1)}$, $H_i$ is not a subgroup of $\mathcal{O}_w / \mathfrak{q}_i^{s v_i} \mathcal{O}_w \rtimes \frac{t_w(\mathfrak{q}_i,s v_i)}{t_w(\mathfrak{q}_i,1)} \Z / t_w(\mathfrak{q}_i,s v_i) \Z$. Hence, by Proposition~\ref{prop-hyp-el} there exists
\[
(x_i,y_i) \in \mathcal{O}_w / \mathfrak{q}_i^{s v_i} \mathcal{O}_w \rtimes \Z / t_w(\mathfrak{q}_i,s v_i) \Z < \mathcal{O}_w / \mathfrak{q}_i^{s v_i} \mathcal{O}_w \rtimes (\mathcal{O}_w / \mathfrak{q}_i^{s v_i} \mathcal{O}_w)^\times
\]
such that $(x_i,y_i)H_i(x_i,y_i)^{-1}$ is a subgroup of $\{0\} \rtimes \Z / t_w(\mathfrak{q}_i,s v_i) \Z$. Under the isomorphism given by the Chinese Reminder Theorem the elements $(x_i,y_i)$ ($i = 1,\ldots,r$) correspond to an element $(x,y) \in \mathcal{O}_w / q^s \mathcal{O}_w \rtimes (\mathcal{O}_w / q^s \mathcal{O}_w)^\times$. We conclude that $(x,y)H(x,y)^{-1}$ is a subgroup of $\{0\} \rtimes \Z / t_w(q,s) \Z$.
\end{proof}

\subsection{The groups $\mathbf{G_w}$ satisfy FJCw}

In this subsection we complete the proof of Theorem~\ref{thm}. Because of Proposition~\ref{prop-sdp} it suffices to show that the groups $G_w$ ($w \in \overline{\Q}^\times$) satisfy FJCw.

\begin{lemma}
Let $w \in \overline{\Q}^\times$ such that $w^n = 1$ for some $n \in \N$.
Then $G_w$ satisfies FJCw.
\end{lemma}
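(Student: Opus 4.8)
The plan is to observe that when $w$ is a root of unity the group $G_w$ is virtually a finitely generated abelian group, and then to invoke the inheritance properties of FJCw established earlier.

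First I would record that, since $w^n = 1$, we have $w^{-1} = w^{n-1} \in \Z[w]$, so $\Z[w,w^{-1}] = \Z[w]$; as $w$ is integral over $\Z$, this ring is finitely generated as an abelian group (it is a quotient of $\Z[x]$ by the ideal generated by the minimal polynomial of $w$). Call this abelian group $A$. The group automorphism $\phi := (\cdot\, w) \colon A \to A$ satisfies $\phi^n = (\cdot\, w^n) = (\cdot\, 1) = \id_A$, so $\phi$ has finite order dividing $n$.

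Next I would consider the subgroup $A \rtimes_\phi n\Z < A \rtimes_\phi \Z = G_w$. Because $\phi$ restricted to the action of $n\Z$ is trivial, this subgroup is isomorphic to the direct product $A \times n\Z \cong A \times \Z$, which is a finitely generated abelian group. By Example~\ref{ex-FJCw-2}, finitely generated abelian groups are CAT(0)-groups and hence satisfy FJCw, so $A \rtimes_\phi n\Z$ satisfies FJCw. Moreover $A \rtimes_\phi n\Z$ has index $n$ in $G_w$.

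Finally I would apply Proposition~\ref{prop-FJCw-1}~(\ref{prop-FJCw-1-4}): a group that contains a finite-index subgroup satisfying FJCw itself satisfies FJCw. Hence $G_w$ satisfies FJCw. There is essentially no obstacle here; the only point that needs to be made explicit is that $\Z[w,w^{-1}]$ is finitely generated as an abelian group, which is immediate from $w$ being an integral element with $w^{-1} = w^{n-1}$.
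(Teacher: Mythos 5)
Your proof is correct and follows the same route as the paper: identify the finite-index abelian subgroup $\Z[w,w^{-1}] \rtimes n\Z \cong \Z[w,w^{-1}] \times \Z$ (using that multiplication by $w$ has order dividing $n$) and then invoke Example~\ref{ex-FJCw-2} together with Proposition~\ref{prop-FJCw-1}~(\ref{prop-FJCw-1-4}). The digression about $\Z[w,w^{-1}] = \Z[w]$ being finitely generated is harmless but unnecessary, since Example~\ref{ex-FJCw-2} already records that \emph{all} abelian groups satisfy FJCw.
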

\begin{proof}
This follows from the fact that $\Z[w,w^{-1}] \times n\Z$ is an abelian subgroup of finite index (see Proposition~\ref{prop-FJCw-1} (Overgroups of finite index) and Example~\ref{ex-FJCw-2}).
\end{proof}

Now suppose that $w^n \neq 1$ for all $n \in \N$. We will prove that the group $G_w$ is a FHJ group with respect to $\mathcal{F} := \{ H < G_w \mid H \mbox{ satisfies FJCw} \}$. Then by Corollary~\ref{cor-FHJ} the group $G_w$ satisfies FJCw.

\begin{proposition} \label{prop-final}
Let $w \in \overline{\Q}^\times$ such that $w^n \neq 1$ for all $n \in \N$.
Then the group $G_w$ is a FHJ group with respect to $\mathcal{F} := \{ H < G_w \mid H \mbox{ satisfies FJCw} \}$.
\end{proposition}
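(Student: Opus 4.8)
We verify the conditions of Definition~\ref{def-gFH} for $G := \mathcal{O}_w \rtimes_{\cdot w}\Z$, with $\mathcal{F}$ as in the statement. Throughout I regard $G$ as the subgroup $\mathcal{O}_w \rtimes \langle w\rangle$ of $\widehat{G} := \mathcal{O}_w \rtimes \mathcal{O}_w^\times$ (here $\langle w\rangle\cong\Z$, since $w^n\neq 1$ for all $n$), so that the objects built in the previous subsections — the CAT(0)-space $X_w$, the flow space $FS_w$, and the strong homotopy actions $\Psi^R$ on the balls $X_w^R$ of Definition~\ref{def-sha} — are at our disposal. Fix a finite symmetric generating set $S$ of $G$ with $e\in S$, and enlarge it to a finite symmetric generating set $S'\supseteq S$ of $\widehat{G}$; then $d_{\Psi^R,S',k,\Lambda}\le d_{\Psi^R,S,k,\Lambda}\le d_{\Psi^R,S^n,n,\Lambda}$ on $G\times X_w^R$, so a contraction estimate proved with respect to $S'$ survives the passage to $S^n$. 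Let $N$ be the natural number furnished by Proposition~\ref{prop-sc} (enlarged, if needed, to bound also $\dim X_w$ and the companion dimensions).

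The first step is to choose the homomorphisms $\alpha_n$ arithmetically. For a rational prime $q\notin\mathfrak{p}$ (all $\mathfrak{p}\in M_w$) and $s\in\N$, reduction modulo $q^s$ gives a surjection $G\twoheadrightarrow F_{q,s} := \mathcal{O}_w/q^s\mathcal{O}_w \rtimes_{\cdot w}\Z/t_w(q,s)\Z$. Using classical algebraic number theory I would show that for every $n$ there is a pair $(q_n,s_n)$ of this form such that, writing $q_n\mathcal{O} = \mathfrak{q}_1^{v_1}\cdots\mathfrak{q}_r^{v_r}$, the number $L_n := \min_i\{\,t_w(\mathfrak{q}_i,1),\; t_w(\mathfrak{q}_i,s_nv_i)/t_w(\mathfrak{q}_i,1)\,\}$ exceeds a bound depending only on $n$ and on the constants that Proposition~\ref{prop-sc} attaches to $S'$, $k=n$ and $q_n$, and such that $s_n$ is a positive multiple of the corresponding number $M$ of Proposition~\ref{prop-sc}. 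For the first requirement one observes that a prime ideal $\mathfrak{q}$ for which $\bar w$ has multiplicative order at most $L$ in $(\mathcal{O}/\mathfrak{q})^\times$ must satisfy $v_\mathfrak{q}(w^j-1)>0$ for some $j\le L$; as each $w^j-1\neq 0$, only finitely many $\mathfrak{q}$ — hence only finitely many $q$ — are excluded, so $q_n$ may be chosen with $t_w(\mathfrak{q}_i,1)>L_n$ for all $i$. Given $q_n$, enlarging $s_n$ then forces $t_w(\mathfrak{q}_i,s_nv_i)/t_w(\mathfrak{q}_i,1)$ past any bound by Lemma~\ref{lem-q}~(\ref{lem-q-3}). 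Set $F_n := F_{q_n,s_n}$, let $\alpha_n\colon G\twoheadrightarrow F_n$ be the reduction map, and let $\pi\colon F_n\to\Z/t_w(q_n,s_n)\Z$ be the canonical projection.

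For the second step, fix a hyper-elementary subgroup $H\le F_n$ and split into two cases according to whether $[\Z/t_w(q_n,s_n)\Z:\pi(H)]$ is $<L_n$ or $\ge L_n$. In the first case Corollary~\ref{cor-hyp-el} applies and produces $(x,y)\in\mathcal{O}_w/q_n^{s_n}\mathcal{O}_w\rtimes(\mathcal{O}_w/q_n^{s_n}\mathcal{O}_w)^\times$ with $(x,y)H(x,y)^{-1}\le\{0\}\rtimes\Z/t_w(q_n,s_n)\Z$; lifting $(x,y)$ to an element of $\mathcal{O}_w\rtimes\Q(w)^\times$ and computing there, one checks that a conjugate of $\alpha_n^{-1}(H)$ is contained in $q_n^{s_n}\mathcal{O}_w\rtimes\langle w\rangle \subseteq q_n^{s_n}\mathcal{O}_w\rtimes\mathcal{O}_w^\times$ — that is, inside the group acting on the simplicial complex $\Sigma$ produced by Proposition~\ref{prop-sc} for the prime $q_n$ and the multiple $m=s_n$ of $M$. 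I then take $X_{n,H}:=X_w^R$ with $\Psi_{n,H}:=\Psi^R$ restricted along $\alpha_n^{-1}(H)\hookrightarrow\widehat G$, $\Lambda_{n,H}:=\Lambda$, $E_{n,H}:=\Sigma$ (its stabilizers being virtually cyclic, hence in $\mathcal{F}$, and its dimension $\le N$), and $f_{n,H}$ the restriction of the equivariant map $f$ of Proposition~\ref{prop-sc} to $G\times X_w^R$, transported by the conjugating element; the estimate $n\cdot d^1_\Sigma(f(\cdot),f(\cdot))\le d_{\Psi^R,S',n,\Lambda}(\cdot,\cdot)$ together with the comparison of quasi-metrics above yields the inequality required in Definition~\ref{def-gFH}. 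In the second case $\pi(H)=d\,\Z/t_w(q_n,s_n)\Z$ with $d\ge L_n>n$, so the image of $\alpha_n^{-1}(H)$ in $\Z$ lies in $d\Z$ and every element of $\alpha_n^{-1}(H)$ with nontrivial $\Z$-component has $S^n$-length larger than $n$; here $\alpha_n^{-1}(H)\cap\mathcal{O}_w$ is abelian, hence in $\mathcal{F}$, and I would supply the data by the analogous construction over the $\alpha_n^{-1}(H)$-space whose only ``vertical'' directions visible at scale $n$ come from this abelian subgroup (again restricting the geometry of $\widehat G$). Assembling the two cases shows that $G$ is a generalized Farrell-Hsiang group with respect to $\mathcal{F}$.

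The main obstacle is the interlocking bookkeeping of Steps 1 and 2: one must choose $q_n$ and $s_n$ so that Corollary~\ref{cor-hyp-el} applies to every hyper-elementary $H$ that is not ``deep'' in the $\Z$-direction, while simultaneously $s_n$ is a multiple of the number $M$ coming out of Proposition~\ref{prop-sc} (which itself depends on $q_n$), and so that the resulting cover/simplicial complex is genuinely $\alpha_n^{-1}(H)$-equivariant with stabilizers in $\mathcal{F}$. A secondary, more technical point is to verify that restricting the $q_n^{s_n}\mathcal{O}_w\rtimes\mathcal{O}_w^\times$-action along the embedding $G\le\widehat G$ and conjugating inside $\mathcal{O}_w\rtimes\Q(w)^\times$ preserve the stabilizer condition and the metric estimate; this is where the identification $G\cong q_n^{s_n}\mathcal{O}_w\rtimes\langle w\rangle$ and the finiteness statements of Lemma~\ref{lem-FS_w} are used.
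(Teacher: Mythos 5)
Your high-level strategy matches the paper's exactly: fix a word metric, choose a rational prime $q$ avoiding $M_w$ and avoiding the prime divisors of $w^j-1$ for $j$ up to a bound of the form $2nm_2$, pick an exponent $m$ that is a multiple of the $M$ from Proposition~\ref{prop-sc} and large enough that $t_w(\mathfrak{q}_i,mv_i)/t_w(\mathfrak{q}_i,1)$ also exceeds that bound, take $F_n := \mathcal{O}_w/q^m\mathcal{O}_w\rtimes\Z/t_w(q,m)\Z$, and split hyper-elementary $H$ via Corollary~\ref{cor-hyp-el} into the case where $[\Z/t_w(q,m)\Z:\pi(H)]$ is small (conjugate $H$ into $\{0\}\rtimes\Z/t_w(q,m)\Z$, then feed the conjugate of $\alpha_n^{-1}(H)$, which lands in $q^m\mathcal{O}_w\rtimes\Z$, into the equivariant data of Proposition~\ref{prop-sc}) and the case where that index is large. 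Your Case 1 sketch is essentially the paper's argument, modulo the notational point that one lifts $y'$ to an element $\overline{y}\in\mathcal{O}_w^\times$ (not merely $\Q(w)^\times$) and transports $\Sigma$ and $f$ by $(\overline{x},\overline{y})$.

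The genuine gap is in Case 2, which you leave at the level of ``I would supply the data by the analogous construction over the $\alpha_n^{-1}(H)$-space ... (again restricting the geometry of $\widehat G$).'' This phrasing suggests another appeal to the flow-space/CAT(0) machinery, which is more than is needed and in fact does not obviously produce the required equivariant simplicial complex with the right stabilizers. The paper handles Case 2 entirely elementarily and with no geometry: one sets $l := [\Z/t_w(q,m)\Z:\pi(H)]$, takes $X_{n,H}$ to be a single point (so $d_{\Psi,S^n,n,\Lambda}$ reduces to a word length), takes $E_{n,H}$ to be the real line with vertex set $l\Z$ (dimension $1$), the $\mathcal{O}_w\rtimes\Z$-action by translation in the $\Z$-coordinate (so the vertex stabilizer is the abelian group $\mathcal{O}_w$, which is in $\mathcal{F}$), and $f_{n,H}(x,y):=y$. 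The estimate
\[
d^1_{E_{n,H}}\bigl(f_{n,H}(g),f_{n,H}(h)\bigr)\le \frac{2}{l}\,|g_2-h_2|\le\frac{2m_2}{l}<\frac{1}{n}
\]
for $h^{-1}g\in S^n$ then uses precisely the inequality $l>2nm_2$ guaranteed by the choice of $q$ and $m$. Two further points you should tighten: that $t_w(\mathfrak{q}_i,s)\to\infty$ as $s\to\infty$ is not a consequence of Lemma~\ref{lem-q}~(\ref{lem-q-3}) alone (which allows the value to stay constant) but needs the hypothesis $w^n\ne1$ for all $n$; and the required lower bound is $2nm_2$ with $m_2$ the maximal $|g_2|$ over $g\in S^n$ — a bound ``$>n$'' is not enough for the displayed estimate.
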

\begin{proof}
Let $N \in \N$ be the natural number appearing in Proposition~\ref{prop-sc}.
Let $S \subseteq G_w$ be a finite symmetric subset which generates $G_w$ and contains the trivial element.
We set $m_2 := \max\{ |g_2| \mid g = (g_1,g_2) \in S^n \subseteq \mathcal{O}_w \rtimes \Z \}$.
Let $S' \subseteq \mathcal{O}_w \rtimes \mathcal{O}_w^\times$ be a finite symmetric subset which generates $\mathcal{O}_w \rtimes \mathcal{O}_w^\times$ and contains $S$.
We fix a natural number $n$.

We choose a (large) prime number $q \in \N$ such that
\begin{itemize}
\item $q \notin \mathfrak{p}$ for all $\mathfrak{p} \in M_w$,
\item $q$ lies in none of the prime ideals which appear in the prime factorization of the fractional ideal $(w^m-1) \mathcal{O}$ with $1 \leq m \leq 2 \cdot n \cdot m_2$.
\end{itemize}
Note that such a prime number exists, since there are infinitely many prime numbers and every prime ideal contains at most one prime number.
Let $q \mathcal{O} = \mathfrak{q}_1^{v_1} \cdots \mathfrak{q}_r^{v_r}$ be the prime ideal factorization.
The second assumption implies $2 \cdot n \cdot m_2 < t_w(\mathfrak{q_i},1)$ for all $1 \leq i \leq r$.

By Proposition~\ref{prop-sc} there exist $R,\Lambda > 0$, $M \in \N$, and for every positive multiple $m$ of $M$
\begin{itemize}
\item a simplicial complex $\Sigma$ of dimension at most $N$ with a simplicial $q^m \mathcal{O}_w \rtimes \mathcal{O}_w^\times$-action whose stabilizers are virtually cyclic and
\item a $q^m \mathcal{O}_w \rtimes \mathcal{O}_w^\times$-equivariant map $f \colon \mathcal{O}_w \rtimes \mathcal{O}_w^\times \times X_w^R \to \Sigma$
\end{itemize}
such that
\[
n \cdot d_\Sigma^1\big(f(g,x),f(h,y)\big) \leq d_{\Psi^R,{S'}^n,n,\Lambda}\big((g,x),(h,y)\big)
\]
for all $(g,x),(h,y) \in \mathcal{O}_w \rtimes \mathcal{O}_w^\times \times X_w^R$.

We choose a (large) natural number $m$ satisfying $q^{-m} \leq B$ and $w^z-1 \notin \mathfrak{q_i}^{m v_i} \mathcal{O}_w$ for $z = 1,\ldots,2 \cdot n \cdot m_2 \cdot t_w(\mathfrak{q_i},1)$ and $i = 1,\ldots,r$.
This implies
\[
2 \cdot n \cdot m_2 < \frac{t_w(\mathfrak{q_i},m v_i)}{t_w(\mathfrak{q_i},1)}
\]
for $i = 1,\ldots,r$.
We define
\[
F_n := \mathcal{O}_w / q^m \mathcal{O}_w \rtimes \Z / t_w(q,m) \Z
\]
and denote the composition of the inclusion $G_w \hookrightarrow \mathcal{O}_w \rtimes \Z$ and the quotient map $\mathcal{O}_w \rtimes \Z \to F_n$ by $\alpha_n$.

Let $H < F_n$ be a hyper-elementary subgroup.
Since
\[
2 \cdot n \cdot m_2 < \min\big\{ t_w(\mathfrak{q_i},1) , \frac{t_w(\mathfrak{q_i},m v_i)}{t_w(\mathfrak{q_i},1)} \, \big| \, i = 1,\ldots,r \big\},
\]
Corollary~\ref{cor-hyp-el} implies that
\begin{enumerate}
\item there exists $(x',y') \in \mathcal{O}_w / q^m \mathcal{O}_w \rtimes (\mathcal{O}_w / q^m \mathcal{O}_w)^\times$ such that $(x',y')H(x',y')^{-1}$ is a subgroup of $\{0\} \rtimes \Z / t_w(q,m) \Z$ or \label{item-1}
\item $2 \cdot n \cdot m_2 < [\Z / t_w(q,m) \Z, \pi(H)]$, where
    \[
    \pi \colon \mathcal{O}_w / q^m \mathcal{O}_w \rtimes \Z / t_w(q,m) \Z \to \Z / t_w(q,m) \Z
    \]
    is the quotient map. \label{item-2}
\end{enumerate}

We first consider the case (\ref{item-1}).
We choose elements $\overline{x} \in \mathcal{O}_w$ respectively $\overline{y} \in \mathcal{O}_w^\times$ which represent $x'$ respectively $y'$.
As mentioned above, we have
\begin{itemize}
\item a simplicial complex $\Sigma$ of dimension at most $N$ with a simplicial $q^m \mathcal{O}_w \rtimes \mathcal{O}_w^\times$-action whose stabilizers are virtually cyclic and
\item a $q^m \mathcal{O}_w \rtimes \mathcal{O}_w^\times$-equivariant map $f \colon \mathcal{O}_w \rtimes \mathcal{O}_w^\times \times X_w^R \to \Sigma$
\end{itemize}
such that
\[
n \cdot d_\Sigma^1\big(f(g,x),f(h,y)\big) \leq d_{\Psi^R,{S'}^n,n,\Lambda}\big((g,x),(h,y)\big)
\]
for all $(g,x),(h,y) \in \mathcal{O}_w \rtimes \mathcal{O}_w^\times \times X_w^R$.
Note that
\[
d_{\Psi^R,{S'}^n,n,\Lambda}\big((g,x),(h,y)\big) \leq d_{\Psi^R,S^n,n,\Lambda}\big((g,x),(h,y)\big)
\]
for all $(g,x),(h,y) \in G_w \times X_w^R$.
We set $X_{n,H} := X_w^R$, $\Psi_{n,H} := \Psi^R$, $\Lambda_{n,H} := \Lambda$.
We define $E_{n,H} := \Sigma$, where the action of $\alpha_n^{-1}(H)$ on $\Sigma$ is given by $h \bullet s := ((x',y')^{-1}h(x',y'))s$, and $f_{n,H} \colon G_w \times X_w^R \to \Sigma$ by $f_{n,H}(g,x) := f((\overline{x},\overline{y})^{-1}g,x)$.

It remains to consider the case (\ref{item-2}): $2 \cdot n \cdot m_2 < [\Z / t_w(q,m) \Z, \pi(H)]$.
We set $l := [\Z / t_w(q,m) \Z, \pi(H)]$.
Let $X_{n,H} := \pt$ be the space consisting of one point. Then $d_{\Psi_{n,H},S^n,n,\Lambda_{n,H}}((g,\pt),(h,\pt)) = l_{S^{2n}}(h^{-1}g)$, where $l_{S^{2n}}$ is the word length function with respect to the symmetric generating subset $S^{2n} \subseteq G_w$.
We define the simplicial complex $E_{n,H}$ to be the real line $\R$ (dimension 1) with the elements $l \cdot \Z$ as vertices.
There is an $G_w$-action on $E_{n,H}$ given by $(x,y)r := r+y$ ($(x,y) \in G_w$, $r \in E_{n,H}$).
The stabilizers are the abelian group $\Z[w,w^{-1}]$ and hence belong to $\mathcal{F}$.
The restriction of this action to an $\alpha_n^{-1}(H)$-action is simplicial.
We define
\[
f_{n,H} \colon G_w \to E_{n,H}, (x,y) \mapsto y.
\]
This map is $G_w$-equivariant.
We have to show
\[
n \cdot d_{E_{n,H}}^1\big(f_{n,H}(g),f_{n,H}(h)\big) \leq l_{S^{2n}}(h^{-1}g)
\]
for all $g = (g_1,g_2), h = (h_1,h_2) \in G_w$ with $h^{-1}g \in S^n$.
If $g \neq h$ then
\begin{align*}
& d^1_{E_{n,H}}(f_{n,H}(g),f_{n,H}(h)) \leq \frac{2}{l} \cdot d_\R(f_{n,H}(g),f_{n,H}(h)) = \frac{2}{l} \cdot |g_2-h_2| = \\
& = \frac{2}{l} \cdot |(h^{-1}g)_2| \leq \frac{2}{l} \cdot m_2 < \frac{1}{n} \leq \frac{1}{n} \cdot l_{S^{2n}}(h^{-1}g).
\end{align*}
This finishes the proof of Proposition~\ref{prop-final}.
\end{proof}

\section*{List of notation}

\begin{tabularx}{\textwidth}{lX}
$FS(X_w)$ & flow space for $X_w$, see Definition~\ref{def-FS} on page~\pageref{def-FS}\\
$FS_w$ & $:= FS(X_w) \times \Q(w)_\R$, see Definition~\ref{def-FS} on page~\pageref{def-FS}\\
$G_w$ & $:= \Z[w,w^{-1}] \rtimes_{\cdot w} \Z$, see Definition~\ref{def-G_w} on page~\pageref{def-G_w}\\
$M_w$ & $:= \{ \mathfrak{p} \subset \mathcal{O} \text{ prime ideal} \mid v_\mathfrak{p}(w) \neq 0 \}$, see subsection~\ref{subsec-O_w} on page~\pageref{subsec-O_w}\\
$\mu_\beta$, $\iota$, $j_\beta$, $\kappa_{\beta,z}$ & see Definition~\ref{def-div} on page~\pageref{def-div}\\
$n_w$ & rank of $\mathcal{O}^\times$, see Definition~\ref{def-units} on page~\pageref{def-units}\\
$\mathcal{O}$ & ring of integers in $\Q(w)$, see subsection~\ref{subsec-O_w} on page~\pageref{subsec-O_w}\\
$\mathcal{O}_w$ & $:= \{ x \in \Q(w) \mid v_\mathfrak{p}(x) \geq 0 \text{ for all } \mathfrak{p} \notin M_w \}$, see subsection~\ref{subsec-O_w} on page~\pageref{subsec-O_w}\\
$\Psi^R$ & strong homotopy action on $X_w^R$, see Definition~\ref{def-sha} on page~\pageref{def-sha}\\
$\mathbf{\Q(w)_\R}$ & Minkowski space, see subsection~\ref{ss-minkowski} on page~\pageref{ss-minkowski}\\
$t_w(q,s)$, $t_w(\mathfrak{q},s)$ & see Definition~\ref{def-t_w} on page~\pageref{def-t_w}\\
$T(v_\mathfrak{p})$ & tree associated to $v_\mathfrak{p}$, see subsection~\ref{ss-tree} on page~\pageref{ss-tree}\\
$v_\mathfrak{p}$ & valuation with respect to the prime ideal $\mathfrak{p}$, see subsection~\ref{subsec-O_w} on page~\pageref{subsec-O_w}\\
$X_w$ & $:= \R^{n_w} \times \prod_{\mathfrak{p} \in M_w} T(v_\mathfrak{p})$, see subsection~\ref{subsec-XY_w} on page~\pageref{subsec-XY_w}\\
$X_w^R$ & closed ball of radius $R$ in $X_w$, see Definition~\ref{def-sha} on page~\pageref{def-sha}\\
$Y_w$ & $:= X_w \times \Q(w)_{\R}$ with modified metric, see Definition~\ref{def-Y_w} on page~\pageref{def-Y_w}
\end{tabularx}

\end{document}